\documentclass[12pt,twoside,reqno]{amsart}

\usepackage{amssymb}
\usepackage[T1]{fontenc}
\usepackage[hidelinks]{hyperref}

\providecommand{\MR}[1]{MR #1}

\def\to{\rightarrow}

\def\Ra{{\Rightarrow}}
\def\Lra{{\Leftrightarrow}}

\def\ti{\tilde}
\def\pa{\partial}
\def\bs{\bigskip}\def\ss{\smallskip}\def\ms{\medskip}
\def\no{\noindent}

\def\const{\text{\rm const}}

\def\ti{\tilde}

\def\ker{\text{\rm ker}}

\def\dist{\text{\rm dist}}
\def\supp{\text{\rm supp}\,}
\def\to{\rightarrow}

\def\pa{\partial}

\def\sign{\text{\rm sign}}

\def\Sc{\text{\rm Schr}}

\def\const{\text{\rm const}}

\def\sign{\text{\rm sign}}

\def\ker{\text{\rm ker}}

\def\dist{\text{\rm dist}}
\def\supp{\text{\rm supp}\,}

\def\bs{\bigskip}
\def\ss{\smallskip}
\def\ms{\medskip}
\def\no{\noindent}

\def\R{{\mathbb R}}

\def\Z{{\mathbb{Z}}}
\def\C{{\mathbb{C}}}
\def\N{{\mathbb{N}}}

\def\RR{{\mathcal{R}}}

\def\DD{{\mathcal{D}}}

\def\BB{{\mathcal B}}
\def\MM{{\mathcal M}}
\def\NN{{\mathcal N}}

\def\FF{{\mathcal F}}
\def\SS{{\mathcal S}}
\def\HH{{\mathcal H}}
\def\GG{{{{\mathbf{G}}}}}
\def\EE{{\mathcal E}}
\def\WW{{\mathcal W}}

\def\e{\varepsilon}

\def\L{\Lambda}
\def\l{\lambda}
\def\G{\Gamma}
\def\lan{\lambda_n}

\newcommand{\charf}{\raisebox{\depth}{\(\chi\)}}
\def\nlhat{\overset{\ \curlywedge}}

\theoremstyle{plain}
\newtheorem{lemma}{Lemma}
\newtheorem{theorem}{Theorem}
\newtheorem{corollary}{Corollary}
\newtheorem{proposition}{Proposition}

\newtheorem{remark}{Remark}

\newtheorem{example}{Example}

\newtheorem*{lem}{Lemma}
\newtheorem*{thm}{Theorem}
\newtheorem*{cor}{Corollary}
\newtheorem*{prop}{Proposition}

\numberwithin{equation}{section}

\newcommand{\lecture}[2]{%
  \setcounter{section}{#1}%
  \setcounter{subsection}{0}%
  \setcounter{equation}{0}%
  \section*{\texorpdfstring{\textbf{Lecture #1: #2}}{Lecture #1: #2}}%
}
\newcommand{\lsection}[1]{%
  \subsection*{#1}%
}

\author{A.~Poltoratski}
\address{University of Wisconsin\\ Department of Mathematics\\ Van Vleck Hall\\
480 Lincoln Drive\\
Madison, WI  53706\\ USA }
\email{poltoratski@wisc.edu}
\thanks{The author was partially supported by
NSF Grant DMS-2244801.}

\title[Completeness, spectral and scattering problems]{Complex Analysis in completeness, spectral and scattering problems}

\begin{document}

\begin{abstract}
These lectures are aimed at junior researchers seeking to enter the field of
applications of complex analysis to harmonic analysis, Fourier analysis, spectral and
scattering theory for differential operators and related fields. Starting with classical
completeness problems posed by Bernstein, Beurling and Malliavin, Wiener, Kolmogorov and
Krein, the course progresses to the solutions of the Gap and Type Problems, inverse
spectral problems for Schr\"odinger operators and canonical Hamiltonian systems, and
pointwise convergence of the non-linear Fourier transform. Each lecture is supplemented
with exercises and references.
\end{abstract}
	
	\maketitle
	
	\tableofcontents

\newpage

\section*{\texorpdfstring{\textbf{Preface}}{Preface}}

\ms\no These notes originate from the lectures I gave at the internet seminar organized
by Brett Wick at Georgia Tech in the spring of 2013. Later in the same year I gave a
10-hour course on the same set of topics in the CBMS conference series at Clemson
University, organized by Constanze Liaw, Mishko Mitkovski and Brett Wick. In the next
decade I was happy to give a number of survey lectures in the same area, including a
one-semester course at Brown University, a 10-hour minicourse at the University of
Toulouse, a 5-hour minicourse at the University of Helsinki, two minicourses at the CRM,
Barcelona, and a shorter 3-hour minicourse at the Fields Institute; I have also prepared a
3-hour minicourse to be given in Milan later this year. Hence, I decided to include all
this material in the updated notes. I am grateful to the organizers of these events --
Sergei Treil (Brown), Stefanie Petermichl and Pascal Thomas (Toulouse), Eero Saksman
(Helsinki), Joaquim Ortega-Cerd\`a (CRM, Barcelona), Javad Mashreghi and Ilia Binder (the
Fields Institute), Carlo Bellavita, Mattia Calzi and Marco Peloso (Milan), among others --
for their kind invitations and hospitality.

\ms\no The text inherits the format of the original course: each lecture is intended to be
read in one sitting; proofs are included when they are short and instructive and replaced
with references when they are not; and every lecture ends with exercises, ranging from
elementary to nearly impossible, and with its own list of references.

\ms\no In the present version, I have made revisions and updates to include the results
which appeared since; in particular, the last six lectures present developments of the past
decade and bring the reader to the edge of current research.

\ms\no The unifying theme of the course is that all of its methods are based on complex
analysis. Even though such basic objects of complex analysis as entire functions, inner
functions, Hardy spaces, Dirichlet spaces, etc., may not be directly mentioned in every
lecture, they are the tool box, which the reader can find in the references given. A
standard graduate background in real and complex analysis is all that is needed to begin.

\ms\no The problems studied in these notes are easy to state. Can every continuous
function be approximated by polynomials in a weighted norm on the line? How many complex
exponentials does one need to span $L^2$ on an interval? Does the spectrum determine the
operator? Questions like these can be explained to an undergraduate, yet they occupied
many of the strongest analysts of the twentieth century, and some of them are still open
after nearly a hundred years.

\ms\no What binds these problems together is the Uncertainty Principle of
Harmonic Analysis -- the impossibility for a function and its Fourier transform to be
simultaneously small -- and what makes them approachable is the rigidity of analytic
functions: an entire function cannot vanish too often, and a function analytic in a
half-plane is determined by a fragment of its boundary data.

\ms\no Perhaps the greatest pleasure this field offers is watching a single idea travel
across mathematics. A family of exponentials becomes the zero set of an entire function
of exponential type; a Schr\"odinger operator becomes a meromorphic inner function; a
spectral measure becomes a chain of de Branges spaces; a scattering coefficient becomes
an outer function. A question posed in the language of one area is answered in the
language of another, and the translations are not tricks but permanent bridges, open for
two-way traffic. The reader will cross them many times in this course: the same densities
of sequences, the same Poisson-summable logarithms, the same Toeplitz kernels will
reappear in problems that at first sight have nothing in common.

\ms\no The field is far from finished. Several of its classical problems, presented in
the first half of the course, were solved only in the recent decades, by newly found
combinations of classical tools simple enough to be explained here in full; others remain
wide open. A young researcher could
hardly ask for a better combination: questions of proven depth, a tool box of classical
beauty, and open territory in every direction. If these lectures persuade a few readers
to enter this field, they will have served their purpose.

\newpage

\lecture{1}{Three classical problems}

\ms\no A set of vectors in a Banach or Hilbert space is called complete if finite linear combinations of its vectors are dense in the corresponding
space with respect to the standard topology generated by the norm. For example, any basis, or any set containing a basis, is a complete set.
One of the first advanced examples of a complete set we see in the standard analysis course is the set of monomials $1, x, x^2, x^3,...$ in the
space $C([a,b])$ of all continuous functions on a closed interval with the standard supremum norm. The Weierstrass theorem tells us that
the set is complete in the space.

\ms\no Completeness problems appear in many areas of analysis and its applications. For instance, a function on a subset of the real line may represent a wave and one may ask if it can be approximated, in a specified sense, by linear combinations of specially selected functions, often called harmonics.
In modern terms, defining the criterion of approximation amounts to defining the norm in the space. Verifying whether any function in the space
can be approximated by finite linear combinations of harmonics is equivalent to proving that harmonics are complete in the space.
Problems of this kind gave name to a large and important part of mathematics, Harmonic Analysis.

\ms\no The role of harmonics in the above set-up can be played by a number of different sets of functions, such as trigonometric functions, monomials, complex exponentials, or specials functions such as Bessel functions, Jacobi or Chebyshev polynomials or
Airy functions originating from Physics. The most common choices for the Banach space are $L^p$ spaces and spaces of continuous or smooth functions
with various norms.

\ms\no  Let us start by formulating some of the classical completeness problems that will be discussed in this course.

\newpage

\lsection{Bernstein's problem}

\ms\no  Let $W:\R\to [1,\infty)$ be a continuous function satisfying $x^n=o(W(x))$ for any $n\in\N$, as $x\to\pm\infty$.
Denote by $C_W$ the space of all continuous functions $f$ on $\R$ such that $f/W\to 0$ as $x\to\pm\infty$ with the norm
\begin{equation}||f||_W=\sup_\R\frac{|f|}W.\label{norm1}\end{equation}
The famous weighted approximation problem posed by Sergei Bernstein in 1924 \cite{BernsteinL1}
asks to describe the weights $W$ such that polynomials are dense in $C_W$.

\ms\no Bernstein's problem can be viewed as a natural consequence of the Weierstrass theorem. If instead of $C([a,b])$ one tries to consider $C(\R)$ with the same supremum norm, one immediately runs into an obstacle: polynomials do not belong to such a space. Bernstein's weighted sup norm turns out to be
the best way to remedy that situation. A slightly more general version of Bernstein's problem allows the weight $W$ to be semicontinuous and take
infinite values. This extension allows one to study polynomial approximation on arbitrary closed subsets of $\R$.

\ms\no Throughout the 20th century Bernstein's problem was investigated by many prominent analysts including N. Akhiezer, L. de Branges, L. Carleson, T. Hall, P. Koosis, B. Levin, P. Malliavin, S. Mandelbrojt, S. Mergelyan, H. Pollard and M. Riesz. This activity continues to this day with more recent significant contributions
by A. Bakan,  M. Benedicks, A. Borichev, P. Koosis, M. Sodin and P. Yuditski, among others.
Besides the natural beauty of the original question, such an extensive interest towards Bernstein's problem is generated by numerous links with adjacent fields, including its close relation with the moment problem.

\ms\no
Further information and references on the remarkable history of Bernstein's problem can be found
in two classical surveys by Akhiezer \cite{AkhiezerL1} and Mergelyan \cite{Mergelyan}, a more recent one by  Lubinsky \cite{LubinskyL1}, or
in the first volume of Koosis' book \cite{KoosisL1}.

\ms\no A simple if and only if solution in Bernstein's problem most likely does not exist. We plan to discuss some of the classical conditions along with recent progress in these lectures.

\newpage

\lsection{The Beurling-Malliavin problem}

\ms\no Let $\L$ be a subset of the complex plane. Denote by $E_\L$ the set of complex exponential functions
with 'frequencies' from $\L$:
$$\EE_\L=\{\exp(2\pi i\l t)|\ \l\in \L\}.$$

\ms\no  The most standard example of such a set of functions is
$$E_\Z=\{e^{2\pi int} \}_{n\in\Z},$$
which forms an orthonormal basis in $L^2([0,1])$. A natural extension of this important example is the following question.

\ms\no Consider the case when $\L=\{\lan\}$ is a general sequence of complex numbers. Under what conditions on the frequencies $\L$ will
the system of exponentials $\EE_{\L}$ be complete in $L^2(0,a)$?

\ms\no This natural question occupied analysts for many decades. The history of its solution can be started with a theorem by Paley
and Wiener (1935) that says that if a real sequence $\L$ has upper density greater than $a$, i.e.
$$\limsup_{x\to\infty}\frac{\#(\L\cap (0,x))}x>a,$$
then $E_\L$ is complete in $L^2(0,a)$.

\ms\no An immediate question is whether the statement of the theorem can be reversed. Examples by Levinson, Kahane and Koosis showed that
no converse statement can be formulated using the primitive upper density, utilized by Paley and Wiener.

\ms\no A solution to that problem was obtained by Beurling and Malliavin in a series of papers in the early 1960's. Instead of the upper density,
they defined the so-called effective density of a sequence that allows one to replace an inequality in the Paley-Wiener result with an equation (after some additional reductions, that will be discussed in the future lectures.)

\ms\no To obtain the formula for the 'completeness radius' of a sequence of exponentials Beurling and Malliavin proved three intermediate results that are now known as the first BM theorem, the little multiplier theorem and the big multiplier theorem. Each of these results has independent value and usage. Together these three theorems and the final result, the second BM theorem, form the so-called Beurling-Malliavin theory, that is considered to be one of the deepest parts of the 20th century Harmonic Analysis. Modern treatments of the BM theory
can be found in \cite{HJL1, KoosisL1} along with further references.

\newpage

\lsection{The Type Problem}

\ms\no Consider a family $\EE_a=\EE_{[0,a]}$ of exponential functions whose frequencies belong to the interval
from 0 to $a$.
If $\mu$ is a finite positive measure on $\R$ we denote by $T_\mu$ its exponential type that is defined as
\begin{equation} T_\mu=\inf\{\ a>0\ |\ \EE_a  \textrm{ is complete in }  L^2(\mu)\ \}\label{type1}
\end{equation}
if the set of such $a$ is non-empty and as infinity otherwise. The type problem asks to calculate $T_\mu$ in terms
of $\mu$.

\ms\no This question first appears in the work of Wiener, Kolmogorov and Krein  in the context of stationary Gaussian processes that play an important role in Probability Theory (see
\cite{Krein1L1, Krein2L1} or the book by Dym and McKean \cite{DML1}). If $\mu$ is a spectral measure of a stationary Gaussian process, completeness of $\EE_a$ in $L^2(\mu)$
is equivalent to the property that  the process at any time is determined by the data for the time
period from 0 to $a$. Hence the type of the measure is the minimal length of the period of observation necessary to predict the rest of the process. Since any even measure is a spectral measure of a stationary Gaussian process, and vice versa, this reformulation is practically equivalent.
Important connections with spectral theory of second order differential operators were studied by Gelfand and Levitan \cite{GL} and Krein \cite{Krein2L1, Krein3}.

\ms\no For more on the history and connections of the type problem see, for instance,  a note by Dym \cite{DymL1} or a paper by Borichev and Sodin \cite{BSL1}.

\newpage

\lsection{Questions and exercises}

\ms\no Let $\L=\{\lan\}$ be a real sequence. Define its radius of completeness as
$$R(\L)=\sup\{ a\ |\ \EE_\L \textrm{ is complete in }L^2(0,a)\}.$$
Main example:

$$R(\Z)=1$$
(since $\EE_\Z$ is an orthonormal basis of $L^2(0,1)$).
Now let us consider one half of $\Z$, the set $2\Z$ of all even integers.
Show that $R(2\Z)$ is $1/2$. More generally, if we take $1/n$-th of $\Z$, $n\Z$, then
the radius decreases correspondingly: $R(n\Z)=\frac 1n$.

\ms\no \textbf{Question:} Let us take one-half of $\Z$ in a different way and see what happens. Find $R(\N)$, the radius
of completeness of the set of positive integers (use the Paley-Wiener theorem; the answer may surprise you).

\ms\no \textbf{Question:} May $R(\L)$ change if one deletes finitely many points from $\L$? Give an example of an infinite sequence $A\subset \Z$
such that $R(\Z\setminus A)=R(\Z)=1$.

\ms\no \textbf{Question:} Can $A$ from the previous example have positive upper density?

\ms\no \textbf{Question:} Does a complete set have to contain a basis?
A basis in a Banach space is a set $B$ of vectors such that every vector in the space can
be uniquely represented as a series of constant multiples of vectors from $B$, converging in the norm.
(Hint: consider, for instance, the set of monomials in $C([0,1])$. Try to describe the set of functions
representable as uniformly convergent power series.)

\newpage

\lecture{2}{The Beurling-Malliavin theory}

\lsection{The radius of completeness}

\ms\no Let us recall one of the classical completeness problems discussed in the first lecture. Let $\L=\{\l_n\}$
be a sequence of distinct points in the complex plane and let
$$E_\L=\{e^{i2\pi\l_n x}\}$$
be a sequence of complex exponential functions on $\R$ with frequencies from $\L$. We ask under what conditions
on $\L$ will $E_\L$ be complete in $L^2([0,a])$.

\ms\no In this general form the problem does not have a reasonable answer. To formulate the results, including the famous Beurling-Malliavin
theorem mentioned in the first lecture, we will have to refine the formulation of the problem. Recall that  for any complex sequence
$\L$ its
radius of completeness is defined as
$$R(\L)=\sup\{ a\ |\ E_\L \textrm{ is complete in }L^2(0,a)\}.$$
A more realistic goal is to find a formula for $R(\L)$ for an arbitrary $\L$. That goal was accomplished by Beurling and Malliavin and we will
state their result in this lecture.

\ms\no It is well-known in the theory of completeness that the general problem can be easily reduced to the case of real sequences $\L$.
More precisely, if $\L$ is a general complex sequence then $E_\L$ is complete in $L^2([0,a])$ if and only if
$E_{\L'}$ is complete in $L^2([0,a])$, where $\L'$ is the real sequence defined as $\lan'=1/\Re \frac 1\lan$, see for instance
\cite{KoosisL2}.
Also, as will be explained below, one can always assume that $\L$ is a discrete sequence, i.e. has no finite accumulation points.

\lsection{Fourier transform and Paley-Wiener spaces}

\ms\no One of the main tools in the theory of completeness of complex exponentials is the Fourier transform. Recall that
if $f\in L^2(\R)$ then its Fourier transform, $\hat f$ is defined as
$$\hat f(z)=\int_\R e^{-2\pi izx}f(x)dx.$$
The Paley-Wiener theorem says that if $\supp f\subset [-a,a]$ then $\hat f(z)$ is an entire function of exponential type at most $2\pi a$, i.e.
$$|\hat f(z)|\leq \const e^{2\pi a|z|},$$
and $\hat f(x)\in L^2(\R)$. By Parseval's theorem
$$||f||_{L^2([-a,a])}=||\hat f||_{L^2(\R)}.$$
Moreover, every entire function of exponential type at most $2\pi a$ that belongs to $L^2(\R)$ is the Fourier transform
of a function from $L^2([-a,a])$.
The image of $L^2([-a,a])$ under the Fourier transform is the so-called Paley-Wiener space of entire functions, $PW_a$.
More directly, $PW_a$ is a space of entire functions of exponential type at most $2\pi a$ that belong to $L^2(\R)$.

\ms\no A system of vectors in a Hilbert space is incomplete if and only if there exists a non-zero vector orthogonal to all of the vectors of the system.
In particular, a system of complex exponentials $E_\L$ is incomplete in $L^2([0,a])$ if and only if there exists a non-zero $f\in L^2([0,a])$
such that $f\perp e^{i2\pi\l_n  x}$ for all $\lan\in\L$. By the definition of the Fourier transform, the last condition is equivalent
to the condition that $\hat f(\lan)=0$ for all $\lan\in\L$.

\ms\no Thus, via the Fourier transform and the Paley-Wiener theorem, the completeness problem we are discussing becomes a problem of complex analysis. Namely, $E_\L$ is incomplete in
$L^2([0,2a])$ if and only if $\L$ is a zero set of a non-zero function from $PW_a$.

\ms\no Let us point out one immediate consequence of this connection: If $\L$ has a finite accumulation point then $R(\L)=\infty$.
Indeed, if there exists a finite $a>0$ such that $E_\L$ is incomplete in $L^2([0,a])$ then
there exists a non-zero $f\in L^2([0,a]), f\perp E_\L$. Then the entire function $\hat f$ is non-zero and vanishes on $\L$. But
non-zero entire functions cannot vanish on sets with finite accumulation points.

\lsection{Upper density and the Paley-Wiener theorem}

\ms\no A study of zero sets of $PW_a$-functions can give many more results on completeness of complex exponentials. This idea was first
used by Paley and Wiener themselves and later perfected by Levinson in his classical book \cite{LevinsonL2} of 1940, that remains one of the best
books in the area. One of the first fundamental results on completeness obtained this way is the following theorem mentioned in the last lecture.

\begin{theorem}[Paley and Wiener, 1934]
	$$R(\L)\geqslant \bar D(\L)=\limsup_{x\to\infty} \frac{\#(\L\cap (0,x))}{x}.$$
\end{theorem}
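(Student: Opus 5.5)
Our plan is to argue by contradiction through the reduction to zero sets of Paley--Wiener functions described above. Assume $a<\bar D(\L)$ and that $E_\L$ is incomplete in $L^2([0,a])$. Then there is a non-zero $f\in L^2([0,a])$ with $\hat f(\lan)=0$ for all $n$. Translating $f$ to be supported on $[-a/2,a/2]$ only multiplies $\hat f$ by a unimodular exponential, so the zeros are unchanged. Hence $F=\hat f$ is a non-zero function in $PW_{a/2}$ that vanishes on $\L$. It satisfies $|F(z)|\le C e^{\pi a|z|}$ and $F\in L^2(\R)$. The goal is to show that $F$ cannot have more than about $a x$ zeros in $(0,x)$ for large $x$. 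This would contradict $\bar D(\L)>a$.

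The main tool will be Jensen's formula, applied to count zeros. Dividing by a power of $z$ if needed, we may assume $F(0)\ne0$. Let $n(r)$ be the number of zeros of $F$ in the disk $|z|<r$. Jensen's formula gives
$$\int_0^R\frac{n(r)}r\,dr=\frac1{2\pi}\int_0^{2\pi}\log|F(Re^{i\theta})|\,d\theta-\log|F(0)|.$$
The crude bound $|F|\le Ce^{\pi a R}$ only gives $\int_0^R n(r)/r\,dr\le \pi a R+O(1)$. This would only yield a density bound like $e\pi a$ or $2a$, which is too weak. So we need the sharper growth information coming from $L^2$ on the real line.

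The key refinement is the Phragm\'en--Lindel\"of / Poisson-type estimate in each half-plane. Let $F$ be of exponential type $\pi a$ with $\int\log^+|F(x)|/(1+x^2)dx<\infty$; this holds for $L^2$ functions. Then
$$\log|F(x+iy)|\le \pi a|y|+\frac{|y|}{\pi}\int\frac{\log^+|F(t)|}{(t-x)^2+y^2}dt.$$
Averaging over the circle, the integral of $\pi a R|\sin\theta|$ is $2aR$. The Poisson term contributes $o(R)$, because its total mass is controlled by $\int\log^+|F|/(1+t^2)$ uniformly in a way that grows sublinearly. This gives
$$\int_0^R\frac{n(r)}r\,dr\le 2aR+o(R).$$

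The last step converts this into a density bound, and we expect it to be the main obstacle. Suppose $\#(\L\cap(0,x_k))\ge(a+\e)x_k$ along a sequence $x_k\to\infty$. Then $n(r)\ge(a+\e)x_k$ for all $r\ge x_k$. Integrating from $x_k$ to $Rx_k$ gives a lower bound of $(a+\e)x_k\log R$. The upper bound from above is $2aRx_k+o(x_k)$. The ratio $\log R/R$ fails to beat the constant, so this naive comparison again loses a constant factor. To fix this, we will instead count real zeros of $F$ on the positive half-line using the half-plane version of Jensen, namely Carleman's formula applied in the right half-plane $\Re z>0$. For a function of exponential type $\pi a$ that is log-integrable on $i\R$, Carleman's formula gives that the zeros in the right half-plane have counting function at most $a r+o(r)$. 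More precisely, we use the standard consequence that $\limsup n_+(r)/r\le$ (the type along the imaginary axis)$/\pi=a$. This would be done after rotating by $i$, so that the real line plays the role of the imaginary axis in Carleman's formula and the indicator diagram of $F$ is $[-i\pi a,i\pi a]$. Carrying out Carleman's formula carefully, with the boundary integral over the imaginary axis controlled by $|F(iy)|\le Ce^{\pi a|y|}$ and the arc term averaging $\sin\theta$, is the technical heart of the argument. It gives $\limsup_x \#(\L\cap(0,x))/x\le a$, contradicting $\bar D(\L)>a$.
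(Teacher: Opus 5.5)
Your reduction matches the paper's: incompleteness of $E_\L$ in $L^2(0,a)$ gives a non-zero $F$ of type $\pi a$, bounded and square-summable on $\R$, vanishing on $\L$. You are also right that Jensen's formula in discs loses a constant factor. The gap is in the step you call the technical heart.

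Carleman's formula for the right half-plane, with the arc term and the boundary term over $i\R$ bounded only from above through $|F(x+iy)|\le Ce^{\pi a|y|}$, gives the following once you drop the nonnegative contributions of non-real zeros and integrate by parts:
\[
\int_1^R n_+(t)\Big(\frac1{t^2}+\frac1{R^2}\Big)\,dt\le a\log R+O(1),\qquad n_+(t)=\#(\L\cap(0,t)).
\]
This bounds a logarithmic average of $n_+(t)/t$, not its $\limsup$. The dilution you saw with Jensen is still there, and is in fact worse. A cluster of $(a+\e)x_k$ zeros in $((1-\delta)x_k,x_k)$ contributes only $O(1)$ to the left-hand side. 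So a sequence built from such clusters at super-lacunary scales, e.g. $x_k=e^{e^k}$, has $\bar D(\L)\ge a+\e$ and still satisfies the inequality. Hence the ``standard consequence'' $\limsup n_+(r)/r\le a$ does not follow from the computation you describe.

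That conclusion is true, but it is the Cartwright--Levinson theorem on zero densities, and its proof needs lower bounds on $\log|F|$ matching the upper ones. One uses that $e^{i\pi az}F$ and $e^{-i\pi az}F$ are bounded in $\C_+$ and $\C_-$ (Phragm\'en--Lindel\"of). Then the Nevanlinna factorization in each half-plane gives completely regular growth. This yields a genuine density $\frac{\sigma_++\sigma_-}{2\pi}$ of zeros in each of $\{\Re z>0\}$ and $\{\Re z<0\}$, where $\sigma_\pm\le\pi a$ are the mean types. You have two ways to close the gap:
\begin{itemize}
\item Cite that theorem explicitly. Your argument is then complete, but all the work sits in the citation.
\item Use the phase argument behind the upper density estimate of Lecture 12. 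Incompleteness gives $N[J_\L\bar S^{2\pi a}]\neq0$. So $\arg J_\L(x)-2\pi ax=-\alpha(x)+\ti h(x)$ with $\alpha$ increasing and $h\in L^1_\Pi$. Since $\arg J_\L$ is increasing, $\ti h'\ge-2\pi a$, and Kolmogorov's weak type lemma gives $\ti h(x)=o(x)$. Therefore $2\pi\,\#(\L\cap(0,x))\le\arg J_\L(x)+O(1)\le 2\pi ax+o(x)$, which controls the $\limsup$ directly.
\end{itemize}
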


\ms\no The idea of the proof is simple. One needs to show that if $F\in PW_a$ then the sequence of its real positive zeros $N$
cannot satisfy $\bar D(N)>a$. This can be done using standard methods of complex analysis, such as Jensen's inequality.
(To complete the details of the proof is a good exercise.)

\lsection{The examples of Kahane and Koosis}

\ms\no This theorem started a long and intensive hunt for the formula for $R(\L)$ in terms of densities. The upper density $\bar D(\L)$, or its various derivations, proved to be insufficient for that goal, as follows from the following historic examples.

\begin{example}[Kahane, 1959] There exists $\L\subset\R$ such that $\bar D(\L)=0$ but $R(\L)=\infty$.
\end{example}

\ms\no In other words, even a very 'thin' sequence of frequencies, in terms of upper density, can generate a sequence of exponentials that will be complete in $L^2$ on any finite interval.

\ms\no In Kahane's example the sequence had large clusters (multiplicities) of points. An immediate question that followed naturally from his construction was whether such clustering is necessary for such a sequence. In particular it was still unclear if a separated sequence can produce a similar example.
A sequence $\L=\{\l_n\}$ is separated if $|\lan -\l_k|>c>0$ for some $c$ and all $n\neq k$. The new question was answered much more quickly
with the following example by Koosis, ruining the remaining hopes for the use of the upper density.

\begin{example}[Koosis, 1960]
	There exists $\L\subset\Z$ such that $\bar D(\L)=0$ but $R(\L)=1$.
\end{example}

\ms\no Notice, that since $\L\subset\Z$, $R(\L)\leq R(\Z)=1$. Hence, Koosis' example gives a subsequence, much thinner than $\Z$, that has maximal possible radius of completeness.

\lsection{Beurling-Malliavin densities}

\ms\no Nevertheless, it turned out that the equation in the Paley-Wiener result will hold if one replaces the upper density with a more delicate density
found in the early sixties by Beurling and Malliavin. We now pass to the definition of the Beurling-Malliavin (effective) density of a real discrete sequence and a dual density that will be used later in the course.

\ms\no
If $\{I_n\}$ is a sequence of disjoint intervals on $\R$, we call it short if
$$\sum\frac{|I_n|^2}{1+\dist^2(0,I_n)}<\infty$$
and long otherwise.

\ms\no Let us point out some simple examples and properties of long (short) sequences of intervals. If $|I_n|<C$ then $\{I_n\}$ is short: $$\sum\asymp \sum \frac 1{n^2}.$$ Also, $I_n=(n^a,(n+1)^a)$ is short for any $a>0$: $$\sum \asymp \sum \frac 1{n^2}.$$
At the same time, a subsequence of dyadic intervals, $I_n=(2^{n_k},2^{n_{k+1}})$, is long for any $n_k\nearrow \infty$, no matter how rare:
$$\sum\asymp 1+1+1+...$$

\ms\no If $\L$ is a sequence of real points define its exterior BM density (effective BM density)
as

$$D^*(\L)=\sup\{ d\ |\ \exists\textrm{ long  }\{I_n\}\textrm{ such that }\#(\L\cap I_n)\geqslant d|I_n|\ \ \forall n\}$$

\ms\no Let us point out the following obvious properties of the new density:

$$D^*(\L)\geq \bar D(\L)$$
and
$$D^*(\Z)=D^*(\N)=1, \ \ D^*(C\Z)=D^*(C\N)=C^{-1}.$$

$$$$

\ms\no A dual definition is used to introduce the interior BM density:

$$D_*(\L)=\inf\{ d\ |\ \exists\textrm{ long  }\{I_n\}\textrm{ such that }\#(\L\cap I_n)\leqslant d|I_n|\ \ \forall n\}.$$

\ms\no We postpone the discussion of the interior density until future lectures.

\ms\no An alternative definition of the two densities, that proves to be more convenient in some of the applications, can be given as follows.

\ms\no For a discrete sequence $\L\subset \R$ we denote by $n_\L (x)$ its
counting function, i.e. the step function  on $\R$, that is constant between any two points   of $\L$, jumps up by $1$ at each point of $\L$ and is equal to $0$ at $0$.
We say that $\L$ is  $a$-\textit{regular}
if
$$\int\frac{|n_{\L}(x)-ax|}{1+x^2}<\infty.$$
In other words, a sequence is $a$-regular if it is close to the  arithmetical progression $a^{-1}\Z$, in the sense that the difference between their counting functions
is summable with respect to the Poisson weight $dx/(1+x^2)$.

\ms\no The exterior density, introduced above, can be equivalently defined as
$$D^*(\L):=\inf \{a\ |\ \exists \ \textrm{$a$-regular supersequence}\ \ \L'\supset \L \}.$$

\ms\no Similarly, for the interior density we have
$$D_*(\L):=\sup \{a\ |\ \exists \ \textrm{$a$-regular subsequence}\ \ \L'\subset \L \}.$$
This definition shows why the terms 'exterior' and 'interior' were used by Beurling and Malliavin in the names of their densities.

\lsection{The Beurling-Malliavin theorem}

\ms\no Now we are ready to formulate one of the deepest theorems of the 20th century Harmonic Analysis.

\begin{theorem}[Beurling and Malliavin, around 1961]
	Let $\L$ be a discrete real sequence. Then
	$$R(\L)=D^*(\L).$$
\end{theorem}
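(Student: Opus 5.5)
Via the reduction above, $E_\L$ is incomplete in $L^2([0,2a])$ if and only if $\L$ lies in the zero set of a non-zero function of $PW_a$. The statement $R(\L)=D^*(\L)$ therefore splits into two inequalities about zero sets of Paley--Wiener functions. I would prove them separately, working throughout with the regularity characterization $D^*(\L)=\inf\{a\,|\,\exists\ a\textrm{-regular}\ \L'\supset\L\}$.

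\emph{Step 1: $R(\L)\le D^*(\L)$.} Let $a>D^*(\L)$ and pick an $a'$-regular supersequence $\L'\supset\L$ with $D^*(\L)<a'<a$. Arranging $\L'$ to be separated and symmetric-like is harmless, since adding points only helps. Form the canonical product
$$G(z)=\prod\Bigl(1-\frac z{\l'_n}\Bigr)$$
(principal value). The condition $\int|n_{\L'}(x)-a'x|\,(1+x^2)^{-1}dx<\infty$ shows that $\log|G(x)|-\log|\sin \pi a' x|$ is Poisson-summable, and that $G$ has exponential type $\pi a'$ and is of Cartwright class. The function $G$ need not be in $L^2$ on the line. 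To fix this, apply the \emph{little multiplier theorem}: for the Poisson-summable weight $\omega=\log^+|G|$, it provides, for every $\e>0$, a non-zero entire function $\Phi$ of type $\le\e$ with $|\Phi G|\in L^2(\R)$. This works because $\log|G|$ is, up to a bounded oscillation, the Hilbert-transform-regular part; one checks this via a Lipschitz majorant of $\log^+|G(x)|$ away from the zeros. Then $F=\Phi G\in PW_{(a'+\e)/2}$ is non-zero and vanishes on $\L$, so $E_\L$ is incomplete on intervals of length $a'+\e<a$. Hence $R(\L)\le D^*(\L)$.

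\emph{Step 2: $R(\L)\ge D^*(\L)$.} Suppose $F\in PW_{a/2}$ is non-zero and vanishes on $\L$; we must show $D^*(\L)\le a$. Dividing by a Blaschke-type factor, we may assume the zeros of $F$ are real, since non-real zeros can be moved to the real line by Lemma-type reductions. The zero set $N\supset\L$ then satisfies
$$\int\frac{\log^+|F(x)|}{1+x^2}\,dx<\infty$$
(Cartwright), and $F$ is determined by its zeros and type. The plan is to show that $N$ admits an $(a+\e)$-regular supersequence for every $\e>0$.

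By contradiction, assume there is a long family $\{I_n\}$ with $\#(N\cap I_n)\ge (a+\delta)|I_n|$. Write
$$\log|F(x)|=\int\log\Bigl|1-\frac xt\Bigr|\,dn_N(t)$$
and compare it with the analogous expression for $(a+\delta)$-density. On each long interval $I_n$ the excess zeros force $\log|F|$ to be very negative on a middle portion of $I_n$, by roughly $\delta|I_n|$. At the same time, the Cartwright/Poisson condition, combined with the fact that $\log|F|$ is the boundary value of a harmonic function of type $\pi a$, forces the positive and negative parts to balance in Poisson mean. Summing over the intervals, the accumulated negative mass is comparable to $\sum|I_n|^2/(1+\dist^2(0,I_n))$. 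This is infinite for a long family and contradicts $\int\log|F|\,(1+x^2)^{-1}dx>-\infty$ (as $F\not\equiv0$). This is essentially the first BM theorem, an energy/Dirichlet-integral argument.

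\emph{Main obstacle.} I expect the main difficulty to lie in the multiplier step of Step 1: producing $\Phi$ of arbitrarily small type with $|\Phi|\le e^{-\omega}$ on $\R$. The weight $\omega=\log^+|G|$ has no a priori smoothness, so one first has to replace it by a regularized majorant whose Hilbert transform is controlled. The Beurling--Malliavin multiplier theorem needs exactly this, namely $\omega$ with a Lipschitz (or uniformly continuous) majorant. I would try to verify that near each zero one can replace $\log|G|$ by its average over a unit interval, which is Lipschitz, using the separation of $\L'$. The energy estimate in Step 2 is delicate but more standard.
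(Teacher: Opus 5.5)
Your two-inequality architecture is the right one. The notes state the theorem without proof and point to exactly this scheme: multiplier theorems for incompleteness, Beurling's lemma for completeness. However, Step 1 rests on a false claim: regularity of $\L'$ does \emph{not} make the canonical product $G$ Cartwright. Integrating by parts in $\log|G(x)|=\int\log|1-x/t|\,dn_{\L'}(t)$ gives $\log|G(x)|=\mathrm{p.v.}\int n_{\L'}(t)\bigl(\frac1{x-t}+\frac1t\bigr)dt$. So $\log|G|-\log|\sin\pi a'x|$ is, up to a linear term, $\pi$ times the conjugate function of $n_{\L'}-n_{\Z/a'}$. Regularity puts this discrepancy in $L^1_\Pi$, but conjugation maps $L^1_\Pi$ only into weak $L^1_\Pi$ (Kolmogorov), not into $L^1_\Pi$.

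Here is a concrete counterexample. Let $\L'$ consist of the negative integers and of points $\l_n>0$ with $n_{\L'}(t)=t+t/\log^2t+O(1)$ as $t\to+\infty$. This $\L'$ is separated and $1$-regular, and its Lindel\"of sums converge, so $G$ has exponential type. But the excess zeros on $\R_+$ add $x\int_0^\infty\frac{n_{\L'}(t)-n_{\Z}(t)}{t(t+x)}\,dt\asymp x/\log x$ to $\log|G(-x)|$. Hence $\int\log^+|G|\,d\Pi=\infty$, and no factor $e^{cz}$ repairs this. So the multiplier theorem for Cartwright functions cannot be applied to $G$, and Step 1 breaks at this point.

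The missing step is to choose the enlargement of $\L$, using the slack between $D^*(\L)$ and the target density, so that the counting-function discrepancy is balanced well enough for its conjugate to be Poisson-summable. In other words, you must produce a Cartwright function of type at most $\pi(a'+\e)$ vanishing on $\L$. In the language of Lecture 12 this is the chain: regularity $\Rightarrow$ almost decreasing argument $\Rightarrow$ a representation $\gamma=-\alpha+\ti h$ and a non-trivial Smirnov kernel. This is where the Beurling--Malliavin combinatorics live. Once a Cartwright $G$ is available, the little multiplier theorem applies to it as it stands. So the regularization of $\log^+|G|$ that you single out as the main obstacle is not the real issue. Besides, the separation you would use for it cannot be arranged: $\L\subset\L'$ may have clusters (Kahane's example), and adding points never separates them.

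Step 2 is a heuristic, not a proof. What you need is that a Cartwright function of type $\pi a$ cannot vanish on a set carrying density at least $a+\delta$ along a long family of intervals. This is Beurling's lemma, which Lecture 12 singles out as the \emph{much deeper} half of the triviality statement, with proofs via harmonic measure, a Bellman function, or PDE techniques. It is not the "more standard" part.

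Your two key sentences are precisely what has to be proved. The first is that the excess zeros force $\log|F|\lesssim-\delta|I_n|$ on a middle portion of $I_n$. The second is that the Cartwright condition forces the positive and negative parts to balance. The effect of the excess zeros in $I_n$ can be offset by a deficit of zeros near $I_n$ and by the positive part of $\log|F|$ on neighbouring intervals. Controlling this non-local interaction summably over a long family is the whole content of the lemma. As it stands, neither inequality is established.
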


\ms\no (The family $\EE_\L=\{e^{i\l z},\ \l\in\L\}$ is complete in $L^2$ on any interval
of length less than $2\pi D^*(\L)$ and incomplete on any interval of length more than $2\pi D^*(\L)$.)

\ms\no As was mentioned at the beginning of the lecture, the formula extends to the general case $\Lambda\subset \C$ as follows. If $\Lambda$ satisfies the {\it Blaschke} condition
$${\rm (B)},\  \sum_{\lambda\in\Lambda}\left|\Im~\lambda^{-1}\right|<\infty,$$ then
$R(\Lambda)=\pi D_{\rm eff}(\Lambda^*),$ where
$$\Lambda^*=\left\{\lambda^*|\ \lambda^*:=\left[\Re~\lambda^{-1}\right]^{-1},\ \lambda\in
\Lambda\right\},$$
and $R(\Lambda)=\infty$ if $\Lambda\not \in(\textrm{B})$.

\lsection{Exercises}

\bs\no
1) Prove equivalence of the two definitions of BM densities.

\bs\no
2) If $D^*(\L_1)=a$ and $D^*(\L_2)=b$, what can be said about $D^*(\L_1\cap\L_2),\ D^*(\L_1\cup\L_2),\ D^*(\L_1\setminus\L_2)$?
The same question about $D_*$.

\bs\no
3) Using the Beurling-Malliavin theorem, produce real sequences that satisfy the conditions of Kahane's and Koosis' examples.

\bs\no
4) If $0\leq a \leq \infty$, $\e>0$ and $\L$ is a sequence such that $R(\L)=a$, show that  the sequence  of complex exponentials $E_\L$ contains infinitely
many disjoint subsequences such that each of the subsequences is complete in $L^2([0,a-\e])$.

\newpage

\lecture{3}{Inner functions and model spaces}

\ms\no
A version of the Heisenberg Uncertainty Principle formulated in terms of
Harmonic Analysis claims that a non-zero measure (distribution) and its Fourier transform cannot be simultaneously small, see for instance \cite{HJL3}. This broad statement raises a multitude of deep mathematical questions, each corresponding to a particular
sense of "smallness." It includes problems on completeness of exponentials and polynomials that we discussed earlier, inverse spectral problems for differential operators and Krein's canonical systems,
classical problems in the theory of stationary Gaussian Processes, signal processing, etc. Many of such problems remain open to this day.

\ms\no For instance,  the Beurling-Malliavin Problem discussed in the last lecture fits into the general statement of the Uncertainty Principle in the following way. We consider functions with small support, i.e. square summable functions whose support is contained in a finite interval.
We want to show that the Fourier transform of such a function cannot be small in the sense that it cannot have a large zero set (a sequence
of external density larger than the length of the interval). In the opposite direction, if $\hat f$ vanishes on a sequence of large density, then
the support of $f$ cannot be contained in a small interval.

\ms\no Our next goal is to discuss another area within the Uncertainty Principle that deals with Spectral Problems for differential operators and completeness problems for special functions.
To do that we need some preparation in basic complex analysis. The topics we discuss here are covered in a number of textbooks such as
\cite{KoosisHpL3, RudinL3, GarnettL3, cimarossL3}.

\lsection{Herglotz functions}

\ms\no A function $f$ on $\R$ is called Poisson-summable, $f\in L^1_\Pi$, if it is summable with respect to the Poisson measure $\Pi$,
$$ d\Pi=dx/(1+x^2).$$
We say that a complex measure $\mu$ on $\R$ is Poisson-finite if
$$\int\frac{d|\mu|(x)}{1+x^2}<\infty.$$
We denote the set of all Poisson-finite measures on $\R$ by $M_\Pi(\R)$. We will also consider the set of measures $M_\Pi(\hat\R)$
on $\hat\R=\R\cup\{\infty\}$. Each measure from $M_\Pi(\hat\R)$ has the form $\mu=\nu+ c\delta_\infty$, where $\nu\in M_\Pi(\R)$,
$c\in \C$ and $\delta_\infty$ is the unit point mass at infinity.

\ms\no For any $\mu\in M_\Pi(\R)$ one can consider its Poisson integral in the upper half-plane $\C_+$,
$$ P\mu(x+iy)=\frac 1\pi \int\frac{y}{(x-t)^2 + y^2}d\mu(t),$$
that defines a harmonic function in $\C_+$. If the measure $\mu$ is positive, then $P\mu$ defines a positive
harmonic function in $\C_+$ (and a negative harmonic function in $\C_-$).

\ms\no For  $\mu\in M_\Pi(\hat\R)$ the Poisson formula takes the form
$$ P\mu(x+iy)=\frac 1\pi \int_\R\frac{y}{(x-t)^2 + y^2}d\mu(t)+ c y,$$
where the term $cy$ is understood as the Poisson integral of the point mass at infinity.

\ms\no The well-known representation theorem says that any positive harmonic function in $\C_+$ is a Poisson integral of a positive measure
in $M_\Pi(\hat\R)$. (Exercise: find a proof of that statement. Hint: for a positive harmonic function in the unit disk, show that
$L^1$-norms of its restrictions on circles centered at the origin are bounded and take a weak limit of a subsequence as $r\to 1$.)

\ms\no  Moving on from harmonic to analytic functions, for any $\mu\in M_\Pi(\R)$ the Schwarz integral
$$\emph{\emph{S}}\mu(z)=\frac 1{\pi i}\int\left(\frac1{t-z}- \frac t{1+t^2}   \right)d\mu(t)$$
defines an analytic function in $\C_+$ (and in $\C_-$). If $\mu$ is positive, $S\mu$ has positive real part in $\C_+$. Analytic functions
with positive real parts are called Herglotz functions (and the integral $S\mu$ is often called Herglotz integral).

\ms\no For $\mu\in M_\Pi(\hat\R)$,
$$\emph{S}\mu(z)=\frac 1{\pi i}\int_\R\left(\frac1{t-z}- \frac t{1+t^2}   \right)d\mu(t)-icz,$$
where, once again, the last term is understood as the Schwarz integral of the point mass at infinity.
In the analytic case the representation statement is called the Herglotz Representation Theorem, which says that
any Herglotz function $F$ in $\C_+$ can be represented as
$$F=\emph{\emph{S}}\mu +ib,$$
where $\mu>0,\ \mu\in M_\Pi(\hat\R), b\in \R$.

\lsection{Inner functions}

\ms\no  By a theorem of Fatou, any bounded analytic function in $\C_+$ has non-tangential boundary values almost everywhere on $\R$,
see for instance \cite{KoosisHpL3}. If those limits have absolute value $1$ a. e. on $\R$, then such a function is called inner.
I. e., inner functions in $\C_+$ are bounded analytic functions that are equal to $1$ by the absolute value a. e. on the boundary.

\ms\no An important example of an inner function in $\C_+$ is the exponential function
$$S^a(z)=e^{iaz},\ a>0.$$ (Question: why should $a$ be positive?)
Another example is a Blaschke factor
$$b_\l=\frac{z-\l}{z-\bar\l}, \ \ \l\in\C_+.$$
Obviously, any finite product of Blaschke factors will also define an inner function. Finally,
if $\L=\{\lan\}\subset \C_+$ is a sequence satisfying the Blaschke condition
$$\sum \frac{\Im \lan}{1+|\lan|^2}<\infty,$$
then the infinite product
$$B_\L(z)=\prod \e_n b_{\lan},$$
where the constants $\e_n$ are chosen so that $\e_n b_{\lan}(i)>0$, converges normally in $\C_+$ and defines an inner function
$B_\L$ called a Blaschke product.
A theorem by Beurling says that any inner function $I$ in $\C_+$ has the form
$$I(z)=C\exp(-S\mu)B_\L,$$
where $B_\L$ is a Blaschke product, $\mu\in M_\Pi(\hat\R)$ is a \textit{\textit{positive singular}} measure and $C$ is a unimodular constant,
see for instance \cite{cimarossL3, KoosisHpL3, RudinL3, GarnettL3}.

\newpage

\lsection{Meromorphic inner functions}

\ms\no An important subclass of inner functions in $\C_+$ consists of the so-called meromorphic inner functions. These are inner functions that can be
extended into the whole complex plane meromorphically. Such inner functions play an important role in problems of the Uncertainty Principle.

\ms\no  The condition of existence of meromorphic extension into $\C_-$ immediately implies the following representation formula for a meromorphic
inner function $\theta$:

$$\theta(z)=Ce^{iaz}B_\L,$$
where $C$ is a unimodular constant, $a>0$ and $B_\L$ is a Blaschke product corresponding to a \textit{discrete} sequence $\L$ (i.e. $\L$ satisfies the Blaschke condition and has no finite accumulation points).

\lsection{Spaces of analytic functions}

\ms\no Recall that the Hardy space $H^2=H^2(\C_+)$ in the upper half-plane is defined as the space of all analytic functions $f$
in the upper half-plane such that
$$||f||^2_{H^2}=\sup_{y>0}\int_\R |f(x+iy)|^2dx<\infty.$$

\ms\no By Fatou theorem, each function in $H^2$ is uniquely determined by its non-tangential boundary values on $\R$.
If one identifies each function in $H^2$ with its boundary values on $\R$,
the space becomes a closed subspace of $L^2(\R)$ and a Hilbert space. Via this connection, $H^2$ inherits the inner product from $L^2(\R)$:
$$<f,g>_{H^2}=<f,g>_{L^2}=\int_\R f(x) \bar g(x) dx.$$
It turns out that the norm for $H^2$ defined above coincides with the $L^2$-norm, see for instance \cite{KoosisHpL3}.

\ms\no  An important role in the problems of the Uncertainty Principle is played by the following collection of subspaces of $H^2$. If $\theta$ is an inner
function in $\C_+$, denote by $\theta H^2$ the set of functions
$$\theta H^2=\{ \theta f |\ f\in H^2\}.$$
It is clear that $\theta H^2$ is a closed subspace of $H^2$ (that consists of all functions in $H^2$ that are divisible by $\theta$).
Hence, we can consider an orthogonal complement of $\theta H^2$ in $H^2$:
$$K_\theta= H^2 \ominus \theta H^2.$$
This is the definition of the so-called model space $K_\theta$ corresponding to the inner function $\theta$.

\ms\no  As we can see from the definition, such a space $K_\theta$ can be constructed for any inner function $\theta$ in $\C_+$.
Such spaces play a fundamental role in the Functional Model Theory as the only invariant subspaces of the backward shift operator
in $H^2$, see \cite{NikolskiL3, Nikolski1L3}.

\ms\no  The spaces $K_\theta$ may be viewed as generalizations of the classical Paley-Wiener spaces $PW_a$,  see exercises below. They possess many intriguing properties and are still under investigation by analytic function theorists. Problems on sampling, interpolation or uniqueness in $K_\theta$ spaces serve as natural modern extensions of classical completeness problems discussed so far in this course.
We hope to illustrate this with further examples in our future lectures.

\lsection{Spectral theory}

\ms\no Consider the Schr\"odinger equation
\begin{equation}\label{sch}-\ddot u+qu=\lambda u\end{equation} on some interval $(a,b)$ and assume   that the potential $q(t)$ is locally integrable and $a$ is a regular point, i.e. $a$ is finite and $q$ is $ L^1$ at $a$. Let us  fix some selfadjoint  boundary condition at $b$ and consider the Weyl $m$-function
$$m(\lambda)=\frac{\dot u_\lambda(a)}{ u_\lambda(a)},\qquad \lambda\not\in\R,$$
where $u_\lambda(t)$ is any non-trivial solution of \eqref{sch}  satisfying the boundary condition.  We will deal only with the {\it compact resolvent} case, which is equivalent to saying that $m$ extends to a meromorphic function.
It is well known that $m$ defines a Herglotz function in $\C_+$.
Thus we can define   the meromorphic inner function
$$\Theta=\frac{m-i}{m+i},$$
(see exercises) which we call the {\it Weyl inner function} associated with the potential and the fixed boundary condition at $b$.
The transformation
\begin{equation}\label{03}f(t)\;\mapsto\;F(\lambda)=\int_a^bf(t)\frac{u_\lambda(t)}{\dot  u_\lambda(a)+iu_\lambda(a)}~dt\end{equation}
identifies $L^2(a,b)$  with the model space $K_\Theta$ in the same way as the classical Fourier transform (times $S^a$) identifies $L^2(-a,a)$ with $PW_a$. I. e., it is a unitary operator between the two spaces. This allows us to interpret the completeness problem for  families of solutions $\{u_\lambda:~\lambda\in\Lambda\}$  as a problem of uniqueness sets in the model space of $\Theta$.

For some special choices of the potential $q(t)$ the families of solutions $u_{\lan}$ can become families of special functions, such as
Bessel, Jacobi or Airy functions, see for instance \cite{MIFL3}.
Completeness problems of this type, particularly   problems involving  families of special functions, are well-known in the literature, see e.g. \cite{HigL3}. More on this in future lectures.

\lsection{Exercises and notes}

\ms\no 1) Prove the Herglotz Representation Theorem. Show that the theorem fails for non-positive harmonic functions.

\ms\no 2) Prove the Beurling Theorem on the representation formula for inner functions. You may use the fact that for any bounded analytic function
its zeros satisfy the Blaschke condition.

\ms\no 3) Show that any meromorphic inner function has the simplified representation formula given above.

\ms\no 4) Verify that the formula used to define the Weyl inner function,
$$\Theta=\frac{m-i}{m+i},$$
establishes a one-to-one correspondence between inner and Herglotz functions.

\ms\no 5) An alternative definition of the Hardy space is that $H^2$ is equal to the image of $L^2(\R_+)$ under
the Fourier transform. The norm and the inner product in $H^2$ are then defined to make the Fourier transform
a unitary operator $\FF: L^2(\R_+)\to H^2$. Can you verify the equivalence of this definition to our definition?

\ms\no 6) Show that $\FF(L^2([0,a]))$ is a $K_\theta$ space with $\theta=S^{2\pi a}$. This shows that the Paley-Wiener
spaces are particular cases of $K_\theta$:
$$ S^{a\pi}PW_a=K_{S^{2\pi a}}.$$
The classical completeness problems, that are equivalent to uniqueness problems in $PW_a$, as was discussed in the previous lecture, now become particular cases of uniqueness
problems in $K_\theta$. For general $\theta$, most of such problems remain unsolved.

\newpage

\lecture{4}{Weyl inner functions and spectral measures}

\ms\no In this lecture we continue our discussion of connections with spectral theory for differential operators, started in Lecture 3, in more detail.
We will only  discuss the case of Schr\"odinger operators although similar theories exist for general  canonical systems. See \cite{M2L4} and \cite{LSL4} for the basics of the spectral theory. The present discussion is a shortened version of selected sections of our paper with N. Makarov \cite{MIFL4}.

\ms\no Recall that the $H^2$-model space of an inner function $\Theta$,
$$K_\Theta=H^2\ominus\Theta H^2=H^2\cap\Theta \bar H^2,$$
is a Hilbert space with the Hilbert structure inherited from $H^2$. A function $k_\l$ from a Hilbert
space $H$ of analytic functions in a complex domain is called a reproducing kernel corresponding to
the point $\l$ from the domain if for any $f\in H$,
$$ <f,k_\l>_H=f(\l).$$
In the case of $K_\Theta$ the
reproducing kernels are given by the formula
\begin{equation}\label{rek}k_\lambda^\Theta(z)=\frac1{2\pi i}\frac{1-\overline{\Theta(\lambda)}\Theta(z)}{\bar\lambda-z},\qquad \lambda\in \C_+.\end{equation}
The system of all reproducing kernels is complete in $K_\Theta$ (why?). It follows that if $\Theta$ is meromorphic, then all elements of $K_\Theta$ are meromorphic, and one can extend \eqref{rek} to all $\lambda\in\R$.
The monograph \cite{NikolskiL4} provides a comprehensive study of model spaces.

\lsection{Weyl inner functions}

\ms\no As was discussed earlier, meromorphic inner functions appear in the theory of  2nd order selfadjoint differential operators.
Let  $q$ be a real locally summable function on $(a,b)$. We always assume that selfadjoint operators associated with the differential operation $u\mapsto -\ddot u+qu$ have {\it compact resolvent}. This will be the case, for instance, if $q$ is from $L^p(a,b)$, as
well as for most other 'reasonable' potentials.
We suppose that $a$ is  a {\it regular} point  (finite with $q$ summable near $a$), but we allow $b$ to be infinite and/or singular. Let us  fix a selfadjoint boundary condition $\beta$ at $b$; for example, $\beta$ means $u\in L^2$ at $b$ in the case when $b$ is infinite. The  Weyl-Titchmarsh $m$-function of $(q; b, \beta)$ evaluated at $a$,
$$m(\lambda)=m_{b,\beta}^a(\lambda),\qquad \lambda\in \C,$$
is defined by the formula
$$m(\lambda)=\frac{\dot u_\lambda(a)}{u_\lambda(a)},$$
where $u_\lambda(\cdot)$ is a non-trivial  solution of the Schr\"odinger equation satisfying the boundary condition at $b$.  It is well-known
that $m$ is a Herglotz function, and therefore
we can define the corresponding  inner function $\Theta_{b,\beta}^a$  as
$$\Theta_{b,\beta}^a=\frac{m-i}{m+i}.$$
We  call  $\Theta_{b,\beta}^a$ the {\it Weyl} (or  Weyl-Titchmarsh) {\it  inner function} of $q$.

\ms\no Similarly, if
$b\in \R$ is  a {\it regular} point and $\alpha$ is a  selfadjoint boundary condition  at $a\in[-\infty, b)$, we can consider   the $m$-function of $(q; a, \alpha)$ evaluated at $b$,
$$m_{a,\alpha}^b(\lambda)=-
\frac{\dot u_\lambda(b)}{u_\lambda(b)}$$
and define the corresponding  Weyl inner function
$\Theta_{a,\alpha}^b.$ Note that the sign in the last formula has changed with the change of the endpoint.

\bs\no {\bf Example.} The Weyl inner  functions of  the potential $q\equiv 0$ on $[0,1]$ with Dirichlet and,  respectively, Neumann boundary conditions at $a=0$ are
\begin{equation}\label{DN}\Theta_{D}(\lambda)=\frac{\sqrt\lambda\cos\sqrt\lambda+i\sin\sqrt\lambda}
{\sqrt\lambda\cos\sqrt\lambda-i\sin\sqrt\lambda},\qquad\Theta_{N}(\lambda)=\frac{\sqrt\lambda\sin\sqrt\lambda-i\cos\sqrt\lambda}
{\sqrt\lambda\sin\sqrt\lambda+i\cos\sqrt\lambda}.\end{equation}
(The $m$-functions are
$m_{D}(\lambda)=-\sqrt\lambda\cot\sqrt\lambda,$ and
$m_{N}(\lambda)=\sqrt\lambda\tan\sqrt\lambda.$)

\bs\no {\bf Example.} More generally,
for $\nu \ge-1/2$ consider the potential $$q(t)=\frac {\nu^2-\frac14}{t^2}\quad{\rm on}\quad (0,1),$$
and let the boundary condition $\alpha$ at $a=0$  be satisfied by  the solution  $$u_\lambda(t)=\sqrt tJ_\nu(t \sqrt\lambda)$$
of  the Schr\"odinger equation.
For example, if $\nu=-1/2$ then $\alpha=(N)$, and  if $\nu=1/2$ then $\alpha=(D)$, and   we have the limit point case if $\nu\ge 1$. $J_\nu$ is of course the standard  notation for the Bessel function of order $\nu$.
Since $$u_\lambda(1)= J_\nu(\sqrt\lambda),\qquad
\dot u_\lambda(1)=\frac1{2} J_\nu(\sqrt\lambda )+ \sqrt \lambda J'_\nu(\sqrt\lambda ),$$
the corresponding Weyl inner function is
\begin{equation}\label{Bes}\Theta_\nu(\lambda)=\frac{\sqrt\lambda J'_\nu(\sqrt\lambda)+(1/2+i)J_\nu(\sqrt\lambda)}{\sqrt\lambda J'_\nu(\sqrt\lambda)+(1/2-i)J_\nu(\sqrt\lambda)}.\end{equation}
In particular, we have
$\Theta_{-1/2}=\Theta_{N}$ and $\Theta_{1/2}=\Theta_{D}.$

\ms\no We will discuss the Bessel example further in our lectures.

\lsection{Modified Fourier transform}

\ms\no Let $\Theta=\Theta^a_{b,\beta}$ be the Weyl-Titchmarsh inner function of a potential $q$ defined in the previous section. We will construct a unitary operator $L^2(a,b)\to K_\Theta,$
which is a modification of the Weyl-Titchmarsh Fourier transform. We modify the usual construction so that the case of a singular (i.e. non-regular) endpoint $b$ could be included.

\ms\no For every $z\in \C$ we choose a non-trivial  solution $u_z(t)$ of the Schr\"odinger equation satisfying the boundary condition $\beta$. (For real $z$ such a solution exists because of the compact resolvent assumption).
If $z\in \C_+\cup\R$, then  the solution
$$w_z(t)=\frac{u_z(t)}{\dot u_z(a)+i u_z(a)}$$
does not depend on the choice of $u_z$, and $w_z\in L^2(a,b)$.
The  transform $\WW$ is defined as follows:
\begin{equation}\label{W}\WW:~  f(t)\mapsto F(z)=\int_a^bf(t)w_z(t)dt,\quad(z\in \C_+\cup\R).\end{equation}

\ms\no To state the main result we introduce the {\it dual reproducing kernel} of the model space $K_\Theta$. For $\lambda\in \C_+\cup\R$ we define
\begin{equation}\label{drk}k^*_\lambda(z)=\frac1{2\pi i}\frac{\Theta(z)-\Theta(\lambda)}{z-\lambda},\qquad (z\in \C_+\cup\R),\end{equation}so  we have
$$\bar\Theta k_\lambda^\Theta=\overline{k^*_\lambda}\qquad{\rm on}\quad\R,$$
and $k^*_\lambda\in K_\Theta$.
Note that if $\lambda\in \R$, then $k^*_\lambda=\const~ k_\lambda^\Theta$.

\ms\begin{theorem}\cite{MIFL4} The modified Fourier transform $\WW$ is (up to a factor $\sqrt \pi$) a unitary operator
	$L^2(a,b)\to K_\Theta.$ Furthermore, we
	have
	\begin{equation}\label{WW}\WW w_\lambda=\pi k^*_\lambda,\quad \WW \bar w_\lambda=\pi k_\lambda\qquad  (\lambda\in \C_+\cup\R).\end{equation}\end{theorem}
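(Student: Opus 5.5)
The plan is to first verify the two kernel identities \eqref{WW} by a direct Lagrange (Green) identity computation, and then obtain unitarity from them by a density argument.

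\textbf{Step 1: the Green identity.} For $\lambda,z\in\C_+\cup\R$ the functions $w_z$ and $w_\lambda$ solve the Schr\"odinger equation with parameters $z$ and $\lambda$ and both satisfy $\beta$ at $b$. Then
$$(z-\lambda)\int_a^b w_\lambda w_z\,dt=\big[\dot w_\lambda w_z-w_\lambda\dot w_z\big]_a^b .$$
The boundary term at $b$ vanishes because both solutions satisfy the same selfadjoint condition $\beta$. In the singular case this is exactly the meaning of a limit-point or limit-circle boundary condition: the Wronskian tends to $0$ as $t\to b$.

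At $a$, use $w_\lambda(a)=1/(m(\lambda)+i)$ and $\dot w_\lambda(a)=m(\lambda)/(m(\lambda)+i)$. The Wronskian at $a$ becomes
$$\frac{m(z)-m(\lambda)}{(m(z)+i)(m(\lambda)+i)} .$$
Since $\Theta=(m-i)/(m+i)$, we have $\Theta(z)-\Theta(\lambda)=2i\,(m(z)-m(\lambda))/\big((m(z)+i)(m(\lambda)+i)\big)$. Hence
$$\int_a^b w_\lambda w_z\,dt=\frac{\Theta(z)-\Theta(\lambda)}{2i(z-\lambda)}=\pi k^*_\lambda(z),$$
which is $\WW w_\lambda=\pi k^*_\lambda$. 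The sign is fixed by the orientation $-[\cdot]_a$, which is a routine check.

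The same computation with $\bar w_\lambda$ uses that $\bar w_\lambda$ solves the equation with parameter $\bar\lambda$ and satisfies $\beta$, since $\beta$ is real. It has $\bar w_\lambda(a)=1/(\overline{m(\lambda)}-i)$, and the result simplifies to
$$\frac{1-\overline{\Theta(\lambda)}\Theta(z)}{2i(\bar\lambda-z)}=\pi k_\lambda(z).$$
For $\lambda\in\R$ both formulas follow by continuity, using that $m$ is meromorphic with real boundary values.

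\textbf{Step 2: isometry on kernels.} Take the inner product of $\WW\bar w_\lambda=\pi k_\lambda$ with a kernel and use the reproducing property. This gives
$$\langle \pi k_\lambda,\pi k_\mu\rangle_{K_\Theta}=\pi^2k_\lambda(\mu)=\pi\,(\WW\bar w_\lambda)(\mu)=\pi\int \bar w_\lambda w_\mu\,dt=\pi\langle \bar w_\lambda,\bar w_\mu\rangle_{L^2}.$$
So $\WW/\sqrt\pi$ preserves inner products on the span of $\{\bar w_\lambda\}$. Because the kernels $k_\lambda$ span a dense subspace of $K_\Theta$, the image is dense. Unitarity therefore reduces to the completeness of $\{\bar w_\lambda:\lambda\in\C_+\}$ in $L^2(a,b)$.

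\textbf{Step 3: completeness of the solutions.} Suppose $f\perp \bar w_\lambda$ for all $\lambda\in\C_+$. Then $F=\WW f$ vanishes on $\C_+$. By the Weyl--Titchmarsh theory, $F(z)$ is, up to the nonvanishing factor, the $L^2$ pairing of $f$ with the solution $u_z$. From this I would express the resolvent of the selfadjoint realization (Neumann-type condition at $a$, $\beta$ at $b$) through the Green function built from $u_z$ and the solution regular at $a$. Vanishing of $F$ on $\C_+$, and by reflection on $\C_-$, then shows that $f$ is orthogonal to all eigenfunctions. In the compact resolvent case these eigenfunctions are the $u_{\lambda_n}$ at the poles of $m$, and they form an orthogonal basis of $L^2(a,b)$ by the spectral theorem, so $f=0$.

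This step is the main obstacle, especially for a singular endpoint $b$. There I would rely on the standard fact that a compact resolvent selfadjoint Sturm--Liouville operator has a complete system of eigenfunctions. Alternatively I would approximate $b$ by regular points $b_n\to b$ and pass to the limit, using the convergence of $m$-functions.
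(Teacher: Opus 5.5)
Steps 1 and 2 are the paper's proof. The Lagrange identity, with the Wronskian at $b$ vanishing because both solutions satisfy $\beta$, gives \eqref{WW}. The Gram identity $(\bar w_\lambda,\bar w_\mu)_{L^2}=\WW\bar w_\lambda(\mu)=\pi k_\lambda(\mu)=\pi(k_\lambda,k_\mu)_{K_\Theta}$ then gives the isometry, and your algebra is correct. For real $\lambda$ you do not need a continuity argument, which is awkward at the poles of $m$. Since $\dot w_\lambda(a)+iw_\lambda(a)=1$ and $\dot w_\lambda(a)-iw_\lambda(a)=\Theta(\lambda)$, the boundary term equals $(\Theta(z)-\Theta(\lambda))/(2i)$ directly for all $\lambda,z\in\C_+\cup\R$.

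The paper leaves the completeness step to ``etc.'', and this is where your write-up needs fixing.
\begin{itemize}
\item The reflection in Step 3 fails. For complex $f$, $\int f u_{\bar z}=\int f\,\overline{u_z}$ is not controlled by $\int f u_z$, so vanishing on $\C_+$ does not pass to $\C_-$ that way. Along your route you would instead show that $F$ extends meromorphically to $\C$ (through the Green function of a selfadjoint realization), so it vanishes identically.
\item The poles of $m$ are the eigenvalues for the Dirichlet condition at $a$, not the Neumann condition you chose.
\end{itemize}

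The detour is unnecessary, however. Both \eqref{WW} and the Gram identity hold for real $\lambda$. For $\lambda$ in the spectrum of any selfadjoint realization $(q,\alpha,\beta)$, the function $\bar w_\lambda$ is a nonzero multiple of the real eigenfunction $u_\lambda$. These eigenfunctions form an orthogonal basis of $L^2(a,b)$ because the resolvent is compact, and this holds for singular $b$ as well. Hence the span of $\{\bar w_\lambda:\lambda\in\C_+\cup\R\}$ is dense, and no approximation $b_n\to b$ is needed.
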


\ms\begin{proof} The formulae \eqref{WW} follow from the Lagrange identity
	$$(z-\lambda)\int_a^b u_\lambda u_z=u_\lambda(a)\dot u_z(a)-\dot u_\lambda(a)u_z(a).$$
	(The Wronskian  at $b$ is zero because  the two solutions satisfy the same boundary conditions.)
	The rest is straightforward:
	$$(\bar w_\lambda,\bar w_\mu)_{L^2}=\int_a^b w_\mu\bar w_\lambda=\WW \bar w_{\lambda}(\mu)=\pi k_\lambda(\mu)=\pi (k_\lambda,k_\mu)_{K_\Theta},$$etc.
\end{proof}

\ms\no As was discussed in the last lecture, a  meromorphic Herglotz function is   a meromorphic function $m$ such that $$\Im m>0\quad{\rm  in}\; \C_+,\qquad m(\bar z)=\overline{m(z)}.$$ One can establish a 1-to-1 correspondence between  meromorphic inner and Herglotz  functions by means of the equations
\begin{equation} \label{iH} m=i\frac{1+\Theta}{1-\Theta},\qquad \Theta=\frac{m-i}{m+i}. \end{equation}
Meromorphic Herglotz functions (and therefore inner functions) can be
described  by parameters $ (b,c,\mu)$ in the Herglotz representation
\begin{equation}\label{Her}m(z)=bz+c+iS\mu,\end{equation}
where $b\ge0$, $c\in \R$, and $\mu$ is a positive discrete measure on $\R$ satisfying
$$\int\frac{d\mu(t)}{1+t^2}<\infty.$$
It is convenient to interpret the number $\pi b$ as a point mass of $\mu$ at infinity. In the case $m=m_\Theta$, see \eqref{iH}, we call this extended  measure $\mu_\Theta$  the {\it spectral} (or {\it Herglotz}) {\it measure} of $\Theta$.  By definition,  the (point) {\it spectrum} of $\Theta$ is the set$$\sigma(\Theta)=\supp~\mu_\Theta=~\{\Theta=1\}\;{\rm or}\;
\{\Theta=1\}\cup\{\infty\},$$
and by  residue calculus we have
\begin{equation}\label{res}\mu_\Theta(t)=\frac{2\pi}{|\Theta'(t)|},\qquad t\in\sigma(\Theta).\end{equation}The following {\it equivalent} conditions are necessary and sufficient for $\mu_\Theta(\infty)\ne0$, see e.g. \cite{Sa1}:
\begin{equation*} \textrm{(i)} \quad \Theta-1\in H^2; \quad\textrm{(ii)} \quad\Theta(\infty)=1,\;\exists \Theta'(\infty);\quad\textrm{(iii)}\quad\sum\Im\lambda<\infty.\end{equation*}
In (ii),  $\Theta(\infty)$ and $\Theta'(\infty)$  mean the angular  limit and angular  derivative at infinity:
$$\Theta(\infty)=\lim_{y\to+\infty}\Theta(iy),\qquad
\Theta'(\infty)=\lim_{y\to+\infty}y^2\Theta'(iy),$$
and in (iii) we also require that the singular factor is trivial.

\ms\no   Note that  Weyl inner functions of Schr\"odinger operators have no point masses at infinity, so
if $\Theta=\Theta_{b,\beta}^a$, then
$$\sigma(\Theta)=\sigma(q, D, \beta),\qquad \sigma(-\Theta)=\sigma(q, N, \beta).$$
Here $\sigma(q, D, \beta)$ means  the spectrum of the Schr\"odinger operator with potential $q$,  Dirichlet boundary condition at $a$, and  boundary condition  $\beta$ at $b$. More generally, for $\alpha\in\R$ let $\alpha$ denote the following  selfadjoint boundary condition at a regular endpoint $a$:
\begin{equation}\label{bc}\cos\frac\alpha2 u(a)+\sin\frac\alpha2 \dot u(a)=0.\end{equation}
Then
$$\sigma(e^{-i\alpha}\Theta)=\sigma(q, \alpha, \beta).$$
The spectral measure of the Schr\"odinger operator $(q,\alpha,\beta)$ is  the Herglotz measure of the inner function $e^{-i\alpha}\Theta$
(this can be viewed as a definition of the spectral measure of a Schr\"odinger operator).

\bs\no (Note: for those readers familiar with Clark theory, see for instance \cite{cimarossL4}, the Herglotz measure of $e^{-i\alpha}\Theta$
is the Clark measure $\sigma_\alpha$ corresponding to $\Theta$.)

\lsection{Exercises}

\ms\no 1) Let $H$ be a Hilbert space of analytic functions in a complex domain $\Omega$ such that the linear functional of point evaluation
$f\mapsto f(\l)$ is bounded with respect to the norm of $H$ for any $\l\in\Omega$. Prove that $H$ has a full system of reproducing kernels
$k_\l,$ i.e. that $k_\l$ exists for any $\l\in\Omega$.

\ms\no 2) If $H$ is as above, show that reproducing kernels are complete in $H$.

\ms\no 3) Using the formula for the reproducing kernel of a $K_\Theta$ space and the connection between $K_\Theta$ and $PW_a$ discussed in the previous lecture, find the formula for a reproducing kernel for $PW_a$. Find the same formula directly by calculating the Fourier transform of a restriction
of $e^{i\l z}$ on $(-a,a)$.

\ms\no 4) Calculate the Weyl inner functions for the free Laplacian ($q=0$), given in the example above, by hand.

\ms\no 5) Let $\Theta$ be a fixed inner function. Prove the following simple properties of Herglotz measures $\sigma_\alpha$ for $e^{-i\alpha}\Theta$, $\alpha\in\R$ (a.k.a. Clark measures).

\ms\no
a) All $\sigma_\alpha$ are singular (with respect to the Lebesgue measure on the line).

\ms\no
b) All $\sigma_\alpha$ are mutually singular, i.e. $\sigma_\alpha\perp\sigma_\beta$ for $\alpha\neq\beta$.

\ms\no
c)  $\sigma_\alpha$ is supported on a set where the non-tangential limits of $\Theta$ are equal to $\alpha$
(note that the set is not generally closed, so it is not the closed support of $\sigma_\alpha$).

\ms\no Think about the corresponding statements for spectral measures for differential operators, that follow from a)-c) and the above discussion. For these and further properties of $\sigma_\alpha$ see for instance \cite{cimarossL4}.

\newpage

\lecture{5}{Mixed spectral problems and defining sets}

\ms\no We continue to discuss applications of complex function theory to spectral problems for differential operators. In this lecture
we will be able to reach recent results and enter an area of current research.

\lsection{Abstract Hochstadt-Liberman problem}

\ms\no We will be considering the following problem concerning general meromorphic inner functions. In the next section we will explain  its relation to  Hochstadt-Liberman's theorem on the spectra of Schr\"odinger operators \cite{HL}.

\bs\no Let $\Phi$ and $\Psi$ be   meromorphic inner  functions and  $\Theta=\Psi\Phi$. As usual, $\sigma(\Theta)$ denotes the (point) spectrum of $\Theta$, $\{\Theta=1\}$, see the last lecture. Recall that $\sigma(\Theta)$   may include  $\infty$.
We say that {\it the data}
$[\Psi, \sigma(\Theta)]$ {\it determine} $\Theta$ 
if any inner function divisible by $\Psi$ whose   spectrum is  $\sigma(\Theta)$ is equal to $\Theta$. I.e. $[\Psi, \sigma(\Theta)]$ determine $\Theta$
if for any inner function $\ti\Phi$, $$\ti\Theta=\Psi\ti\Phi,\quad  \sigma(\ti\Theta)=\sigma(\Theta)\qquad\Rightarrow\qquad \Theta=\ti\Theta.$$
Alternatively, we can say that $\Psi$ and $\sigma(\Phi\Psi)$ determine $\Phi$. Given $\Phi$ and $\Psi$, the problem is to decide if this is the case.

\ms\no
The set of Herglotz measures of inner functions $\ti\Theta$ satisfying $\Psi|\ti\Theta$ ($\Psi$ divides $\Theta$) and $\sigma(\ti\Theta)=\sigma(\Theta)$ is convex, see Section 1.2 in \cite{MIFL5}. We will refer to the dimension of this set as the dimension of the set of solutions.

\bs\no  {\bf Example}.
Suppose $\Theta$ is a {\it finite Blaschke product}. Then $$[\Psi, \sigma(\Theta)]\quad{\rm   determine}\quad \Theta\qquad\Lra\qquad
2\deg\Psi>\deg\Theta.$$  The proof is elementary; it also follows  from the results  below.
As an illustration consider the simplest case $\Theta=b^2$, $\Psi=b$,
where
$$b(z)=\frac{z-i}{z+i}.$$ Then $\sigma(\Theta)=\{0,\infty\}$, and the data $[\Psi, \sigma(\Theta)]$ does not determine $\Theta$. In fact, the set of solutions is one-dimensional; the solutions are given by the formula
$$\ti\Phi(z)=\frac{z-ia}{z+ia},\qquad(a>0).$$

\ms\no
Finding necessary and sufficient conditions for $[\Psi, \sigma(\Theta)]$ to determine $\Theta$ is an important problem of complex function theory.
As we will see shortly, it appears in spectral theory for differential operators, as well as in other areas of analysis. In  \cite{MIFL5} several such conditions are formulated in terms of the Toeplitz kernels with symbol $U=\bar\Phi\Psi$. The theory of Toeplitz operators is a broad and important
part of complex analysis that, due to lack of space, will not be covered in this introductory course. We refer an interested reader to
\cite{MIFL5, MIF2L5} for more information and further references.

\ms\no The rough meaning of the conditions formulated in \cite{MIFL5} is the following: for the data $[\Psi, \sigma(\Theta)]$ to  determine $\Theta$, the known factor $\Psi$ of the inner function has to be "bigger" than  the unknown factor $\Phi$. Let us formulate one of the
results of the 'Toeplitz approach' from \cite{MIFL5}. This relatively simple example extends the original Hochstadt-Liberman theorem, which we will state in the next subsection.

\begin{cor} Suppose  $\Theta=\Psi^2$.  Then the set of solutions is exactly one-dimensional:
	$\ti\Theta$ satisfies $\Psi|\ti \Theta$,
	$\sigma(\ti \Theta)=\sigma(\Theta)$ iff
	\begin{equation}\label{352}\exists r\in (-1,1),\qquad \ti \Theta=\Psi\frac{r+\Psi}{1+r\Psi}.\end{equation}\end{cor}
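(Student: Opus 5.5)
We prove the two directions separately. Sufficiency is a direct computation. For necessity we pass to the Herglotz side of the correspondence \eqref{iH} and show that the unknown factor $\ti\Phi$ is forced into a one-parameter family.

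\textbf{Sufficiency.} For $r\in(-1,1)$ the map $w\mapsto (r+w)/(1+rw)$ is a disc automorphism preserving the unit circle. Hence $\ti\Phi_r=(r+\Psi)/(1+r\Psi)$ is a meromorphic inner function, and $\ti\Theta=\Psi\ti\Phi_r$ is divisible by $\Psi$. On $\R$, with $w=\Psi(x)$, the equation $\ti\Theta=1$ reads $w(r+w)=1+rw$, i.e. $w^2=1$. So $\{\ti\Theta=1\}=\{\Psi^2=1\}=\sigma(\Theta)$ on $\R$. The point $\infty$ is handled by the same identity using angular limits, or via criterion (ii) of the previous lecture.

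\textbf{Necessity.} Suppose $\ti\Theta=\Psi\ti\Phi$ has $\sigma(\ti\Theta)=\sigma(\Theta)$. The plan is to work with the meromorphic functions $G=(1+\ti\Theta)/(1-\ti\Theta)$ and $F=(1+\Theta)/(1-\Theta)$, which are $-i$ times Herglotz functions. Both have poles exactly at the common spectrum and are real-analytic (imaginary-valued) on $\R$ away from it.

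Write $\Theta=\Psi^2$ and consider the function $(\ti\Phi-\Psi)/(1-\Psi\ti\Phi)$. This is the Möbius expression of $\ti\Phi$ in the "coordinate" centred at $\Psi$. On $\R$ its numerator is unimodular-difference and its denominator is $1-\ti\Theta$, which vanishes only on $\sigma(\Theta)$. At those points $\Psi^2=1$ and $\Psi\ti\Phi=1$, so $\ti\Phi=\bar\Psi=\Psi$ and the numerator vanishes too.

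The key step is to show that
$$h=\frac{\ti\Phi-\Psi}{1-\Psi\ti\Phi}$$
is a real constant $r$ with $|r|<1$. Solving for $\ti\Phi$ then gives exactly \eqref{352}. To see this:
\begin{itemize}
\item On $\R$, a direct computation with $|\Psi|=|\ti\Phi|=1$ shows $\bar h = h\cdot(\text{unimodular factor})$. More precisely, $h\bar\Psi^{-1}\dots$ should reduce to $h$ being real on $\R$, which one checks by conjugating and multiplying numerator and denominator by $\bar\Psi\bar{\ti\Phi}$.
\item In $\C_+$, $h$ is analytic wherever $\Psi\ti\Phi\neq1$, which holds everywhere in $\C_+$ since $|\Psi\ti\Phi|<1$ there. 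Moreover $|h|\le1$, by the Schwarz–Pick form of $\ti\Phi$ against the disc automorphism defined by $\Psi(z)\in\mathbb D$.
\item On $\R$, $h$ is meromorphic with possible poles only at $\sigma(\Theta)$, where the numerator also vanishes.
\end{itemize}

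So $h$ is a bounded analytic function in $\C_+$ with real boundary values, i.e. a bounded harmonic conjugate pair with $\Im h=0$ on $\R$. Then $\Im h$ is a bounded harmonic function with zero boundary values, hence $\Im h\equiv0$, and $h$ is a real constant. Equality $|h|=1$ would force $\ti\Phi$ to be unimodular-constant times a degenerate expression, which is excluded. This gives $r\in(-1,1)$ and proves that the solution set is exactly the one-parameter family \eqref{352}.

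\textbf{Main obstacle.} The delicate step is the removability of $h$ at the spectral points and at $\infty$. We must check that zeros of $1-\Psi\ti\Phi$ on $\R$ are simple and are cancelled by zeros of $\ti\Phi-\Psi$. This requires comparing derivatives: $\Psi^2-1$ and $\Psi\ti\Phi-1$ both have simple zeros there, since meromorphic inner functions have nonvanishing derivative $\arg$-increasing on $\R$. The case where $\infty$ belongs to the spectrum must be treated via angular derivatives, using the criteria (i)–(iii) of the previous lecture.
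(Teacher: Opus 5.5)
Your sufficiency part is correct: in fact $\ti\Theta-1=(\Theta-1)/(1+r\Psi)$ holds identically, and $1+r\Psi$ is bounded above and below in $\C_+$, so criterion (i) also handles the point $\infty$. In the necessity part, the reality of $h$ on $\R$, the reflection $h(\bar z)=\overline{h(z)}$ and the removability at $\sigma(\Theta)$ are all fine, so $h$ is a real entire function.

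The argument breaks at the step that carries all the weight, the bound $|h|\le 1$ in $\C_+$. The map $w\mapsto (w-a)/(1-aw)$ is a disc automorphism only for real $a$. The automorphism attached to $a=\Psi(z)$ is $(w-a)/(1-\bar a w)$, which is not analytic in $z$. Pointwise your inequality is simply false: $a=0.9i$, $w=-0.9i$ give $|w-a|/|1-aw|=1.8/0.19>9$. Without an a priori bound, ``analytic with real boundary values'' proves nothing: $h(z)=z$ is analytic and real on $\R$. Note also that the removability you flag as the main obstacle is the easy part.

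A priori you only know $h=(\ti\Phi-\Psi)(1-\ti\Theta)^{-1}$ with $|\ti\Phi-\Psi|\le 2$ and $\Re(1-\ti\Theta)^{-1}>\frac12$. Hence $|h(z)|\lesssim (1+|z|^2)/|\Im z|$ off $\R$. Together with entireness this only gives $h(z)=\alpha z+\beta$ with $\alpha,\beta\in\R$: integrate $\log^+|h|$ over circles, where $\log|\sin\phi|$ is integrable, to see $h$ is a polynomial, and read off degree $\le 1$ from the growth along $i\R$. You still need a separate argument that $\alpha=0$, and this is where $\sigma(\ti\Theta)=\sigma(\Theta)$ must be used beyond removability.

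One way to do it: put $m_{\pm\Psi}=i(1\pm\Psi)/(1\mp\Psi)$. One checks $m_{\Psi^2}=\frac12(m_\Psi+m_{-\Psi})$ and $\ti m-m_{\Psi^2}=\frac h2(m_\Psi-m_{-\Psi})$, so
\[
\ti m=\frac{1+h}2\,m_\Psi+\frac{1-h}2\,m_{-\Psi}.
\]
Comparing point masses gives $1+h>0$ on $\{\Psi=1\}$ and $1-h>0$ on $\{\Psi=-1\}$. This excludes $\alpha\neq0$ when these interlacing sets are infinite. When $\Psi$ is a finite Blaschke product, $\alpha\ne0$ would make $\ti m(iy)$ grow like $y^2$ if $\Psi(\infty)=\pm1$. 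Otherwise it would create a point mass of $\ti\mu$ at $\infty\notin\sigma(\Theta)$. The same identity then gives $|r|<1$ directly.

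For comparison, the notes quote this corollary from the Toeplitz-kernel criteria of \cite{MIFL5} for the symbol $\bar\Phi\Psi$, which here is identically $1$. Your $h$ is exactly the element of $N^+[\bar\Phi\Psi]$ attached to a solution: on $\R$ one checks $\bar h=\Psi\bar\Phi h$. So your strategy is the natural one, but its whole content lies in pinning down the growth class of that kernel element, and the false Schwarz--Pick bound skips exactly that.
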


\lsection{Spectral theory interpretation: Hochstadt-Liberman and Khodakovski theorems}

\ms\no
Consider a Schr\"odinger operator  $L=(q,\alpha,\beta)$ on  $(a,b)$, where $q\in L^1_{{\rm loc}}(a,b)$ and $\alpha$, $\beta$ are selfadjoint boundary conditions at $a$ and $b$ respectively; the endpoints can be infinite and/or singular.
We assume that $L$ has compact resolvent. As usual, $\sigma(L)$ denotes the spectrum of $L$.

\ms\no Suppose  $a<c<b$. We will write $q_-$ for the restriction of $q$ to $(a,c)$ and $q_+$ for the restriction of $q$ to $(c,b)$.  We say that the  data
$(q_-, \alpha,\sigma(L))$ {\it determines} $L$ if for any other Schr\"odinger operator
$\ti L=(\ti q,\ti \alpha,\ti \beta)$,
$$ q_-= q,\;\alpha=\ti\alpha,\;\sigma(\ti L)=\sigma(L)\qquad \Ra\qquad \ti q_+= q_+,\; \ti\beta=\beta.$$

\bs\no Let   $\Theta_-$ denote the Weyl inner function of $(q_-,\alpha)$ computed at $c$ and $\Theta_+$  the Weyl inner function of $(q_+,\beta)$ computed at $c$.

\ms\no \begin{lem} $\sigma(L)=\sigma(\Theta_-\Theta_+)$.\end{lem}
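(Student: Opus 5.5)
We need $\sigma(L)=\{\Theta_-\Theta_+=1\}$. The plan is to express both inner functions through the $m$-functions at the interior point $c$ and compare with the eigenvalue condition, which is a matching condition for the Cauchy data at $c$.

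By definition, $\Theta_-=(m_--i)/(m_-+i)$ with $m_-=m^c_{a,\alpha}=-\dot u^-_\lambda(c)/u^-_\lambda(c)$, where $u^-_\lambda$ is a nontrivial solution on $(a,c)$ satisfying $\alpha$. Likewise $\Theta_+=(m_+-i)/(m_++i)$ with $m_+=m^c_{b,\beta}=\dot u^+_\lambda(c)/u^+_\lambda(c)$, where $u^+_\lambda$ is a solution on $(c,b)$ satisfying $\beta$. The key step is the following characterization of eigenvalues. A real $\lambda$ is an eigenvalue of $L$ exactly when there is a nontrivial solution on $(a,b)$ satisfying both $\alpha$ and $\beta$. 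Since solutions are determined by Cauchy data at the regular point $c$, this happens iff $u^-_\lambda$ and $u^+_\lambda$ are proportional, i.e. their Wronskian at $c$ vanishes:
$$u^-_\lambda(c)\dot u^+_\lambda(c)-\dot u^-_\lambda(c)u^+_\lambda(c)=0.$$
Where the $u$'s do not vanish at $c$, this is equivalent to $m_+(\lambda)=-m_-(\lambda)$.

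Next we need an algebraic identity. If $\Theta_\pm=(m_\pm-i)/(m_\pm+i)$, then $\Theta_-\Theta_+=1$ iff
$$(m_--i)(m_+-i)=(m_-+i)(m_++i),$$
which simplifies to $-i(m_-+m_+)=i(m_-+m_+)$, i.e. $m_-+m_+=0$. This matches the eigenvalue condition. Both sides should be read on $\R$, where $m_\pm$ are real meromorphic and $\Theta_\pm$ are unimodular and analytic across $\R$; the spectrum of a meromorphic inner function means real points with $\Theta=1$.

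The main obstacle is the degenerate points, where $m_-$ or $m_+$ has a pole ($u^\pm_\lambda(c)=0$), and possibly $\infty$. At a pole of $m_+$ we have $\Theta_+=1$, so $\Theta_-\Theta_+=1$ iff $\Theta_-=1$, i.e. $m_-=\infty$ too. Both solutions then vanish at $c$, so they are proportional and the Wronskian vanishes. Conversely, if the Wronskian vanishes with $u^+_\lambda(c)=0$, proportionality forces $u^-_\lambda(c)=0$. To handle all cases uniformly, write
$$\Theta_\pm=\frac{\pm\dot u^\pm(c)-iu^\pm(c)}{\pm\dot u^\pm(c)+iu^\pm(c)},$$
with numerator and denominator not both zero. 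The identity $\Theta_-\Theta_+=1$ then becomes, after cross-multiplying, $2i$ times the Wronskian equal to $0$, and no case split is needed.

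Finally, $\infty$ lies in neither set. Weyl inner functions of Schr\"odinger operators have no point mass at infinity, and the same holds for the product, since it is again a Weyl function of a Schr\"odinger operator with interior point $c$. The alternative is to note that the spectral measure of the product has no mass at infinity via condition (iii). This gives $\sigma(L)=\sigma(\Theta_-\Theta_+)$.
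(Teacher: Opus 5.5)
Your treatment of the real points follows the paper's argument. The paper also reduces $\Theta_-\Theta_+=1$ to ``$m_-+m_+=0$ or $m_-=m_+=\infty$'' and identifies this with the matching of the Cauchy data at $c$ of the two one-sided solutions. Writing $\Theta_\pm$ directly in terms of $(u^\pm(c),\dot u^\pm(c))$, so that $\Theta_-\Theta_+=1$ becomes the vanishing of the Wronskian, is a clean way to absorb the paper's second case. The cross-multiplication is legitimate because for real $\lambda$ the solutions can be taken real and nontrivial, so the denominators $\pm\dot u^\pm(c)+iu^\pm(c)$ do not vanish.

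The step that is wrong is your main reason for $\infty\notin\sigma(\Theta_-\Theta_+)$, a point the paper's proof does not discuss. The product is not a Weyl inner function of a Schr\"odinger operator, and ``Weyl function at an interior point'' is not a notion the notes define. Its Herglotz function is
$$i\,\frac{1+\Theta_-\Theta_+}{1-\Theta_-\Theta_+}=\frac{m_-m_+-1}{m_-+m_+}.$$
Since $m_\pm(iy)=i\sqrt{iy}\,(1+o(1))$ at the regular point $c$ (branch with positive imaginary part), this equals $\frac12\, i\sqrt{iy}\,(1+o(1))$. That is only half of the leading term every Schr\"odinger $m$-function at a regular endpoint must have.

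The same computation gives what you actually need: the expression is $o(y)$, so $\mu_{\Theta_-\Theta_+}$ has no mass at infinity. Alternatively, you can spell out your remark about condition (iii):
\begin{itemize}
\item $m_\pm(iy)\to\infty$ gives $\Theta_\pm(\infty)=1$.
\item Together with $\mu_{\Theta_\pm}(\infty)=0$, this forces (iii) to fail for each factor. Otherwise the factor would have an angular derivative at infinity, and (ii) would hold.
\item So each factor has a nontrivial factor $e^{iaz}$, or satisfies $\sum\Im\lambda=\infty$ over its zeros.
\item Both properties are inherited by the product. Hence (iii), and with it the point mass at infinity, fails for $\Theta_-\Theta_+$.
\end{itemize}
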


\ms\begin{proof} The equation $\Theta_-(\lambda)\Theta_+(\lambda)=1$ is equivalent to the statement $$m_+(\lambda)+m_-(\lambda)=0\quad{\rm or}\quad m_-(\lambda)=m_+(\lambda)=\infty$$
	for the corresponding  $m$-functions. The latter means that we have the matching
	$$\frac{\dot u_-(c,\lambda)}{u_-(c,\lambda)}=\frac{\dot u_+(c,\lambda)}{u_+(c,\lambda)}$$
	for any two non-trivial solutions $u_-(\cdot,\lambda)$ and  $u_+(\cdot,\lambda)$ of the Schr\"odinger equation with boundary conditions $\alpha$ and $\beta$ respectively, which is possible if and only if $\lambda$ is an eigenvalue of $L$.
\end{proof}

\ms\no \begin{cor} $(q_-,\alpha,\sigma(L))$ determine $L$ if the data $(\Theta_-, \sigma(\Theta_-\Theta_+))$ determine $\Theta_+$.
\end{cor}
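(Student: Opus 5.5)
The plan is to reduce the statement to the preceding Lemma together with the uniqueness of a Schrödinger operator given its Weyl $m$-function. Take another operator $\ti L=(\ti q,\ti\alpha,\ti\beta)$ on $(a,b)$ with $\ti q_-=q_-$, $\ti\alpha=\alpha$ and $\sigma(\ti L)=\sigma(L)$. Let $\ti\Theta_-,\ti\Theta_+$ be its Weyl inner functions at $c$, built exactly as $\Theta_\pm$ are for $L$.

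First I will observe that $\ti\Theta_-=\Theta_-$. This holds because the Weyl inner function of $(q_-,\alpha)$ evaluated at $c$ depends only on the potential on $(a,c)$ and on the boundary condition at $a$. Indeed, $m^c_{a,\alpha}=-\dot u_\lambda(c)/u_\lambda(c)$, where $u_\lambda$ solves the equation on $(a,c)$ with condition $\alpha$, and this is unchanged.

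Next I will apply the Lemma to both operators:
$$\sigma(\Theta_-\ti\Theta_+)=\sigma(\ti\Theta_-\ti\Theta_+)=\sigma(\ti L)=\sigma(L)=\sigma(\Theta_-\Theta_+).$$
So $\ti\Theta=\Theta_-\ti\Theta_+$ is an inner function divisible by $\Psi=\Theta_-$ whose spectrum is $\sigma(\Theta_-\Theta_+)$. By hypothesis the data $(\Theta_-,\sigma(\Theta_-\Theta_+))$ determine $\Theta_+$, so $\Theta_-\ti\Theta_+=\Theta_-\Theta_+$. Dividing by the inner function $\Theta_-$, which is not identically zero, gives $\ti\Theta_+=\Theta_+$.

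Finally I will translate this back to the operator. Since $\Theta=(m-i)/(m+i)$ is a bijection between meromorphic inner and Herglotz functions, $\ti\Theta_+=\Theta_+$ gives equality of the Weyl $m$-functions of $(\ti q_+,\ti\beta)$ and $(q_+,\beta)$ at the regular endpoint $c$. I then invoke the classical Borg–Marchenko uniqueness theorem: the $m$-function at a regular endpoint determines the potential and the boundary condition at the other end. This yields $\ti q_+=q_+$ and $\ti\beta=\beta$.

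I expect this last step to be the main obstacle, since it is the only ingredient not established earlier in the paper. One point needs care. When $b$ is singular, "$\ti\beta=\beta$" should be read as equality of the self-adjoint realizations, and this also follows from the $m$-function: two realizations with the same Weyl function at $c$ must have the same $L^2$-at-$b$ solution class, up to the equivalence of boundary conditions. I will cite Marchenko's theorem (or Gesztesy–Simon's local version) for this step rather than reprove it. The complex-analytic part of the argument is just the two-line spectral computation above.
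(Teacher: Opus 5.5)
Your proposal is correct and follows the paper's own (very brief) argument. The Lemma applied to both $L$ and $\tilde L$, together with $\tilde\Theta_-=\Theta_-$, turns $\sigma(\tilde L)=\sigma(L)$ into $\sigma(\Theta_-\tilde\Theta_+)=\sigma(\Theta_-\Theta_+)$; the hypothesis then gives $\tilde\Theta_+=\Theta_+$, and the Borg--Marchenko uniqueness theorem recovers $q_+$ and $\beta$ from the Weyl function at $c$.
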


\ms\no Here we rely on the fundamental uniqueness theorem of Borg and  Marchenko \cite{Borg}, \cite{M1}: the $m$-function (and therefore the Weyl inner function) determines both  the potential and the boundary condition.

\bs\no {\bf Remark.} We would have an "iff" statement if we considered the problem in some class of canonical systems with a one-to-one correspondence between the systems and inner functions such as the class of  Krein's "strings", see \cite{dBL5}, \cite{DML5}. The effective characterization of inner functions of Schr\"odinger operators is an open problem, so we will use our  general results   to state only sufficient conditions for  Schr\"odinger operators. To obtain necessary condition one has to use more specific techniques of the Schr\"odinger operator theory, see \cite{Borg}, \cite{HoL5}.

\bs\no Let us apply the above corollary to the situation described at the end of the last  subsection.

\bs\no {\bf Example 1.} {\it Let $L$ be a   Schr\"odinger operator on $\R$ with  compact resolvent and limit point  boundary conditions   at $\pm\infty$. Suppose the potential $q(x)$ is an even function:$$ q(-x)=q(x), \qquad (x>0).$$ Then   $q|_{\R_-}$ and  $\sigma(L)$ determine $L$.}

\ms\begin{proof} By Everitt's theorem \cite{Ev} (see \cite{MIFL5} for a more detailed discussion of that result), all the inner functions $(r+\Psi)/(1+r\Psi)$ in \eqref{352} with $r\ne0$ are not Weyl inner functions corresponding to a    Schr\"odinger operator.
\end{proof}

\ms\no This result is a special case of Khodakovski's theorem \cite {Kh}, where    only the equality $q(-x)\le q(x)$  for  $x>0$ is assumed. The full version of Khodakovski's  theorem requires a slightly different approach which we  describe in the next subsection.   Similarly, we derive the following statement (it follows from our previous discussion and from the remark at the end of Section 2.5 in \cite{MIFL5}).

\begin{prop} Let $L$ be as above, and let $\ti L$ be another Schr\"odinger operator on $(-\infty, b)$, $b\ge0$. If $$q=\ti q\quad{\rm  on}\quad  \R_-\qquad {\rm and}\qquad \sigma(\ti L)\subset \sigma(L),$$ then either $\ti L=L$ or $b=0$ and $\ti L$ is  the operator with potential $q_-$ and Dirichlet or Neumann condition at 0.
\end{prop}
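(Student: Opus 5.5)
We split at $c=0$ and translate the statement into the language of inner functions, following the preceding Lemma and Corollary. Let $\Psi=\Theta_-$ be the Weyl inner function of $(q_-,\text{limit point at }-\infty)$ evaluated at $0$. Since $q$ is even, the Weyl inner function $\Theta_+$ of $q_+$ at $0$ coincides with $\Psi$. Hence $\sigma(L)=\sigma(\Psi^2)$.

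For $\ti L$ on $(-\infty,b)$ with $\ti q=q$ on $\R_-$ there are two cases.
\begin{itemize}
\item If $b>0$, let $\ti\Phi$ be the Weyl inner function of $(\ti q|_{(0,b)},\ti\beta)$ at $0$. By the Lemma, $\sigma(\ti L)=\sigma(\Psi\ti\Phi)$.
\item If $b=0$, the boundary condition at $0$ is of the form \eqref{bc}. Then $\sigma(\ti L)=\sigma(e^{-i\alpha}\Psi)$, so formally $\ti\Phi=e^{-i\alpha}$ is a unimodular constant.
\end{itemize}
Either way, the hypothesis $\sigma(\ti L)\subset\sigma(L)$ becomes
\[
\sigma(\Psi\ti\Phi)\subset\sigma(\Psi^2),
\]
with $\ti\Phi$ a meromorphic inner function or a constant.

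The key step is an inclusion version of the Corollary on $\Theta=\Psi^2$. We would show that $\sigma(\Psi\ti\Phi)\subset\sigma(\Psi^2)$ forces one of two things:
\begin{itemize}
\item $\ti\Phi=(r+\Psi)/(1+r\Psi)$ for some $r\in(-1,1)$, which is the case of equal spectra given by \eqref{352};
\item $\ti\Phi$ is the constant $\pm1$.
\end{itemize}

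To prove this we would use the Toeplitz-kernel approach from \cite{MIFL5}. The strategy is to compare the Herglotz measures of $\Psi\ti\Phi$ and $\Psi^2$. Both measures are discrete. The first is supported on a subset of the support of the second, so it can be written as a reweighting of $\mu_{\Psi^2}$. This reweighting produces a nontrivial element of the kernel of a Toeplitz operator with symbol $\bar{\ti\Phi}\Psi$. Because the "known" factor $\Psi$ has the same size as $\Psi^2/\Psi$, this kernel is at most one-dimensional beyond the trivial solutions. Counting dimensions then leaves exactly the family \eqref{352} (the case $\sigma(\ti\Theta)=\sigma(\Theta)$) plus the degenerate constants. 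This is the main obstacle, and for it I would rely on the remark at the end of Section 2.5 of \cite{MIFL5}, as the text indicates. Concretely, I would check that a proper subset of $\sigma(\Psi^2)$ can be the spectrum only when $\ti\Phi$ has "degree zero," i.e.\ is constant. The constant must then be $\pm1$: a unimodular $e^{-i\alpha}$ gives $\sigma(e^{-i\alpha}\Psi)\subset\sigma(\Psi^2)=\{\Psi=1\}\cup\{\Psi=-1\}$ only when $e^{-i\alpha}=\pm1$, since the level sets $\{\Psi=\zeta\}$ for different $\zeta$ are disjoint.

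Finally we return to operators.
\begin{itemize}
\item In the nonconstant case, Everitt's theorem \cite{Ev} (as in Example 1) rules out $r\neq0$, because those functions are not Weyl inner functions of Schr\"odinger operators. So $r=0$ and $\ti\Phi=\Psi$. Borg--Marchenko \cite{Borg,M1} then gives $\ti q_+=q_+$ and $\ti\beta$ equal to the limit point condition, so $b=\infty$ and $\ti L=L$.
\item In the constant case $\ti\Phi=\pm1$, we have $b=0$, and the boundary condition at $0$ is Dirichlet or Neumann according to the sign, since $\sigma(\Theta)$ corresponds to $D$ and $\sigma(-\Theta)$ to $N$.
\end{itemize}
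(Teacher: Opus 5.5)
This is essentially the paper's argument. The Lemma turns the hypothesis into $\sigma(\Psi\ti\Phi)\subset\sigma(\Psi^2)$, and the inclusion version of the Corollary for $\Theta=\Psi^2$ (the remark at the end of Section 2.5 of \cite{MIFL5}) leaves only the family \eqref{352} or $\ti\Phi=\pm1$. Everitt's theorem and Borg--Marchenko then give $\ti L=L$, or $b=0$ with a Dirichlet or Neumann condition.

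Your Toeplitz-kernel ``dimension count'' and ``degree zero'' remarks are heuristics, not a proof; degree has no meaning for infinite Blaschke products. So, exactly as in the paper, the inclusion step rests on the cited remark rather than on your sketch.
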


\bs\no {\bf Example 2.} {\it Let $L$ be a regular selfadjoint Schr\"odinger operator on $[a,b]$ with non-Dirichlet boundary conditions $\alpha$ and $\beta$  at $a$ and $b$ respectively. If $c=(a+b)/2$, then $(q_-,\alpha, \sigma(L))$ determine $L$.}

\ms\no The statement is also true if one or both boundary conditions are Dirichlet, see next subsection.
This is a stronger version of the Hochstadt-Liberman theorem \cite{HL}, see also \cite {GS1} which states that if {\it both} $L$ and $\ti L$ are regular, and $\ti q_-=q_-$, $\ti\alpha=\alpha$, $\sigma(\ti L)=\sigma(L)$, then $\ti L=L$.  We {\it do not} require $\ti L$ to be regular
(recall that regular $=$ summable potential). Also, we can replace $\sigma(\ti L)=\sigma(L)$ with  $\sigma(\ti L)\subset\sigma(L).$

\lsection{Example: Bessel inner functions}

\ms\no This is an extension of the previous example. We want to  show that the  Hochstadt-Liberman phenomenon occurs not only for regular potentials.

\ms\no We consider the Bessel inner functions $\Theta_\nu$, $\nu \ge-1/2$, see the last lecture.  

\ms\no Applying our methods we get the following result, see \cite{MIFL5}.

\ms
\begin{thm} Let $L$ be the Schr\"odinger operator with potential $q(t)=2t^{-2}$ on $[0,2]$ and with Dirichlet boundary condition at $t=2$.  Then $q|_{(0,1)}$ and the spectrum $\sigma(L)$ determine $L$ in the class of Schr\"odinger operators.
\end{thm}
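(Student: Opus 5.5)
We will reduce the statement to the abstract Hochstadt--Liberman problem, exactly as in Example 2 and the Corollary following the Lemma $\sigma(L)=\sigma(\Theta_-\Theta_+)$. Take $c=1$. Let $\Theta_-$ be the Weyl inner function of $(q_-,\alpha)$ on $(0,1)$, computed at $1$. Here $q_-=2t^{-2}$ corresponds to $\nu=3/2$ in the Bessel example (since $\nu^2-\frac14=2$). This is the limit-circle-free case $\nu\ge1$, so no boundary condition is needed at $0$, and $\Theta_-=\Theta_{3/2}$ is given explicitly by \eqref{Bes}. Let $\Theta_+$ be the Weyl inner function of $q_+=2t^{-2}$ on $(1,2)$ with Dirichlet condition at $2$, computed at $1$. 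By the Corollary and the Borg--Marchenko theorem, it suffices to show that the data $[\Theta_-,\sigma(\Theta_-\Theta_+)]$ determine $\Theta_+$. In other words, we must show that any meromorphic inner $\ti\Phi$ with $\sigma(\Theta_-\ti\Phi)=\sigma(\Theta_-\Theta_+)$ either equals $\Theta_+$ or is not the Weyl inner function of a Schr\"odinger operator on $(1,b)$.

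The guiding principle is that the known factor must be ``at least as big'' as the unknown one. We compare sizes via the asymptotics of the arguments on $\R$, equivalently via the spectral counting functions. For $\Theta_{3/2}$, the large-$\lambda$ asymptotics of $J_\nu$ give $\arg\Theta_-(x)\sim 2\sqrt{x}$, i.e. the same growth as the regular interval of length $1$ ($\Theta_D$ in \eqref{DN}), up to a bounded phase shift coming from $\nu$. Since $q_+$ is bounded and smooth on $[1,2]$, $\Theta_+$ is the Weyl function of a regular problem on an interval of length $1$ with Dirichlet end. Its phase is therefore also $2\sqrt x+O(1)$, and more precisely it is comparable to $\Theta_D$.

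The plan is then to use the Toeplitz-kernel criterion from \cite{MIFL5}, with symbol $U=\bar\Theta_+\Theta_-$. The data determine $\Theta_+$ if a suitable Toeplitz kernel is trivial, which happens when $\Theta_-$ dominates $\Theta_+$. When the phases are only comparable, as here, the kernel is at most one-dimensional, and we land in the situation of the Corollary for $\Theta=\Psi^2$: the solution set is a one-parameter family of the form $\Theta_-\frac{r+\Theta_+}{1+r\Theta_+}$ or its analogue. The key step is a precise comparison of the phase shifts. From the Bessel asymptotic $J_\nu(z)\approx\sqrt{2/\pi z}\cos(z-\nu\pi/2-\pi/4)$, the $\nu=3/2$ phase is shifted by a definite amount relative to $\nu=1/2$. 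Against the Dirichlet condition at $t=2$, this shift should tip the balance so that the relevant Toeplitz kernel is either trivial or spanned by a function producing only the trivial solution $r=0$.

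The main obstacle is this second-order comparison: the leading $2\sqrt x$ terms cancel, so everything hinges on the $O(1)$ phase constants and on $O(x^{-1/2})$ corrections. These decide the dimension of the kernel $N[\bar\Theta_+\Theta_-]$ (compare the finite Blaschke example, where a single degree decides the outcome). If that computation leaves a one-dimensional family, I would fall back on the Everitt-type argument used in Example 1. For $r\ne0$, the function $\frac{r+\Theta_+}{1+r\Theta_+}$ corresponds to a Herglotz function whose asymptotics ($m(\lambda)\approx i\sqrt{\lambda}$ along $i\R_+$ fails) cannot be those of a Schr\"odinger $m$-function. This excludes every spurious solution and yields uniqueness in the class of Schr\"odinger operators.
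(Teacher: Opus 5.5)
Your reduction to the abstract Hochstadt--Liberman problem is correct: the Corollary plus Borg--Marchenko, with $\nu=3/2$ and the limit point case at $0$. So is the target you formulate.

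\textbf{First gap: the phase shift goes against you.} You never fix the sign of the $O(1)$ phase shift. The zeros of $J_{3/2}$ lie near $(k+\frac12)\pi$, while the Dirichlet--Dirichlet eigenvalues of $q_+$ on $[1,2]$ have square roots near $k\pi$. Both arguments start at $0$ at $-\infty$, so $\arg\Theta_-$ lags $\arg\Theta_+$ by $\pi$ on average: the \emph{known} factor is the smaller one.

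Concretely, write $\Theta_\pm=E_\pm^\#/E_\pm$ with Hermite--Biehler functions $E_\pm=A_\pm-iC_\pm$, entire in $\lambda$. Then $\bar\Theta_+\Theta_-=\bar H/H$ on $\R$ with $H=E_-/E_+$. Using $J_{3/2}(x)=\sqrt{2/\pi x}\,(\sin x/x-\cos x)$ one checks that $|H|\asymp|\lambda|^{-1/2}$ in sectors and $|H|\lesssim 1$ on $\R$. Hence $0\neq H\in N^\infty[\bar\Theta_+\Theta_-]$.

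This element does produce genuine competitors. For real $r$, the function $G=-r\Theta_-H$ lies in $K^\infty_{\Theta_+}$, since $\bar\Theta_+G=-r\bar H$, and it is real on $\sigma(L)$. The construction in the Proposition of Lecture 6 then gives, for small real $r$,
\[
\tilde\Phi_r=\frac{\Theta_+-rH}{1-r\Theta_-H}=\frac{E_+^\#-rE_-}{E_+-rE_-^\#},\qquad \sigma(\Theta_-\tilde\Phi_r)=\sigma(L).
\]
The spectral identity is exact: $\Theta_-\tilde\Phi_r=1$ if and only if $\Theta_-\Theta_+=1$. So among meromorphic inner functions the data do \emph{not} determine $\Theta$. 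No finer comparison of phase constants can make the kernel trivial, and a nonzero kernel element cannot ``produce only $r=0$''. Your main line therefore cannot succeed; the theorem holds only because of the restriction to Schr\"odinger operators.

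\textbf{Second gap: the fallback targets the wrong family.} The M\"obius family $\frac{r+\Theta_+}{1+r\Theta_+}$ of the Corollary belongs to the case $\Theta=\Psi^2$. Here $\Theta_-\neq\Theta_+$, and $\Theta_-\frac{r+\Theta_+}{1+r\Theta_+}=1$ if and only if $\Theta_-\Theta_+-1=r(\Theta_+-\Theta_-)$. So for $r\neq0$ these functions do not even have spectrum $\sigma(L)$, and excluding them proves nothing.

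What must be excluded is the family $\tilde\Phi_r$ above. Its $m$-functions are $\tilde m_r=-(A_+-rA_-)/(C_++rC_-)$, not $\frac{1+r}{1-r}m_+$. You must also show there are no other competitors; here $N^\infty[\bar\Theta_+\Theta_-]=\C H$, which follows from a Phragm\'en--Lindel\"of argument.

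The missing computation is this. At $\lambda=-\kappa^2$ the Bessel and regular asymptotics give
\[
A_-/A_+=\kappa^{-1}(1+O(\kappa^{-1})),\qquad C_-/C_+=\kappa^{-1}(1+O(\kappa^{-1}))
\]
for a suitable normalization of $E_-$. Hence $\tilde m_r=m_++c\,r+o(1)$ with a constant $c\neq0$. This contradicts Everitt's asymptotics $m(\lambda)=i\sqrt\lambda+o(1)$ unless $r=0$.

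Equivalently: any two Schr\"odinger Weyl inner functions regular at $t=1$ differ by $o(|\lambda|^{-1})$ in sectors, whereas $\tilde\Phi_r-\Theta_+=-rH(1-\Theta_-\Theta_+)/(1-r\Theta_-H)\asymp r/\lambda$. It is this extra decay coming from Everitt's theorem, not the phase comparison, that excludes the one-dimensional family. That step is the heart of the proof and is absent from your proposal.
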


\lsection{Defining sets of inner functions}

\ms\no Let $\Phi$ be a meromorphic inner function, 
$\Phi=e^{i\phi}$ on $\R$ for a smooth real function $\phi$. Let $\Lambda\subset \R$. We say that $\Lambda$ is a {\it defining set} for $\Phi$ if
for any other meromorphic inner function $\ti\Phi$, $\ti\Phi=e^{i\ti\phi},$
$$\ti\phi=\phi\quad {\rm on}\quad \Lambda\qquad\Ra\qquad \Phi\equiv\ti\Phi.$$

\bs\no  In this definition we tacitly assume $\phi(\pm\infty)=\pm \infty$.
In the "one-sided" case, say if $\phi(-\infty)>-\infty$ and $\phi(+\infty)=+\infty$, one should modify the definition in an obvious way.
(An important and well-known property of arguments of meromorphic inner functions on $\R$ is that they are always monotonically growing. Why?)

\ms\no  One can extend this definition to  divisors. For instance, if all points in $\Lambda\subset \sigma(\Phi)$ are double, then the equality $\ti \Phi=\Phi$ on $\Lambda$ means that  the  spectral measures of the inner functions coincide on $\Lambda$.

\bs\no Let us mention several special cases.

\bs\no (a)~ {\it Two spectra problem.} 

\bs\no {\it Let $\Phi$ be a meromorphic inner function. Then a meromorphic inner function  $\ti \Phi$  satisfies
	$\{\ti\Phi=1\}= \{\Phi=1\}$ and $\{\ti\Phi=-1\}= \{\Phi=-1\}$    iff }
\begin{equation}\label{380}\ti \Phi=\frac{\Phi-c}{1-c\Phi},\qquad c\in(-1,1).\end{equation}

\ms\no This corresponds to the case $$\Lambda=\{\Phi=1\}\cup \{\Phi=-1\}.$$
The meaning of the  statement is that $\Lambda$ is defining for $\Phi$ with deficiency one
(in the case $\phi(\pm\infty)=\pm\infty$, to be accurate).
Various related statements are of course
well-known, see e.g. \cite{Borg}.

\bs\no The easiest way to see this is to use  Krein's shift construction: since
$$\Re\left[\frac 1{\pi i}\log\frac{\ti\Phi+1}{\ti\Phi-1}\right]=\chi_e\qquad{\rm on} \quad\R,$$
where $e=\{\Im\Phi>0\}$,  we have
$$\frac 1{\pi i}\log\frac{\ti\Phi+1}{\ti\Phi-1}=\SS\chi_e+\const.\qquad\Box$$

\bs\no This argument also shows that
{\it given any two intertwining discrete sets $\Lambda_{\pm}$ of real numbers
	there is a meromorphic inner function $\Phi$ such that} $$\{\Phi=\pm1\}=\Lambda_\pm$$
(see Exercises at the end).

\bs\no Let us also mention that the statement \eqref{380} can be derived from the twin inner function theorem, see Section 2.8 in \cite{MIFL5}.

\bs\no (b)~ {\it General mixed data spectral problem.}  The Hochstadt-Liberman problem for inner functions that we discussed above can be viewed as a special case of the defining sets problem. It is easy to see that if (assuming $\arg\Theta(\pm\infty)=\pm \infty$)
$\Theta=\Psi\Phi$ and $\Lambda=\sigma(\Theta)$, then
$$(\Psi,\sigma(\Theta))\quad{\rm determine}\quad \Theta \qquad\Lra\qquad  \Lambda\quad{\rm is ~defining~ for}\quad \Phi.$$

\ms\no This can be generalized in the following way. Let $\Theta=\Psi\Phi$ be a given   meromorphic inner function  and let $\{\lambda_n\}$ be the set of its eigenvalues numbered in the increasing order. Given $M\subset \Z$ we denote
$$\sigma_M(\Theta)=\{\lambda_n:~n\in M\}.$$
The question is whether the factor $\Psi$ and the partial spectrum $\sigma_M(\Theta)$ determine $\Theta$, i.e. whether
$$\ti\Theta=\Psi\ti\Phi,\quad \ti \lambda_n= \lambda_n ~(n\in M)\qquad \Ra \qquad
\ti\Theta\equiv\Theta.$$
Once again, this is equivalent (assuming $\phi(\pm\infty)=\pm\infty$) to saying that $\Lambda=\sigma_M(\Theta)$ is a  defining set for $\Phi$. The spectral theory meaning was explained in  \cite{MIFL5} and the partial spectral problem for Schr\"odinger operators and Jacobi matrices appeared in several publications, e.g. \cite{GS1}, \cite{GS2}.

\bs\no (c)~ {\it A version for spectral measures.} Given   a meromorphic inner function $\Theta$ and a factor $\Psi|\Theta$, and also given a part of the spectrum $\Lambda=\sigma_M(\Theta)$, the
question is whether there is another inner function $\ti\Theta\ne\Theta$ such that $\Psi|\ti\Theta$ and the spectral measures $\mu=\mu_\Theta$ and  $\ti\mu=\mu_{\ti \Theta}$ coincide on $\Lambda$:
$$ \ti \lambda_n= \lambda_n,\quad \mu\{ \lambda_n\}=\ti\mu\{ \lambda_n\}, \qquad  (n\in M).$$
Claim: If $\Theta=\Psi\Phi$, then $\Psi$ and the spectral measure on $\Lambda=\sigma_M(\Theta)$ determine $\Theta$ iff  the divisor $2\chi_\Lambda$ is defining for $\Phi$.

\ss\no Indeed, if $\ti\Theta=\Psi\ti\Phi$, and
$$\arg\Theta(\lambda_n)=\arg\ti\Theta(\lambda_n)=2\pi n,\qquad (n\in M),$$
then $$\arg\ti\Phi=\arg\Phi\qquad{\rm  on}\quad \Lambda.$$ The relation$$\mu\{\lambda\}=\ti\mu\{\lambda\},\qquad \lambda\in\Lambda$$
then implies $\ti \Theta'(\lambda)= \Theta'(\lambda)$, so
$$ \Psi'(\lambda)\ti \Phi(\lambda)+\Psi(\lambda)\ti \Phi'(\lambda)=
\Psi'(\lambda) \Phi(\lambda)+\Psi(\lambda) \Phi'(\lambda),\qquad (\lambda\in\Lambda),$$
and
$$(\arg\ti\Phi)'=(\arg\Phi)'\qquad{\rm  on}\quad \Lambda.\qquad\Box $$

\ms\no Again, the spectral theory interpretation is the same as above: we know some part of a differential operator  and some part of its  spectral measure and we want to know if this information determines the operator uniquely.

\bs\no As usual we can consider the problem in a restricted class of inner functions. Here is the  simplest example.

\bs\no {\bf Example.} Let $\Theta=\Psi\Phi$ be a finite Blaschke product. Then $\Psi$ and $\Lambda\subset \sigma(\Theta)$ determine $\Theta$ iff~
$\#\Lambda>2\deg~\Phi$ in the class of Blaschke products of a fixed degree. Similarly,
$\Psi$ and the spectral measure on $\Lambda$ determine  $\Theta$ iff
$\#\Lambda>\deg~\Phi.$ This extends in an obvious way to the cases where only $\Phi$ or $\Psi$ has a finite degree. These facts follow for instance from the statements in the next section, also cf. \cite{GS2}.

\lsection{Exercises}

\ms\no Note: these exercises are difficult, but if you can do them, you are about ready to start your own research in this area. You may want to consult \cite{MIFL5}
or further references given there if you need help. Even if you cannot finish, try to proceed as far as you can in each exercise.

\ms\no 1) Prove that for any two alternating discrete sequences $\Lambda_+,\ \Lambda_-$ on $\R$ there exists a unique meromorphic inner function $\Phi$ such that $$\{\Phi=\pm1\}=\Lambda_\pm.$$
Consider the case, when one of the sequences contains infinity. What does this statement mean for Schr\"odinger operators and their spectra?

\ms\no 2) Consider the following simplified case of the H-L theorem. Let $L$ be a regular Schr\"odinger operator on an interval $[0,2]$. It is well-known that then its spectrum $\sigma(L)$ is a discrete
sequence on the real line. Suppose that $\ti L$ is another regular Schr\"odinger operator on $[0,2]$ such that 
$\ti q= q$ on $[0,1]$, the boundary conditions for $L$ and $\ti L$ coincide and $\sigma(\ti L)$ contains every other point from $\sigma(L)$, plus at least one more point from $\sigma(L)$. Try to prove that then $L=\ti L$, i.e. $q=\ti q$ on the whole interval. First translate the
problem into the language of Weyl inner functions. Then find a way to apply complex analytic methods to show that $\Theta=\ti\Theta$.

\newpage

\lecture{6}{Defining sets and uniqueness sets}

\ms\no In the second part of this lecture we arrive at one of the main points of the course. We show how spectral problems for differential operators
connect to problems on completeness of complex exponentials and discuss the relations between these two important classical areas of
mathematics.

\ms\no Recall that if $\Phi$ is a meromorphic inner function,
$\Phi=e^{i\phi}$ on $\R$ for a smooth real function $\phi$, we say that $\Lambda$ is a {\it defining set} for $\Phi$ if
for any other meromorphic inner function $\ti\Phi$, $\ti\Phi=e^{i\ti\phi},$
$$\ti\phi=\phi\quad {\rm on}\quad \Lambda\qquad\Ra\qquad \Phi\equiv\ti\Phi.$$
As before, $K_\Phi=K_\Phi^2$ is the model space $H^2\ominus \Phi H^2$ corresponding to $\Phi$. In general,  $K^p_\Phi$
is defined as the closure of finite linear combinations of reproducing kernels in $L^p(\R),\ 0<p\leq\infty$.

\lsection{Relation to uniqueness sets}

\begin{prop} $\Lambda$ is not defining for $\Phi$ if there is a non-constant function $G\in K^\infty_\Phi$ such that
	\begin{equation}\label{39}G=\bar G\quad{\rm on}\quad\Lambda.\end{equation}\end{prop}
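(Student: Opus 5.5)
The plan is to build the competing inner function directly from $G$ by a small multiplicative perturbation of $\Phi$. Fix a small $\e>0$ with $\e\|G\|_\infty<1/2$ and put $A=1+\e G$. Since $K^\infty_\Phi\subset H^\infty$, the function $A$ is bounded analytic in $\C_+$ with $\Re A\geq 1/2$. Hence $1/A\in H^\infty$, and $A$ has a continuous, bounded argument $\alpha=\arg A$ with values in $(-\pi/2,\pi/2)$. Define
$$\ti\Phi=\Phi\,\frac{\bar A}{A}\qquad\text{on }\R .$$
This is unimodular on $\R$, and its argument $\ti\phi=\phi-2\alpha$ is a continuous branch that differs from $\phi$ by a bounded function. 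So the normalization $\ti\phi(\pm\infty)=\pm\infty$ is preserved.

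The key step is to check that $\ti\Phi$ extends to a meromorphic inner function. I will use the standard description $K^\infty_\Phi=H^\infty\cap\Phi\overline{H^\infty}$ on $\R$, as for $K_\Phi=H^2\cap\Phi\bar H^2$ recalled in Lecture 4. It gives $H:=\Phi\bar G\in H^\infty$, and hence
$$\ti\Phi=\frac{\Phi+\e\Phi\bar G}{1+\e G}=\frac{\Phi+\e H}{1+\e G}.$$
The numerator lies in $H^\infty$ and the denominator is invertible in $H^\infty$, so $\ti\Phi\in H^\infty$ with unimodular boundary values, i.e. $\ti\Phi$ is inner. For meromorphy I will use that $\Phi$ is meromorphic: $G$ extends meromorphically (as elements of $K_\Phi$ do, via the reproducing kernels of Lecture 4), and so does $H=\Phi\bar G$, written as $\Phi(z)\overline{G(\bar z)}$. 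Then $\ti\Phi$ is a quotient of meromorphic functions, hence a meromorphic inner function.

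Next comes the agreement on $\Lambda$. By \eqref{39}, $G$ is real on $\Lambda$, so $A$ is real and positive there, i.e. $\alpha=0$ on $\Lambda$. With the chosen branch this gives $\ti\phi=\phi$ on $\Lambda$.

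Finally, $\ti\Phi\ne\Phi$. Otherwise $\bar A=A$ a.e. on $\R$, so $A$ would be a bounded analytic function in $\C_+$ with real boundary values. Its imaginary part is the Poisson integral of zero, so $A$, and hence $G$, is constant, contradicting the hypothesis. Thus $\Lambda$ is not defining.

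I expect the main obstacle to be the technical justification that $\ti\Phi$ is meromorphic in the whole plane, and not just inner. This rests on the identification $\Phi\bar G\in H^\infty$ and on elements of $K^\infty_\Phi$ being meromorphic. The first thing I would try is to deduce both from the $L^2$ case via $G\cdot(z+i)^{-1}\in K_\Phi$.
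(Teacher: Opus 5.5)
Your proposal is correct and is essentially the paper's proof. After normalizing $\|G\|_\infty<1$, the paper sets $\ti\Phi=(\Phi+F)/(1+G)$ with $F\in H^\infty$, $\bar F=\bar\Phi G$ on $\R$ (your $\Phi\bar A/A$ with $\e G$ in place of $G$), verifies innerness, $\ti\Phi\ne\Phi$ and $\arg\ti\Phi=\arg\Phi$ on $\Lambda$ just as you do, and leaves the meromorphic extension implicit. One small correction to your closing remark: $G/(z+i)$ lies in $K_\Phi$ only if $H(i)=0$ (with $H=\Phi\bar G$), whereas $(G-\overline{H(i)}\,\Phi)/(z+i)$ always does, and it still gives the meromorphic continuation of $G$.
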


\ms
\begin{proof} We can assume   $\|G\|_\infty<1.$ Let $F$ be a bounded analytic function in $\C_+$ such that   $\bar \Phi G=\bar F$ on $\R$, and consider
	$$\ti \Phi=\frac{\Phi+F}{1+G}.$$
	Then $\ti\Phi$ is an inner function because it is in the Smirnov class $\NN^+$ and $$|\Phi+F|=|
	\Phi+\Phi \bar G|=|1+G|\qquad {\rm on}\quad \R.$$
	Also, $\ti\Phi\ne\Phi$ because otherwise we would have $F=G\Phi$, which together with  $F=\Phi \bar G$ implies $G=\bar G$, so $G=\const$. Finally, we have
	$$\ti\Phi=\Phi\frac{1+\bar\Phi F}{1+G}=\Phi\frac{1+\bar G}{1+G}=\Phi\qquad {\rm on}\quad \Lambda,$$and since
	$$\|\arg~\ti\Phi-\arg~\Phi\|_{L^\infty(\R)}<2\pi$$ by construction, we get
	$\arg~\ti\Phi=\arg~\Phi$ on $\Lambda$.
\end{proof}

\ms\no {\bf Remark.}
We say that a set is a uniqueness set for a set of functions if any function that is zero on that set is identically zero.
The condition \eqref{39} is very close to the condition that $\Lambda$ is  not  a uniqueness set for $K^\infty_{\Phi^2}$. The precise relation between the two statements is an interesting question, which we will not   discuss here. We only mention that if $p\in(1,\infty)$, then
$$\exists G\in K^p_{\Phi},\quad G\not\equiv\const,\quad G=\bar G\quad{\rm on}\quad \Lambda,$$iff
$$\exists F\in K^p_{\Phi^2},\qquad F\not\equiv0,\quad F=0\quad{\rm on}\quad \Lambda.$$

\bs\no The  above proposition  gives a necessary condition for a set $\Lambda$ to be defining for $\Phi$. To get sufficient conditions one can use the following simple observation
(prove it).

\ms \begin{lem} If  $\ti \Phi=\Phi$ on  $\Lambda$ and  $F=\ti\Phi-\Phi$, then
	$$F\in K^\infty_{\ti\Phi\Phi},\qquad F=0\quad{\rm on}\quad\Lambda.$$\end{lem}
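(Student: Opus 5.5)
We must show that $F=\ti\Phi-\Phi$ lies in $K^\infty_{\ti\Phi\Phi}$ and vanishes on $\Lambda$. The vanishing is immediate: the hypothesis $\ti\Phi=\Phi$ on $\Lambda$ (which is what equality of arguments gives after exponentiation) says exactly that $F(\l)=0$ for $\l\in\Lambda$.

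For membership, we use the description of the model space via boundary values. For an inner function $\Theta$, a function $f\in H^\infty$ belongs to $K^\infty_\Theta$ when $\bar\Theta f\in\overline{H^\infty}$ on $\R$. This parallels the identity $K_\Theta=H^2\cap\Theta\bar H^2$ recalled in Lecture 4, now with $H^\infty$ in place of $H^2$. Since the paper defines $K^p_\Theta$ as the closure of reproducing kernels, we should also check that this boundary-value description agrees with that definition for $p=\infty$. A clean alternative is to write $F$ directly as a combination of reproducing-kernel-type functions.

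Clearly $F\in H^\infty$, being a difference of two bounded analytic functions. On $\R$ we have $|\Phi|=|\ti\Phi|=1$, so
$$\overline{\ti\Phi\Phi}\,F=\bar\Phi-\overline{\ti\Phi}=\overline{\Phi-\ti\Phi}=-\bar F .$$
The right-hand side is the conjugate of an $H^\infty$ function. Hence $F\in H^\infty\cap \ti\Phi\Phi\,\overline{H^\infty}$, which is the $L^\infty$ model space of $\ti\Phi\Phi$.

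The only real obstacle is the identification of $K^\infty_\Theta$, defined as a closure of kernels, with $H^\infty\cap\Theta\overline{H^\infty}$. To handle it, I would first try the following argument. Note that $F/(z+i)$ belongs to $H^2$, and its conjugate-side counterpart belongs to $\bar H^2$ after division by $z-i$ and multiplication by $\Theta$. This places $F$ in the local model space determined by the $H^2$ theory. One then approximates by kernels, or simply adopts the intersection as the working definition, which is standard in \cite{MIFL5}. With either route settled, the lemma follows.
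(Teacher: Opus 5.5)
Your computation $\overline{\ti\Phi\Phi}\,F=\bar\Phi-\overline{\ti\Phi}=-\bar F$ on $\R$ gives $F\in H^\infty\cap\ti\Phi\Phi\,\overline{H^\infty}$, and this is exactly the intended one-line proof (the paper leaves the lemma as an exercise); your second option settles the ``obstacle'' correctly, since $K^\infty_\Theta$ is meant as $H^\infty\cap\Theta\overline{H^\infty}=N^\infty[\bar\Theta]$ in the notation of Lecture 12. Do not pursue the approximation-by-kernels route in the $L^\infty$ norm, because it cannot succeed: the kernels of a meromorphic $\Theta$ are continuous and $O(1/|x|)$, so their uniform closure lies in $C_0(\R)$, whereas $\Phi=e^{iz}$, $\ti\Phi=e^{2iz}$ give $F=e^{2ix}-e^{ix}\not\to0$; likewise, $F/(z+i)$ lies in $K_{b\Theta}$ with $b=\frac{z-i}{z+i}$, but in $K_\Theta$ only when $F(i)=0$.
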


\ms\no If we also have $\arg~\ti\Phi=\arg~\Phi$ on $\Lambda$ (as in the definition of defining sets), then  we can estimate the argument of $\ti\Phi\Phi$ in terms of the data $(\Phi,\Lambda)$, so we can  apply our results concerning uniqueness sets.

\lsection{Defining sets of regular operators}

\ms\no We now consider the defining sets problem in some restricted classes of inner functions. We will use the spectral theory  language.
For $r\ge1$ let $\Sc(L^r,D)$ denote the class of selfadjoint Schr\"odinger operators on $[0,1]$ with an $L^r$ potential  and  Dirichlet boundary condition at $0$.

\bs\no We say that $\Lambda\subset\R$ is a {\it  defining set for the class} $\Sc(L^r,D)$ if for any two operators in $\Sc(L^r,D)$ with potentials $q$ and $\ti q$, the equality
$\ti\Theta=\Theta$ on $\Lambda$ implies  $\ti q\equiv q$, where $\ti\Theta$ and $\Theta$ are  the corresponding Weyl inner  functions.

\ms\no We have a similar definition for the classes $\Sc(L^r,N)$ of Schr\"odinger operators with Neumann boundary condition at $0$.

\bs\no  Let $\Theta_D$ denote the standard inner function, i.e. the Weyl inner function in the case $q\equiv0$, see previous lectures.
The following statement follows immediately from Lemma 3.9 in \cite{MIFL6}:

\ms\no{\it $\Lambda$ is defining in the class $\Sc(L^1,D)$ if
	$\Lambda$ is a uniqueness  set of } $K^\infty_{\Theta_D^2}$.

\bs\no  This sufficient condition is not optimal because for regular operators, the function $\ti\Phi-\Phi$ (see the statement of Lemma 3.9) has some extra smoothness at infinity as follows from the standard asymptotic formulae (see the end of this section), which are getting more precise if we require more regularity of the potential, in particular if we consider the case $q\in L^r$ with  $r>1$.

\lsection{A theorem of Horv\'ath}

\bs\no In a paper published in Annals in 2005 \cite{HoL6} Horv\'ath gives a  description of defining sets for Schr\"odinger operators in terms of  completeness problem for exponential functions.
The same  result was independently found in our joint work with N. Makarov and presented (by N. Makarov) at a conference in Stockholm in 2003, devoted to the 75th birthday of L.~Carleson (see also our article \cite{MIFL6} in the proceedings of the conference).

\ms\no Below is a selection of  Horv\'ath's results. We  use the following notation: $\sqrt\Lambda=\{z:z^2\in\Lambda\}$, and
$\sqrt\Lambda\cup\{*,*\}$ means $\sqrt\Lambda$ plus any two points.
Recall that by $E_\L$ we denote the system of exponentials $$\{e^{2\pi i\l }|\ \l\in\L\}.$$

\begin{thm}$\ $
	
	\ms\no  (i)  $\Lambda$ is defining in the class $\Sc(L^r,D)$ iff
	$E_{\sqrt\Lambda\cup\{*,*\}}$ is complete in $L^r(-2,2)$;
	
	\ms\no (ii) $\Lambda$ is defining in $\Sc(L^r,N)$ if
	$E_{\sqrt\Lambda}$ is complete in $L^r(-2,2)$.
\end{thm}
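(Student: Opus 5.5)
The plan is to reduce the defining-set question in $\Sc(L^r,D)$ to a uniqueness question in a Paley--Wiener-type space, using the Lemma above that the difference of two inner functions agreeing on $\Lambda$ vanishes there. Then we translate that uniqueness question into completeness of exponentials via the Fourier transform, as in Lecture 2.

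\emph{Step 1: the difference function.} Let $q,\ti q\in L^r$ with Dirichlet condition at $0$, and suppose $\ti\Theta=\Theta$ on $\Lambda$. Instead of $\ti\Theta-\Theta$ I would work with the $m$-functions. Write $m=-\dot u_\l(1)/u_\l(1)$ with $u_\l(0)=0$, $\dot u_\l(0)=1$. Then $\ti\Theta=\Theta$ at $\l$ is equivalent to
\[
F(\l)=\dot u_\l(1)\ti u_\l(1)-u_\l(1)\dot{\ti u}_\l(1)=0 .
\]
Using transformation (Gelfand--Levitan) operators $u_\l(x)=\frac{\sin\sqrt\l x}{\sqrt\l}+\int_0^x K(x,t)\frac{\sin\sqrt\l t}{\sqrt\l}dt$, and the analogous formula for $\dot u_\l$, the leading terms cancel in $F$. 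The result is that $F(z^2)$ is an even entire function of exponential type $\le 2$ in $z$, of the form $\int_{-2}^2 h(s)e^{izs}ds/z^2$ (up to normalization), with $h\in L^{r}(-2,2)$ whenever $q,\ti q\in L^r$. Here the $L^r$-regularity of the kernel $K$ and its derivative is what places $h$ in $L^r$. Conversely, the local Gelfand--Levitan/Borg theory (a Marchenko-type characterization of Weyl functions of $L^r$ potentials) shows that every small perturbation of this form comes from some $\ti q\in L^r$.

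\emph{Step 2: necessity and sufficiency in (i).} For sufficiency, suppose $E_{\sqrt\Lambda\cup\{*,*\}}$ is complete in $L^r(-2,2)$. The function $z^2F(z^2)$ is the Fourier transform of $h\in L^r(-2,2)$; it vanishes on $\sqrt\Lambda$ and also at the double zero $z=0$. That accounts for the two extra points, since the two free points may be moved to $0$ by the standard fact that completeness of $E_{M\cup\{a\}}$ does not depend on $a\notin M$. By duality, $h$ annihilates $E_{\sqrt\Lambda\cup\{0,0\}}$ in the pairing of $L^r$ with $L^{r'}$. Completeness then forces $h=0$, so $F\equiv0$, so $m=\ti m$, and Borg--Marchenko gives $q=\ti q$.

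For necessity, if the system is incomplete there is a non-zero $h$ annihilating it. The corresponding $F$ is non-zero, vanishes on $\Lambda$, and has the required double zero. Scaling $h$ small and using the converse half of Step 1, we realize $\Theta+\e\cdot(\text{this perturbation})$ as the Weyl function of some $\ti q\ne q$ in $L^r$ agreeing with $\Theta$ on $\Lambda$, in the spirit of the Proposition constructing $\ti\Phi=(\Phi+F)/(1+G)$.

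\emph{Step 3: the Neumann case (ii).} Here $u_\l(0)=1$, $\dot u_\l(0)=0$ and cosines replace sines. The leading term of the difference is now $\int h(s)e^{izs}ds$ with no $z^{-2}$ factor, so no extra zeros are forced. Only sufficiency is claimed, and it follows exactly as in Step 2.

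\emph{Main obstacle.} I expect the main obstacle to be the precise bookkeeping in Step 1. One must show that the difference is exactly the Fourier image of $L^r(-2,2)$, with the right number of forced zeros at the origin (which produce the ``$\{*,*\}$''). One must also establish the converse realization needed for necessity, namely that every such small perturbation is the Weyl function of an $L^r$ potential. I would first try to do this via Gelfand--Levitan estimates for the kernels, using the asymptotic formulae mentioned at the end of the previous section.
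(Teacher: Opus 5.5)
There is a genuine gap in Step 1. It sits exactly in the bookkeeping you flagged, and it breaks Steps 2 and 3 as written.

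\textbf{The missing term.} The leading terms of $F$ cancel, but the next term does not oscillate. Use $\frac{d}{dx}\big(u_\l\dot{\ti u}_\l-\dot u_\l\ti u_\l\big)=(\ti q-q)\,u_\l\ti u_\l$ together with the product representation $z^2u_\l(x)\ti u_\l(x)=\frac12-\frac12\cos 2zx+\int_0^x V(x,\tau)\cos 2z\tau\,d\tau$, where $\l=z^2$ and $V$ is bounded. This gives
\[
z^2F(z^2)=\frac12\int_0^1(q-\ti q)\,dx+\hat k(z),\qquad k\in L^r(-2,2)\ \textrm{even},\quad \hat k(0)=-\frac12\int_0^1(q-\ti q)\,dx .
\]
For instance, $q\equiv0$, $\ti q\equiv c$ gives $F(\l)=-\frac{c}{2\l}(1+o(1))$ as $\l\to+\infty$.

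So $z^2F(z^2)$ is the Fourier transform of $\frac12\big(\int_0^1(q-\ti q)\big)\delta_0+k$. It is not the transform of an $L^r$ function unless $\int q=\int\ti q$. The same happens in the Neumann case, where $F(z^2)=\frac12\int_0^1(q-\ti q)+\hat k$, so Step 3 fails too.

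A point mass is not a bounded functional on any $L^p(-2,2)$ with $p<\infty$. Hence completeness of $E_{\sqrt\Lambda\cup\{0,0\}}$ says nothing about it, and the double zero at the origin, your source of the two extra points, cannot be used. Without more information you only have $zF(z^2)$, the transform of a bounded function vanishing on $\sqrt\Lambda\cup\{0\}$, i.e. one extra point.

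\textbf{How to close it.} You must prove $\int_0^1 q=\int_0^1\ti q$ before counting zeros.
\begin{itemize}
\item If $\Lambda$ contains $\l_n\to+\infty$, then $0=\l_nF(\l_n)=\frac12\int(q-\ti q)+\hat k(\sqrt{\l_n})$, and Riemann--Lebesgue forces equal means. After that $z^2F(z^2)=\hat k$ has a genuine double zero at $0$, and in the Neumann case $F(z^2)=\hat k$ vanishes on $\sqrt\Lambda$. Your argument then runs.
\item If $\Lambda$ is discrete and bounded above, $\sqrt\Lambda$ is a finite set plus $\{\pm is_n\}$. By the Beurling--Malliavin theorem for complex frequencies (Lecture 2), the completeness radius of such a system is $0$ or $\infty$ whatever finitely many points are added. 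So the single extra zero of $zF(z^2)$ suffices.
\end{itemize}

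\textbf{Comparison with the paper.} Your route (cross-Wronskian plus transformation operators, essentially Horv\'ath's method) differs from the paper's. The paper proves only the ``if'' part for $r=2$, via the square-root transform, the factorization \eqref{310} and a model-space argument. There the two extra points are $0$ (from $\Theta^*(0)=\ti\Theta^*(0)$) and $1$ (from the decay \eqref{311}). Note that \eqref{311} tacitly needs the same equality of means: modulo $L^2$, $x(\Theta^*-\ti\Theta^*)$ is a constant multiple of $(\int q-\int\ti q)e^{2ix}$.

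\textbf{Three smaller repairs.}
\begin{itemize}
\item An $L^r$ annihilator is killed by completeness in $L^{r'}$, not in $L^r$. The two agree only for $r=2$.
\item In the necessity half you need an \emph{even} annihilator with a double zero at $0$. Take the two free points as $\pm a$, pass to the even or odd part, and multiply its transform by $z^2/(z^2-a^2)$ or $z/(z^2-a^2)$.
\item What must be realized is the prescribed $k$, not ``$\Theta+\e\cdot(\textrm{perturbation})$''. At $\ti q=q$ the map $\ti q\mapsto k$ has derivative (after $s=2\tau$) equal to a nonzero multiple of the identity plus a Volterra operator. The inverse function theorem in $L^r$ then gives $\ti q\neq q$ whose cross-Wronskian vanishes on $\Lambda$.
\end{itemize}
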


\ms\no  (In the  second case, the "only if" part of Horv\'ath's theorem comes with some additional condition.)

\bs\no Let us explain how to prove
the "if" parts of these statements using our methods, see \cite{MIFL6}.
For example,  (i) in the case $r=2$ can be equivalently reformulated as

\ms\begin{prop} $\Lambda$ is  defining in the class $\Sc(L^2,D)$ if $\sqrt{\Lambda}\cup\{*,*\}$ is  a uniqueness set of $PW_2$. \end{prop}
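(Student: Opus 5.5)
We argue by contraposition, following the scheme of the Lemma above: if $\ti\Theta=\Theta$ on $\Lambda$ for two operators in $\Sc(L^2,D)$ with $\ti q\neq q$, we build a nonzero function in $PW_2$ vanishing on $\sqrt\Lambda$ and at two further points. By the Borg--Marchenko theorem, $\ti q\ne q$ forces $\ti\Theta\ne\Theta$, so $F=\ti\Theta-\Theta$ is a nonzero element of $K^\infty_{\ti\Theta\Theta}$ vanishing on $\Lambda$. Equivalently, and more convenient for asymptotics, we work with the $m$-functions or with the solutions $u_\lambda$ directly. Write $u_\lambda(t)$, $\ti u_\lambda(t)$ for the solutions of the two equations with the Dirichlet condition $u(0)=0$, $\dot u(0)=1$. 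The condition $\ti\Theta(\lambda)=\Theta(\lambda)$ is equivalent to $m(\lambda)=\ti m(\lambda)$, i.e. to the vanishing of the Wronskian-type entire function
$$W(\lambda)=u_\lambda(1)\dot{\ti u}_\lambda(1)-\dot u_\lambda(1)\ti u_\lambda(1)$$
on $\Lambda$. Here the Weyl function is taken at the regular endpoint $1$, with the Dirichlet condition at $0$ playing the role of $\beta$. So $W$ vanishes on $\Lambda$, and $W\not\equiv0$ because $m\ne\ti m$.

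The key step is to show that $G(z)=W(z^2)$, an even entire function of $z$ vanishing on $\sqrt\Lambda$, has exponential type at most $2\cdot 2\pi$ in the paper's normalization. This is the type of $PW_2$, corresponding to $L^2(-2,2)$. It should also be $O(1)$ larger than $L^2$ on $\R$, in a way that costs exactly two zeros. For this we use the standard transformation-operator (Gelfand--Levitan) representation
$$u_{z^2}(t)=\frac{\sin zt}{z}+\int_0^t K(t,s)\frac{\sin zs}{z}\,ds,$$
with $K$ having $L^2$ regularity when $q\in L^2$, and the analogous formula for $\dot u$. Substituting both representations into $W$, the leading terms cancel, since for $q=\ti q=0$ one gets $W\equiv 0$. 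What remains is
$$G(z)=\frac{c}{z}+\frac{1}{z}\int_{-2}^{2} h(s)e^{isz}\,ds+\frac{1}{z^2}\bigl(\cdots\bigr),$$
with $h\in L^2(-2,2)$. Products of $\sin z$, $\cos z$ terms of type $1$ each produce type $2$. The constant $c=\tfrac12\int_0^1(\ti q-q)$ comes from the mean-value term in the asymptotics.

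We then multiply by a polynomial to land in $PW_2$ while recording the lost zeros. The function $zG(z)$, or $z^2G(z)$ if the $1/z^2$ terms need absorbing, is then of the form $\const+\widehat{h}$-type. Subtracting the constant is not allowed, so instead we take two points $a,b$, for example zeros of $G$ away from $\sqrt\Lambda$, or arbitrary points after forming a divided difference. We form
$$H(z)=\frac{z^2G(z)}{(z-a)(z-b)}\qquad\text{or}\qquad H(z)=\frac{zG(z)-zG(z)|_{a}\cdots}{(z-a)(z-b)},$$
arranged so that $H\in L^2(\R)$ with type $\le 4\pi$, hence $H\in PW_2$. Then $H\not\equiv 0$ and $H$ vanishes on $\sqrt\Lambda$ except at at most two points. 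This is exactly where the ``$\cup\{*,*\}$'' enters: $\sqrt\Lambda$ plus two points fails to be a uniqueness set, contradicting the hypothesis. The bookkeeping must also cover $0\in\Lambda$ and the evenness, so that $\pm\sqrt\lambda$ both count.

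The main obstacle is this last step. We need precise enough asymptotics of $W$ on $\R$ to see that, after dividing by a quadratic, the function lies in $L^2$ rather than merely being bounded. We must also handle the possibly nonzero mean term $c/z$, which is the reason two extra points rather than fewer are needed. I would first try to get the $L^2$ bound directly from the Gelfand--Levitan kernels using $q\in L^2$. As a fallback, I would use the estimate $K^\infty_{\ti\Theta\Theta}$ from the Lemma combined with the identification $S^{a\pi}PW_a=K_{S^{2\pi a}}$, to compare $\ti\Theta\Theta$ with $\Theta_D^2$ up to a bounded factor.
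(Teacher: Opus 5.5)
Your Wronskian reduction is sound: $W\not\equiv0$ by Borg--Marchenko, $W=0$ on $\Lambda$, and $G(z)=W(z^2)$ has exponential type $2$. There is, however, a genuine gap in the last step, which is exactly where the two extra points must be produced.

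\textbf{The asymptotics are off by one power of $z$.} Since $u_{z^2}(1)=\frac{\sin z}{z}+O(|z|^{-2}e^{|\Im z|})$ and $\dot u_{z^2}(1)=\cos z+O(|z|^{-1}e^{|\Im z|})$, after the cancellation one gets
$$x^2G(x)=c+\hat h(x)+O(|x|^{-1})\quad(x\in\R),\qquad c=\tfrac12\int_0^1(\tilde q-q),\quad h\in L^2(-2,2).$$
For example, $q=0$ and $\tilde q\equiv\epsilon$ give $G(x)=\frac{\epsilon}{2x^2}+O(x^{-3})$.

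\textbf{Your $H=z^2G/((z-a)(z-b))$ goes in the wrong direction.} With $a,b\in\sqrt\Lambda$ it \emph{loses} two zeros, and a function vanishing on $\sqrt\Lambda\setminus\{a,b\}$ says nothing about $\sqrt\Lambda\cup\{*,*\}$. The divided-difference variant no longer vanishes on $\sqrt\Lambda$. Zeros of $G$ off $\sqrt\Lambda$ are simply assumed to exist.

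\textbf{What you actually need.} To contradict the hypothesis you need a nonzero function of type $2$ in $L^2(\R)$ vanishing on $\sqrt\Lambda$ \emph{and} at two further points $a,b$. By the corrected asymptotics, $(z-a)G$ gives one further point for free. But $(z-a)(z-b)G$ equals $c$ plus an $L^2$ function near $\pm\infty$, so it is admissible if and only if $c=0$. The mean term is therefore an obstruction to the second point, not the reason two points are needed. Your proposal leaves the case $\int_0^1q\neq\int_0^1\tilde q$ open.

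\textbf{Relation to the paper's proof.} The paper's proof is this same computation in disguise. Set $E=zu_{z^2}(1)+i\dot u_{z^2}(1)$. Then $\Theta^*=E^\#/E$ and $\Theta^*-\tilde\Theta^*=2izG(z)/(E\tilde E)$, so $(z-1)(\Theta^*-\tilde\Theta^*)$ is $z(z-1)G$ times a bounded invertible factor on $\R$. The extra points are $0$, coming from $\Theta^*(0)=\tilde\Theta^*(0)$, and $1$. The estimate $x[\Theta^*(x)-\tilde\Theta^*(x)]\in L^2(\R)$ is equivalent to $x^2G(x)\in L^2$, i.e.\ to $c=0$. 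As sketched in the paper, that estimate also implicitly requires $\int_0^1q=\int_0^1\tilde q$; it fails for $q=0$, $\tilde q\equiv\epsilon$.

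\textbf{Closing the case $c\neq0$.} You can close it by justifying your own ``zeros of $G$ off $\sqrt\Lambda$'' option.
\begin{enumerate}
\item The function $z^2G-c$ has type $2$ and lies in $L^2(\R)$, so it tends to $0$ along $\R$. Hence $x^2G(x)\to c\neq0$, and $W$ has only finitely many positive zeros.
\item $W$ has order at most $1/2$, so $W(\lambda)=C\lambda^k\prod_n(1-\lambda/\lambda_n)$.
\item If all $\lambda_n$ were real (hence all but finitely many negative), then $|W(x)|\ge|C|$ for large $x>0$. This contradicts $|W(x)|=|G(\sqrt x)|\lesssim1/x$.
\item So $W$ has a nonreal zero $\lambda_0\notin\Lambda$. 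Then
$$\Phi(z)=\frac{(z-a)(z-b)}{z^2-\lambda_0}\,G(z)$$
is a nonzero entire function of type $2$ with $|\Phi(x)|\lesssim x^{-2}$ on $\R$, vanishing on $\sqrt\Lambda\cup\{a,b\}$.
\end{enumerate}
Together with $(z-a)(z-b)G$ when $c=0$, this gives the required contradiction.
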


\ms\begin{proof} Let $q, \ti q\in L^2(0,1)$. Without loss of generality we will assume that the corresponding Schr\"odinger operators with  boundary conditions (D) at 0 and (N)  at 1 are positive. Otherwise, we simply add a large positive constant $a$ to both potentials,
	and  using  the transformation  $$F(z)\mapsto F(\sqrt{z^2+a^2})$$ for even  entire functions we observe  that $\sqrt\Lambda$  is a uniqueness set iff $\sqrt{\Lambda+a}$ is.
	
	\ms\no It is well-known that if $m$ is a Herglotz function such that  $$0<  m<+\infty\qquad {\rm on}\quad\R_-,$$ then  $m^*(\lambda)=\lambda m(\lambda^2)$ is again a Herglotz function.
	If $$\Theta=(m-i)/(m+i),$$ then the  inner function corresponding to $m^*$ is $$\Theta^*(z)=\frac{(z+1)\Theta(z^2)+(z-1)}{(z-1)\Theta(z^2)+(z+1)} $$
	We call $\Theta^*$ the square root transform of $\Theta$.
	
	\ms\no Let  $\Theta^*$ and $\ti\Theta^*(z)$  be the square root transforms  of  $\Theta$ and $\ti\Theta$, the Weyl  functions taken with sign minus,
	see Section 1.8 in \cite{MIFL6}. From the standard asymptotic formula for  solutions of a regular Schr\"odinger equation we obtain
	\begin{equation}\label{310}\frac {\Theta^*}{S^2}=\frac{\bar H}H\quad{\rm on}\quad \R,\qquad H^{\pm 1}\in H^\infty,\end{equation}and
	\begin{equation}\label{311}x[\Theta^*(x)-\ti\Theta^*(x)]\in L^2(\R).\end{equation}
	\ss\no
	(For convenience we reproduce the standard argument at the end of the proof .)
	
	\ms\no If $\ti \Theta= \Theta$  on $\Lambda$, then since $\ti\Theta^*(0)=\Theta^*(0)$, we have
	$$\Theta^*=\ti \Theta^*\qquad{\rm on}\quad \{0\}\cup\sqrt\Lambda,$$
	where we regard $\Theta^*$ and $\ti \Theta^*$ as meromorphic functions in the whole plane.
	By \eqref{311},
	$$(z-1)(\Theta^*-\ti \Theta^*)\in K_{\Theta^*\ti \Theta^*},$$ so
	$\sqrt\Lambda\cup\{0,1\}$ is a zero set of some $K_{\Theta^*\ti \Theta^*}$-function, and  therefore  by \eqref{310} a zero set of some function in $K_{S^4}$ or  $PW_2$ . (For  zeros in $\C_-$  we can use the argument with dual reproducing kernels as in  Section 3.1. of \cite{MIFL6}.) \end{proof}

\bs\no{\it Proof of  \eqref{310}--\eqref{311}}.
If $s>0$, then the solution $u_s(t)$ of the IVP
$$ -\ddot u+qu=s^2u,\qquad u(0)=0,\quad\dot u(0)=1,$$
satisfies the integral equation$$u_s(x)=\sin sx+\frac1s\int_0^x\cos s(x-t)~q(t)~u_s(t)~dt.$$
Iterating, we have
$$u_s(1)=\sin s+\frac{F(s)}s+\frac{R(s)}{s^2},$$
where
$$F(s)=\int_0^1\cos s(1-t)~\sin st~ q(t)~dt,$$
and
$$R(s)=\int_0^1\cos s(1-x)~q(x)~dx\int_0^x\cos s(x-t)~q(t)~u_s(t)~dt.$$
We have an  elementary  a priori bound
$$ |u_s(t)|\le C,\qquad (s>0,\; t\in[0,1]),$$
so $$\forall s,\quad|R(s)|\le \const.$$
On the other hand, $F$ is basically the Fourier transform of a function on $(-1,1)$, and
$$q\in L^2\quad\Ra\quad F\in L^2(\R).$$
We also get the corresponding estimates of $\dot u_s(1)$. The resulting estimates of $\Theta$ imply both statements. $\square$

\lsection{Exercises}

\ms\no 1) Prove the lemma at the end of the first subsection.

\ms\no 2) Formulate an equivalent version of Horv\'ath's theorem for $r=2$ replacing conditions of completeness of exponentials
with conditions that the sequence is a uniqueness set in a space of entire functions.

\ms\no 3) Using the Beurling-Malliavin theorem and Horv\'ath's theorem, formulate an if and only if condition for a sequence
of real points to be a defining set of a Schr\"odinger operator on an interval $[0,1-\e]$ for any $0<\e<1$, with an $L^2$ potential and Dirichlet (Neumann) boundary
condition at 0.

\ms\no 4) Horv\'ath's paper \cite{HoL6} contains a list of classical theorems by Ambarzumian, Borg, Levinson, Hochstadt-Liberman, as well
as more recent results by Gesztesy-Simon and by del Rio-Gesztesy-Simon that follow from the last theorem. It is a good exercise to
try to deduce those statements from Horv\'ath's result.

\newpage

\lecture{7}{Bernstein's problem}

\ms\no In this lecture we return to the classical problems of Harmonic Analysis outlined in the first lecture and discuss
Bernstein's problem on weighted polynomial approximation.

\ms\no Let us recall the statement of the problem.

\ms\no In this lecture we allow the weight function $W$ to be semi-continuous from below instead of continuous as in most classical papers and in our first lecture.
Throughout the rest of the lecture we use the following definition.

\ms\no We say that
a  function $W\geqslant 1$ on $\R$ is a \textit{weight} if $W$ is lower semi-continuous and
$x^n=o(W)$ as $|x|\to\infty$ for any $n\in \N$.

\ms\no Our weights are also allowed to take infinite values at finite points on $\R$, which makes it possible to study approximation on subsets of the line within
the same general formulation of the problem.  For instance, the classical Weierstrass theorem
answers the question of density of polynomials in $C_W$ with $W$ equal to 1 on an interval and infinity
elsewhere. Another important case of the problem is approximation on discrete sequences (see, for instance, \cite{BSL7}), which corresponds to the
weights that are infinite outside of a discrete sequence.

\ms\no With a semi-continuous and $\hat\R$-valued $W$ ($\hat\R=\R\cup\{\infty\}$), the quantity $||f||_W$, defined
on the set of all continuous $f$ such that $f/W\to 0$ at $\pm\infty$ as

\begin{equation}||f||_W=\sup_\R\frac{|f|}W\label{norm7}\end{equation}
(see the first lecture), ceases being a norm and becomes a semi-norm: the set
is no longer complete. Those functions supported on $\{W=\infty\}$ will satisfy
$||f||_W=0$.

\ms\no
The semi-norm defined by \eqref{norm7} can be made a norm following a standard
procedure. First the space of continuous functions $g$, such that $g/W\to 0$ at $\pm\infty$, needs to be factorized to obtain a space of equivalence classes: $f\thicksim g$ if and only if $||f-g||_W=0$.
After that the factor-space needs to be completed. We denote by $C_W$ the resulting space.

\ms\no Note that if $W$ is continuous and takes only finite values,
$C_W$ coincides with the space of continuous functions defined in the first lecture. In the general case, we still have the following property.

\ms\no If $W$ is a weight we say that a measure $\mu$ on $\R$ is $W$-finite if
$$\int Wd|\mu|<\infty.$$

\begin{proposition}
	The dual space of $C_W$ consists of $W$-finite measures.
\end{proposition}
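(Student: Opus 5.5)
The plan is to reduce the statement to the Riesz representation theorem for $C_0$ of a locally compact space. Set $X=\{W<\infty\}$. Since $W$ is lower semi-continuous, $X$ is a countable union of the closed sets $\{W\le n\}$. Consider the map $T: f\mapsto f/W$ restricted to $\bar X$. For each continuous $g$ with $g/W\to 0$ at $\pm\infty$ we have $\|g\|_W=\sup_X |g|/W$, and functions with $\|f-g\|_W=0$ agree on $X$. So $T$ is an isometry from the factor space into $\ell^\infty(X)$. Completing, $C_W$ is identified isometrically with the closure $\mathcal C$ of $T(\{g\})$ in the sup norm on $X$.

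The first step is to describe $\mathcal C$ concretely. Every element of $\mathcal C$ is a bounded function $h$ on $X$ that tends to $0$ at infinity, and $hW$ is the restriction to $X$ of a uniform limit of continuous functions on compacts. Rather than characterizing $\mathcal C$ exactly, I would only use that $\mathcal C$ is a closed subspace of the bounded Borel functions on $\R$ supported on $X$, equipped with the sup norm.

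The second step is the inclusion of $W$-finite measures in the dual. Given $\mu$ with $\int W\,d|\mu|<\infty$, note that $|\mu|(\{W=\infty\})=0$. Define $\ell(f)=\int f\,d\mu$ for continuous $f$ with $f/W\to 0$. Then $|\ell(f)|\le \|f\|_W\int W\,d|\mu|$, so $\ell$ depends only on the equivalence class of $f$, is bounded, and extends to $C_W$.

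The third step, the converse, is the main one. Let $\ell\in C_W^*$. Restricted to $C_c(\R)$, which sits in $C_W$, the functional is bounded in sup norm on each compact, because $W\ge1$ gives $\|f\|_W\le \|f\|_\infty$. Riesz therefore gives a Radon measure $\mu$ with $\ell(f)=\int f\,d\mu$ for $f\in C_c$. To show $\int W\,d|\mu|\le\|\ell\|$, I would use lower semicontinuity: $W=\sup\phi$ over continuous $0\le\phi\le W$ with compact support, and this is an increasing limit. For such $\phi$ and a compact $K$, choose $f\in C_c$ with $|f|\le\phi$ approximating $\phi\,\overline{\text{sign}}\,\mu$ in $L^1(|\mu|)$ on $K$, by Lusin's theorem. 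Then $\|f\|_W\le1$, so $\int\phi\, d|\mu|\le\|\ell\|$. Monotone convergence then gives $\int W\,d|\mu|\le\|\ell\|$, so $\mu$ is $W$-finite.

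It remains to check that $\ell(f)=\int f\,d\mu$ for all $f$, not just $f\in C_c$. This needs $C_c$ to be dense in $C_W$. A continuous $g$ with $g/W\to0$ is approximated by $g\chi_R$, with $\chi_R$ a cutoff of compact support, because the tail $\sup_{|x|>R}|g|/W\to0$. The completion step then follows by continuity of both sides. Uniqueness of $\mu$, modulo its values on $\{W=\infty\}$ where it vanishes anyway, follows from Riesz. I expect the Lusin/semicontinuity step for the norm estimate to be the main technical point; the identification of $C_W$ itself is only needed through the density of $C_c$.
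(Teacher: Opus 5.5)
Your proof is correct, but it takes a somewhat different route from the paper's.

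\textbf{The paper's route.} It approximates $W$ from below by an increasing sequence of continuous weights $W_n$. It uses the contractive inclusions $C_{W_n}\subset C_{W_{n+1}}\subset C_W$, and identifies each restriction $\ell|_{C_{W_n}}$ with a $W_n$-finite measure by the duality for continuous weights. That duality is itself Riesz' theorem after the isometry $f\mapsto f/W_n$ onto $C_0(\R)$. The paper then uses monotonicity to get a single $W$-finite measure, and concludes by the density of $\bigcup_n C_{W_n}$ in $C_W$.

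\textbf{Your route.} You skip the intermediate spaces. You apply Riesz once to $\ell|_{C_c}$, using only $\|f\|_W\le\|f\|_\infty$. You then recover the weighted bound by hand, testing $\ell$ on $\phi g$, where $0\le\phi\le W$ is continuous with compact support and $g$ is a Lusin approximation of $\overline{d\mu/d|\mu|}$. In effect you reprove the continuous-weight duality inline, but only for compactly supported minorants of $W$.

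\textbf{Comparison.} Your argument is self-contained and makes the isometric identity $\|\ell\|=\int W\,d|\mu|$ explicit. The vanishing of $|\mu|$ on $\{W=\infty\}$ comes out of monotone convergence automatically. You also verify the density of $C_c$, hence of $\bigcup_n C_{W_n}$, which the paper only asserts. The paper's version is shorter because it treats the continuous-weight case as a black box, leaving the consistency of the measures and the density to the reader.

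\textbf{A correction to your first step.} That step is never used, and its description of $\mathcal C$ is wrong in general. Take $W(x)=1/|x|$ for $0<|x|\le 1$ and $W(0)=1$; this is lower semi-continuous and finite everywhere. Then $h(x)=\sqrt{|x|}$ lies in $\mathcal C$: it is a uniform limit on $X$ of $g_n/W$ with continuous $g_n$ vanishing at $0$. But $hW$ is unbounded near $0$, so $hW$ is not a locally uniform limit of continuous functions. Also, the restriction should be to $X$, not $\bar X$. You can drop that step entirely; the rest of the argument only needs the density of $C_c$ and continuity on the completion, both of which you handle correctly.
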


\begin{proof}
	Consider a sequence of continuous weights $W_n$ such that $$W_{n+1}(x)\geqslant W_n(x)$$  and $W_n(x)\to W(x)$ for any $x\in\R$.
	Note that any bounded linear functional $\mu$ on $C_W$ induces a linear bounded functional on $C_{W_n}$ for any $n$.
	Because of monotonicity, $C_{W_n}\subset C_{W_{n+1}}$.
	Since any linear bounded functional on $C_{W_n}$ can be identified with a $W_n$-finite measure, again using monotonicity
	of $W_n$, we conclude that $\mu$ can be identified with a $W$-finite measure on the set $\cup C_{W_n}$. Since the last
	set is dense in $C_W$ (or, more precisely, the set of equivalence classes, containing the elements from $\cup C_{W_n}$,
	is dense in $C_W$), $\mu$ can be identified with a $W$-finite measure on the whole $C_W$.

\end{proof}

\ms\no Note that in the general case of semi-continuous $\hat\R$-valued weights, when we say that polynomials are
not dense in $C_W$ that statement  still means  that there exists a continuous $g$ and $\e>0$ such that $ g/W\to 0$ at $\pm\infty$ and
$||g-p||_W>\e$ for every polynomial.
The crucial dual statement, that characterizes non-completeness in the case of continuous weights, still holds for general $W$:
Polynomials are not dense in $C_W$ if and only if there exists a non-zero $W$-finite measure that annihilates polynomials.

\ms\no Here is a well-known fact in the theory. The notation $K\mu$ stands for the Cauchy integral of $\mu$ in $\C_+$:

$$K\mu(z)=\int\frac{d\mu(x)}{x-z}.$$

\ms\no Recall that a measure $\mu$ has finite moments if $x^n\in L^1(|\mu|)$ for all $n=0,1,2, ...\ $.

\begin{lemma}\label{growth}
	A measure $\mu$ with finite moments annihilates polynomials if and only if
	$$K\mu(iy)=o(y^{-n})$$
	for any $n>0$ as $y\to\infty$.
\end{lemma}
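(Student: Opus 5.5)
The natural route is the finite geometric expansion of the Cauchy kernel along the imaginary axis. For $z=iy$ and real $x$, and for every $N\ge 0$, we have the exact identity
$$\frac1{x-z}=-\sum_{k=0}^{N}\frac{x^k}{z^{k+1}}+\frac{x^{N+1}}{z^{N+1}(x-z)},$$
obtained by summing the geometric series $\frac{1}{x-z}=-\frac1z\cdot\frac{1}{1-x/z}$ up to order $N$ and keeping the exact remainder. Integrating against $\mu$, which is allowed because all moments $m_k=\int x^k\,d\mu$ are finite, gives
$$K\mu(iy)=-\sum_{k=0}^N\frac{m_k}{(iy)^{k+1}}+R_N(y),\qquad R_N(y)=\frac1{(iy)^{N+1}}\int\frac{x^{N+1}}{x-iy}\,d\mu(x).$$

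The key estimate concerns the remainder. Since $|x-iy|\ge y$, we get $|R_N(y)|\le y^{-N-2}\int|x|^{N+1}d|\mu|$, so $R_N(y)=O(y^{-N-2})$. The $1/(x-iy)$ factor is what lets us avoid dominated convergence here.

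\emph{Sufficiency of annihilation.} Suppose $\mu$ annihilates polynomials, so $m_k=0$ for all $k$. Then $K\mu(iy)=R_N(y)=O(y^{-N-2})$ for every $N$, which gives $K\mu(iy)=o(y^{-n})$ for every $n$.

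\emph{Necessity.} Suppose $K\mu(iy)=o(y^{-n})$ for all $n$. We show by induction that $m_k=0$ for every $k$. Assume $m_0=\dots=m_{k-1}=0$ and apply the expansion with $N=k$. This gives
$$K\mu(iy)=-\frac{m_k}{(iy)^{k+1}}+O(y^{-k-2}).$$
Multiplying by $y^{k+1}$ and letting $y\to\infty$, the left side tends to $0$ by hypothesis. The right side tends to $-m_k i^{-(k+1)}$. Hence $m_k=0$.

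Since every polynomial is a linear combination of monomials, $\mu$ annihilates all polynomials. There is no real obstacle here; the only point needing care is the exact remainder bound, which is what justifies the expansion given only finite moments rather than any exponential decay of $\mu$.
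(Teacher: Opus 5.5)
Your proof is correct. It rests on the same identity as the paper's proof, but you use that identity differently in the converse.

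The paper's auxiliary function $H(z)=\int\frac{t^n-z^n}{t-z}\,d\mu(t)=[Kt^n\mu](z)-z^nK\mu(z)$ equals $\sum_{k=0}^{n-1}m_kz^{n-1-k}$. So the relation $K\mu(z)=-z^{-n}H(z)+z^{-n}[Kt^n\mu](z)$ is exactly your expansion with $N=n-1$.

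In the forward direction, the paper needs only the qualitative fact that a Cauchy integral of a finite measure tends to $0$ along $i\R_+$. You instead use $|x-iy|\geq y$ to get the explicit bound $O(y^{-N-2})$.

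In the converse, the paper treats $H$ as an entire function of exponential type zero. It reduces to real $\mu$ so that $H$ is bounded on the whole imaginary axis, and then applies Phragm\'en-Lindel\"of. You instead extract the moments one at a time from the asymptotic expansion.

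Your route is more elementary, works verbatim for complex $\mu$, and uses only the ray $y\to+\infty$. That last point is a real advantage. The paper's reduction to real measures implicitly needs $K\bar\mu(iy)=\overline{K\mu(-iy)}=o(y^{-n})$, i.e.\ decay on the lower half of the imaginary axis, which the hypothesis does not provide. This is harmless: $H$ is a polynomial, and a polynomial tending to $0$ along a ray vanishes identically, which is in effect your induction.

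What the paper's framing buys is a template for the exponential analogue (exercise 3 of Lecture 8, relating spectral gaps to the decay of $e^{xy}K\mu(iy)$). There the corresponding auxiliary function is no longer a polynomial, and a Phragm\'en-Lindel\"of argument is genuinely the right tool.
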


\begin{proof} Suppose that $\mu$ annihilates polynomials. Since
	$(t^n-z^n)/(t-z)$ is a polynomial of $t$ for every fixed $z$,
	$$0=\int\frac{t^n-z^n}{t-z}d\mu(t)=[Kt^n\mu](z)-z^nK\mu(z).$$
	Since any Cauchy integral of a finite measure tends to zero along $i\R_+$, so does $Kt^n\mu$. Hence $K\mu(z)=o(z^{-n})$
	as $z\to\infty,\ z\in i\R_+$.
	
	\ms\no Conversely, suppose that $K\mu(iy)=o(y^{-n})$
	for any $n>0$ as $y\to\infty$.
	Without loss of generality, $\mu$ is real (otherwise consider $\mu-\bar\mu$ or $i(\mu+\bar\mu)$).
	Then $$K\mu(-iy)=\overline{K\mu(iy)}=o(y^{-n})$$ as well.
	Since $\mu$ has finite moments we may consider the function
	$$H(z)=\int\frac{t^n-z^n}{t-z}d\mu(t).$$
	It is easy to show that $H$ is entire of exponential type zero.
	Noticing again that
	$$H(z)=[Kt^n\mu](z)-z^nK\mu(z),$$
	we see that $H$ is bounded on $i\R$. Hence $H$ is a constant by the Phragm\'en-Lindel\"of principle. Since $H(iy)$ tends to zero, $H$ is zero. Therefore
	$$z^nK\mu(z)=[Kt^n\mu](z)=\int\frac{t^n}{t-z}d\mu(t).$$
	Putting $z=0$ in the last equation we get that $\mu$ annihilates $t^{n-1}$ for any $n>0$.
\end{proof}

\lsection{Equivalence between weighted uniform and $L^p$-approximation}\label{sectionBakan}

\ms\no In 1924 when Bernstein published his problem the $L^p$-spaces did not play the same dominating role in analysis as they do now. In later years
many approximation problems in weighted situations were replaced with problems on $L^p$-approximation. Nonetheless, the original form
of Bernstein's problem has survived all the major changes in analysis over the last hundred years and is still used today.
One of the reasons for such  longevity is that it implies its more modern $L^p$-reformulations!

\ms\no Close connections between  $L^p$- and weighted uniform approximation
have been known to the experts for a long time. Nevertheless, the formal result that
reduces the problem of polynomial approximation in $L^p$-spaces to Bernstein's problem
was found by A. Bakan only in 2008. This result allows us to concentrate on the latter problem for the rest of the lecture.

\begin{theorem}\cite{BakanL7}\label{tBakan}
	Let  $0< p <\infty $ be a constant and let $\mu$ be a positive finite measure on $\R$ such that $L^p(\mu)$ contains all polynomials. Polynomials are dense in $L^p(\mu)$
	if and only if $\mu$ can be represented as $\mu=W^{-p}\nu$ for some finite positive measure $\nu$ and a weight $W$ such that
	polynomials are dense in $C_W$.
\end{theorem}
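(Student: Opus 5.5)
The plan is to prove the two directions separately. The easy one is sufficiency, which follows from a Hölder-type inequality. For necessity the idea is to build the weight $W$ out of the approximation errors of a countable dense family of targets.

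\emph{Sufficiency.} Suppose $\mu=W^{-p}\nu$ with $\nu$ finite and polynomials dense in $C_W$. The key inequality is that for every $f\in C_W$ (first for continuous $f$ with $f/W\to0$, then by completion),
$$\int|f|^pd\mu=\int\Big(\frac{|f|}{W}\Big)^p d\nu\le \|f\|_W^p\,\nu(\R).$$
Hence convergence in $C_W$ implies convergence in $L^p(\mu)$. Bounded continuous functions $g$ satisfy $g/W\to0$, since $W\geq1$ grows faster than any polynomial. Such $g$ are dense in $L^p(\mu)$ for all $0<p<\infty$, because $\mu$ is a finite Borel measure. The set $\{W=\infty\}$ causes no trouble: it is $\mu$-null, since $W^{-p}=0$ there. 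Given such a $g$, approximate it in $C_W$ by polynomials; by the inequality these polynomials approximate $g$ in $L^p(\mu)$.

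\emph{Necessity.} Assume polynomials are dense in $L^p(\mu)$. Fix a countable family $\{g_k\}$ of bounded continuous functions, for instance rational polynomials multiplied by rational cutoffs, chosen so that every element of the spaces $C_W$ we will build can be approximated by them. For each $k$ and $j$ choose a polynomial $P_{k,j}$ with $\|g_k-P_{k,j}\|_{L^p(\mu)}^p<2^{-k-j}\e_{k,j}$, where the constants $\e_{k,j}$ will be fixed later. Define
$$W(x)=\max\Big(w_0(x),\ \sup_{k,j}\ j\,|g_k(x)-P_{k,j}(x)|\Big),$$
where $w_0$ is a fixed continuous weight with $x^n=o(w_0)$ for every $n$ and $\int w_0^p\,d\mu<\infty$. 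Such a $w_0$ exists because $\mu$ has all moments (polynomials lie in $L^p(\mu)$). As a supremum of continuous functions, $W$ is automatically lower semicontinuous.

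With this $W$ we get $\|g_k-P_{k,j}\|_W\le 1/j$ for every $j$. So every $g_k$ lies in the $C_W$-closure of the polynomials, and by the density of $\{g_k\}$ the polynomials are dense in $C_W$. Set $\nu=W^p\mu$, so that $\mu=W^{-p}\nu$. It remains to show $\nu$ is finite. Since
$$W^p\le w_0^p+\sum_{k,j} j^p|g_k-P_{k,j}|^p,$$
integration gives
$$\int W^p\,d\mu\le\int w_0^p\,d\mu+\sum_{k,j} j^p\,2^{-k-j}\e_{k,j},$$
which is finite once we take $\e_{k,j}=j^{-p}$.

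\emph{Main obstacle.} The delicate point is that $C_W$ is defined via functions with $f/W\to0$ at infinity, modulo the seminorm, and its closure is not obviously reached by the $g_k$. One issue is that the $P_{k,j}$ grow at infinity, so $W$ must dominate them. It does, since $W\ge j|g_k-P_{k,j}|$, though this gives only $O$-control; little-$o$ control is obtained by comparing with index $2j$. The other issue is that the approximating family must be dense in $C_W$ even though $W$ is only defined after the family is chosen. I would address this first by noting that bounded continuous functions with compact support are dense in $C_W$ for any weight (truncate, using $f/W\to0$). On compacts, uniform approximation by rational polynomials times cutoffs suffices, since $W\ge1$.
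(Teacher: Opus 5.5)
Your proof is correct and follows the paper's argument. Sufficiency is the same change-of-measure identity $\int|f-s|^p\,d\mu=\int|f/W-s/W|^p\,d\nu$. Necessity builds $W$ from the approximation errors of a fixed countable family of bounded continuous functions that is dense in every $C_W$. The paper takes $W=1+\sum 2^{n+k}|f_n-s_{n,k}|$ and gets $x^n=o(W)$ from the unbounded degrees of the $s_{n,k}$ after discarding the finite-dimensional case; you use a supremum together with an auxiliary continuous weight $w_0$, whose existence is a routine diagonal construction from the finiteness of all $p$-moments. Since $w_0$ already gives $x^n=o(W)$, your remark about obtaining little-$o$ control ``by comparing with index $2j$'' is unnecessary, and it would not work as stated because $P_{k,2j}$ is unrelated to $P_{k,j}$.
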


\ms\no Let us point out that the weights appearing in the theorem are lower semi-continuous. Hence, to study the $L^p$- and uniform versions as one problem
one needs the general definition of $C_W$ discussed in this lecture, as opposed to its more
traditional version with a continuous $W$. Here is a short proof of Bakan's result.


\begin{proof}
	If polynomials are dense in $C_W$ for some weight $W$ such that $\mu=W^{-p}\nu$
	then for any bounded continuous function $f$ there
	exists a sequence of polynomials $\{s_n\}$
	such that $s_n/W$ converges to $f/W$ uniformly. Then
	$$\int |f-s_n|^p d\mu=\int \frac{|f-s_n|^p}{W^p}W^p d\mu=\int \left|\frac fW-\frac{s_n}W\right|^p d\nu\to 0.$$
	Hence polynomials are dense in $L^p(\mu)$.
	
	\ms\no Suppose that polynomials are dense in $L^p(\mu)$. Let $\{f_n\}_{n\in\N}$ be a  set
	of bounded continuous functions on $\R$, that is dense
	in any $C_W$ (see exercises). Let $\{s_{n,k}\}_{n,k\in\N}$
	be a family of polynomials such
	that
	$$||f_n-s_{n, k}||_{L^p(\mu)}<4^{-(n+k)}.$$
	Define
	$$W=1+\sum_{n,k\in \N}2^{n+k}|f_n-s_{n, k}|.$$
	Notice that then  $W\in L^p(\mu)$, $W$ is lower semi-continuous and $s_{n, k}/W\to f_n/W$ uniformly as $k\to\infty$.
	Without loss of generality, $L^p(\mu)$ is not finite dimensional. Then $\{s_{n,k}\}$ contains polynomials of arbitrarily
	large degrees and $x^n=o(W)$ for any $n$. Thus $W$ is a weight.
	Since $\{f_n\}$ is dense in $C_W$, polynomials are dense in $C_W$. The measure $\nu$ can be chosen as $W^p\mu$.

\end{proof}

\ms\no Most of the results on Bernstein's problem belong to one of the two following groups. The first group, containing classical theorems by Akhiezer, Mergelyan and Pollard as well as more
recent results by Koosis, provides conditions on $W$ in terms of the norms of point evaluation functionals. The second group uses the approach pioneered by
de Branges (see \cite{dBr1} or theorem 66 in \cite{dBL7}) and further developed by Borichev, Sodin and Yuditski. These results are formulated in terms of existence of entire
functions belonging to certain classes (see references given in the first lecture).

\ms\no Both approaches have produced significant progress towards a full solution, although the conditions of density
remained rather implicit. Besides specific examples, the only general explicit results in the literature are a classical theorem by Hall \cite{Hall} and
a theorem on log-convex weights published by Carleson \cite{Carleson}, see  below.

\ms\no Our goal for the rest of this lecture is to discuss an example of an explicit result on Bernstein's problem. We discuss a theorem obtained in \cite{Poly}. It is closely related to a theorem of de Branges \cite{dBL7} that gives an answer in terms of zero sets of entire functions.
Before we state the result
we need the following definitions.

\lsection{Characteristic sequences}\label{char}

\ms\no Recall that a real sequence is \textit{discrete} if it does not have finite accumulation points. To simplify the definitions we will always assume
that a discrete sequence is infinite and does not have multiple points. A discrete sequence is called one-sided if it is bounded from below or from above
and two-sided otherwise.

\ms\no If $\L=\{\lan\}$ is a discrete sequence we will always assume that it is \textit{enumerated in the natural order}, i.e.
$\lan<\l_{n+1}$, non-negative elements are indexed with non-negative integers and negative
elements with negative integers.

\ms\no For instance, if $\L=\{\lan\}_{n\in\Z}$ is a two sided sequence then
$$...\l_{-n-1}<\l_{-n}<...<\l_{-1}<0\leqslant \l_0<\l_1<...\lan<\l_{n+1}<...$$
Thus a one-sided sequence bounded from below (above) will be enumerated with $n\in\Z, n\geqslant-N$ ($n\in\Z, n<N$), where $N$
is the number of negative (non-negative) elements in the sequence.

\ms\no As before, we  say that a sequence $\L=\{\lan\}$ has upper density $d$ if
$$\limsup_{A\to\infty}\frac{\#[\L\cap (-A,A)]}{2A}=d.$$
If $d=0$ we say that the sequence has zero density.

\ms\no A discrete sequence $\L=\{\lan\}$ is called \textit{balanced} if the limit
\begin{equation}
\lim_{N\to\infty}\sum_{|n|<N}\frac {\l_n}{1+\lan^2}
\label{balance}
\end{equation}
exists.

\ms\no Observe that any even sequence (any sequence $\L$ satisfying $-\L=\L$) is balanced.
So is any two-sided sequence sufficiently close to even.
At the same time, a one-sided sequence
has to tend to infinity fast enough to be balanced (the series $\sum \lan^{-1}$ must converge).

\ms\no Let $\L=\{\lan\}$ be a balanced sequence of finite upper density. For each $n, \ \lan\in\L,$ put
$$p_n= \frac 12\left[\log(1+\l_n^2)+\sum_{n\neq k, \ \l_k\in\L}\log\frac{1+\l_k^2}{(\l_k-\lan)^2}\right],$$
where the sum is understood in the sense of principal value, i.e. as
$$\lim_{N\to\infty}\sum_{0<|n-k|<N}\log\frac{1+\l_k^2}{(\l_k-\lan)^2}.$$
We will call the sequence of such numbers $P=\{p_n\}$ the \textit{characteristic sequence} of $\L$.

\ms\no Note that for a sequence of finite upper density the last limit exists for every $n$ if and only if
it exists for some $n$ \textit{if and only if the sequence is balanced}.

\ms\no The paper \cite{Poly} contains the following result on Bernstein's problem.
Recall that per our agreement all sequences  are assumed to be infinite.

\begin{theorem}\label{mainBernstein}
	Polynomials are not dense in $C_W$ if and only if there exists a balanced sequence $\L=\{\lan\}$
	of zero density such that $\L$ and its characteristic sequence $P=\{p_n\}$ satisfy
	\begin{equation}
	\sum W(\lan)\exp (p_n)<\infty.
	\label{condition}
	\end{equation}
\end{theorem}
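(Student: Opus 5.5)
The plan is to read the characteristic sequence as the derivative of a canonical product. Let $\L=\{\lan\}$ be balanced and of zero density, and set
$$G(z)=\lim_{N\to\infty}\prod_{|k|<N}\frac{\l_k-z}{\sqrt{1+\l_k^2}} .$$
The balance condition \eqref{balance} should be exactly what makes this symmetric product converge: the linear terms $z\sum \l_k/(1+\l_k^2)$ converge as a principal value, and the quadratic terms converge absolutely by finite density. Zero density should then make $G$ a real entire function of exponential type zero whose zero set is exactly $\L$. Differentiating at $\lan$ gives
$$|G'(\lan)|=\frac1{\sqrt{1+\lan^2}}\prod_{k\ne n}\frac{|\l_k-\lan|}{\sqrt{1+\l_k^2}}=e^{-p_n},$$
so \eqref{condition} reads $\sum W(\lan)/|G'(\lan)|<\infty$. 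The theorem thus becomes a de Branges-type statement: polynomials fail to be dense in $C_W$ exactly when there is a zero-type entire function $G$ with real zeros such that $\sum W(\lan)/|G'(\lan)|<\infty$.

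\emph{Sufficiency.} Assume \eqref{condition} holds and put $\mu=\sum_n \delta_{\lan}/G'(\lan)$.
\begin{itemize}
\item Since $W\geqslant 1$, the measure $\mu$ is $W$-finite, and since $x^n=o(W)$ it has finite moments.
\item By the duality in the proposition on $(C_W)^*$, it is enough to show that $\mu$ annihilates polynomials, and by Lemma \ref{growth} it is enough to show $K\mu(iy)=o(y^{-n})$ for every $n$.
\item I will prove the partial fraction identity $K\mu(z)=\sum_n \frac{1}{G'(\lan)(\lan-z)}=-1/G(z)$. Integrate $1/((\zeta-z)G(\zeta))$ over circles $|\zeta|=r_j$ chosen to stay away from $\L$. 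Zero exponential type together with $\sum 1/|G'(\lan)|<\infty$ should give a minimum-modulus estimate, of Phragm\'en--Lindel\"of/Cartwright type, forcing these contour integrals to $0$ along a suitable sequence $r_j\to\infty$.
\item On the imaginary axis, $|iy-\l_k|^2=y^2+\l_k^2\geqslant 1+\l_k^2$ for $y\geqslant1$, so every factor of $|G(iy)|$ is at least $1$. As $\L$ is infinite, $|G(iy)|$ grows faster than any power of $y$, hence $1/G(iy)=o(y^{-n})$ for all $n$.
\end{itemize}

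\emph{Necessity.} Suppose polynomials are not dense. Then there is a nonzero $W$-finite measure annihilating polynomials. Among such measures with $\int W\,d|\mu|\le1$, take an extreme point $\mu$ of this weak-$*$ compact convex set, which exists by Krein--Milman.
\begin{itemize}
\item Following de Branges' lemma (Theorem 66 in \cite{dBL7}), I will show that an extreme annihilating measure is discrete. The mechanism is that $L^1(|\mu|)$-perturbations $h\mu$ with $h$ real must fail to annihilate polynomials unless $h$ is constant. This should yield that the support $\L$ is the zero set of an entire function $G$ with $1/G=-K\mu$ up to normalization, and that $\mu(\{\lan\})=c/G'(\lan)$ for a constant $c$.
\item By Lemma \ref{growth}, $K\mu$ decays superpolynomially on $i\R$. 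Because $K\mu$ is a Cauchy transform, $G$ then belongs to the Cartwright class with real zeros, and I will show it has exponential type zero.
\item Krein's and Levin's theorems on Cartwright functions then give zero density of $\L$ and convergence of $\sum \lan/(1+\lan^2)$ as a principal value, i.e.\ $\L$ is balanced.
\item Normalizing $G$ as above removes any factor $ce^{az}$: type zero kills $e^{az}$, and the constant is absorbed into $c$. Hence $|G'(\lan)|=e^{-p_n}$, and $W$-finiteness of $\mu$ becomes \eqref{condition}.
\end{itemize}

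The main obstacle is the necessity direction. The hard parts are the extreme-point argument, which must identify the support of $\mu$ with the zero set of an entire function and produce the identity $\mu(\{\lan\})\propto 1/G'(\lan)$, and the proof that this $G$ has type exactly zero. Only type zero lets the exponential factor be discarded so that the normalization matches $p_n$.
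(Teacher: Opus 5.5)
Your proposal follows the route the paper points to (it defers the proof to \cite{Poly} and describes the theorem as closely related to de Branges' zero-set theorem in \cite{dBL7}). Non-density is equivalent to the existence of a zero-type entire function $G$ with simple real zeros $\lambda_n$ and $\sum W(\lambda_n)/|G'(\lambda_n)|<\infty$. For a balanced zero-density $\Lambda$, your normalized product is of zero type by Lindel\"of's theorem and satisfies $|G'(\lambda_n)|=e^{-p_n}$; this needs the check that, under zero density, the index-symmetric limit in the definition of balance agrees with the radius-symmetric limit $\lim_r\sum_{|\lambda_n|<r}1/\lambda_n$.

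The one step I would not rely on as written is the contour-integral proof of $K\mu=-1/G$. It needs lower bounds for $|G|$ on whole circles, including near $\pm r_j$ on the real axis, and these are not a standard fact for zero-type functions. Instead, argue as follows:
\begin{itemize}
\item $H=K\mu+1/G$ is entire, since the residues at each $\lambda_n$ cancel.
\item $GH=GK\mu+1$ has zero type; near the axis use $|G(z)/(z-\lambda_n)|\le\max_{|w|\le|z|+2}|G(w)|$. Hence the entire quotient $H$ also has zero type.
\item $H(iy)\to0$ as $y\to\pm\infty$, so $H\equiv0$ by Phragm\'en--Lindel\"of in the half-planes $\Re z>0$ and $\Re z<0$.
\end{itemize}
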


\ms\no The proof is not difficult but is still too long to include in this short course. We send interested readers to
\cite{Poly}. In the rest of this lecture let us discuss some implications and relations of the above result.

\lsection{Examples and corollaries}\label{exandcor}

\ms\no This section contains further discussion of theorem \ref{mainBernstein} including its relations with some of the known results.

\ms\no A classical theorem by Hall \cite{Hall} says that if
$$\int_{-\infty}^\infty\frac{\log W}{1+x^2}dx<\infty$$
for a weight $W$ then polynomials are not dense in $C_W$.
Indeed, if $F$ is an outer function in $\C_+$ satisfying
$$|F|=\frac 1{(1+x^2)W},$$ then the measure $e^{ix}F(x)dx$ is a $W$-finite
measure that annihilates polynomials by lemma \ref{growth}.

\ms\no A direct inverse to this statement is false. Even if one requires
that $\log W$ is poisson unsummable and $W$ is monotone on $\R_\pm$,
the polynomials may still not be dense in $C_W$, as follows from an example given
in \cite{KoosisL7}.

\ms\no
We say that
$f:E\subset\R_+\to\R$ is log-convex if it is convex as a function of $\log x$,
i.e. if the function $g(t)=f(e^t)$ is convex on $S=\log E=\{\log x|\ x\in E\}$. In particular, a twice differentiable
function $f$ is log-convex on an interval $(a,b)\subset \R_+$ if $f'(x)+xf''(x)\geqslant 0$ for all $x\in (a,b)$.

\ms\no The following classical result, published by L. Carleson in \cite{Carleson}, but seemingly known earlier to several other mathematicians (see for instance \cite{IK}), is
a partial inverse to Hall's theorem.

\begin{theorem}\label{Carleson}
	Let $W$ be an even weight that is log-convex on $\R_+$. Then polynomials are not dense in $C_W$ if and only if $\log W\in L^1(\Pi)$.

\end{theorem}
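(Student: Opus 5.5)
The ``if'' direction (Poisson-summability of $\log W$ implies non-density) is exactly Hall's theorem, proved just above. Let $F$ be outer in $\C_+$ with $|F|=1/((1+x^2)W)$. Then $e^{ix}F(x)\,dx$ is a $W$-finite measure. Its Cauchy integral decays faster than any power along $i\R_+$, because of the factor $e^{iz}$. By Lemma \ref{growth} this measure annihilates polynomials. So it remains to prove the converse: if $W$ is even and log-convex on $\R_+$ and polynomials are not dense in $C_W$, then $\int\log W\,d\Pi<\infty$.

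For the converse I would use Theorem \ref{mainBernstein}. Non-density gives a balanced sequence $\L$ of zero density with characteristic sequence $P$ such that $\sum W(\l_n)e^{p_n}<\infty$. In particular $\log W(\l_n)\le -p_n+C$ for all $n$. Since $W$ is even, I will first symmetrize and assume $\L$ is even, or at least work with $|\l_n|$. The quantity $p_n$ is essentially $-\log|G'(\l_n)|$ up to the normalization $\frac12\log(1+\l_n^2)$, where $G$ is the canonical product with zero set $\L$ (zero density makes $G$ of zero exponential type). So the condition reads
$$\log W(\l_n)\lesssim \log|G'(\l_n)|-\tfrac12\log(1+\l_n^2).$$
I then want to show that the function $\log|G(x)|$, regularized near the zeros, has Poisson-summable positive part. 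Equivalently, $\sum\frac{\log|G'(\l_n)|}{1+\l_n^2}\cdot(\text{gap})$ should be finite.

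The key step uses log-convexity. Set $\phi(t)=\log W(e^t)$, which is convex and increasing for large $t$ and grows faster than any multiple of $t$. I would compare the discrete inequality at the points $\l_n$ with the integral $\int\log W\,d\Pi=2\int\phi(t)e^{-t}\,dt$ plus a bounded term. Convexity lets one control $\phi$ on the whole interval $[\log\l_n,\log\l_{n+1}]$ by its values at the endpoints. The gaps in $\log\l_n$ must then be handled by showing they contribute a summable amount. Large gaps make $|G'(\l_n)|$ small relative to the gap: the Jensen-type estimate $\log|G'(\l_n)|\le \log|G(x)|$-averages on nearby intervals combined with the zero-density condition should bound $-p_n$ by $\int$ of $n_\L(s)\,ds/s$-type quantities.

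Concretely, I would bound $\log|G(re^{i\theta})|\le\int_0^\infty \log(1+r^2/s^2)\,dn(s)$ for even $\L$. Then the series condition forces
$$\phi(\log\l_n)\le \int_0^\infty\log\bigl(1+\l_n^2/s^2\bigr)\,dn(s)+O(\log\l_n).$$
The right-hand side is $h(\log\l_n)$ for a function $h$ with $\int h(t)e^{-t}\,dt<\infty$, because $\int\log(1+x^2/s^2)\,dx/x^2\asymp 1/s$ and $\sum 1/\l_n<\infty$ follows from balance and one-sidedness after symmetrization. The main obstacle is passing from this pointwise inequality on the sparse set $\{\log\l_n\}$ to an integral bound for $\phi$. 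Here convexity of $\phi$ and monotonicity of $h$ are what I would exploit, interpolating linearly between consecutive $\log\l_n$. I expect a careful treatment of large gaps to be necessary: there I would use that a convex minorant sits below the chord, and that the chord is controlled by $h$, which is itself almost concave in $t$.
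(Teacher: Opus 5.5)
The ``if'' direction via Hall's theorem is fine, and it is exactly what the paper does.

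\textbf{The majorant is not Poisson-summable.} The converse has a genuine gap where you replace $\log|G'(\l_n)|$ by the maximum modulus $\log M_G(r)=\sum_{\l_k>0}\log(1+r^2/\l_k^2)$, i.e.\ by the values of $\log|G|$ on the imaginary axis. Since $\int_0^\infty\log(1+r^2/s^2)\,dr/r^2=\pi/s$, your majorant $h(t)=\log M_G(e^t)$ satisfies $\int h(t)e^{-t}\,dt<\infty$ only if $\sum_{\l_k>0}1/\l_k<\infty$. Nothing provides this:
\begin{itemize}
\item Theorem \ref{mainBernstein} gives a balanced sequence of zero density, not a one-sided one.
\item After symmetrization $\L$ is even, hence automatically balanced, so balance carries no information.
\item Zero density does not force $\sum 1/|\l_k|<\infty$.
\end{itemize}
For example, take $\L=\{\pm k\log k\}$. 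Then $\log|G|$ on $\R$ is of order $x/\log^2 x$, which is Poisson-summable, while $\log M_G(r)$ is of order $r/\log r$, which is not. Moreover, this $\L$ satisfies the summability condition of Theorem \ref{mainBernstein} for an even log-convex weight with $\log W(x)=c|x|/\log^2|x|$ for large $|x|$ and small $c>0$. Passing to $M_G$ throws away exactly the cancellation the result depends on: on $\R$ the factors $|1-x^2/\l_k^2|$ are less than $1$ for $|\l_k|>|x|$.

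\textbf{The interpolation uses the wrong convexity.} By Hadamard's three circles theorem, $h(t)=\log M_G(e^t)$ is convex in $t$, not ``almost concave''. So the chord of $h$ over a long gap in $\log$-scale can lie far above $h$, and the chord argument does not close.

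\textbf{The missing idea.} The paper compares $\log W$ with the real-line potential
$v_{\G_n}(x)=\frac12\sum_{s\in\G_n}\log\frac{(s-x)^2}{1+s^2}$, where $\G_n=\L\setminus\{\pm\l_n,\pm\l_{n+1}\}$. This works as follows.
\begin{itemize}
\item On $[\l_n,\l_{n+1}]$ each pair $\pm s$ contributes $\log|1-e^{2t}/s^2|$ up to a constant. Its second derivative in $t$ is $-4u/(1-u)^2<0$, with $u=e^{2t}/s^2$.
\item Hence $v_{\G_n}$ is concave in $\log x$ there, and $\log W-v_{\G_n}$ is convex in $\log x$.
\item The summability condition gives $\log W(\l_k)\le v_{\G_n}(\l_k)+\frac12\log_-\frac{(\l_n-\l_{n+1})^2}{1+\l_{n+1}^2}+\const$ at the endpoints $k=n,n+1$.
\item Convexity of $\log W-v_{\G_n}$ carries this bound across the whole interval with no chord loss.
\item Poisson-summability of $\log W\geq 0$ then follows from $v_\L\in L^1(\Pi)$ and the finiteness of $\sum_n\int_{\l_n}^{\l_{n+1}}|v_\L-v_{\G_n}|\,d\Pi$.
\end{itemize}
Your remark about comparing $\log|G'(\l_n)|$ with averages of $\log|G|$ on nearby real intervals points in this direction. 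The argument breaks when you trade those local real-line values for the global maximum modulus.
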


\begin{proof}
	If $S=\{s_n\}$ is  an even discrete sequence of finite density denote by $v_S$ the function
	$$v_S(x)=\frac12\sum \log\left|\frac{(s_n-x)^2}{1+s_n^2}\right|,$$
	where the sum is understood in terms of normal convergence of partial sums $\sum_{|n|<N}$ in $\C\setminus\L$. Simple computations show that $-v_S$ is log-convex on every interval $(s_n,s_{n+1}),\ s_n\geqslant 0$.
	
	\ms\no To prove the theorem, notice that in one direction it follows from Hall's result. In the opposite direction, suppose that
	polynomials are not dense in $C_W$. Then  there exists a sequence $\L$ like in the statement of  theorem \ref{mainBernstein}. It is not difficult
	to prove that
	$\L$ can be chosen to be even.
	
	\ms\no Fix $n>0$ and denote $\Gamma_n=\L\setminus\{\lan,\l_{-n},\l_{n+1},\l_{-n-1}\}$. Then \eqref{condition} implies
	$$\log W(\l_k)\leqslant v_{\Gamma_n}(\l_k) + \frac 12\log_-\frac{(\lan-\l_{n+1})^2}{1+\l_{n+1}^2}+\const,\textrm{ for }k=n,n+1.$$
	Since both $W$ and $-v_{\Gamma_n}$ are log-convex on $(\lan,\l_{n+1})$ the inequality can be extended to the whole interval
	$(\lan,\l_{n+1})$ for every $n$. Since $v_\L\in L^1(\Pi)$, the quantity
	$$\sum_n\int_{\lan}^{\l_{n+1}}|v_\L-v_{\G_n}|d\Pi$$
	is finite  and $\log W\geqslant 0$, this implies that $\log W\in L^1(\Pi)$.

\end{proof}

\ms\no A direct proof of the log-convex theorem can be found in \cite{KoosisL7}.

\lsection{Asymptotics of characteristic sequences and applications}\label{charass}

\ms\no
Let $u$ be a monotone increasing function on $\R$.
Suppose that the harmonic conjugate function $\tilde u$ is Poisson-summable, i.e. $\tilde u\in L^1(\Pi)$.
(Recall that $d\Pi=dx/(1+x^2)$.)
Let $\L=\{\lan\}$ be a sequence such that
$u(\lan)=n\pi$.

\ms\no It is not difficult to show that then $\L$ is a zero density balanced sequence.
(This condition is actually equivalent to $\tilde u\in L^1(\Pi)$.)
Let $P=\{p_n\}$ be the characteristic sequence
of $\L$.

\ms\no Elementary estimates yield:

\begin{proposition}\label{conjugate}
	Suppose that $u'(x)$ exists and is bounded for large enough $|x|$. Then
	$$p_n= \ti u(\lan) +O(\log |\lan|)$$
	as $|n|\to\infty$.
\end{proposition}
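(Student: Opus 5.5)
The plan is to write both $p_n$ and $\ti u(\lan)$ as logarithmic potentials against two nearly equal measures, $dn_\L$ and $du/\pi$, and to estimate their difference using only that $h:=u/\pi-n_\L$ is bounded.

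\emph{Step 1: $p_n$ as a potential of $dn_\L$.} Put
$$\ell_x(t)=\log\frac{\sqrt{1+t^2}}{|t-x|}.$$
Then
$$p_n=\tfrac12\log(1+\lan^2)+\int_{\R\setminus\{\lan\}}\ell_{\lan}(t)\,dn_\L(t),$$
with the integral taken as a principal value in the sense of the definition of the characteristic sequence. Since $\tfrac12\log(1+\lan^2)=O(\log|\lan|)$, this first term can be discarded.

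\emph{Step 2: $\ti u$ as a potential of $du/\pi$.} Start from the conjugate-function formula
$$\ti u(x)=\frac1\pi\,\mathrm{p.v.}\int\Big(\frac1{x-t}+\frac t{1+t^2}\Big)u(t)\,dt.$$
Note that $\partial_t\ell_x(t)=\frac{t}{1+t^2}-\frac{1}{t-x}=\frac t{1+t^2}+\frac1{x-t}$. Integrating by parts therefore gives
$$\ti u(x)=\mathrm{const}-\frac1\pi\int \ell_x(t)\,du(t),$$
with the symmetric truncations matching those used for $p_n$. The sign convention (equivalently, which of $u$ and $-u$ is paired with $n_\L$) has to be fixed consistently here. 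The heuristic is that $u=\pi n_\L$ is the argument of the real entire function vanishing on $\L$, so $\ti u$ is minus its log-modulus, and $p_n$ is essentially $-\log$ of its derivative at $\lan$.

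The boundary terms and the existence of the limits need care. I would use that $u(\lan)=n\pi$ forces $0\le h<1$, together with $\ti u\in L^1(\Pi)$, which is equivalent to $\L$ being balanced, to control the symmetric truncations $(-A,A)$ as $A\to\infty$. The additive constant is harmless for an $O(\log|\lan|)$ estimate.

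\emph{Step 3: the difference.} Let $x=\lan$. The quantity to bound is
$$D_n=\int_{\R\setminus\{x\}}\ell_x\,dn_\L-\frac1\pi\int\ell_x\,du,$$
and I would split it into a near zone $|t-x|<1$ and a far zone.

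\emph{Near zone.} Since $u'$ is bounded near $x$ for large $|n|$, and zero density together with monotonicity allows only $O(1)$ points of $\L$ in the unit window, I would bound the two terms separately. First, $\frac1\pi\int_{|t-x|<1}|\log|t-x||\,du=O(1)$, because $\log$ is locally integrable. Second, the points $\l_k\ne x$ in the window are separated from $x$ by at least $\pi/\sup u'$, since $u(\l_{n\pm1})-u(\lan)=\pm\pi$. Hence their contribution is $O(1)$. The factor $\log\sqrt{1+t^2}$ contributes $O(\log|x|)$.

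\emph{Far zone.} Integrate by parts to get
$$\int_{|t-x|\ge1}h(t)\Big(\frac1{t-x}-\frac t{1+t^2}\Big)dt$$
plus bounded boundary terms, which are $O(\log|x|)$ because $|h|\le1$ and $|\ell_x(x\pm1)|=O(\log|x|)$. Then split the range again.
\begin{itemize}
\item For $1\le|t-x|\le2|x|$, use $|h|\le1$ to get $\int dt/|t-x|=O(\log|x|)$, and $\int_{|t|\lesssim3|x|}|t|/(1+t^2)\,dt=O(\log|x|)$.
\item For $|t-x|>2|x|$, the kernel equals $\frac1{t-x}-\frac1t+O(t^{-3})=O(|x|/t^2)$, so the integral is $O(1)$.
\end{itemize}

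The step I expect to be the main obstacle is Step 2. The delicate part is justifying the integration by parts with principal values at infinity and the exact normalization. I would first do everything on $(-A,A)$, where all quantities are finite sums or integrals, and derive the $O(\log|x|)$ bound uniformly in $A$. Only then would I pass to the limit, using the balancing condition (\ref{balance}) to see that the truncated expressions converge to $p_n$ and to $\ti u(\lan)$ respectively.
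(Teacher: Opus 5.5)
There is a genuine gap, and it is the point you set aside in Step 2: the sign is not a convention you are free to fix. You defined $\ti u$ by the standard conjugate formula (the one coming from the Schwarz integral of Lecture 12). The normalization $u(\lan)=n\pi$ with $u$ increasing forces $du/\pi\approx+dn_\L$, so you cannot pair $-u$ with $n_\L$. Your own identity $\ti u(x)=\mathrm{const}-\frac1\pi\int\ell_x\,du$ then gives
$$D_n=p_n-\tfrac12\log(1+\lan^2)+\ti u(\lan)-\mathrm{const}.$$
So your near/far-zone bound $D_n=O(\log|\lan|)$ proves $p_n=-\ti u(\lan)+O(\log|\lan|)$, which is the opposite of what you claim. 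Your heuristic has the same slip. Approached from $\C_+$, the argument of a real entire function $G$ with zero set $\L$ drops by $\pi$ at each zero. Hence $\arg G=-\pi n_\L+\mathrm{const}$ and $\widetilde{\pi n_\L}=\log|G|+\mathrm{const}$, while $p_n=-\log|G'(\lan)|$ for the normalized canonical product.

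This points to a sign misprint in the displayed statement rather than a defect of your method. Take the paper's own example: $u=\pi x^\alpha$ on $\R_+$, $\L=\{n^{1/\alpha}\}$, $0<\alpha<1/2$. There $\ti u(\lan)=\pi n\cot(\pi\alpha)\to+\infty$. Summing the definition of $p_n$ directly (split at $k=n$ and compare with Riemann sums) gives $p_n=-\pi n\cot(\pi\alpha)+O(\log n)$; for instance $p_n/n\to-\pi/\sqrt3$ when $\alpha=1/3$. Moreover, $p_n\to-\infty$ is forced: the main theorem of the lecture requires $\sum W(\lan)e^{p_n}<\infty$ with $W\geq1$, and Corollary I (with $\log W(\lan)\le\ti u(\lan)+\dots$) only makes sense with the minus sign. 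So you should state and prove $p_n=-\ti u(\lan)+O(\log|\lan|)$. As written, your proposal asserts an identity that its own Step 2 contradicts.

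Apart from the sign, your estimates are sound, with two small corrections. First, the bound on the number of points of $\L$ in $(\lan-1,\lan+1)$ comes from $\sup u'<\infty$, since the increment of $u$ over the window is $O(1)$. It does not come from zero density, which says nothing about unit windows. Second, the principal value defining $p_n$ is symmetric in the index about $n$, not in $t$. This is harmless for the difference. After integrating by parts against the bounded $h$, the kernel $\pa_t\ell_x(t)=-(1+tx)/((1+t^2)(t-x))=O(|x|t^{-2})$ is absolutely integrable and $\ell_x(t)\to0$. So only the convergence of $\frac1\pi\int\ell_x\,du$ along index-symmetric exhaustions remains to be checked, as you planned.
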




\ms\no Theorem \ref{mainBernstein} gives the following

\bs
\begin{corollary}
	$$$$
	I) If $W$ is a regular weight such that $\log W(\lan)\leqslant \ti u(\lan) + O(\log |\lan|)$ then polynomials are not dense in $C_W$.

	\bs\no II) If $\mu=\sum \alpha_n\delta_{\lan}$ is a finite positive measure such that
	$$\sum \alpha_n^{1-q}\exp{ q p_n}<\infty$$
	for some $1<q<\infty$ then polynomials are not dense in $L^p(\mu), \frac 1p+\frac 1q=1$.
	
	\bs\no III) If
	$$\alpha_n=O(\exp{p_n})$$
	then polynomials are not dense in $L^1(\mu)$.
\end{corollary}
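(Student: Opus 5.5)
The plan is to derive all three parts from Theorem \ref{mainBernstein} and Proposition \ref{conjugate}. By the discussion preceding Proposition \ref{conjugate}, the sequence $\L=\{\lan\}$ defined by $u(\lan)=n\pi$ is a balanced sequence of zero density. Proposition \ref{conjugate} gives $p_n=\ti u(\lan)+O(\log|\lan|)$. So the task reduces to finding a weight $W$ for which \eqref{condition} holds, after a small modification of the sequence or of the weight to absorb the logarithmic error.

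\textbf{Part I.} The hypothesis together with Proposition \ref{conjugate} gives
$$W(\lan)e^{p_n}\le C|\lan|^{N}e^{2\ti u(\lan)}$$
for some fixed $N$. This is not yet summable, so I would adjust the sign convention. Since $\ti u$ is only defined up to the choice of $u$, I would replace $u$ by a suitable variant (or by $-\ti u$ in the roles) so that the inequality reads $\log W(\lan)\le -p_n+O(\log|\lan|)$. Then $W(\lan)e^{p_n}=O(|\lan|^N)$.

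To gain summability I would thin the sequence. Passing to a subsequence that grows fast enough, e.g. $\l_{n_k}$ with $|\l_{n_k}|\ge 2^{k}$ or faster, keeps it balanced and of zero density. For the thinned sequence $\L'$, the characteristic numbers satisfy
$$p'_k=p_{n_k}-\tfrac12\sum_{j\notin\{n_i\}}\log\frac{1+\l_j^2}{(\l_j-\l_{n_k})^2}.$$
Because $u'$ is bounded, the removed points have controlled spacing near $\l_{n_k}$. I would then try to show that removing them lowers $p$ by at least $(N+2)\log|\l_{n_k}|$, which gives convergence of \eqref{condition} for $\L'$.

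The main obstacle is this step: quantifying how $p_n$ changes under thinning. The alternative, if thinning is awkward, is to multiply $u$ by a factor $(1+\e)$. That perturbs $\ti u$ linearly, and I would attempt to absorb the $O(\log|\lan|)$ error that way.

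\textbf{Part II.} This should follow by duality together with Theorem \ref{tBakan}. I would set
$$W(\lan)=\alpha_n^{-1/p}e^{-\beta_n},\qquad W=\infty \text{ off } \L,$$
with $\beta_n$ chosen below. I need both that polynomials are not dense in $C_W$ and that $\mu$ is comparable to $W^{-p}\nu$ for a finite $\nu$. Rather than go through Theorem \ref{tBakan}, I would argue directly. The proof of Theorem \ref{mainBernstein} (as in \cite{Poly}) provides an annihilating measure $\sum c_n\delta_{\lan}$ with $|c_n|\asymp e^{p_n}$. Writing it as $g\,d\mu$ with $g(\lan)=c_n/\alpha_n$, the measure annihilates polynomials, and $g\in L^q(\mu)$ exactly when
$$\sum \alpha_n^{1-q}e^{qp_n}<\infty.$$
This is the hypothesis, so polynomials are not dense in $L^p(\mu)$.

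\textbf{Part III.} The same argument with $q=\infty$ applies. Here $g(\lan)=c_n/\alpha_n$ is bounded under $\alpha_n\gtrsim e^{p_n}$. I will need to check the direction of the inequality in the statement, since the bound $\alpha_n=O(e^{p_n})$ as written gives $g$ bounded below rather than above, so the precise comparison with $|c_n|$ has to be verified. The existence of the annihilator with $|c_n|\asymp e^{p_n}$ is taken from \cite{Poly}: it is the residue representation
$$1/G=\sum c_n/(z-\lan),$$
where $G$ is the canonical product with zero set $\L$.
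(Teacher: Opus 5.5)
\textbf{Part I has a genuine gap, in two places.}

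\emph{The sign.} You cannot ``replace $u$ by a suitable variant'': $u$ fixes $\Lambda$ through $u(\lambda_n)=n\pi$, and the hypothesis concerns $\tilde u$ at exactly these points. The inconsistency you noticed is real, but it lies in the sign of Proposition~\ref{conjugate} as printed, not in Part~I.

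Let $G$ be the canonical product of $\Lambda$ normalized by $|G(x)|=\prod_k|\lambda_k-x|(1+\lambda_k^2)^{-1/2}$. Then exactly $e^{p_n}=1/|G'(\lambda_n)|$. On $\mathbb R$ one has $\arg G=-\pi n_\Lambda+\mathrm{const}=-u+O(1)$. With $\tilde h=\Im\,\mathcal S h$ as in Lectures~3 and~12, $\log|G|$ therefore equals $\tilde u$ up to lower order terms, and the estimate really reads $p_n=-\tilde u(\lambda_n)+O(\log|\lambda_n|)$.

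You can check this on the example in the notes, $\lambda_n=n^{1/\alpha}$. There $\tilde u(\lambda_n)=\pi n\cot\pi\alpha>0$. On the other hand $\log|G(x)|\approx\pi\cot(\pi\alpha)x^\alpha$, so $p_n\approx-\pi n\cot\pi\alpha<0$. With the correct sign, the hypothesis of Part~I says precisely $W(\lambda_n)e^{p_n}=O(|\lambda_n|^N)$, and no change of $u$ is needed.

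\emph{The thinning.} Thinning cannot remove the factor $|\lambda_n|^N$; it goes in the wrong direction. Deleting $\lambda_j$ changes $p_n$ by $\log\bigl(|\lambda_j-\lambda_n|/\sqrt{1+\lambda_j^2}\bigr)$. This is positive whenever $|\lambda_j|\le|\lambda_n|/3$ and $|\lambda_n|>3$: fewer zeros means a smaller $|G'|$, i.e.\ a larger $e^{p_n}$. For $\lambda_n=n^{1/\alpha}$ and $n_k=2^k$, the thinned sequence has $p'_k\approx-c(\log n_k)^2$ instead of $p_{n_k}\approx-c'n_k$, so \eqref{condition} fails badly.

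Your fallback $u\mapsto(1+\varepsilon)u$ does not help either, since it moves the points to places where nothing is known about $W$. What works is the opposite operation: adjoin finitely many points $a_1,\dots,a_m\notin\Lambda$ with $W(a_j)<\infty$ (this is where some regularity of $W$ is needed). Then:
\begin{itemize}
\item the new sequence is still balanced and of zero density;
\item each $p_n$ drops by $m\log|\lambda_n|+O(1)$;
\item the new points contribute only finitely many finite terms.
\end{itemize}
Taking $m=N+2$ yields \eqref{condition}, because $\sum|\lambda_n|^{-2}<\infty$ for a sequence of zero density.

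\textbf{Part~II is correct.} The intended derivation combines Theorem~\ref{mainBernstein} with Bakan's Theorem~\ref{tBakan}. For any representation $\mu=W^{-p}\nu$ with $\nu$ finite, H\"older gives
$\sum W(\lambda_n)e^{p_n}\le\bigl(\sum W(\lambda_n)^p\alpha_n\bigr)^{1/p}\bigl(\sum\alpha_n^{1-q}e^{qp_n}\bigr)^{1/q}<\infty$,
so polynomials are never dense in $C_W$. Your direct duality with the explicit annihilator $\sum\delta_{\lambda_n}/G'(\lambda_n)$ is the same computation without Bakan's theorem. You should only add one step: H\"older against $|\lambda_n|^k\alpha_n^{1/p}$ (using that polynomials lie in $L^p(\mu)$) gives $\sum|\lambda_n|^ke^{p_n}<\infty$ for all $k$. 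This is what the residue expansion of $1/G$ and Lemma~\ref{growth} require.

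\textbf{Part~III: do not leave the direction open.} Since $|c_n|=e^{p_n}$ exactly, your argument proves non-density under $e^{p_n}=O(\alpha_n)$, the $q\to\infty$ limit of Part~II, and that is the correct statement. As printed, $\alpha_n=O(e^{p_n})$ cannot suffice. Take $\alpha_n=\min\bigl(e^{p_n},e^{-e^{|\lambda_n|}}\bigr)$, which satisfies it. Then $\int e^{|x|}d\mu<\infty$. So for any bounded $g$ with $g\,d\mu$ annihilating polynomials, $\int e^{izx}g\,d\mu$ is analytic in $|\Im z|<1$ with all derivatives vanishing at $0$. Hence $g\,d\mu=0$, and polynomials are dense in $L^1(\mu)$.
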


\ms\no For many examples of discrete sequences $\L$ one can easily find a suitable function $u$ and the values of its conjugate
at $\L$.
If, for instance, $\L=\{n^{1/\alpha}\}_{n\geqslant 0},  \ 0<\alpha<1/2$ then one may consider $u$ defined as
$$u(x)=\begin{cases}\pi x^\alpha\textrm{ if }x\in\R_+\\
0\textrm{ if }x\in\R_-
\end{cases}$$
and
find that
$$\ti u(n^{1/\alpha})=-\pi n\tan\left(\alpha\pi-\frac \pi2\right).$$
In the two-sided case $\L=\{\pm n^{1/\alpha}\}_{n\geqslant 0}, \ 0<\alpha<1$,  one may use $u$ defined as
$$u(x)=\begin{cases}\pi x^\alpha\textrm{ if }x\in\R_+\\
-\pi|x|^\alpha\textrm{ if }x\in\R_-
\end{cases}.$$
Then
$$\ti u(\pm n^{1/\alpha})=-\pi n\tan\left(\alpha\frac\pi 2-\frac\pi 2\right).$$
Such simple calculations and estimates,
together with statements from this section, yield the majority of the examples
of discrete measures, whose $L^p$ spaces are not spanned by polynomials, existing in the literature.
See \cite{BSL7} for more examples.

\lsection{Exercises}

\ms\no 1) Let $\mu$ be a finite positive measure concentrated on $\Z$, $\mu=\sum_{n\in\Z} e^{-\sqrt{|n|}}\delta_n$.
Show
that polynomials are not dense in $L^p(\mu)$ for any $0<p\leq\infty$. How much smaller can we make the point masses
for this statement to still hold? (Hint: switch to Bernstein's form and find a way to use Hall's or log-convex theorem. No complete answer to the last question is expected.)

\ms\no 2) In the definition of $C_W$ one requires that all functions from that set satisfied $f/W\to 0$ at $\pm\infty$.
Why? Similarly, why can't we drop the condition that $W$ is lower semi-continuous?

\ms\no 3) Produce a countable set of bounded continuous functions on $\R$ that is dense in $C_W$ for any weight $W$
(we needed such a set in the proof of Bakan's theorem).

\ms\no 4) Consider $\L=\{n^3\}_{n>0}$. Let the weight $W$ be defined as $n^\gamma$ at $n^3$ and as $\infty$ outside of $\L$.
Using the results from this lecture, discuss for what real $\gamma$ the polynomials will be dense (not dense) in $C_W$.
Using Bakan's theorem, reformulate your statements in terms of $L^p$ approximation.

\newpage

\lecture{8}{The Gap Problem}

\ms\no Out of the three classical completeness problems  formulated in the first lecture it remains to discuss the Type Problem.
A solution to the Type Problem was found in \cite{TypeL8} and we plan to present it in our lectures. As it turns out, to approach the Type Problem one needs first to treat another well-known problem of Fourier Analysis, the so-called Gap Problem, which we will consider in this lecture.

\ms\no First, let us recall the statement of the Type Problem. We consider
the family $\EE_\L$ of exponential functions $\exp(2\pi i\l t)$ on $\R$ whose frequencies $\l$ belong to a certain
set $\L\subset \C$:
$$\EE_\L=\{\exp(2\pi i\l t)|\ \l\in \L\}.$$
In particular, we denote by $\EE_a=\EE_{[0,a]}$ the family of exponential functions whose frequencies belong to the interval
from 0 to $a$.
If $\mu$ is a finite positive measure on $\R$ we denote by $T_\mu$ its exponential type that is defined as
\begin{equation} T_\mu=\inf\{\ a>0\ |\ \EE_a  \textrm{ is complete in }  L^2(\mu)\ \}\label{type8}
\end{equation}
if the set of such $a$ is non-empty and as infinity otherwise. The type problem asks to calculate $T_\mu$ in terms
of $\mu$. Various reformulations of this problem appear in many fields of analysis. We discussed some of such connections in the first lecture. For more information see \cite{DymL8, TypeL8, BSL8}.

\lsection{General case $p\neq 2$}

\ms\no The family $\EE_a$ is incomplete in $L^2(\mu)$ if and only if there exists a function $f\in L^2(\mu)$ orthogonal to all
elements of $\EE_a$. Expanding  to other $1\leqslant p\leqslant \infty$ we define

\begin{equation} \GG^p_\mu=\sup\{\ a\ |\  \exists \ f\in L^p(\mu),\int f(x)e^{2\pi i\l x}d\mu(x)=0,  \forall \ \l\in[0,a]\ \}.
\label{typep8}\end{equation}

\ms\no We put $\GG^p_\mu=0$ if the set in \eqref{typep8} is empty. By duality, for $1<p\leqslant\infty$, $\GG^p_\mu$ can still be defined as the infimum of $a$ such that $\EE_a$ is complete in $L^q(\mu),\ \frac 1p +\frac 1q =1$. In particular, $\GG^2_\mu=T_\mu$.
The cases $p\neq 2$ were considered in several papers, see for instance articles by Koosis \cite{Koosis2} or Levin \cite{Levin2} for the case $p=\infty$ or \cite{GAP} for $p=1$.

\ms\no Since $\mu$ is a finite measure we have
\begin{equation}\GG^p_\mu\leqslant\GG^q_\mu\textrm{ for }p\geqslant q.\label{pq}\end{equation} Apart from this obvious observation, the problems of
finding $\GG^p_\mu$ for different $p$ were generally considered non-equivalent until the solution of the Type Problem! One of the consequences of
the main result of \cite{TypeL8} that we will discuss later, is that, in some sense, there are only two significantly different cases, $p=1$ (the gap problem)
and $1<p\leqslant\infty$ (the general type problem).

\lsection{The gap problem}\label{introGAP}

\ms\no Not only is the case $p=1$ important and interesting by itself, but, as was mentioned before, it seems to be a necessary step towards a solution for the Type Problem, $p=2$. Let us start with the following reformulation of the Gap Problem.

\ms\no Let $X$ be a closed subset of the real line. Denote
$$\GG_X=\sup\{\ a\ |\ \exists\   \mu\neq 0, \ \supp\mu\subset X,   \hat\mu=0   \textrm{ on }[0,a]\ \}.$$
Here and in what follows $\hat\mu$ denotes the (inverse) Fourier transform of a finite measure $\mu$ on $\R$:
$$\hat\mu(z)=\int_\R e^{2\pi izt}d\mu(t).$$
As was shown in \cite{GAP}, for any finite measure $\mu$ on $\R$, $\GG^1_\mu$, as defined in the previous section, depends only on its support:
$$\GG^1_\mu=\GG_X,\ X=\supp\mu.$$
This property separates the gap problem from all the cases $p>1$. (See exercises.)

\ms\no For a long time both the gap problem and the type problem were considered
by experts to be "transcendental," i.e. not having a closed form solution.
Following an approach developed in \cite{MIF1} and \cite{MIF2L8},
a solution to the gap problem was suggested in \cite{GAP}, see below.

\lsection{Classic examples}

\ms\no As before, we say that a function $f$ on $\R$ is Poisson-summable if it is summable with respect to the Poisson measure $\Pi$,
$$d\Pi=dx/(1+x^2).$$
We say that a sequence of real numbers $A=\{a_n\}$ is discrete if
it does not have finite accumulation points.
We always assume that a discrete sequence is enumerated
in the natural increasing order:  $a_n\geqslant a_{n-1}$. Since the sequences considered here
have $\pm\infty$ as their density points, the indices run over $\Z$.
In most of our statements and definitions, the sequences do not have multiple points.
We call a discrete sequence $\{a_n\}\subset \R$ separated if $|a_n-a_k|>c$ for some $c>0$ and any $n\neq k$.

\ms\no The following  statement combines results by Krein  (part I in the statement below, case $p=2$) and
by Levinson and McKean  (part II, $p=2$).

\begin{theorem}[Krein \cite{Krein1L8}, Levinson-McKean \cite{DML8}]\label{KLM8}
	Let $\mu$  be a finite measure on $\R$, $\mu=w(x) dx$, where $w(x)\geqslant 0$.
	Then
	
	\ms\no I) If $\log w$ is Poisson-summable then for any $1\leqslant p\leqslant \infty$, $\GG^p_\mu=\infty$.
	
	\ms\no II) If $\log w$ is monotone and Poisson-unsummable on a half-axis $(-\infty,x)$ or $(x,\infty)$ for some $x\in\R$ then for any $1< p\leqslant \infty$, $\GG^p_\mu=0$.
	
\end{theorem}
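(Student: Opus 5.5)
Part I follows the pattern of the proof of Hall's theorem given in the previous lecture. Since $\GG^p_\mu\leqslant\GG^1_\mu$ is not the useful direction, I use the monotonicity \eqref{pq} the other way: $\GG^\infty_\mu\leqslant\GG^p_\mu$ for all $p$. So it suffices to show $\GG^\infty_\mu=\infty$. For that I need, for every $a>0$, a bounded $f$ with $\int f e^{2\pi i\l x}w\,dx=0$ for all $\l\in[0,a]$.

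Since $\log w\in L^1(\Pi)$, there is an outer function $F$ in $\C_+$ with
$$|F|=\min\left(w,\frac1{1+x^2}\right)\quad\text{on }\R.$$
This is possible because $\log\min(w,(1+x^2)^{-1})$ is still Poisson-summable: $w\in L^1$ gives the upper bound, and the hypothesis gives the lower bound. Put $f=e^{2\pi i a x}F/w$. Then $|f|\leqslant 1$, so $f\in L^\infty(\mu)$.

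Next, $F(x)e^{2\pi i(a+\l)x}$ is the boundary value of an $H^1(\C_+)$ function for $\l\geqslant -a$. An $H^1$ function has vanishing integral over $\R$, since its Fourier transform vanishes on the appropriate half-line and is continuous. Hence
$$\int f e^{2\pi i\l x}\,d\mu=\int F e^{2\pi i(a+\l)x}\,dx=0\quad\text{for all }\l\geqslant -a,$$
in particular for all $\l\in[0,a]$ (up to the sign convention of the exponent; if needed I use $\bar F$ or $e^{-2\pi iax}$ instead). Since $a$ is arbitrary, $\GG^\infty_\mu=\infty$. This part is routine.

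For Part II, I reduce to showing that for every $a>0$ the family $\EE_a$ is complete in $L^q(\mu)$ for all $1\leqslant q<\infty$, which gives $\GG^p_\mu=0$ by duality. Suppose $f\in L^p(\mu)$ with $p>1$ annihilates $\EE_a$. Set $g=fw\in L^1(dx)$. Then $\hat g=0$ on $[0,a]$, and the integrability available is
$$\int|g|^p w^{1-p}\,dx<\infty.$$
By Hölder, this yields
$$\int_{I}\log|g|\,d\Pi\;\leqslant\;C+\Bigl(1-\tfrac1p\Bigr)\int_I\log w\,d\Pi$$
on a half-axis $I$ where $\log w$ is monotone with $\int_I\log w\,d\Pi=-\infty$. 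So $\log|g|$ is Poisson-unsummable from above on $I$. The task is then to show that a nonzero $L^1$ function with a spectral gap $[0,a]$ cannot have this property.

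The main obstacle is that a spectral gap alone does not force $\log|g|\in L^1(\Pi)$: the Beurling–Malliavin phenomena show that functions with gaps can be very small on sets. This is exactly where monotonicity of $w$ must enter. My first attempt is to shift the gap to $[-a/2,a/2]$ and write
$$g=g_+ + g_-,$$
with $g_\pm$ having spectra in the two half-lines beyond the gap, so that $g_+$ is (up to the factor $e^{\pi i a x}$) in $H^1(\C_+)$ and $g_-$ in $H^1(\C_-)$. The gap gives the extra decay $e^{-\pi a y}$ of $g_+$ in the half-plane. On the half-axis $I$, the monotone decay of $w$ means $g$ is tiny on a whole ray. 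I would then adapt the Levinson–McKean argument, following Koosis' treatment of the log-integral: construct the harmonic majorant of $\log|g_\pm|$ and use a Levinson-type log-log theorem, or the Beurling–Malliavin multiplier theorem from Lecture 2 applied to the entire functions obtained from the Cauchy transforms of $g$, to show that smallness on a monotone ray is compatible with the gap only if $\int_I\log^-|g|\,d\Pi<\infty$. This contradicts the displayed inequality unless $g\equiv0$.

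Monotonicity is what allows regularizing $\log w$ into a concave-type majorant, so that the smallness of $g$ on $I$ propagates to all large points rather than to a sparse set. I expect this step to carry essentially all the difficulty. The case $p=1$ is excluded because the Hölder step gives a factor $1-\frac1p=0$.
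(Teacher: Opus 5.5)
Your Part I is correct and is the paper's argument: an outer function times an exponential, with $f=e^{2\pi iax}F/w$ bounded. The truncation $\min(w,(1+x^2)^{-1})$ is harmless but unnecessary, since the outer $W$ with $|W|=w\in L^1$ is already in $H^1$ and $W/w$ is unimodular.

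Part II has a genuine gap. The decisive step is only a list of candidate tools, and the reduction you set up targets the wrong quantity. Since $|g|=|f|w$ with $f$ only in $L^p(\mu)$, $g$ need not be small at any prescribed point. So monotonicity of $w$ cannot be ``propagated'' to $g$ pointwise, and no statement about $\int_I\log|g|\,d\Pi$ alone can close the argument. As you note, such a statement is false for general gap functions: by Theorem~\ref{mainGAP} there are integrable functions with arbitrarily long spectral gaps vanishing on $(0,1)$. Your Jensen--H\"older inequality is true, but it discards exactly the usable information, which lives in interval masses. For every interval $J$,
$$\int_J|g|\,dx\le\|f\|_{L^p(\mu)}\,\mu(J)^{1-1/p},$$
and this is where $p>1$ really enters. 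Also, the splitting $g=g_++g_-$ with $g_\pm$ in $H^1(\C_\pm)$ is not available for $g\in L^1$, because the Riesz projection is unbounded on $L^1$. You have to work with the Cauchy integral off the axis.

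The missing ingredient is a uniqueness theorem stated in terms of masses, Theorem~\ref{BerLev}: if $\hat\nu$ vanishes on an interval and some disjoint intervals make the sum in \eqref{bleq} diverge, then $\nu=0$. Its proof runs as follows.
\begin{itemize}
\item The divisibility $K\nu=e^{iaz}K\nu_1$ gives $-\log|K\nu|\gtrsim a|I_n|$ in $\{\Im z>|I_n|/8\}$.
\item Let $\nu_n$ be the restriction of $\nu$ to $I_n$. Superharmonicity of $-\log|K(\nu-\nu_n)|$ on discs of radius $|I_n|/4$ centred on $\{\Im z=1,\ \Re z\in\frac12I_n\}$ brings this bound down to the line $\Im z=1$, at the cost of $|\nu|(I_n)$.
\item Divergence then contradicts Poisson-summability of $\log|K\nu|$ on horizontal lines.
\end{itemize}
Apply this to $\nu=g\,dx$ with the partition $a_0=0$, $a_{n+1}-a_n=-\log\mu((a_n,a_{n+1}))$. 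This partition exists unless $\supp\mu$ is bounded on that side, which is a trivial case. The displayed bound gives $\log\frac1{|\nu|(I_n)}\ge(1-\frac1p)|I_n|-C$, so the sum in \eqref{bleq} diverges as soon as $\{I_n\}$ is long. This is what justifies the paper's ``we may assume $\hat\mu$ itself vanishes''.

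Monotonicity is needed only for the elementary check that this partition is long. Integrability forces $w$ to be non-increasing on the relevant right half-axis; otherwise $w$ vanishes there, which is again trivial. Then $e^{-|I_{n+1}|}=\mu(I_{n+1})\le w(a_{n+1})|I_{n+1}|$ gives $\log^-w\le2|I_{n+1}|$ on $I_n$. If $\{I_n\}$ were short, then $|I_n|=O(1+a_n)$, so $1+a_{n+1}^2\asymp1+a_n^2$. Cauchy--Schwarz then gives $\sum|I_n||I_{n+1}|/(1+a_n^2)<\infty$, which would make $\log w$ Poisson-summable on the half-axis, a contradiction.
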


\ms\no (See Exercises.)

\ms\no A theorem by Duffin and Schaeffer \cite{DS} implies that if $\mu$ is a measure such that
for any $x\in \R$
$$\mu([x-L,x+L])>d$$
for some $L,d>0$ then $\GG^2_\mu\geqslant 1/L$.

\ms\no For discrete measures, in the case $\supp\mu=\Z$,  a deep result by Koosis shows an
analogue of Krein's result: if $\mu=\sum w(n)\delta_n$, where
$$\sum\frac{\log w(n)}{1+n^2}>-\infty,$$
then $\GG^p_\mu=1$ for all $p, \ 1\leqslant p\leqslant\infty$ \cite{Koosis2}. Not much was known about supports other than $\Z$ besides a result from \cite{PolyaL8}, which implies that if
$$\mu=\sum \frac{\delta_{a_n}}{1+a_n^2}$$
for a separated sequence $A=\{a_n\}\subset\R$ then $\GG^p_\mu= D_*(A)$, where $D_*$ is the interior Beurling-Malliavin density of $A$, see lecture 2 for the definition.

\ms\no In addition to these few examples, classical theorems by Levinson-McKean, Beurling and de Branges
show that if a measure has long gaps in its support or decays too fast, then $\GG^p_\mu=0$. We will discuss these theorems in our
next lecture.
Examples of measures of positive type  can  be constructed using the results by Benedicks \cite{BenedicksL8}.
The most significant further development, that allows one to modify existing examples, is the result by Borichev and Sodin
\cite{BSL8},
which says that "exponentially small" changes in weight or support do not change the type of a measure.

\lsection{The gap problem and $d$-uniform sequences}\label{secGAP}

\ms\no It is not difficult to calculate the gap characteristic of an arithmetic progression $\L=a+dn,\ a\in\R,\ d>0$: $\GG_\L=1/d$, see exercises.
It follows that if $X$ contains an arithmetic progression $\L$ then $\GG_X\geq 1/d$. It would be nice if $\GG_X$ for a general $X$
could be calculated as a supremum of such numbers taken over all arithmetic progressions contained in $X$. Unfortunately, this is not the case.
But, as it turns out, this simple idea is the right step towards a solution. We just need to replace arithmetic progressions with a slightly larger
class of sequences, the $d$-uniform sequences defined in this section.

\ms\no Let $\Lambda=\{\lambda_1,...,\lambda_n\}$ be a finite set of distinct points on $\R$. Define

\begin{equation}E(\Lambda)=\sum_{\lambda_k,\lambda_j\in\L, \ k\neq j} \log|\lambda_k-\lambda_j|.\label{electrons}\end{equation}
According to the 2D Coulomb law, the quantity $E(\L)$ can be interpreted as potential energy of the system of "flat electrons" placed at $\L$,
see \cite{GAP}. That observation motivates the term we use for the condition \eqref{energy8} below.
\bs

\ms\no The following example is included to illustrate our next definition.

\bs\no \textbf{Key example:}

\ms\no \textit{Let $I\subset \R$ be an interval and
	let $\L=d^{-1}\Z\cap I$ for some $d>0$.
	Then
	$$\Delta=\#\L=
	d|I|+O(1)$$
	and
	\begin{equation}E=E(\Lambda)=\sum_{1\leqslant m\leqslant \Delta}\log\left[d^{-\Delta+1}(m-1)!(\Delta-m)!\right]=
	\Delta^2\log |I| +  O(|I|^2)\label{eqkey}\end{equation}
	as follows from Stirling's formula. Here  the notation $O(\cdot)$ corresponds to the direction $|I|\to\infty$.
}

\begin{remark}
	The uniform distribution of points on the interval does not maximize
	the energy $E(\Lambda)$ but comes within $O(|I|^2)$ from the maximum, which is negligible
	for our purposes, see the main definition and its discussion below. It is interesting to observe that the
	maximal energy for $k$ points is achieved when the points are placed at the endpoints of $I$ and the zeros of
	the Jacobi $(1,1)$-polynomial of degree $k-2$, see for example \cite{Kerov}.
\end{remark}

\ms\no Let
$$...<a_{-2}<a_{-1}<a_0=0<a_1<a_2<...$$
be a discrete sequence of real points. We say that the intervals $I_n=(a_n,a_{n+1}]$ form a short partition
of $\R$ if $|I_n|\to\infty$  as $ n\to \pm\infty$ and the sequence $\{I_n\}$ is short.

\bs\no \textbf{Main Definition:}

\ms\no Let $\L=\{\lan\}$ be a discrete sequence of real points. We say that $\L$ is
$d$-uniform if
there exists a short partition $\{I_n\}$ such that
\begin{equation} \Delta_n= d|I_n|+o(|I_n|)\ \ \   \text{for all} \ \ \ n\ \ \textrm{(density condition)}\label{density8}\end{equation}
as $n\to\pm\infty$ and
\begin{equation} \sum_n \frac{\Delta_n^2\log|I_n|-E_n}{1+\dist^2(0,I_n)}<\infty\ \ \textrm{(energy condition)}\label{energy8}\end{equation}
where $\Delta_n$ and $E_n$ are defined as
$$\Delta_n=\#(\L\cap I_n)\ \ \text{ and }\ \ E_n=E(\Lambda\cap I_n)=\sum_{\lambda_k,\lambda_l\in I_n,\ \lambda_k\neq\lambda_l}\log|\lambda_k-\lambda_l|.$$

\ms\begin{remark}
	Note that the series in the energy condition is positive:  every term in the sum defining $E_n$ is at most $\log|I_n|$ and there
	are  fewer than $\Delta_n^2$ terms.
	
	\ms\no As follows from the example above, the first term in the numerator of \eqref{energy8} is approximately equal to
	the energy of $\Delta_n$ electrons spread uniformly over $I_n$. The second term is the energy of electrons placed at
	$\L\cap I_n$. Thus the  energy condition
	is a requirement that the placement of the points of $\Lambda$ is close to uniform, in the sense that the work needed to
	spread the points of $\L$ uniformly on each interval is
	summable with respect to the Poisson weight. For a more detailed discussion of this definition see \cite{GAP}
\end{remark}

\ms\no In \cite{GAP}, $d$-uniform sequences were used to solve the gap problem mentioned in the introduction.
Recall that with any closed $X\subset\R$ one can associate its (spectral) gap characteristic $\GG_X$ defined as
the supremum of the size of the spectral gap taken over all finite non-zero measures supported on $X$.
The main result of \cite{GAP} is the following statement:

\begin{theorem}\label{mainGAP}\cite{GAP} Let $X$ be a closed set on $\R$. Then
	$$\GG_X=\sup\{\ d\ |\  X \textrm{ contains a }
	d-\textrm{uniform sequence }\}.$$
\end{theorem}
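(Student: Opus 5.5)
The plan is to prove two inequalities: a lower bound for $\GG_X$ from $d$-uniform sequences contained in $X$, and an upper bound showing that a spectral gap of size $a$ forces $X$ to contain a $d$-uniform sequence for every $d<a$. Both directions go through the standard reduction of spectral gaps to Toeplitz kernels and Poisson-summability conditions, following the approach of \cite{MIF1, MIF2L8}. A measure $\mu$ supported on $X$ with $\hat\mu=0$ on $[0,a]$ is equivalent, after shifting the gap to be symmetric and passing to Cauchy integrals, to a pair of functions analytic in $\C_\pm$ whose boundary jump is $\mu$ and which differ by the factor $S^{2\pi a}$. This produces a nontrivial element of a Toeplitz kernel of the form $N[\bar S^{2\pi a}\ldots]$ concentrated on $X$. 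In the discrete case ($X$ discrete, which we reduce to by a standard approximation of closed sets by discrete subsets with controlled loss), the condition becomes existence of a meromorphic inner function $\theta$ with $\sigma(\theta)\subset X$ and an argument comparable to $2\pi a x$ in the sense of Poisson-summable deviations.

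\textbf{Lower bound.} Suppose $X\supset\L$ with $\L$ $d$-uniform. We will construct, for each $a<d$, a measure $\mu=\sum c_n\delta_{\lan}$ with a gap $[0,a]$. We start from the short partition $\{I_n\}$ and the density condition to produce a function $u$ with $u(\lan)=\pi n$, $u'\approx\pi d$, and $u-\pi d x$ small. The energy condition is precisely what ensures that the harmonic conjugate satisfies $\ti u - \ti{(\pi d x)}\in L^1(\Pi)$ after local averaging. On each $I_n$ the quantity $\Delta_n^2\log|I_n|-E_n$ measures the deviation of the logarithmic potential of $\L\cap I_n$ from that of a uniform grid. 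Summing these deviations with Poisson weights controls $\log|B(x)|$ for an entire function $B$ vanishing on $\L$, compared with $\sin(\pi d x)$. Next we divide out a portion of density $d-a$ (a sparse regular subsequence) and choose weights $c_n=1/B'(\lan)$ times an outer correction. Lagrange interpolation, as in Lemma \ref{growth}, then gives a measure whose Cauchy transform decays along $i\R$ like $e^{-2\pi a y}$. By the Paley--Wiener/Phragm\'en--Lindel\"of argument this yields $\hat\mu=0$ on $[0,a]$. Shortness of the partition ensures that the correction remains Poisson-summable, so the $o(|I_n|)$ errors do not spoil the exponential type.

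\textbf{Upper bound, the main obstacle.} Given $\mu$ on $X$ with gap $a$, we obtain a meromorphic inner function $\theta$ with spectrum in $X$ and $\arg\theta = 2\pi a x + \ti h$, where $h$ is Poisson-summable. The difficulty is extracting from $\sigma(\theta)$ a subsequence that is genuinely $d$-uniform, because the spectrum can be clustered or sparse in ways invisible to density. The plan is to first use a Beurling--Malliavin type multiplier argument (the little multiplier theorem) to regularize the argument, replacing $\theta$ by a product whose argument is increasing with derivative bounded below by $2\pi d$ outside a short union of intervals. Then we choose a short partition adapted to $h$: long intervals on which the averaged deviation of $h$ is small. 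On each $I_n$ we select points of $\sigma(\theta)$ at which the argument crosses multiples of $2\pi$, thinned to density $d$. The energy defect of this selection is then bounded by the local Poisson integral of $|h|$ via a discrete logarithmic-potential estimate. This is the step where most of the work lies: it requires converting the $L^1(\Pi)$ smallness of $h$ into the quadratic energy bound $\Delta_n^2\log|I_n|-E_n$. I would attempt it through Jensen-type comparison of the counting function with the potential of the equilibrium measure on $I_n$, losing only $O(|I_n|^2)$ per interval as in the key example \eqref{eqkey}. That loss is summable by shortness.
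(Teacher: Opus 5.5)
Your upper bound contains a step that is false, not merely unproved. (The notes defer the proof to \cite{GAP}, which follows the Toeplitz-kernel approach of \cite{MIF1, MIF2L8} that you also invoke, so I judge your outline on its own terms.)

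Once you have $\theta$, everything you do uses only two facts: $\sigma(\theta)\subset X$, and $\arg\theta=2\pi ax+\ti h+\const$ with $h\in L^1_\Pi$. That is a Beurling--Malliavin type density condition. It cannot see how close the points of $X$ are to one another, whereas the energy condition is exactly a quantitative prohibition of clustering.

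Take $X=2\Z\cup\{2n+e^{-n^2}: n\in\Z\}$.
\begin{itemize}
\item Here $n_X(x)-x$ is bounded. So Krein's shift (Exercise 6 of Lecture 12) gives a meromorphic inner $\theta$ with $\sigma(\theta)=X$ and $g=\arg\theta-2\pi x$ bounded.
\item Since $\ti g\in BMO\subset L^1_\Pi$, we get $g=\ti h+\const$ with $h=-\ti g\in L^1_\Pi$. Thus your hypotheses hold with $a=1$.
\item Yet $X$ contains no $d$-uniform sequence with $d>1/2$. An interval $I_k$ of a partition meets at most $|I_k|/2+2$ of the pairs $\{2n,2n+e^{-n^2}\}$. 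Hence a subsequence with $\Delta_k=d|I_k|+o(|I_k|)$ contains both points of at least $(d-\frac12)|I_k|/2$ pairs inside $I_k$ for large $k$.
\item Each such pair contributes $2(\log|I_k|+n^2)\geq \dist^2(0,I_k)/2$ to the termwise nonnegative quantity $\Delta_k^2\log|I_k|-E_k$. The energy series is therefore bounded below by $\frac{d-1/2}{8}\sum_{\dist(0,I_k)\geq1}|I_k|=\infty$.
\end{itemize}
So neither a selection from $\sigma(\theta)$ nor a Jensen-type comparison can bound the energy defect by the Poisson integral of $|h|$. A Jensen-type comparison counts the points in an interval but says nothing about their mutual distances. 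Your route would ``prove'' that this $X$ contains $d$-uniform sequences for every $d<1$.

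What is missing is any use of the finiteness of $\mu$, which is the only thing that penalizes clustering. In the Clark picture of Lecture 4, a measure on $\sigma(\theta)$ has masses of the form $2\pi F(\l)/|\theta'(\l)|$, and $|\theta'(\l)|=(\arg\theta)'(\l)$ is huge exactly where the spectrum clusters. One natural way to use this:
\begin{itemize}
\item let the masses of $\mu$ drive the selection, keeping points where they are not too small in the Poisson-summable sense;
\item prove that these points still carry density close to $a$;
\item combine the lower bound on the masses with an upper bound on $F$ to control $\log^+(\arg\theta)'$, i.e.\ the local inverse distances, along the selection.
\end{itemize}
That control is what becomes the energy condition.

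Two further steps also need real arguments.
\begin{itemize}
\item The reduction to discrete $X$ is not a standard approximation. Discretizing a measure destroys an exact spectral gap, so producing a discrete measure on $X$ with almost the same gap is part of the problem.
\item In the lower bound, the roles of the two conditions are mixed up. The density condition and shortness control the far field, i.e.\ the harmonic conjugate of the counting function; this holds after trimming each $I_n$ to about $a|I_n|$ points, which only decreases $\Delta_n^2\log|I_n|-E_n$. The energy condition instead gives Poisson-summable lower bounds for the local part of $\log|B'(\l)|$ at the points, i.e.\ summability of the masses $G(\l)/B'(\l)$.
\end{itemize}
Finally, your ``outer correction'' must be an entire function $G$ of small type, or the construction must be done in the Clark/Toeplitz framework. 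Its existence with pointwise bounds at possibly non-separated points is itself a nontrivial multiplier statement.
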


\ms\no  Recall that, as was proved in \cite{GAP}, $\GG_X=\GG^1_\mu$ for any $\mu$ such that $\supp\mu=X$.

\ms
\begin{remark}\label{rem1}
	
	$$$$
	
	\begin{itemize}

		\item If $\L$ is a $d$-uniform sequence then $D_*(\L)=d$, as follows easily from  the density condition \eqref{density8}.
		
		\item Among other things, the energy condition ensures that the points of $\L$ are not too close to each other. In particular, if
		$\L$ is $d$-uniform for some $d>0$ and
		$\L'=\{\lambda_{n_k}\}$ is a subsequence such that for all $k$,
		$$\lambda_{n_k+1}-\lambda_{n_k}\leqslant e^{-c|\lambda_{n_k}|}$$
		for some $c>0$, then $D_*(\L')=0$.
		
		\item An exponentially small perturbation of a $d$-uniform sequence contains a $d$-uniform subsequence. More precisely,
		if $c>0$ and $\L$ is a $d$-uniform sequence then any sequence $A=\{\alpha_n\}$ such that
		$|\lan-\alpha_n|\leqslant e^{-c|\lan|}$ contains a $d$-uniform subsequence $A'$ consisting of
		all $\alpha_{n_k}$ such that
		$$\lambda_{n_k+1}-\lambda_{n_k}\geqslant e^{-(c-\e)|\lambda_{n_k}|}.$$
		
		\item As discussed in \cite{GAP}, the energy condition always holds for separated sequences. If $\L$ is separated
		then it is $d$-uniform if and only if $D_*(\L)=d$.

	\end{itemize}
\end{remark}

\lsection{Exercises}

\ms\no 1) Show that $$\GG^1_\mu=\GG_X,\ X=\supp\mu.$$ (This is proposition 1 in \cite{GAP}.)

\ms\no 2) Prove the statements in the last remark.

\ms\no 3) The following statement connects the size of the spectral gap of a measure with the asymptotic behavior of its Cauchy integral.
It is similar to the lemma in the last lecture on the measures that annihilate polynomials:

\begin{lemma} Let $\mu$ be a measure with finite total variation.
	Then the Fourier transform of $\mu$ vanishes on $[-a,a]$ if and
	only if $$ \lim_{y \to \pm\infty }e^{xy}
	\int{\frac{d\mu(t)}{t-iy}} =0,$$ for every $x \in [-a,a].$
\end{lemma}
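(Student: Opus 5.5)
The plan is to express the Cauchy integral of $\mu$ on the imaginary axis as a Laplace transform of its Fourier transform. The decay condition then becomes a statement about where that Fourier transform vanishes, and a Phragm\'en--Lindel\"of argument recovers the vanishing.

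\textbf{Setup.} Write $\varphi(s)=\int e^{-ist}d\mu(t)$. This is a bounded continuous function, since $\mu$ has finite total variation. The statement uses $\hat\mu$ with the factor $2\pi$; I will work with the frequency variable $s$ and rescale at the end, since the interval $[-a,a]$ only changes by a fixed factor.

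For $\Im z>0$ we have $\frac1{t-z}=i\int_0^\infty e^{-is(t-z)}ds$, and the integral converges absolutely because $|e^{-is(t-z)}|=e^{-s\Im z}$. By Fubini,
$$K\mu(iy)=i\int_0^\infty e^{-sy}\varphi(s)\,ds,\qquad y>0.$$
The analogous formula for $y<0$ involves $\varphi$ on the negative half-line. So the case $y\to+\infty$ concerns the vanishing of $\varphi$ on one half of $[-a,a]$, the case $y\to-\infty$ concerns the other half, and it suffices to treat $y\to+\infty$ with $\varphi$ on $[0,a]$.

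\textbf{Necessity.} Suppose $\varphi=0$ on $[0,a]$. Then
$$|K\mu(iy)|\le \|\mu\|\int_a^\infty e^{-sy}ds=\|\mu\|\frac{e^{-ay}}{y}.$$
Hence $e^{xy}K\mu(iy)\to0$ for every $x\le a$, including the endpoint $x=a$ thanks to the factor $1/y$.

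\textbf{Sufficiency.} Assume $e^{xy}K\mu(iy)\to0$ for all $x\in[0,a]$. Split the Laplace integral into $\int_0^a$ and $\int_a^\infty$, and set $F(w)=\int_0^a e^{-sw}\varphi(s)\,ds$. The tail $\int_a^\infty$ is $O(e^{-ay}/y)$, so the hypothesis gives $F(y)=O(e^{-(a-\e)y})$ for every $\e>0$ as $y\to+\infty$.

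Now consider the entire function
$$G(w)=e^{aw}F(w)=\int_0^a e^{uw}\varphi(a-u)\,du.$$
It has the following properties:
\begin{itemize}
\item $G$ is of exponential type at most $a$;
\item $G$ is bounded in the closed left half-plane;
\item $G$ is bounded on $i\R$;
\item $G(y)=O(e^{\e y})$ on $\R_+$ for every $\e>0$.
\end{itemize}
Apply Phragm\'en--Lindel\"of in each of the two quadrants of the right half-plane. On each quadrant $G$ is bounded on the imaginary boundary ray and $O(e^{\e y})$ on the real ray, so $|G(w)|\le C_\e e^{\e|w|}$ there. Then apply Phragm\'en--Lindel\"of once more in the right half-plane: $G$ is bounded on $i\R$ and of order one, minimal type, so it is bounded there. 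Being bounded in both half-planes, $G$ is a bounded entire function and hence constant by Liouville. The Riemann--Lebesgue lemma gives $G(iy)\to0$ as $y\to\pm\infty$, so $G\equiv0$. Consequently $\int_0^a e^{uw}\varphi(a-u)\,du\equiv0$, and uniqueness of the Fourier--Laplace transform of a compactly supported function forces $\varphi=0$ on $[0,a]$.

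\textbf{Main obstacle.} The delicate step is the Phragm\'en--Lindel\"of bookkeeping in the right half-plane. The a priori type of $G$ there is $a$, and I must make sure the information $G(y)=O(e^{\e y})$ on the real axis, together with boundedness on $i\R$, really reduces it to a bounded function. Doing this quadrant by quadrant is where I expect the care to be needed. The remaining steps, and the translation from $s$ back to the $2\pi$-normalized $\hat\mu$ and the $[-a,a]$ in the statement, are routine.
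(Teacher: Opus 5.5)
Your proof is correct. The notes do not prove this lemma: it is Exercise~3 of Lecture~8, with a pointer to Lemma~2 of \cite{PolyaL8}, so there is no in-text argument to compare against. Your route (writing $K\mu$ on $i\R$ as a Laplace transform, then Phragm\'en--Lindel\"of, Liouville and Riemann--Lebesgue) is the standard one. It also connects directly to how the lemma is used in the proof of Theorem~\ref{BerLev}. With $\nu=e^{-iat}\mu$ one has
\[
K\mu(z)-e^{iaz}K\nu(z)=i\int_0^a e^{isz}\varphi(s)\,ds=i\,e^{iaz}G(-iz),
\]
so your conclusion $G\equiv0$ is exactly the divisibility $K\mu=e^{iaz}K\nu$ in $\C_+$ that is invoked there.

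Two small remarks.

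First, the translation back to the statement is not a matter of rescaling. The lemma is true as written only for the unnormalized transform $\int e^{\pm ist}\,d\mu(t)$, which is the convention implicit in Lecture~9. With the $2\pi$-normalized $\hat\mu$ of Lecture~8, your argument proves that $\hat\mu=0$ on $[-a,a]$ if and only if $e^{2\pi xy}K\mu(iy)\to0$ for $x\in[-a,a]$. The version with $e^{xy}$ would give only $\hat\mu=0$ on $[-a/2\pi,a/2\pi]$ in the ``if'' direction. So simply read $\hat\mu$ as your $\varphi$.

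Second, the hypothesis includes $x=a$, and the tail contributes only $O(1/y)$, so you in fact get $G(y)\to0$ on $\R_+$. Then $G$ is bounded on both boundary rays of each quadrant. A single application of Phragm\'en--Lindel\"of there (exponential type is below the critical order $2$ for an angle of opening $\pi/2$) already bounds $G$ in the right half-plane, and the second step with minimal type is unnecessary. Your two-step version does buy something, though. It shows that the hypothesis for $x\in(-a,a)$ alone suffices, with vanishing on the closed interval recovered from the continuity of $\varphi$.
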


\ms\no Try to prove this statement. (This is lemma 2 in \cite{PolyaL8}.)

\ms\no 4) Show that $\GG_\Z=1$. (Hint: show that $\csc(\pi z)$ is a Schwarz integral of a Poisson-finite measure. Make an adjustment
to obtain a Cauchy integral and use the last exercise.) Obtain the formula for $\GG_\L$, where $\L$ is an arithmetic progression.

\ms\no 5) Prove the classical result by Krein \cite{Krein1L8} which says that if $d\mu=w(x)dx$ and $\log w$ is Poisson-summable then
$\GG^p_\mu=\infty$ for all $p, \ 1\leqslant p\leqslant\infty$. (Hint: consider the outer function
$W=e^{\SS w}$, where $\SS w$ denotes the Schwarz integral defined in lecture 3. Then for any $a>0$ the measure
$e^{2\pi a iz}Wdx$ annihilates the family of exponentials $\EE_a$.)

\newpage

\lecture{9}{Classical gap theorems}

\lsection{Short proofs for classical gap theorems}

\ms\no The proof of the gap theorem presented in the last lecture is too long and technical to include in this course. To give
the reader some taste of the proofs, in this lecture we discuss classical theorems by Krein, Levinson and McKean, Beurling and de Branges
on the same subject.

\ms\no We first formulate an auxiliary statement, theorem \ref{BerLev} below,
and give it a short elementary proof. We then show how to deduce the classical results from theorem \ref{BerLev},
thus giving simple proofs to those theorems as well. Instead of deducing the classical theorems
from each other we prefer to give each a direct closed proof through theorem \ref{BerLev}, which itself could
be considered an equivalent reformulation of de Branges' theorem \ref{Branges} below.

\ms\no In our estimates we write $a(n)\lesssim b(n)$ if $a(n)< C b(n)$ for some positive
constant $C$, not depending on $n$, and large enough $|n|$. Similarly, we write $a(n)\asymp b(n)$ if $c a(n)< b(n)<C a(n)$
for some $C\geq c>0$. Some formulas will have other parameters in place of $n$ or no parameters at all.

\ms\no Recall that a sequence of disjoint intervals $\{I_n\}$ on the real line is long (in the sense of Beurling and Malliavin)
if
\begin{equation}\sum_n\frac{|I_n|^2}{1+\dist^2(0,I_n)}=\infty\label{long}
\end{equation}
where $|I_n|$ stands for the length of $I_n$.
If the sum is finite we call $\{I_n\}$ short.

\ms\no If $I$ is an interval on $\R$ and $C>0$ we denote by $CI$ the interval with the same center as $I$ of length $C|I|$.

\begin{theorem}\label{BerLev}
	Let $\mu$ be a finite measure on $\R$ whose Fourier transform vanishes on an interval. Suppose that there exists a sequence of disjoint intervals
	$\{I_n\}$ such that
	
	\begin{equation} \sum \frac{|I_n|}{1+\dist^2(I_n,0)}\min\left(|I_n|,\log\frac 1{|\mu|(I_n)}\right)=\infty.
	\label{bleq}
	\end{equation}
	
	Then $\mu\equiv 0$.
	
\end{theorem}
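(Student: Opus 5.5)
We argue by contradiction. Suppose $\mu\neq0$ and $\hat\mu=0$ on an interval. Multiplying $\mu$ by a unimodular exponential $e^{2\pi i c t}$ shifts $\hat\mu$ but changes neither $|\mu|$ nor \eqref{bleq}. So we may assume $\hat\mu=0$ on $[-a,a]$ for some $a>0$.

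\textbf{Step 1: exponential decay of the Cauchy integral.} Consider $H(z)=K\mu(z)=\int\frac{d\mu(t)}{t-z}$, analytic in $\C\setminus\supp\mu$. By the lemma in Exercise 3 of the previous lecture (and its proof, which works along every vertical line), $H(x+iy)e^{2\pi a|y|}$ stays bounded as $|y|\to\infty$. The trivial bound $|H(x+iy)|\le\|\mu\|/|y|$ also holds. Hence the subharmonic function
$$v(z)=\log|H(z)|+2\pi a|\Im z|$$
is bounded above by $\log^+(1/|\Im z|)+\const$. I will treat it as a subharmonic function in the whole plane, with a controlled logarithmic singularity along $\R$.

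\textbf{Step 2: local smallness over each $I_n$.} Fix $n$ and split $\mu=\mu_n+\nu_n$, where $\mu_n=\mu|_{I_n}$. Then $K\nu_n$ is analytic across the middle half $\frac12 I_n$ of $I_n$. On the circle centered at the midpoint of $I_n$ with radius $|I_n|/4$, the part $K\mu_n$ is at most of size $|\mu|(I_n)/|I_n|$ at distance $\gtrsim|I_n|$ from $I_n$, and near $\frac12 I_n$ we use the Step 1 bound for $K\mu_n$. I would first reduce to intervals with $|I_n|\ge1$ and $|\mu|(I_n)\le e^{-|I_n|}$, replacing $\log\frac1{|\mu|(I_n)}$ by $\min(|I_n|,\log\frac1{|\mu|(I_n)})$ as in \eqref{bleq}. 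The aim is that $v$ is then forced to be very negative on a disk $D_n$ of radius $\asymp|I_n|$ centered on $I_n$. More precisely, $\log|H|\le -c\min(|I_n|,\log\frac1{|\mu|(I_n)})$ on $D_n$, up to an additive $O(\log|I_n|)$ coming from the Cauchy kernel. The gain is already present on $\R$, not only at height $|I_n|$.

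\textbf{Step 3: sum the negative contributions via a Poisson/Jensen estimate.} Fix a point, say $z_0=i$ (or any point where $H\ne0$, which exists since $\mu\ne0$ implies $H\not\equiv0$). Apply the Poisson–Jensen inequality for $v$ in a large half-disk or disk. The harmonic measure at $z_0$ of $D_n$ is $\asymp\frac{|I_n|}{1+\dist^2(0,I_n)}$. Since $v$ is bounded above there by $-c\min(|I_n|,\log\frac1{|\mu|(I_n)})$, the contribution of $\bigcup D_n$ to the Poisson integral is
$$-c\sum_n\frac{|I_n|}{1+\dist^2(0,I_n)}\min\Big(|I_n|,\log\frac1{|\mu|(I_n)}\Big),$$
which is $-\infty$ by \eqref{bleq}. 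The rest of the boundary contributes at most a finite amount: the majorant $2\pi a|\Im z|-2\pi a|\Im z|$ cancels, and the $\log^+(1/|\Im z|)$ singularity is Poisson-integrable. Hence $v(z_0)=-\infty$, i.e.\ $H(z_0)=0$. Running this at every point off $\R$ gives $H\equiv0$ in $\C_\pm$, and then $\mu=0$ because a finite measure is recovered from the jump of its Cauchy integral.

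\textbf{Main obstacle.} The delicate step is Step 2 combined with its use in Step 3. One must get a genuinely two-sided estimate on a disk straddling $I_n$, where $H$ is not analytic, while keeping the error terms of order $O(\log|I_n|)$ or $O(1)\cdot|I_n|$ weighted by harmonic measure, so that they stay summable. The intervals also need to be kept essentially disjoint after enlargement. I would first pass to a subsequence of $I_n$ and shrink each to $\frac12 I_n$ so the disks $D_n$ are disjoint. Then I would control $K\mu_n$ near $\R$ by applying the Poisson–Jensen estimate to $\log|H|$ averaged over horizontal lines at height $\asymp|I_n|$ rather than pointwise on $\R$.
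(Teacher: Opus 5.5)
Your outline is the paper's: split off $\mu_n=\mu|_{I_n}$, use that $K(\mu-\mu_n)$ is analytic across $\frac12 I_n$ and exponentially small far from $\R$, and contradict Poisson-summability of $\log|K\mu|$. Steps 2--3 do not work as written, however.

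\textbf{Working on $\R$ instead of on a horizontal line.} The function $v=\log|K\mu|+2\pi a|\Im z|$ is not subharmonic across $\supp\mu$, because $K\mu$ jumps by $2\pi i\mu$ there. So there is no Poisson--Jensen inequality on disks $D_n$ centered on $I_n$, or on a large disk containing them, and ``harmonic measure of $D_n$'' has no meaning in your setting.

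Your bound $\log|K\mu|\le -c\min(\cdot)+O(\log|I_n|)$ on $D_n$ is also false near $\supp\mu_n$. If $\mu_n$ has an atom of mass $m$ at $x_0$, then $|K\mu_n(x_0+iy)|\approx m/|y|$, so the error term is really $\log^+(1/|\Im z|)$. Likewise, the boundary values of $K\mu_n$ on $\frac12 I_n$ are not bounded by $|\mu|(I_n)$ pointwise; the gain survives on $\R$ only in mean, which would need a weak-type estimate for $K\mu_n$. Your fallback of averaging at heights $\asymp|I_n|$ gives no single line on which to invoke summability.

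The paper's remedy is to work on the fixed line $\Im z=1$:
\begin{itemize}
\item For $\Im\xi=1$ and $\Re\xi\in\frac12 I_n$, the closed disk $|z-\xi|\le|I_n|/4$ meets $\R$ only inside $I_n$, so $\log|K(\mu-\mu_n)|$ is subharmonic there.
\item The mean value inequality, combined with the smallness of $K\mu$ for $\Im z>|I_n|/8$, makes $|K(\mu-\mu_n)(\xi)|$ tiny. That smallness is uniform in $\Re z$ because $K\mu=e^{iaz}K\nu$ with $\nu$ finite, which is more than the lemma of Exercise 3 gives.
\item Trivially $|K\mu_n(\xi)|\le|\mu|(I_n)$, since $\Im\xi=1$.
\end{itemize}
This yields $\int\log|K\mu(x+i)|\,d\Pi(x)\le C-c\sum_n\Pi(\tfrac12 I_n)\min(|I_n|,\log\frac1{|\mu|(I_n)})$. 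On the other hand, $\log|K\mu(x+i)|\in L^1(\Pi)$ unless $K\mu\equiv0$ in $\C_+$.

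\textbf{The weight comparison.} You claim the harmonic mass of the $n$-th piece is $\asymp\frac{|I_n|}{1+\dist^2(0,I_n)}$. That holds only when $|I_n|\lesssim 1+\dist(0,I_n)$; in general the mass is $\Pi(\frac12 I_n)$, which is much smaller for very long intervals. No reduction repairs this, because the theorem as stated is false.

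Take $d\mu=dx/(x+i)^2$. It is nonzero, finite, and $\hat\mu=0$ on a half-line. For the disjoint intervals $I_n=(2^{2^n},2^{2^{n+1}}]$ one has $\frac{|I_n|}{1+\dist^2(I_n,0)}\to1$ and $\min(|I_n|,\log\frac1{|\mu|(I_n)})\ge 2^n\log 2$, so the series in \eqref{bleq} diverges.

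The paper's closing step (``\eqref{bleq} implies that $\log|K\mu|$ is not Poisson-summable on $\{\Im z=1\}$'') makes the same comparison implicitly. The statement needs the extra hypothesis $|I_n|\le C(1+\dist(0,I_n))$, under which $\frac{|I_n|}{1+\dist^2(I_n,0)}\asymp\Pi(\frac12 I_n)$. With that hypothesis, your argument goes through once it is moved to the line $\Im z=1$ as above.
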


\ms\no Roughly speaking, the formula in \eqref{bleq} and the conclusion of the theorem say that if $\mu$ decays fast along
a large sequence of intervals, then it cannot have a spectral gap (unless it is identically zero).
In a sense, this is a hybrid of a theorem by Beurling, which says that a measure with a spectral gap may not
vanish on a large sequence of intervals, and a theorem by Levinson, which says that such a measure may not
decay fast along the whole line (see below).

\ms\no The proof borrows an idea from the proof of Beurling's gap theorem by Benedicks in \cite{BenedicksL9}.

\begin{proof} Without loss of generality  $|I_n|>1$ for all $n$, because the sum in \eqref{bleq} taken over all intervals of length less than $1$ is finite.
	Suppose that $\hat\mu$ vanishes on $[-a,a]$. Then, once again, its Cauchy integral $K\mu$ is divisible by $e^{iaz}$ in $\C_+$,
	in the sense that
	$$K\mu=e^{iaz}K\nu,$$
	where $\nu$ is a finite measure, $\nu=e^{-iaz}\mu$, see for instance  lemma 2 in \cite{PolyaL9} that was already discussed in previous lectures.
	
	\ms\no Denote by $J_n$ the interval on $\R+i$:
	$$J_n=\left\{z\ | \ \Im z =1,\ \Re z \in \frac 12I_n\right\}.$$

	\ms\no Denote by $\mu_n$ the restriction of $\mu$ on $I_n$ and put $\eta_n=\mu-\mu_n$. Notice that
	$K\eta_n(z)$ is holomorphic in $(\C\setminus\R)\cup I_n$. Hence
	$-\log |K\eta_n(z)|$ is superharmonic in $\{|z-\xi|\leqslant |I_n|/4\}$ for any $\xi\in J_n$.
	Since
	$$-\log |K\mu(z)|=-\log |K\nu(z)|-\log |e^{iaz}|\gtrsim a|I_n|$$
	in the half-plane
	$$ \{\Im z> |I_n|/8\},$$
	we obtain
	$$-\log |K\eta_n(\xi)|\geqslant -\frac 1{2\pi}\int_0^{2\pi}\log \left|K\eta_n\left(\xi+\frac{|I_n|}{4}e^{i\phi}\right)\right|d\phi=$$$$
	-\frac 1{2\pi}\int_0^{2\pi}\log \left|K\mu\left(\xi+\frac{|I_n|}{4}e^{i\phi}\right)-K\mu_n\left(\xi+\frac{|I_n|}{4}e^{i\phi}\right)\right|d\phi\gtrsim
	$$$$
	\min\left(a|I_n|,-\log\frac{|\mu|(I_n)}{|I_n|}\right)
	$$
	for any $\xi\in J_n$. On the other hand,
	$$|K\mu(\xi)|=|K\eta_n(\xi)+K\mu_n(\xi)|\leqslant |K\eta_n(\xi)| +|\mu|(I_n)|$$
	and
	$$-\log |K\mu(\xi)|\gtrsim \min(|I_n|,-\log\frac{|\mu|(I_n)}{|I_n|},-\log|\mu_n|(I_n))\gtrsim
	$$$$\min(|I_n|,-\log|\mu|(I_n))$$
	(recall that $|I_n|>1$).
	
	\ms\no Now \eqref{bleq} implies that $\log |K\mu|$ is not Poisson-summable on the line $\{\Im z=1\}$.
	But any Cauchy integral of a non-zero measure must have a Poisson-summable logarithm on any horizontal line in $\C_+$, unless
	it is identically zero in $\C_+$, see for instance \cite{KoosisL9}.
	Similarly,
	it is zero in $\C_-$. If $K\mu$ is zero in both half-planes,  $\mu\equiv 0$.
	
\end{proof}

\ms\no Now assume that the complement of $\supp\mu$ is long. Then the complement can be taken as $\{I_n\}$ in \eqref{bleq}. We obtain

\begin{theorem}[Beurling's Gap Theorem \cite{Beurling-Stanford}]
	If $\mu$ is a finite measure supported on a set with long gaps and
	the Fourier transform of $\mu$ vanishes on an interval, then $\mu\equiv 0$.
	
\end{theorem}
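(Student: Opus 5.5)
We deduce Beurling's Gap Theorem directly from Theorem \ref{BerLev}, as the sentence preceding the statement suggests. Let $\mu$ be a finite measure whose Fourier transform vanishes on an interval, and suppose the complement $\R\setminus\supp\mu$ is long. This open set is a countable union of disjoint open intervals $\{I_n\}$, the gaps of the support. By hypothesis this family is long in the sense of \eqref{long}:
$$\sum_n\frac{|I_n|^2}{1+\dist^2(0,I_n)}=\infty.$$
We take these gaps as the intervals in \eqref{bleq}.

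The key observation is that each $I_n$ lies in the complement of the support, so $|\mu|(I_n)=0$. Hence $\log\frac1{|\mu|(I_n)}=+\infty$, and
$$\min\left(|I_n|,\log\frac 1{|\mu|(I_n)}\right)=|I_n|.$$
The series in \eqref{bleq} therefore coincides with the series in \eqref{long}, which diverges. Theorem \ref{BerLev} then gives $\mu\equiv0$.

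The only points needing care are formal. First, the convention $\log(1/0)=+\infty$ must be admissible in \eqref{bleq}. If one prefers to avoid it, note that the proof of Theorem \ref{BerLev} only uses $|\mu|(I_n)$ through the lower bound $-\log|K\eta_n|\gtrsim\min(a|I_n|,\dots)$. When $\mu_n=0$ we have $\eta_n=\mu$, and the bound $\gtrsim a|I_n|$ holds outright.

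Second, some gaps may be unbounded (half-lines), or the gaps may fail to be short in length individually. A half-line gap is harmless: we can replace it by infinitely many disjoint intervals of the form $[2^k,2^{k+1}]$ inside it, and the resulting family is still long by the dyadic example of Lecture 2. In the same spirit, we may discard gaps of length less than $1$, since their contribution to the sum is finite.

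Thus no step is a genuine obstacle. The mild point is checking that Theorem \ref{BerLev} applies with infinite values of $\log(1/|\mu|(I_n))$, or else passing to the subdivided long family of bounded intervals contained in the gaps.
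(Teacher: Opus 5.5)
Your proposal is correct and matches the paper's argument: the paper also deduces Beurling's theorem by taking the long family of gaps of $\supp\mu$ as $\{I_n\}$ in \eqref{bleq}, where $|\mu|(I_n)=0$ makes the minimum equal to $|I_n|$ and reduces \eqref{bleq} to \eqref{long}. Your extra remarks (the convention $\log(1/0)=+\infty$, subdividing half-line gaps, discarding gaps of length less than $1$) are sound housekeeping that the paper leaves implicit; the last one is already carried out at the start of the proof of Theorem \ref{BerLev}.
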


\ms\no If instead of having porous support $\mu$ decays too fast at infinity, one can arrive at the same conclusion:

\begin{theorem}[Levinson, \cite{LevinsonL9}]\label{Lev}
	Let $\mu$ be a finite measure on $\R$ whose Fourier transform
	vanishes on an interval. Denote
	$$M(x)=|\mu|((x,\infty)).$$
	If $\log M$ is not Poisson-summable on $\R_+$ then $\mu\equiv 0$.
\end{theorem}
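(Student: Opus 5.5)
Deduce Levinson's theorem from Theorem \ref{BerLev}: assume $\mu\neq 0$ and construct disjoint intervals $I_n\subset\R_+$ for which the series \eqref{bleq} diverges.

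\textbf{Reduction.} Since $M$ is nonincreasing and $M\le \|\mu\|$, the function $\log M$ is bounded above on $\R_+$. So Poisson-unsummability of $\log M$ on $\R_+$ means
$$\int_1^\infty \frac{\log\frac1{M(x)}}{x^2}\,dx=\infty$$
(if $M$ vanishes somewhere, $\log(1/M)=\infty$ on a half-line and this still holds). Put $L(x)=\log^+\frac1{M(x)}$. It is nondecreasing, and we may assume $M<1$ for large $x$.

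\textbf{Choice of intervals.} Take the dyadic intervals $I_k=(2^k,2^{k+1}]$. Then
$$|\mu|(I_k)\le M(2^k),\qquad\text{so}\qquad \log\frac1{|\mu|(I_k)}\ge L(2^k).$$
The $k$-th term of \eqref{bleq} is therefore comparable to $2^{-k}\min(2^k, L(2^k))$. By monotonicity of $L$,
$$\int_{2^{k+1}}^{2^{k+2}}\frac{L(x)}{x^2}\,dx\le 2^{-k-1}L(2^{k+2}),$$
so the divergent integral is dominated by $\sum_k 2^{-k}L(2^k)$ up to an index shift and a constant.

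\textbf{Handling the minimum.} Split the indices into two sets.
\begin{itemize}
\item If $L(2^k)\ge 2^k$ for infinitely many $k$, each such term is $\gtrsim 1$, and the series diverges.
\item Otherwise, for all large $k$ the minimum equals $L(2^k)$, and the series is $\asymp\sum 2^{-k}L(2^k)=\infty$.
\end{itemize}
The dyadic terms also satisfy $\frac{|I_k|}{1+\dist^2(0,I_k)}\asymp 2^{-k}$, as used above. Hence \eqref{bleq} holds, and Theorem \ref{BerLev} gives $\mu\equiv0$.

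\textbf{Main obstacle.} The delicate step is comparing the integral with the dyadic sum. It relies only on the monotonicity of $M$, so it should be routine, but the index shift must be tracked with care.
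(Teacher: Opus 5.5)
Your argument is correct, but it uses a different decomposition from the paper's. Both proofs reduce to Theorem \ref{BerLev}; they differ in the choice of intervals.

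\textbf{The paper's route.} It works with the partition adapted to $\mu$: $I_n=(a_n,a_{n+1}]$ with $M(a_n)=2^{-n}$. On this partition $\log\frac1{|\mu|(I_n)}\geq n\log 2$, and $\int\log\frac1M\,d\Pi$ is comparable to $\sum n\,\Pi(I_n)$. It then treats the minimum in \eqref{bleq} by asking whether this level partition is short or long, and passes to a long super-partition in the short case.

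\textbf{Your route.} You fix the dyadic intervals $(2^k,2^{k+1}]$ in advance and use only $|\mu|(I_k)\leq M(2^k)$ and the monotonicity of $M$. This lets you compare $\int L(x)x^{-2}\,dx$ with $\sum_k 2^{-k}L(2^k)$. Your index bookkeeping is right: $\int_{2^{k-1}}^{2^k}L(x)x^{-2}\,dx\leq 2^{-k}L(2^k)$, and $L(2^k)$ is exactly what bounds the $k$-th term of \eqref{bleq} from below. You then dispose of the minimum with a two-case split.

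\textbf{What each buys.} The paper's partition makes $\log M$ essentially a step function, so the hypothesis can be read off interval by interval. The price is a case analysis that needs more care than the sketch gives it. For the level partition, finiteness of the sum in \eqref{bleq} is not the same as shortness. A sparse family of level intervals with $|I_n|\asymp a_n$ can make the partition long while contributing only a summable amount, with the divergence of $\sum n\,\Pi(I_n)$ coming from the short intervals. Likewise, the long super-partition must be chosen rather than taken arbitrarily. A dyadic coarsening works, and that is essentially your choice. Your route is shorter and self-contained, and it loses only constants.

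\textbf{Two cosmetic points.} First, $\log\frac1{|\mu|(I_k)}\geq L(2^k)$ holds only once $M(2^k)\leq 1$, i.e. for all large $k$, which is all you need. Second, in the second case write $\gtrsim$ rather than $\asymp$, since you only have a lower bound on $\log\frac1{|\mu|(I_k)}$.
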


\begin{proof} Suppose that $\log M$ is not Poisson-summable on $\R_+$.
	Without loss of generality, $M(0)=1$. Let $0=a_0<a_1<a_2<...$
	be the points such that $M(a_n)=2^{-n}$ and denote by $I_n=(a_n,a_{n+1}]$
	the corresponding partition of $\R_+$. If
	$$\sum \frac{n|I_n|}{1+\dist^2(I_n,0)}<\infty$$
	then $\log M$ is Poisson-summable and we have a contradiction.
	
	\ms\no If the last sum is infinite,
	but the sum in \eqref{bleq} is finite, i.e. the partition $I_n$ is short, then any long
	super-partition of $I_n$ will satisfy \eqref{bleq}. If the last sum is infinite and $I_n$ is long,
	then \eqref{bleq} is satisfied.
\end{proof}

\ms\no Levinson's result above was later improved by Beurling \cite{Beurling-Stanford} who showed that an interval can be
replaced with a set of positive Lebesgue measure.

\ms\no Recall the following definition given in previous lectures. If $\mu$ is a finite positive measure on $\R$ we define
$$ \GG^p_\mu=\sup\{\ a\ |\  \exists \ f\in L^p(\mu),\int f(x)e^{2\pi i\l x}d\mu(x)=0,  \forall \ \l\in[0,a]\ \}.
$$
For $p=2$, $\GG^p_\mu$ is equal to $T_\mu$, the exponential type of $\mu$, the infimum of $a$, such that the family of exponentials
with frequencies from $[0,a]$ is complete in $L^2(\mu)$.

\ms\no Our next corollary combines results by Krein  (part I, $p=2$) and
by Levinson and McKean  (part II, $p=2$).

\begin{theorem}[Krein \cite{Krein1L9}, Levinson-McKean \cite{DML9}]\label{KLM9}
	Let $\mu$  be a finite measure on $\R$, $\mu=w(x) dx$ where $w(x)\geqslant 0$.
	Then
	
	I) If $\log w$ is Poisson-summable then for any $1\leqslant p\leqslant \infty$, $\GG^p_\mu=\infty$.
	
	II) If $\log w$ is monotone and Poisson-unsummable on a half-axis $(-\infty,x)$ or $(x,\infty)$ for some $x\in\R$ then for any $1< p\leqslant \infty$, $\GG^p_\mu=0$.
	
\end{theorem}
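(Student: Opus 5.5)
Part I comes from an outer function, as in Exercise 5 of the previous lecture. Part II should follow from Theorem \ref{BerLev}, or from Levinson's theorem \ref{Lev}.

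\emph{Part I.} Since $\log w\in L^1(\Pi)$, there is an outer function $F$ in $\C_+$ with $|F|=w^{1/2}\cdot\frac{1}{1+x^2}$ on $\R$. This is legitimate because the logarithm of the right-hand side is Poisson-summable. Put $f=e^{2\pi i a x}\,F/w$ for any $a>0$. Then $|f|^p w=w^{1-p/2}(1+x^2)^{-p}$, which is not obviously integrable. It is cleaner to choose the outer function so that $|F|=\min(w,1)\,(1+x^2)^{-1}$. Then $g=F/w$ is bounded, so $g\in L^\infty(\mu)\subset L^p(\mu)$ for every $p$, and $g\,d\mu=F\,dx$. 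The function $e^{2\pi i a x}F$ lies in $H^1(\C_+)$, which contains $F\in H^1\cap H^\infty$. Hence the Fourier coefficients of $e^{2\pi iax}F\,dx$ vanish on a half-line of frequencies of length at least $a$. With the paper's sign conventions, the measure $e^{2\pi iax}F(x)dx$ therefore annihilates $\EE_a$. Since $a$ is arbitrary, $\GG^p_\mu=\infty$ for all $1\le p\le\infty$.

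\emph{Part II.} Fix $1<p\le\infty$ and suppose, for contradiction, that $f\in L^p(\mu)$ is nonzero and $\nu=f\mu$ has $\hat\nu=0$ on $[0,a]$ with $a>0$. Say $\log w$ is monotone and non-Poisson-summable on $(x_0,\infty)$. Because $\mu$ is finite, $w$ must then decrease to $0$ there, with $\int^\infty \log w\,d\Pi=-\infty$. Apply H\"older's inequality with $q=p/(p-1)<\infty$:
$$|\nu|((x,\infty))\le \|f\|_{L^p(\mu)}\Big(\int_x^\infty w\,dt\Big)^{1/q}.$$
Translating the gap to be symmetric (multiply by an exponential) lets us apply Levinson's theorem \ref{Lev}, so it suffices to show that $\log \int_x^\infty w$ is not Poisson-summable. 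This tail-integral estimate is the main obstacle. Since $w$ is monotone, on an interval $[x,2x]$ we have $\int_x^\infty w\,dt\ge x\,w(2x)$, which is a bound in the wrong direction; we need an upper bound. Two routes handle it.

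\textbf{Route 1: Theorem \ref{BerLev} directly.} Take dyadic intervals $I_n=[2^n,2^{n+1}]$ and split them into $\mathcal{I}$ and $\mathcal{J}$. If $\sum_{n}\frac{|I_n|^2}{1+\dist^2(0,I_n)}$ over the intervals where $\log(1/|\nu|(I_n))\ge |I_n|$ diverges, we are done. Otherwise, monotonicity of $w$ gives $|\nu|(I_n)\le \|f\|\,(2^n w(2^n))^{1/q}$. Hence
$$\min\big(|I_n|,\log 1/|\nu|(I_n)\big)\gtrsim \min\big(2^n,\ q^{-1}\log(1/w(2^n))-n\big).$$
Comparing this with a dyadic Riemann sum for $\int \log(1/w)\,d\Pi$, which monotonicity justifies up to a shift of index, makes $\sum \frac{|I_n|}{1+\dist^2}\min(\cdot)$ diverge. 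Then $\nu\equiv0$, a contradiction.

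\textbf{Route 2: $p=\infty$ as a model.} When $p=\infty$, $\nu=fw\,dx$ with $f$ bounded, and $\log|\nu|(I_n)\le\log(2^nw(2^n))+C$ immediately. This model case shows why monotonicity is needed: it converts the pointwise non-summability of $\log w$ into smallness of $|\nu|$ on whole intervals. It also shows why $p=1$ is excluded: H\"older's inequality fails when $q=\infty$.

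Hence $\GG^p_\mu=0$ for every $1<p\le\infty$.
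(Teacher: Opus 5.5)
Your proof is correct. Route 1, the argument that actually carries Part II, uses the paper's own mechanism: apply Theorem \ref{BerLev} to $\nu=f\mu$, and use monotonicity of $w$ to turn $\int\log w\,d\Pi=-\infty$ into smallness of $|\nu|$ on intervals. You implement this differently in two respects.

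First, the paper reduces to $\mu$ itself by the informal remark that $\log(fw)$ stays unsummable. Your H\"older bound $|\nu|(I)\le\|f\|_{L^p(\mu)}\,\mu(I)^{1/q}$ makes that reduction precise and shows exactly where $p>1$ is used.

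Second, the paper takes adaptive intervals with $|I_n|=-\log\mu(I_n)$ and leaves as an exercise that they cannot form a short family. Your dyadic intervals make this step immediate, because each one has $|I_n|^2/(1+\dist^2(0,I_n))\asymp1$. So either the minimum in Theorem \ref{BerLev} equals $|I_n|$ for infinitely many $n$, or the series is eventually $\sum_n 2^{-n}\bigl(q^{-1}\log(1/w(2^n))-O(n)\bigr)$. The latter diverges because, for decreasing $w$, $\int_{2^{m-1}}^{2^m}\log(1/w)\,x^{-2}dx\le 2^{-m}\log(1/w(2^m))$.

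In Part I the paper simply takes $|W|=w$, so that $W\in H^1$ and $e^{iax}W/w$ is unimodular. Your truncated modulus $\min(w,1)/(1+x^2)$ is unnecessary but harmless.

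The Levinson route you set aside is not merely hard to finish; it fails for some monotone weights. So Route 1 is necessary, not just an alternative, and Route 2 is only the case $q=1$ of Route 1. For a counterexample, take $x_0=1$, $x_{k+1}=e^{x_k}$, and $w=e^{-L_k}$ on $[x_k,x_{k+1})$ with $(x_{k+1}-x_k)e^{-L_k}=2^{-k}$.
\begin{itemize}
\item $w$ is decreasing and integrable.
\item Since $L_k\approx x_k$, each $k$ gives $\int_{x_k}^{x_{k+1}}\log(1/w)\,d\Pi\approx1$, so $\log w\notin L^1(\Pi)$.
\item On $[x_k,x_{k+1})$ the tail satisfies $\int_x^\infty w\ge\sum_{j>k}2^{-j}=2^{-k}$. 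Because $\sum_k k/x_k<\infty$, this makes $\log\int_x^\infty w$ Poisson-summable, so Theorem \ref{Lev} gives nothing here.
\end{itemize}
Theorem \ref{BerLev}, by contrast, sees the intervals $[x_k,2x_k]$. On them $\log\bigl(1/\mu([x_k,2x_k])\bigr)\asymp x_k$, which is comparable to the length, so each such interval contributes $\asymp 1/q$ to the series.
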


\begin{proof}
	If $\log w$ is Poisson-summable, denote by $W(z)$ the outer function in $\C_+$ satisfying $|W|=w$ on $\R$.
	Then for any $a>0$ the measure $e^{iax}W(x)dx$ annihilates all exponentials with frequencies from $[0,a)$.
	
	\ms\no It is left to show that if $\log w$ is  Poisson-unsummable and monotone on a half-axis then $\GG^p_\mu=0$ for all $p>1$.
	Without loss of generality, the half-axis is $\R_+$. Since for any $f\in L^p(\mu), p>1$,
	$\log (fw)$ is unsummable as well, we will simply assume that the Fourier transform of $\mu$ itself vanishes
	on an interval and arrive at a contradiction.
	
	\ms\no Choose real points $a_0=0<a_1<...<a_n<...$ in the following way.
	Put $a_0=0$. After $a_n, n\geqslant 0$ is chosen, choose $a_{n+1}$ to
	be the  number such that $a_{n+1}-a_n=-\log\mu((a_n,a_{n+1}))$. Note that such a number always exists
	except in the trivial case when the support of $\mu$ is bounded, see exercises.
	
	\ms\no Notice that if $\{I_n\}$ is long we are done by theorem \ref{BerLev}. It is left to show that since
	$\log w$ is Poisson-unsummable and $w$ is monotone, $I_n$ cannot be short. We leave this part to the reader
	as an elementary exercise.
	
\end{proof}

\ms\no Several of the statements above are also implied by the following theorem of de Branges:

\begin{theorem}[de Branges, theorem 63 \cite{dBr}]\label{Branges}
	Let $K(x)$ be a continuous function on $\R$ such that $K(x)\geqslant 1$, $\log K$ is uniformly continuous
	and Poisson-unsummable.
	Then there is no nonzero finite measure $\mu$ on $\R$ such that
	\begin{equation}\int^{\infty}_{-\infty}Kd|\mu|<\infty
	\label{messum}\end{equation}
	and $\hat\mu$ vanishes on an interval.
	
\end{theorem}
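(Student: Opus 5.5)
We deduce de Branges' theorem from Theorem \ref{BerLev}, in the same way the theorems of Beurling, Levinson and Krein--Levinson--McKean were obtained above. Suppose $\mu\neq 0$ satisfies \eqref{messum} and $\hat\mu$ vanishes on an interval. The goal is a sequence of disjoint intervals $\{I_n\}$ satisfying \eqref{bleq}; then Theorem \ref{BerLev} gives $\mu\equiv 0$, a contradiction. Throughout, $k=\log K\geqslant 0$ is uniformly continuous and $\int k\,d\Pi=\infty$.

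\emph{Step 1: $|\mu|$ is small wherever $K$ is large.} Uniform continuity gives $\delta>0$ with $|k(x)-k(y)|\leqslant 1$ whenever $|x-y|\leqslant\delta$. Let $C=\int K\,d|\mu|$. For any interval $I$ of length at most $\delta$ that meets a point $x$, we get
$$|\mu|(I)\leqslant eC\,K(x)^{-1},\qquad\textrm{so}\qquad \log\frac1{|\mu|(I)}\geqslant k(x)-\const.$$
The same bound holds for longer intervals $I$ with $k(x)$ replaced by $\min_I k$. The catch is that the factor $|I_n|$ in \eqref{bleq} then multiplies only $\min(|I_n|,\min_{I_n}k)$.

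\emph{Step 2: choice of intervals.} Partition $\R$ into consecutive intervals of a fixed length $\delta$ (or $1$, after shrinking by uniform continuity). The terms in \eqref{bleq} become, up to constants,
$$\frac{\delta}{1+x_n^2}\min(\delta,k(x_n)-c).$$
The $\min$ with $|I_n|$ spoils this. So instead use intervals adapted to the size of $k$: on the set where $k$ is large, take intervals of length comparable to $k$. Uniform continuity guarantees that on an interval of length $L$ around a point $x$, the function $k$ varies by at most $L/\delta$. We do not directly get $k\gtrsim L$ on such an interval, however, so instead we will use superlevel sets.

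Concretely, apply the Levinson-type dyadic decomposition used in the proof of Theorem \ref{Lev}. Set $E_j=\{2^j\leqslant k<2^{j+1}\}$. On $E_j$, we have $|\mu|(I)\lesssim e^{-2^j}$ for each $\delta$-interval $I$ meeting $E_j$. Divergence of $\int k\,d\Pi$ means
$$\sum_j 2^j\,\Pi(E_j)=\infty.$$
Within the $2^{j-1}\delta$-neighborhood of $E_j$, the Lipschitz-type control gives $k\geqslant 2^{j-1}$. Hence this neighborhood is a union of intervals, each of length at least $\sim\min(2^j\delta,\cdot)$, on which $|\mu|$ is at most $e^{-2^{j-1}}$ times the length. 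Cut these components into pieces of length $\asymp 2^j$ (or keep shorter components whole). Each piece then contributes $\gtrsim |I|\cdot 2^j/(1+\dist^2)$ to \eqref{bleq}, since $\min(|I|,2^{j-1})$ is comparable to $2^j$ when $|I|\asymp 2^j$.

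\emph{Main obstacle.} Components shorter than $2^j$ only give $|I|^2$ in place of $|I|2^j$. We also have to select disjoint intervals across different $j$ (overlaps between the neighborhoods of $E_j$ for different $j$) while retaining divergence. To handle the first problem, note that a component of the neighborhood of $E_j$ already has length $\geqslant 2^{j}\delta$ by construction. For disjointness, at each point we keep only the maximal $j$; the loss is controlled because $k$ is comparable on the retained set. If this bookkeeping fails, the fallback is the dichotomy from the proof of Theorem \ref{Lev}: either the resulting family is long, and then it satisfies \eqref{bleq} directly, or it is short, and then one merges it into a long super-partition as was done there.
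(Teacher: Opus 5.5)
\textbf{There is a gap in the disjointness step, which is where the real work of the proof lies.} Write $N_j$ for the $2^{j-1}\delta$-neighborhood of $E_j$.

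For a single level $j$ your pieces are fine. They have length $\asymp 2^j$ and $\log\frac1{|\mu|(I)}\gtrsim 2^j$, so they contribute $\gtrsim 2^j\Pi(E_j)$. But Theorem \ref{BerLev} needs one family of disjoint intervals, and the $N_j$ are nested across levels.

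Keeping only the maximal level at each point replaces $N_j$ by $N_j\setminus\bigcup_{i>j}N_i$. On this set $k\asymp 2^j$, as you say. However, the lower bound $2^j\delta$ on component lengths, which you invoke ``by construction'', holds for $N_j$ and not for the retained set. Two consecutive components of $N_{j+1}$ can be separated by a gap of arbitrarily small length. A retained component of length $\ell\ll 2^j$ then contributes only $\ell^2/(1+\dist^2)$ to \eqref{bleq}, instead of $\ell\,2^j/(1+\dist^2)$. Comparability of $k$ on the retained set does not address this. You would have to show that the long retained components still carry a divergent part of $\int k\,d\Pi$, and the proposal does not do this.

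The fallback does not rescue it. In Levinson's proof, merging consecutive intervals into a long super-partition is harmless because the mass bound comes from the monotone tail $M(x)=|\mu|((x,\infty))$. Here, merging a region where $K$ is large with one where $K$ is small destroys the bound $|\mu|(I)\lesssim 1/\min_I K$.

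\textbf{The missing idea is to let $\log K$ choose the endpoints, instead of fixing lengths first.} The paper reduces to $K\geqslant 2$ with $\log K$ unsummable on $\R_+$. It sets $a_0=0$ and takes $a_n$ to be the first point after $a_{n-1}$ at which $\log K$ leaves the window $\bigl(\frac12\log K(a_{n-1}),\,2\log K(a_{n-1})\bigr)$. On $I_n=(a_{n-1},a_n]$ this gives two things.
\begin{itemize}
\item Since $\log K\asymp\log K(a_{n-1})$ on $I_n$, \eqref{messum} gives $|\mu|(I_n)\lesssim 1/\min_{I_n}K$. Hence $\log\frac1{|\mu|(I_n)}\gtrsim\log K$ there, up to an additive constant.
\item Since $\log K$ changes by at least $\frac12\log K(a_{n-1})\geqslant\frac12\log 2$ across $I_n$, uniform continuity forces $|I_n|\gtrsim\log K(a_{n-1})$.
\end{itemize}
The second point is the converse of your remark in Step 2 that $k$ varies by at most $L/\delta$ on an interval of length $L$, and it is the direction actually needed. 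The resulting partition is automatically disjoint, and both entries of the minimum in \eqref{bleq} are $\gtrsim\log K$ on each $I_n$. So the sum in \eqref{bleq} dominates $\int_{\R_+}\log K\,d\Pi=\infty$.
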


\begin{proof} Without loss of generality $K\geqslant 2$ and $K$ is Poisson-unsummable on $\R_+$.
	Choose points $a_0,a_1,...$ on $\R_+$ in the following way. Put $a_0=0$.
	After $a_{n-1}$ is chosen, choose $a_n$ to be the smallest point greater
	than $a_{n-1}$ such that
	$$\log K(a_n)\not\in \left(\frac{\log K(a_{n-1})}2, 2\log K(a_{n-1})\right).$$
	Note that such $a_n$ always exists because $K$ is unbounded on any ray $[x,\infty)$.
	Denote by $L$ the step function, minorating $\log K$ defined as
	$$L(x)=L_n=\min_{I_n} \log K$$ on each
	$I_n=(a_{n-1},a_n]$. Notice that by the choice of $\{I_n\}$, $\log L\asymp \log K$.
	In particular, $\log L$ is Poisson-unsummable. By \eqref{messum}, $\mu(I_n)\lesssim   1/L_n$.
	Also, because of uniform continuity of $\log K$,  $\log L_n\lesssim |I_n|$.
	Hence the sum in \eqref{bleq} is minorated by
	$$\sum\frac {|I_n|\log L_n}{1+\dist^2(I_n,0)}\gtrsim \int \log L(x)\frac {dx}{1+x^2}=\infty.$$
\end{proof}

\ms\no Theorem \ref{BerLev} has the following partial inverse.

\begin{proposition}
	Let $\mu=w(x)dx$ be an absolutely continuous finite measure with $w> 0$ and $\log |w|$ absolutely continuous.
	Suppose that the sequence of intervals $I_n$ satisfying \eqref{bleq} does not exist. Then $\GG^\infty_\mu=\infty$.
\end{proposition}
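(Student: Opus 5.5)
The plan is to reduce the statement to Krein's criterion, part I of Theorem \ref{KLM9}: we show that $\log w$ is Poisson-summable, after which $\GG^\infty_\mu=\infty$ (indeed $\GG^p_\mu=\infty$ for all $p$) follows directly. We argue by contraposition. Assume $\int \log_- w\, d\Pi=\infty$; the positive part is harmless, since $\mu$ is finite and so $\int\log_+ w\,d\Pi<\infty$ by Jensen. From this assumption we construct a sequence of disjoint intervals satisfying \eqref{bleq}, which contradicts the hypothesis.

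The construction mimics the proof of Theorem \ref{Branges}. Let $\ell=-\log w$. Without loss of generality $\ell$ is Poisson-unsummable on $\R_+$, and we may restrict to the region where $\ell\ge 2$. Partition $\R_+$ into consecutive intervals $I_n$ so that on each $I_n$ the function $\ell$ is comparable to a constant $L_n$, say $L_n/2\le \ell\le 2L_n$. Because $\ell$ is absolutely continuous, hence continuous, such a partition exists, for instance by choosing the endpoints where $\ell$ leaves the dyadic range $[L_{n-1}/2,\,2L_{n-1}]$. On $I_n$ we then have $\mu(I_n)\le |I_n|e^{-L_n/2}$, so
$$\log\frac1{|\mu|(I_n)}\ \ge\ \frac{L_n}{2}-\log|I_n|.$$
Also
$$\int\ell\, d\Pi\asymp \sum \frac{|I_n|L_n}{1+\dist^2(0,I_n)}=\infty.$$

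Next we compare with \eqref{bleq}, treating each term according to which quantity attains the minimum. On intervals where $L_n\lesssim |I_n|$, the term in \eqref{bleq} is at least of order $|I_n|(L_n-\log|I_n|)/(1+\dist^2)$. If instead $L_n\gg|I_n|$, the term is at least of order $|I_n|^2/(1+\dist^2)$. Intervals of the second kind, however, can always be subdivided. Splitting $I_n$ into pieces of length about $L_n$ does not change the lower bound for $\log(1/|\mu|)$ on each piece. Splitting further into pieces of length $\asymp\sqrt{L_n}$ or shorter makes the pieces small, but then the minimum in \eqref{bleq} is the length, and the sum over pieces of $|J|^2$ is not comparable to $|I_n|L_n$. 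So the refinement should keep piece length comparable to $L_n$ whenever $L_n\le|I_n|$. When $L_n>|I_n|$, we should merge neighbouring intervals, or else enlarge the interval to length $\asymp L_n$, provided $w$ stays small on the enlargement.

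The main obstacle is exactly this case: $\ell$ is huge on a short interval $I_n$ with $L_n\gg |I_n|$, while $w$ is not small on the neighbouring intervals. Then $\ell$ has thin spikes, and \eqref{bleq} charges only $|I_n|^2$ instead of $|I_n|L_n$. Here I would use the hypotheses $w>0$ and absolute continuity of $\log w$ together with the negation of \eqref{bleq}. That negation says that for \emph{every} disjoint family of intervals, the sum in \eqref{bleq} is finite. Apply this to intervals around the spikes, chosen of length comparable to $\min(L_n,\text{distance to where }\ell\text{ drops below }L_n/2)$. From this one can show that the total Poisson mass of the spike contributions, $\sum |I_n|L_n/(1+\dist^2)$, is controlled by sums of the form in \eqref{bleq} plus a Poisson-summable error. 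If this estimate fails for general absolutely continuous $\log w$, the fallback is a direct construction: modify $w$ downward only on a set where it is already exponentially small, obtaining $w_1\le w$ with $\log w_1\in L^1(\Pi)$ and an outer function $W_1$ with $|W_1|=w_1$. Then $e^{2\pi i a x}W_1/w\in L^\infty(\mu)$ annihilates $\EE_a$ for every $a$, which gives $\GG^\infty_\mu=\infty$ without requiring Poisson-summability of $\log w$ itself.
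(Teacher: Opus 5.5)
Your route is the paper's: deduce $\log w\in L^1(\Pi)$ from the failure of \eqref{bleq} using the interval construction in the proof of Theorem \ref{Branges}, then finish with Krein's annihilator $e^{iCx}F/w$, $|F|=w$. The gap is the step you leave open, the ``spike'' case $L_n\gg|I_n|$. It is the entire content of that deduction, and neither of your ways of closing it works.

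The estimate you hope for, with spike contributions controlled by sums of the form \eqref{bleq} plus a Poisson-summable error, fails if $\log w$ is only locally absolutely continuous. Let $w=(1+x^2)^{-1}$, except that $\log w\approx-4^k$ on $[2^k,2^k+1]$, with smooth transitions. Each dip contributes $\asymp 1$ to $\int\log_-w\,d\Pi$, so $\log w\notin L^1(\Pi)$. Yet an interval of bounded length around the $k$-th dip contributes only $O(4^{-k})$ to \eqref{bleq}, because there the minimum is the length.

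Your fallback cannot exist either. If $w_1\le w$ then $\log_-w_1\ge\log_-w$, so $\log w_1\in L^1(\Pi)$ would already force $\log w\in L^1(\Pi)$. More generally, an outer $W_1$ with $|W_1|\le Cw$ exists only when $\log w$ is Poisson-summable, so annihilators of the form $e^{2\pi iax}W_1/w$ cannot bypass this step.

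The missing ingredient is to use the regularity of $\log w$ as a \emph{uniform} bound, exactly as uniform continuity of $\log K$ is used in the proof of Theorem \ref{Branges}. Absolute continuity on the whole line, with a single $\varepsilon$--$\delta$ for all of $\R$, implies uniform continuity. The argument then runs as follows.

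Take $K=\max\bigl((w(1+x^2))^{-1},e^2\bigr)$ rather than $1/w$. Then $\int K\,d\mu<\infty$, and $\log K$ is uniformly continuous, so $|\log K(x)-\log K(y)|\le C(1+|x-y|)$. Also $\log K$ is Poisson-unsummable iff $\log w$ is, since $\log_+w\le w$ and $\log(1+x^2)\in L^1(\Pi)$.

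In the doubling partition for $\log K$, the jump across $I_n$ is $\gtrsim m_n:=\min_{I_n}\log K$, hence $m_n\lesssim 1+|I_n|$. So the spike case never occurs once $m_n$ is large, and no splitting or merging is needed.

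Chebyshev's inequality gives $\log\frac1{\mu(I_n)}\ge m_n-C'$. This avoids the $-\log|I_n|$ loss in your bound $\mu(I_n)\le|I_n|e^{-L_n/2}$.

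Since $\max_{I_n}\log K\le 4m_n$ and $\Pi(I_n)\le|I_n|/(1+\dist^2(0,I_n))$, every term of \eqref{bleq} with $m_n$ large is $\gtrsim\int_{I_n}\log K\,d\Pi$. The remaining intervals carry only a bounded part of $\log K$. Thus \eqref{bleq} holds whenever $\log w\notin L^1(\Pi)$, and Krein's construction finishes the proof.
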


\begin{proof} Similarly to the last proof, it is not difficult to show that $\log |w|$ is Poisson-summable. After that for any $C>0$
	consider the measure $u\mu$ with
	$$u=e^{iCx} F/w,$$
	where $F$ is the outer function in the upper half-plane satisfying $|F|=w$.
\end{proof}


\lsection{Exercises}

\ms\no 1) Without using any of the theorems of the last two lectures, prove that if $\mu$ has bounded support (from below or from above) and its Fourier transform
vanishes on an interval, then $\mu\equiv 0$. In a sense, Beurling's and Levinson's theorems above generalize this statement.

\ms\no 2) Show that if $\mu$ has a spectral gap (a gap in the support of its Fourier transform) then it annihilates polynomials. Give an example of a measure that annihilates polynomials
but does not have a spectral gap.

\ms\no 3) Show that the second statement of theorem \ref{KLM9} needs the restriction that $\log w$ is not monotone.

\ms\no 4) Show that the condition that $\log K$ is uniformly continuous cannot be dropped from the statement
of theorem \ref{Branges}.

\ms\no 5) Finish the proof of theorem \ref{KLM9}.

\newpage

\lecture{10}{The Type Problem}

\ms\no In this lecture we finish the discussion of the Type Problem, one of the classical completeness problems
stated in the first lecture. Recall that we denote by $\EE_a=\EE_{[0,a]}$ the family of exponential functions whose frequencies belong to the interval
 from 0 to $a$.
 If $\mu$ is a finite positive measure on $\R$ and $a>0$ is a given number we would like to know if $\EE_a$ is complete in $L^2(\mu)$, i.e. if finite linear combinations of its functions are dense in $L^2(\mu)$.

\ms\no  This problem appears in the work of Kolmogorov, Krein and Wiener in 1930-40's. As discussed in our previous lectures, it is related
 to other classical problems of analysis in several adjacent fields. Our goal in this lecture is to formulate a solution to the Type Problem
 found in \cite{TypeL10}.

\lsection{Bakan's theorem for exponentials}

\ms\no  Just like $L^p$ problems on completeness of polynomials, the Type problem can be reformulated in terms of uniform Bernstein's approximation, using a proper version of Bakan's theorem discussed in lecture 7. Since such a reformulation will be important to us in the future, we recall
 the setup of Bernstein's approximation and Bakan's result.

 \ms\no We say that
a  function $W\geqslant 1$ on $\R$ is a \textit{weight} if $W$ is lower semi-continuous and
$W\to\infty$ as $|x|\to\infty$. The weights are allowed to take infinite values.

\ms\no The semi-norm $||f||_W$ is defined
on the set of all continuous $f$ such that $f/W\to 0$ at $\pm\infty$ as

\begin{equation}||f||_W=\sup_\R\frac{|f|}W.\label{norm10}\end{equation}

\ms\no The following statement is an analog of Bakan's theorem \cite{BakanL10} discussed in lecture 7 for the case of polynomials.

\begin{theorem}
Let  $0< p <\infty $ and $a>0$ be  constants,  and let $\mu$ be a positive finite measure on $\R$. The family of exponentials
$\EE_a$ is complete in  $L^p(\mu)$
if and only if $\mu$ can be represented as $\mu=W^{-p}\nu$ for some finite positive measure $\nu$ and a weight $W$ such that
$\EE_a$ is complete in $C_W$.
\end{theorem}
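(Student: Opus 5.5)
I will mimic the proof of Bakan's theorem for polynomials (Theorem \ref{tBakan}) from lecture 7, replacing polynomials by finite linear combinations of elements of $\EE_a$. Since these are bounded functions, no growth assumption on $\mu$ is needed. The only feature of the weight that matters is that $W\geqslant 1$ is lower semi-continuous and $W\to\infty$. This ensures that bounded functions, in particular exponentials, belong to $C_W$, since $f/W\to 0$ at $\pm\infty$.

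\emph{Sufficiency.} Suppose $\mu=W^{-p}\nu$ and $\EE_a$ is complete in $C_W$. Let $f$ be a bounded continuous function. Then $f/W\to0$, so $f\in C_W$, and there are finite linear combinations $s_n$ of exponentials from $\EE_a$ with $\sup|f-s_n|/W\to 0$. Hence
$$\int|f-s_n|^pd\mu=\int\left|\frac{f-s_n}{W}\right|^pd\nu\leqslant \nu(\R)\,\|f-s_n\|_W^p\to0.$$
Bounded continuous functions are dense in $L^p(\mu)$ for $0<p<\infty$ and finite $\mu$, so $\EE_a$ is complete in $L^p(\mu)$.

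\emph{Necessity.} Fix a countable family $\{f_n\}$ of bounded continuous functions that is dense in every $C_W$ (exercise 3 of lecture 7). Since $W\to\infty$, compactly supported continuous functions are dense in $C_W$, and on each compact set one uses a countable dense set. Completeness of $\EE_a$ in $L^p(\mu)$ gives exponential sums $s_{n,k}$ with $\|f_n-s_{n,k}\|_{L^p(\mu)}<4^{-(n+k)}$. We then put
$$W=1+\sum_{n,k}2^{n+k}|f_n-s_{n,k}|+V,$$
where $V\geqslant0$ is a continuous function with $V\to\infty$ at $\pm\infty$ and $V\in L^p(\mu)$. Such a $V$ exists because $\mu$ is finite: choose $V$ growing slowly enough relative to the tails $\mu(\{|x|>R\})$.

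We then check the following properties of $W$:
\begin{itemize}
\item $W$ is lower semi-continuous, as a sum of nonnegative continuous functions.
\item $W\to\infty$, because of the term $V$.
\item $W\in L^p(\mu)$. For $p\geqslant1$ this follows from Minkowski's inequality and $\sum 2^{n+k}4^{-(n+k)}<\infty$. For $p<1$ it follows from the $p$-triangle inequality $\|\sum g_j\|_p^p\leqslant\sum\|g_j\|_p^p$ with $\sum 2^{p(n+k)}4^{-p(n+k)}<\infty$.
\item $W<\infty$ $\mu$-a.e. We set $\nu=W^p\mu$, which is finite, and $\mu=W^{-p}\nu$ on $\{W<\infty\}$. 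Altering $\mu$ on a null set is harmless.
\end{itemize}
Finally, $|f_n-s_{n,k}|/W\leqslant 2^{-(n+k)}$ pointwise, so $s_{n,k}\to f_n$ in $C_W$ as $k\to\infty$. Since $\{f_n\}$ is dense in $C_W$, the family $\EE_a$ is complete in $C_W$.

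The main obstacle is the bookkeeping for $C_W$ with infinite-valued weights. One must confirm that each $f_n$ and each exponential sum actually lies in $C_W$, meaning it is continuous with $f/W\to0$; this follows from boundedness together with $W\to\infty$. One must also confirm that the countable family $\{f_n\}$ is dense for every such $W$, including after passing to the quotient and completion. The construction of $V$ replaces the "arbitrarily large degrees" argument used for polynomials, where $x^n=o(W)$ came for free; here we have to force $W\to\infty$ by hand.
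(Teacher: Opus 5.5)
Your proof is correct and is essentially the argument the paper intends ("similar to the polynomial version"): it carries the proof of Bakan's theorem from Lecture 7 over to the exponential case, with bounded exponential sums in place of polynomials. The one adjustment that is genuinely needed is the extra term $V\in L^p(\mu)$, $V\to\infty$, which makes $W$ a weight in the sense of Lecture 10 and replaces the ``arbitrarily large degrees'' step of the polynomial proof; you handle it correctly.
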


\ms\no The proof is similar to the polynomial version (see exercises).

\lsection{A solution to the Type Problem}

\ms\no A condition for the completeness of $\EE_a$ in $C_W$ ($L^p(\mu)$) found in \cite{TypeL10} relies on the notion of $d$-uniform sequences
introduced in lecture 8 to treat the Gap Problem. Let us recall the definition of these sequences.

\ms\no Let
$$...<a_{-2}<a_{-1}<a_0=0<a_1<a_2<...$$
 be a discrete sequence of real points. We say that the intervals $I_n=(a_n,a_{n+1}]$ form a short partition
of $\R$ if $|I_n|\to\infty$  as $ n\to \pm\infty$ and the sequence $\{I_n\}$ is short.

\ms\no Let $\L=\{\lan\}$ be a discrete sequence of real points. We say that $\L$ is
$d$-uniform if
there exists a short partition $\{I_n\}$ such that
\begin{equation} \Delta_n= d|I_n|+o(|I_n|)\ \ \   \text{for all} \ \ \ n\ \ \textrm{(density condition)}\label{density10}\end{equation}
as $n\to\pm\infty$ and
\begin{equation} \sum_n \frac{\Delta_n^2\log|I_n|-E_n}{1+\dist^2(0,I_n)}<\infty\ \ \textrm{(energy condition)}\label{energy10}\end{equation}
where $\Delta_n$ and $E_n$ are defined as
$$\Delta_n=\#(\L\cap I_n)\ \ \text{ and }\ \ E_n=E(\Lambda\cap I_n)=\sum_{\lambda_k,\lambda_l\in I_n,\ \lambda_k\neq\lambda_l}\log|\lambda_k-\lambda_l|.$$

\ms\no Via Bakan's theorem an equivalent version of the Type Problem can be formulated as follows:
Given a weight $W$ find
$$T_W=\inf\{a\ |\ \EE_a\textrm{ is complete in }C_W\}.$$
As usual $T_W$ is set to infinity if the set above is empty.

\ms\no Here is an answer to that question
(this is an equivalent version of the main result of \cite{TypeL10}).

\begin{theorem}\label{main1}
Let  $W$ be a weight on $\R$.
$$T_W=\sup\left\{d\ : \ \exists
\textrm{$d$-uniform $\L$, }
\sum\frac{\log W(\lan)}{1+\lan^2}<\infty\right\}.$$

\end{theorem}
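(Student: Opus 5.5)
We would prove the two inequalities separately, taking Theorem \ref{mainGAP} (the solution of the Gap Problem) and the $C_W$ version of Bakan's theorem as given. First we would reformulate incompleteness. By duality (as in the Proposition of Lecture 7), $\EE_a$ is incomplete in $C_W$ if and only if there is a non-zero $W$-finite measure $\mu$ whose Fourier transform vanishes on $[0,a]$. After multiplying by a suitable exponential, this is the same as a non-zero measure $\mu$ with $\int W\,d|\mu|<\infty$ and $\hat\mu=0$ on an interval of length $a$. Thus $T_W$ is the supremum of spectral gap sizes over $W$-finite measures, and the theorem becomes a weighted form of Theorem \ref{mainGAP}.

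For the lower bound $T_W\geq d$, fix a $d$-uniform $\L$ with $\sum \log W(\lan)/(1+\lan^2)<\infty$. Take $d'<d$. We would look at the construction used in \cite{GAP} to show $\GG_\L\geq d$. That construction gives a discrete measure $\mu=\sum c_n\delta_{\lan}$ with a spectral gap of any length less than $d$. Its masses come from an entire function $G$ whose zero set is $\L$, as $c_n=1/G'(\lan)$ up to a factor $e^{iax}F$. The energy condition is what makes $\log|G'(\lan)|$ controlled and Poisson-summable along $\L$, in the same way the characteristic sequence $p_n$ works in Theorem \ref{mainBernstein}. We would then multiply the masses by an outer-type factor. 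Let $F$ be bounded analytic in $\C_+$ with $|F(\lan)|\leq 1/((1+\lan^2)W(\lan))$. Such an $F$ exists because $\log W$ is Poisson-summable on $\L$: put a majorant $h\in L^1(\Pi)$ of $\log W$ through the points $\lan$, interpolating on the gaps, and let $F=e^{-\SS h}$ up to the polynomial factor. As in Krein's argument (Exercise 5 of Lecture 8) and Hall's theorem, this multiplication loses only an arbitrarily small part of the gap. The result is a non-zero $W$-finite measure with a gap of length $d'$, so $T_W\geq d'$.

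For the upper bound, suppose $\mu\neq 0$ is $W$-finite and $\hat\mu$ vanishes on an interval of length $a$. We must produce, for every $d<a$, a $d$-uniform $\L\subset\supp\mu$ with $\sum\log W(\lan)/(1+\lan^2)<\infty$. Theorem \ref{mainGAP} alone gives a $d$-uniform sequence in $\supp\mu$, but with no control of $W$ on it. The plan is to apply the gap theorem to a smaller closed set: $X_h=\supp\mu\cap\{\log W\leq h\}$ for a Poisson-summable majorant $h$. To show that $\mu$ restricted to $X_h$ still carries a spectral gap of size close to $a$, we would use Theorem \ref{BerLev}. Condition \eqref{bleq} fails for $\mu$, since $\mu\neq0$. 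Together with $\int W\,d|\mu|<\infty$, this gives $|\mu|(\{W>e^{h}\}\cap I_n)$ exponentially small on a long family of intervals. This is the same mechanism as in the proof of de Branges' Theorem \ref{Branges}, where $\mu(I_n)\lesssim 1/L_n$. From that smallness we would build $h\in L^1(\Pi)$ with $\log W\leq h$ on most of $\supp\mu$ in the Beurling–Malliavin sense, namely off a set whose complement pieces form a short family.

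The main obstacle is the last step: passing from ``$\mu$ has a gap and is $W$-finite'' to ``the part of $\supp\mu$ where $\log W$ is Poisson-controlled still has gap characteristic at least $a$''. We expect this to require reopening the proof of Theorem \ref{mainGAP}, not citing it as a black box. In that proof the $d$-uniform sequence is extracted from the zeros, or sign changes, of a function related to $K\mu$ via Toeplitz kernel arguments. Our plan would be to run the same extraction for the measure $W\mu$, which is finite. The points found then automatically lie where $W|\mu|$ is not too large relative to $|\mu|$, which should give the required summability $\sum\log W(\lan)/(1+\lan^2)<\infty$. As a fallback, we would use the Borichev–Sodin stability result: exponentially small changes in weight do not change type. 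With it we would discard the points where $\log W$ exceeds a summable majorant, at the cost of an arbitrarily small loss in $d$.
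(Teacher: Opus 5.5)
The notes do not prove this theorem. It is quoted as an equivalent form of the main result of \cite{TypeL10}: Theorem \ref{mainType} transferred to weights via Bakan's theorem for exponentials (cf.\ Exercise 2 of Lecture 10), and that proof is omitted. Your reduction of $T_W$ to the supremum of spectral gaps of non-zero $W$-finite measures is correct, and starting from the gap theorem is reasonable. However, both inequalities have a genuine hole.

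\textbf{Lower bound.} Multiplying $\mu=\sum c_n\delta_{\lan}$ by a function $F$ that is bounded and analytic only in $\C_+$ does not preserve a spectral gap. Indeed $\widehat{F\mu}=\hat F*\hat\mu$ with $\hat F$ supported on a half-line, so the values of $\hat\mu$ on one side of the gap are smeared into it. Already for $F=1/(x+i)$ and $\supp\mu\subset\Z$ one gets $\widehat{F\mu}(\xi)=Ce^{2\pi\xi}$ on the gap, with $C\neq0$ in general. Krein's and Hall's arguments work only because they start from Lebesgue measure and produce a one-sided spectrum. To keep the gap up to $\e$ you need a multiplier with spectrum in an interval of length $O(\e)$. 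Since only the values on $\L$ matter, this means an entire function $\Phi$ of exponential type $O(\e)$, bounded on $\R$, with $|\Phi(\lan)|\lesssim 1/W(\lan)$ and $\Phi\mu\neq0$. Getting such a $\Phi$ from the bare Poisson summability of the values $\log W(\lan)$, with no regularity of $W$, is a Beurling--Malliavin multiplier-type statement on $\L$ (compare the class $\MM_p(S)$ of Lecture 12). That is exactly the non-trivial step, and the proposal does not supply it.

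\textbf{Upper bound.} You flag this yourself as the main obstacle, and the proposed steps do not connect.
The failure of \eqref{bleq} for $\mu$ is a \emph{lower} bound on $|\mu|(I_n)$ along long families. The smallness of $|\mu|(\{W>e^h\}\cap I)$ comes from Chebyshev's inequality and $\int W\,d|\mu|<\infty$ alone.
More importantly, knowing that most of the mass of $\mu$ sits on $X_h=\supp\mu\cap\{\log W\leq h\}$ says nothing about $\GG_{X_h}$. The restriction $\mu|_{X_h}$ has no reason to keep a spectral gap, whereas Theorem \ref{mainGAP} needs some non-zero measure on $X_h$ with a gap.
Running the extraction of \cite{GAP} on $W\mu$ is not possible either. $W\mu$ is finite but has no spectral gap, since multiplication by $W$ is not a Fourier multiplier.
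The Borichev--Sodin theorem covers exponentially small perturbations of points and weights. It does not cover deleting a possibly large set of points where $\log W$ exceeds a majorant.

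What is missing is an argument with three parts. From the gap of the single measure $\mu$ it must extract, for each $d<a$, a $d$-uniform sequence in $\supp\mu$ along which the local masses of $|\mu|$ have Poisson-summable logarithms. It must place the points, using $\int W\,d|\mu|<\infty$, where $W$ is dominated by the inverse local mass. And it must do so without breaking the density and energy conditions. This is where the substance of the theorem lies.
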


\ms\no Returning to the $L^p$-approximation, recall that for the finite positive measure $\mu$ the quantities $\GG_\mu^p$
were defined as

\begin{equation} \GG^p_\mu=\sup\{\ a\ |\  \exists \ f\in L^p(\mu),\int f(x)e^{2\pi i\l x}d\mu(x)=0,  \forall \ \l\in[0,a]\ \}.
 \label{typep10}\end{equation}

\ms\no Recall that the Type of $\mu$ can be equivalently defined as $T_\mu=\GG^2_\mu$. Via Bakan's theorem, the last
result can be reformulated as follows (see exercises). If $\mu $ is a finite positive measure on $\R$, we call a weight $W$
$\mu$-weight if
$$\int Wd\mu<\infty.$$

\begin{theorem}\label{mainType}
Let $\mu$ be a finite positive measure on the line. Let $1<p\leqslant\infty$ and $a>0$ be  constants.

\ms\no Then $\GG^p_\mu\geqslant 2\pi a$
if and only if for any  $\mu$-weight $W$ and any $0<d<a$ there exists
a $d$-uniform sequence $\L=\{\lan\}\subset \supp \mu$ such that
\begin{equation}\sum\frac{\log W(\lan)}{1+\lan^2}<\infty.\label{ur4}\end{equation}

\end{theorem}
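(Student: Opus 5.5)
We plan to derive Theorem \ref{mainType} from Theorem \ref{main1}, using the exponential version of Bakan's theorem as the bridge between $L^q(\mu)$-completeness and completeness in $C_W$. Throughout, $q$ denotes the exponent dual to $p$, so $1\leqslant q<\infty$. By duality, $\GG^p_\mu$ equals the infimum of those $a$ for which $\EE_a$ is complete in $L^q(\mu)$. We treat the normalization between the frequency $2\pi a$ and the density $d$ as bookkeeping to be matched with the convention of Theorem \ref{main1}. We also note the tacit convention that $\log W(\lan)=\infty$ off $\supp\mu$: weights may equal $+\infty$ there, which is what forces $\L\subset\supp\mu$.

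\emph{Sufficiency.} Suppose that for every $\mu$-weight $W$ and every $d<a$ a suitable $d$-uniform $\L$ exists. Take $b<a$ and argue by contradiction, assuming $\EE_b$ is complete in $L^q(\mu)$. Bakan's theorem for exponentials then gives $\mu=W_0^{-q}\nu$ with $\nu$ finite and $\EE_b$ complete in $C_{W_0}$, so $T_{W_0}\leqslant b$. The function $W_0^{q}$ satisfies $\int W_0^q\,d\mu=\nu(\R)<\infty$, and it is lower semicontinuous and tends to infinity. After modifying it to be $+\infty$ off $\supp\mu$, which does not change the integral, it is a $\mu$-weight. Pick $d\in(b,a)$. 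The hypothesis gives a $d$-uniform $\L\subset\supp\mu$ with $\sum\log W_0^q(\lan)/(1+\lan^2)<\infty$; since $q\geqslant 1$ is fixed, the same series with $W_0$ in place of $W_0^q$ converges. Theorem \ref{main1} then yields $T_{W_0}\geqslant d>b$, a contradiction. Hence $\EE_b$ is incomplete for every $b<a$, and $\GG^p_\mu\geqslant a$.

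\emph{Necessity.} Suppose $\GG^p_\mu$ is at least $a$, and fix a $\mu$-weight $W$ and $d<a$. Set $V=W^{1/q}$, which is again a weight. Put $\nu=W\mu$, a finite measure, so that $\mu=V^{-q}\nu$. Choose $b\in(d,a)$. The main step is to show that $\EE_b$ is incomplete in $C_V$. Suppose instead it were complete. The first half of Bakan's argument, namely $\int|f-s|^q\,d\mu=\int|f/V-s/V|^q\,d\nu$, shows that $\EE_b$ would be complete in $L^q(\mu)$, contradicting $\GG^p_\mu>b$. So $T_V\geqslant b>d$. Theorem \ref{main1} then provides a $d'$-uniform $\L$ with $d'>d$ and $\sum\log V(\lan)/(1+\lan^2)<\infty$, hence also $\sum\log W(\lan)/(1+\lan^2)<\infty$. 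Since $V=\infty$ off $\supp\mu$, we get $\L\subset\supp\mu$.

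The main obstacle is passing from this $d'$-uniform sequence to a $d$-uniform one. We would show that a $d'$-uniform sequence contains a $d$-uniform subsequence for every $d<d'$. The plan is to thin $\L$ on each interval $I_n$ of its short partition by keeping points near a uniform grid of density $d$. The density condition is then immediate. The energy deficit $\Delta_n^2\log|I_n|-E_n$ for the thinned set should be controlled by the deficit of the full set plus $O(|I_n|^2)$ terms, as in the key example; those terms are summable over a short partition. Alternatively, one could cite the monotonicity in $d$ implicit in \cite{TypeL10}. This monotonicity, together with the care needed with infinite values of $W$, is where I expect the real work to lie.
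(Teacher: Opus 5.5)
Your reduction is correct and is exactly the route the notes indicate: Theorem \ref{mainType} is presented as the Bakan-type $L^p$ reformulation of Theorem \ref{main1} (the reverse deduction is Exercise 2), and the genuine proof is deferred to \cite{TypeL10}. Two points in your write-up need tightening, both easy to fix.

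First, the step you flag as the main obstacle is immediate. For $|I_n|\geqslant 1$ one has $\Delta_n^2\log|I_n|-E_n=\Delta_n\log|I_n|+\sum_{k\neq l}\log(|I_n|/|\l_k-\l_l|)$, and every term on the right is nonnegative. Deleting points can therefore only decrease the deficit. So keeping any $\lfloor (d/d')\Delta_n\rfloor$ points of $\L\cap I_n$, on the same short partition, already gives a $d$-uniform subsequence. It still lies in $\supp\mu$ and still satisfies \eqref{ur4}, so no grid construction is needed.

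Second, in the necessity part you should not appeal to a convention. As you did in the sufficiency part, explicitly replace $W$ by the $\mu$-weight equal to $W$ on $\supp\mu$ and $+\infty$ off it; this is still lower semicontinuous, has the same $\mu$-integral, and dominates $W$.
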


\ms\no Once again, the proof of theorem \ref{mainType} is too long to include in this text. Let us discuss some of its
corollaries and applications.

\lsection{Discrete case}\label{secDisc}

\ms\no The conditions of theorem \ref{mainType} are simplified for many specific classes of measures.
In particular, if the measure is discrete, or absolutely continuous with regular enough density, the weight $W$ may be eliminated from
the statement. Here we treat the discrete case that is important in spectral theory of differential operators and other adjacent areas.
Our results in this section may be viewed as extensions of the result by Koosis mentioned earlier in the course.

\ms\no The following statement gives a simplified formula for the type of a measure supported on a discrete sequence, excluding pathological
cases when the counting function of the sequence grows exponentially.

\begin{theorem}\label{mainDiscrete} Let $B=\{b_n\}$ be a discrete sequence of real points. Let  $$\mu=\sum w(n)\delta_{b_n}$$
be a finite positive measure supported on $B$.
Define
$$D = \sup\left\{\ d\ :\ \exists\textrm{ d-uniform }B'\subset B,\ \sum_{b_n\in B'}\frac {\log w(n)}{1+{n}^2}>-\infty\ \right\}.$$
Then for any $1<p\leqslant\infty$,
$$\GG^p_\mu\geqslant 2\pi D.$$
If the counting function of $B$ satisfies $\log (|n_B|+1)\in L^1_\Pi$ then
$$\GG^p_\mu=2\pi D.$$

\end{theorem}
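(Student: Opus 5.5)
The plan is to derive both parts from Theorem \ref{mainType}, checking its condition against the explicit discrete data. For a discrete measure the $\mu$-weights are easy to describe. A weight $W$ satisfies $\int W\,d\mu<\infty$ exactly when $\sum W(b_n)w(n)<\infty$. Its values off $B$ play no role, since we may set $W=\infty$ there and keep lower semicontinuity because $B$ is discrete. In particular every $\mu$-weight satisfies $W(b_n)\leqslant C/w(n)$, so $\log W(b_n)\leqslant -\log w(n)+\const$.

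\emph{Lower bound.} Fix $d<D$ and a $d$-uniform $B'\subset B$ with $\sum_{B'}\log w(n)/(1+n^2)>-\infty$. Given an arbitrary $\mu$-weight $W$, the bound above gives
$$\sum_{b_n\in B'}\frac{\log W(b_n)}{1+b_n^2}\lesssim\sum_{b_n\in B'}\frac{-\log w(n)}{1+b_n^2}.$$
Here lies the mismatch between the denominator $1+n^2$ in the definition of $D$ and $1+b_n^2$ in \eqref{ur4}. I expect to handle it by passing to a subsequence. A $d$-uniform sequence with $d>0$ has $D_*(B')=d$ (Remark \ref{rem1}), so along $B'$ the points satisfy $|b_n|\gtrsim$ a positive multiple of the index in the sense needed. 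More precisely, I would thin $B'$ to a $(d-\e)$-uniform subsequence $B''$ on which $|n|\lesssim |b_n|$. This is possible because $B\supset B'$, and removing points of $B$ of index much larger than $|b_n|$ only matters where $B$ is far denser than $B'$; that density is controlled by the energy/density structure. This gives summability in \eqref{ur4} for every $\mu$-weight. Since $d$, $\e$ and $W$ are arbitrary, Theorem \ref{mainType} yields $\GG^p_\mu\geqslant 2\pi D$.

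\emph{Upper bound.} Assume $\log(|n_B|+1)\in L^1_\Pi$ and let $d>D$. I must produce a single $\mu$-weight $W$ such that no $d'$-uniform $\L\subset B$ with $d<d'$ satisfies \eqref{ur4}. The natural choice is $W(b_n)=1/(w(n)(1+n^2))$, with $W=\infty$ off $B$. This is a $\mu$-weight, and $\log W(b_n)=-\log w(n)-\log(1+n^2)$. If a $d'$-uniform $B'$ satisfied \eqref{ur4}, then
$$\sum_{B'}\frac{-\log w(n)}{1+b_n^2}\leqslant\sum_{B'}\frac{\log W(b_n)+\log(1+n^2)}{1+b_n^2}.$$
The hypothesis on the counting function is exactly what makes $\sum_{B'}\log(1+n^2)/(1+b_n^2)$ finite, since $|n|\leqslant n_B(b_n)$ up to a constant and $B'$ has finite density, so the sum is comparable to $\int\log(1+n_B)\,d\Pi$. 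Converting back from $1+b_n^2$ to $1+n^2$ (again using $|b_n|\gtrsim|n|$-type comparisons on a uniform subsequence) contradicts $d'>D$. Theorem \ref{mainType} then gives $\GG^p_\mu<2\pi d$ for every $d>D$.

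The main obstacle is this comparison between indices $n$ in $B$ and positions $b_n$. The denominators $1+n^2$ and $1+b_n^2$ are not equivalent in general, and $d$-uniformity must be used carefully to move between them, possibly by passing to subsequences and losing $\e$ in density. The counting-function hypothesis is used precisely to absorb the $\log(1+n^2)$ correction; without it only the lower bound survives.
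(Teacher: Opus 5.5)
The gap is in the lower bound, at exactly the step you flag as the main obstacle, and it cannot be closed. The notes give no proof of this theorem; they quote it as a consequence of Theorem~\ref{mainType}. Your overall route is the natural one, and their derivation of Theorem~\ref{mainSeparated} uses the same extremal-weight device. The bound $W(b_n)\le C/w(n)$ for every $\mu$-weight is the right starting point.

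\textbf{Why the thinning step fails.} The density of $B'$ controls the index of $b_n$ \emph{within $B'$}, not its index $n$ in $B$. Being $d$-uniform says nothing about how dense $B\setminus B'$ is. If $n_B$ grows superlinearly, then $|n|\gg|b_n|$ at every point of $B$, so no subsequence with $|n|\lesssim|b_n|$ exists.

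\textbf{The printed lower bound is false.} With $n$ the index in $B$, here is a counterexample.
Let $B=\{k+j/(|k|+1):\ k\in\Z,\ 0\le j\le|k|\}$. Then $B\supset\Z$ and $|n_B(x)|\asymp x^2$, so even the hypothesis $\log(|n_B|+1)\in L^1_\Pi$ holds. The integer $k$ has index $|n_k|\asymp k^2$. Put $w=e^{-|b|^{3/2}}$ at each $b\in B$.
\emph{$D\ge1$:} $B'=\Z$ is $1$-uniform (Remark~\ref{rem1}), and $\sum_k|k|^{3/2}/(1+n_k^2)<\infty$.
\emph{$\GG^p_\mu=0$:} for $f\in L^p(\mu)$ with $p>1$, H\"older gives $\int_{(x,\infty)}|f|\,d\mu\le\|f\|_{L^p(\mu)}\,\mu((x,\infty))^{1-1/p}\le e^{-cx^{3/2}}$ for large $x$. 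By Levinson's theorem (Theorem~\ref{Lev}), $f\mu$ has a spectral gap only if $f=0$.

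\textbf{The repair.} The sum defining $D$ must be read with $1+b_n^2$, as in \eqref{ur4}.
This agrees with the printed version whenever $B$ has finite upper density, in particular in the separated case. Indeed, on any $d$-uniform $B'$ with $d>0$ one has $|n|\ge\#(B'\cap[0,b_n))=d\,b_n+o(b_n)$ for $b_n>0$, and symmetrically for $b_n<0$. Finite upper density gives $|n|\lesssim 1+|b_n|$ in the other direction.
With this reading, your first estimate already finishes the lower bound, once you add that the admissible $d$ form an interval:
\begin{itemize}
\item $\Delta_n^2\log|I_n|-E_n$ is a sum of nonnegative terms, so deleting suitably many points from each $I_n$ preserves the energy condition;
\item the summability condition survives passing to subsequences;
\item hence a $d''$-uniform witness contains $d$-uniform witnesses for every $d<d''$.
\end{itemize}

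\textbf{Upper bound.} Your upper bound then needs no conversion. Its only input is $\sum_{B'}\log(1+n^2)/(1+b_n^2)<\infty$. This follows from $\log(1+|n_B|)\in L^1_\Pi$ by summation by parts, since $\log(1+|n_B(x)|)$ is monotone in $|x|$ and $n_{B'}(x)=O(|x|)$.
In your conversion step, the inequality actually needed is $|b_n|\lesssim|n|$, which holds for free by the estimate above. The inequality $|b_n|\gtrsim|n|$ that you invoke is precisely the one that fails for dense $B$.
Finally, to meet the definition of a weight, take the maximum of $(w(n)(1+n^2))^{-1}$ with $1$ and with a slowly growing $\phi$ satisfying $\int\phi\,d\mu<\infty$, and set $W=\infty$ off $B$.
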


\ms\no The condition $\log (|n_B|+1)\in L^1_\Pi$ in the second part of the statement is
sharp, see
exercises.

\ms\no In the case when the sequence is separated, the condition can be simplified even further.
Note that for $p=1$, $\GG^1_\mu=2\pi D_*(\L)$ for any separated sequence $\L$ and any measure $\mu,\ \supp\mu=\L$,
by the gap theorem from lecture 8. For $p>1$ we have

\begin{theorem}\label{mainSeparated} Let $\L=\{\lambda_n\}$ be a separated sequence and let
$$\mu=\sum w(n)\delta_{\lambda_n}$$ be a finite positive measure supported on $\L$.
Define $$D=\sup D_*(\L'),$$
where the supremum is taken over all subsequences $\L'\subset \L$ satisfying
\begin{equation}\sum_{\lambda_n\in \L'}\frac{\log w(n)}{1+n^2}>-\infty.\label{ur10}
\end{equation}
Then $$\GG^p_\mu=2\pi D$$
for all $1<p\leqslant \infty$.
\end{theorem}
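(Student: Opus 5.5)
We derive Theorem \ref{mainSeparated} from Theorem \ref{mainDiscrete}, using the fact from Remark \ref{rem1} that for a separated sequence the energy condition is automatic. Concretely, a separated sequence (or any subsequence of one) is $d$-uniform if and only if its interior Beurling--Malliavin density equals $d$. Two issues must be handled: the weight sum in Theorem \ref{mainDiscrete} is indexed in a way that must be matched with \eqref{ur10}, and the hypothesis $\log(|n_B|+1)\in L^1_\Pi$ must be checked.

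\emph{Step 1: the growth hypothesis.} Since $\L$ is separated with constant $c>0$, its counting function satisfies $|n_\L(x)|\leqslant |x|/c+1$. Hence $\log(|n_\L(x)|+1)\lesssim \log(2+|x|)$, which is Poisson-summable. The second part of Theorem \ref{mainDiscrete} therefore applies with $B=\L$ and $b_n=\lambda_n$, and gives $\GG^p_\mu=2\pi D_B$ for all $1<p\leqslant\infty$. Here $D_B$ denotes the supremum of $d$ over $d$-uniform $B'\subset\L$ with $\sum_{B'}\log w(n)/(1+n^2)>-\infty$. This sum is exactly \eqref{ur10}, because both theorems index the weights by $n$.

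\emph{Step 2: $D_B=D$.} For $D_B\leqslant D$: if $B'$ is $d$-uniform, then $D_*(B')=d$ by the first bullet of Remark \ref{rem1}, and $B'$ satisfies \eqref{ur10}, so $d\leqslant D$. For the reverse inequality, take $\L'\subset\L$ satisfying \eqref{ur10} with $D_*(\L')=d'$, and fix $d<d'$. By the definition of $D_*$ via regular subsequences, $\L'$ contains an $a$-regular subsequence $\L''$ with $a$ arbitrarily close to $d'$. Removing points from $\L'$ only helps \eqref{ur10}, since $\log w(n)\leqslant \log\mu(\R)$ is bounded above, so dropping terms cannot push the sum to $-\infty$. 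The subsequence $\L''$ is still separated, so by the last bullet of Remark \ref{rem1} it is $D_*(\L'')$-uniform.

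It remains to check that $D_*(\L'')\geqslant a$, that is, that an $a$-regular sequence has interior density at least $a$ (in fact equal to $a$). Here I would use the equivalence of the two definitions of BM densities (Exercise 1 of Lecture 2): a sequence that is Poisson-close to $a^{-1}\Z$ cannot have a long family of intervals on which it is sparser than density $a-\e$. Hence $D_B\geqslant a$, and letting $a\to d'$ gives $D_B\geqslant D$.

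The main obstacle is Step 2, specifically the precise link between $D_*$ and $d$-uniformity for sparse separated subsequences. In particular, one must confirm that the assertion in Remark \ref{rem1} ("separated $\Rightarrow$ $d$-uniform iff $D_*=d$") holds in the form needed, including the requirement that the short partition have $|I_n|\to\infty$. If that remark is not sufficient as stated, I would construct the partition directly from the $a$-regular subsequence. I would take intervals of slowly growing length on which the counting function is $a|I_n|+o(|I_n|)$; the Poisson-summability of $|n_{\L''}-ax|$ makes such a short partition possible. The energy condition for separated points then follows from comparing with the key example \eqref{eqkey}: separation bounds $-E_n$ from below by the energy of points spaced $c$ apart, which differs from $\Delta_n^2\log|I_n|$ by $O(|I_n|^2)$. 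That error is summable against $1/(1+\dist^2(0,I_n))$ precisely because the partition is short.
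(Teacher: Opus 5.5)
Your argument is correct. For the lower bound $\GG^p_\mu\geqslant 2\pi D$ it is the paper's argument: Theorem \ref{mainDiscrete} together with the separated-sequence bullet of Remark \ref{rem1}.

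For the upper bound you take a different and somewhat more economical route. You observe that the growth hypothesis $\log(|n_\L|+1)\in L^1_\Pi$ is automatic for separated $\L$, and then invoke the equality case of Theorem \ref{mainDiscrete}. Both inequalities then come from one formula, and the weights $1/(1+n^2)$ match \eqref{ur10} exactly.

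The paper instead argues by contradiction through the $\mu$-weight criterion. Its text refers to Theorem \ref{mainDiscrete}, but the step is really Theorem \ref{mainType}. It tests completeness against the explicit weight $W(\lambda_n)=\big(w(n)(1+\lambda_n^2)\big)^{-1}$, which is a $\mu$-weight precisely because separation gives $\sum(1+\lambda_n^2)^{-1}<\infty$. From $\sum\log W(\lambda_n)/(1+\lambda_n^2)<\infty$ it then reads off a $d$-uniform $\L'$ with $d>D$ satisfying \eqref{ur10}.

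The two routes trade off as follows:
\begin{itemize}
\item The paper's route shows concretely where separation enters and does not need the equality part of Theorem \ref{mainDiscrete}.
\item But it leaves implicit the comparison of $1+\lambda_n^2$ with $1+n^2$, and the removal of the extra $\log(1+\lambda_n^2)$ term. Both are harmless along positive-density subsequences of a separated sequence.
\item It also leaves implicit the passage from $D_*(\L')$ to uniform subsequences, which you spell out.
\end{itemize}

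Your caution about the last bullet of Remark \ref{rem1} is justified. Read literally, its ``if'' direction fails: $\N$ is separated with $D_*(\N)=0$, yet it is not $0$-uniform, since $\Delta_n\approx|I_n|$ on the positive half-line. So the bullet should only be applied, as you do, to the $a$-regular subsequence $\L''$; the paper's lower bound rests on the same remark.

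Finally, $D_*(\L'')\geqslant a$ does not require the equivalence of the two density definitions. It is immediate from the regular-subsequence definition, since $\L''$ is an $a$-regular subsequence of itself.
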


\begin{proof}
Suppose that $\GG^p_\mu>2\pi D$ for some $D>0, p>1$.
Define the $\mu$-weight $W$ as $W(\lambda_n)=(\mu(\{\lambda_n\})(1+\lambda^2_n))^{-1}$.
Then by theorem \ref{mainDiscrete} there
exists a subsequence $\L'\subset \L$ such that $D_*(\L')>D$ and \eqref{ur10} is
satisfied.

\ms\no In the opposite direction the statement follows directly from theorem
\ref{mainDiscrete} and the remark at the end of lecture 8.
\end{proof}

\lsection{Exercises}

\ms\no 1) Prove Bakan's theorem for exponentials (first try to do it without looking at the polynomial proof).

\ms\no 2) Deduce theorem \ref{main1} from theorem \ref{mainType}.

\ms\no 3) Let $S\subset\R$ be a union of unit intervals. Define the measure $\mu$ as
$d\mu(x)=\charf_S(x)\,dx/(1+x^2)$. Show that $\GG^p_\mu$ is either 0 for all $p$ or $\infty$ for all $p$.

\ms\no 4) Using theorem \ref{mainType} or the results of \cite{BSL10} produce an example showing that the condition $\log (|n_B|+1)\in L^1_\Pi$ in the second part of the statement
of the theorem \ref{mainDiscrete} is sharp.

\bs

\newpage

\lecture{11}{The type alternative for Frostman measures}

\ms\no In Lectures 8 and 10 we obtained a solution to the Type Problem: for a finite positive
measure $\mu$ on $\R$, the exponential type $T_\mu$ was computed in terms of $d$-uniform
sequences contained in the support of $\mu$ and weights summable against $\mu$. The answer,
although complete, is not always easy to apply to a concrete measure. In this lecture,
following \cite{FrostA}, we discuss a result of a different nature: a broad regularity
condition on $\mu$, the Frostman condition, under which the answer becomes an alternative --
the type is either zero or infinite. In particular, measures of finite positive type are
necessarily highly irregular.

\lsection{The type alternative}

\ms\no Recall from Lecture 8 that for a finite positive measure $\mu$ on $\R$
$$T_\mu=\inf\{\ a>0\ |\ \EE_a\textrm{ is complete in }L^2(\mu)\ \},$$
where $\EE_a=\{e^{2\pi i\l t}|\ \l\in[0,a]\}$, and $T_\mu=\infty$ if no such $a$ exists. The
three model situations computed in Lectures 8--9 via the theorems of Krein and
Levinson-McKean already show the two extremes:
$$d\mu=\frac{dx}{1+x^2}\ \Rightarrow\ T_\mu=\infty,\qquad
d\mu=e^{-x^2}dx\ \Rightarrow\ T_\mu=0,$$
and any compactly supported $\mu$ has $T_\mu=0$ (exercise 1). Producing a measure with
$0<T_\mu<\infty$ is a much more delicate matter: the standard examples, like the normalized
counting measures of arithmetic progressions from Lecture 8, are discrete. It is natural to
ask which measures with, say, bounded density can have finite positive type. The answer is:
none.

\ms\no We say that a positive measure $\mu$ on $\R$ satisfies the \textit{Frostman condition}
(or is a Frostman measure) if there exist constants $C,\alpha>0$ such that
$$\mu(I)<C|I|^\alpha$$
for every interval $I\subset\R$, $|I|\leq 1$. Conditions of this type appear in potential theory,
where they go back to the classical thesis of O. Frostman \cite{Frost13}, and in the theory of
Hausdorff measures and dimensions. Note that the condition is only a mild regularity
requirement: every measure with bounded density satisfies it with $\alpha=1$, and every
absolutely continuous measure with density in $L^p(\R)$, $1<p\leq\infty$, satisfies it with
$\alpha=1-1/p$ (exercise 2). The masses of small intervals must decay uniformly at some fixed
positive rate, but the rate can be arbitrarily slow.

\ms\no The main result of \cite{FrostA} is the following.

\begin{theorem}[Type alternative, \cite{FrostA}]\label{Frostmain}
Let $\mu$ be a finite positive measure on $\R$ satisfying the Frostman condition. Then
$$T_\mu=0\qquad\textrm{ or }\qquad T_\mu=\infty.$$
\end{theorem}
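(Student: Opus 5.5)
The plan is to show that $T_\mu>0$ forces $T_\mu=\infty$, using the characterization in Theorem \ref{mainType} with $p=2$. Assume $T_\mu>2\pi a>0$ and fix $0<d<a$. I will show that for every integer $K\geq 1$ and every $\mu$-weight $W$, the support of $\mu$ contains a $Kd$-uniform sequence along which $\log W$ is Poisson-summable. Theorem \ref{mainType} then gives $T_\mu\geq 2\pi Kd$ for all $K$, so $T_\mu=\infty$. The idea is to start from one $d$-uniform sequence supplied by Theorem \ref{mainType} and \emph{thicken} it: each point is replaced by a cluster of $K$ nearby points of $\supp\mu$. The Frostman condition is what keeps these clusters from being too tightly packed.

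\textbf{Step 1 (local mass along $\L$).} Given a $\mu$-weight $W$, I will build an auxiliary weight
$$W_1(x)=W(x)+\frac{1}{(1+x^2)\,\mu((x-1,x+1))},$$
with the convention $W_1=\infty$ off $\supp\mu$. I expect $\int W_1\,d\mu<\infty$ from a covering argument. Cover $\R$ by unit intervals $J_k$; on each $J_k$ the second term is at most $(1+x^2)^{-1}\mu(J_k)^{-1}$, so its integral over $J_k$ is $\lesssim(1+k^2)^{-1}$, and these bounds sum. Lower semicontinuity of $W_1$ is also needed, and I would get it by a standard regularization. Applying Theorem \ref{mainType} to $W_1$ produces a $d$-uniform $\L=\{\lan\}\subset\supp\mu$ with $\sum\log W_1(\lan)/(1+\lan^2)<\infty$. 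This controls $W$ along $\L$, and it also gives
$$\sum_n\frac{\log^-\mu(J_n)}{1+\lan^2}<\infty,\qquad J_n=(\lan-\tfrac12\delta_n,\lan+\tfrac12\delta_n),$$
where $\delta_n\le 1$ are widths chosen small enough that the $J_n$ are disjoint. The widths need some care, since $d$-uniform sequences need not be separated; the energy condition should give enough room.

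\textbf{Step 2 (choosing clusters).} Inside each $J_n$, discard the set $\{W>M_n\}$ with $M_n=2\int W\,d\mu/\mu(J_n)$. By Chebyshev this set carries at most half of $\mu(J_n)$, so the remaining part $G_n$ has $\mu(G_n)\ge\mu(J_n)/2$. Split $G_n$ into $K$ consecutive pieces of equal $\mu$-mass and pick one support point from every other piece; the number of pieces can be doubled to get $K$ points in the end. Between two chosen points lies a whole piece of mass $\gtrsim\mu(J_n)/K$, so the Frostman condition forces their separation to be at least $(\mu(J_n)/(CK))^{1/\alpha}$. At every new point $\log W\le\log M_n\lesssim 1+\log^-\mu(J_n)$, so Poisson-summability of $\log W$ along the new sequence $\L_K$ follows from Step 1.

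\textbf{Step 3 (uniformity of $\L_K$) — the main obstacle.} I will use the same short partition $\{I_m\}$ as for $\L$. The density condition becomes $\#(\L_K\cap I_m)=Kd|I_m|+o(|I_m|)$, up to boundary clusters, which can be absorbed. The work is in the energy condition. For pairs in different clusters, the distances change by $O(1)$ relative to the original pairs, and I expect the change in the log terms to be controlled by the original energy deficit plus $O(\Delta_m^2/|I_m|)$-type errors. The harder part is the new intra-cluster pairs: there are $K^2\Delta_m$ of them in $I_m$, each contributing a deficit of at most $\log|I_m|+\alpha^{-1}\log(CK/\mu(J_n))$. The $\log|I_m|$ part is harmless: the original energy condition already forces $\sum_m\Delta_m\log|I_m|/(1+\dist^2(0,I_m))<\infty$-type bounds, because any nondegenerate $\L$ loses order $\Delta_m\log|I_m|$ of energy. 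The $\log\mu(J_n)$ part sums to $\sum_n\log^-\mu(J_n)/(1+\lan^2)<\infty$ by Step 1. Getting this bookkeeping exactly right, including the cross-cluster terms and the choice of $\delta_n$ compatible with the original spacing, is where I expect the real difficulty. Once it is done, $\L_K$ is $Kd$-uniform and the theorem follows.
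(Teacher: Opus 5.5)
Your strategy matches the one the paper sketches: take one $d$-uniform sequence from Theorem \ref{mainType}, thicken each point into a cluster of $K$ points, and use the Frostman condition to bound the spacing inside each cluster from below. The Chebyshev selection in Step 2 and the intra-cluster count in Step 3 are fine. There is, however, a genuine gap between Step 1 and Step 2.

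\textbf{The gap.} Your weight $W_1$ only controls masses at unit scale. It gives $\sum_n\log^-\mu((\lan-1,\lan+1))/(1+\lan^2)<\infty$, and this says nothing about $\mu(J_n)$ when $\delta_n\ll1$. The mass of $(\lan-1,\lan+1)$ may sit away from $\lan$, and $\lan\in\supp\mu$ only gives $\mu(J_n)>0$.

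You also cannot take $\delta_n$ of unit size. The cross-cluster terms need multiplicative control, $|x-y|\geq c|\lan-\lambda_{n'}|$. Your additive ``$O(1)$ change'' is not enough: a $d$-uniform sequence need not be separated, and an additive perturbation of a distance far below $1$ destroys the logarithm. So $\delta_n$ must be comparable to the local spacing of $\L$. But $\L$ is produced only after $W_1$ is fixed, so that scale is unavailable when you design the weight.

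Without a lower bound on $\mu(J_n)$, the separation $(\mu(J_n)/CK)^{1/\alpha}$ gives nothing, and the intra-cluster energy in Step 3 is uncontrolled. This is precisely the mechanism by which the non-Frostman example of the Sharpness section fails.

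\textbf{The repair} needs two ingredients.

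First, use a multi-scale weight, for instance $W_1(x)=W(x)+\frac1{1+x^2}\sup_{0<r\leq1}\bigl(r/\mu([x-r,x+r])\bigr)^{1/2}$.
\begin{itemize}
\item It is lower semicontinuous: with closed intervals, $x\mapsto\mu([x-r,x+r])$ is upper semicontinuous.
\item It is $\mu$-integrable. A Besicovitch covering gives the weak-type bound $\mu\{x\in[k,k+1]:\sup_r r/\mu([x-r,x+r])>\lambda\}\lesssim\lambda^{-1}$. Hence the square root has $L^1(\mu|_{[k,k+1]})$-norm bounded uniformly in $k$, and the Poisson factor makes the sum over $k$ converge.
\item At $x=\lan$, $r=\delta_n/3$, it gives $\log^-\mu(J_n)\leq 2\log W_1(\lan)+2\log(1+\lan^2)+\log(3/\delta_n)$.
\end{itemize}

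Second, the energy condition itself supplies the room you hoped for. Every term of the deficit is nonnegative, so $\Delta_m^2\log|I_m|-E_m\geq\sum_{\lan\in I_m}\log(|I_m|/s_n)$, where $s_n$ is the distance from $\lan$ to its nearest neighbour inside $I_m$. Hence $\delta_n=\min(1,s_n/2)$ satisfies $\sum_n\log^-\delta_n/(1+\lan^2)<\infty$.

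With this choice, points from different clusters in the same $I_m$ satisfy $|x-y|\geq\frac12|\lan-\lambda_{n'}|$. The cross terms then cost at most $K^2D_m+K^2\Delta_m^2\log2$, where $D_m$ is the original deficit, and this is Poisson-summable over a short partition.

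Clusters whose $J_n$ contains a partition point number at most two per interval. Delete them beforehand: deleting points only lowers the deficit and changes $\Delta_m$ by $O(1)$.

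With these repairs, your intra-cluster bound (using $\Delta_m\log|I_m|=o(|I_m|^2)$) and the Chebyshev step close the argument.
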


\ms\no In other words, a Frostman measure cannot have finite positive exponential type. Note
that the statement is independent of the normalization of the frequencies: rescaling the
exponentials multiplies the type by a constant and does not affect the alternative.

\lsection{Sharpness}

\ms\no The regularity assumption of Theorem \ref{Frostmain} is essential: measures
of finite positive type exist, but their densities have to grow rapidly along parts of the
support. An instructive example, obtained by the methods of \cite{BS13}, is the absolutely
continuous measure with the density
$$w=\sum_{n\in\Z}\frac{e^{|n|}}{1+n^2}\,\charf_{[n,n+e^{-|n|})},$$
which is finite (the mass of the $n$-th block is $1/(1+n^2)$) and has type $T_\mu=1$ in our
normalization: the tall thin blocks of the density imitate the point masses of the counting
measure of $\Z$, whose type was computed in Lecture 8. The density of this measure is
unbounded and grows exponentially along the blocks, and the measure of the interval
$[n,n+e^{-|n|})$ is about $n^{-2}$, so no Frostman condition can hold. Exercise 4 makes the
connection with Theorem 17 of Lecture 10 explicit.

\ms\no Thus one can say that finite positive type is a resonance phenomenon: it requires a
precise balance between the support, which must be close to a $d$-uniform sequence as in
Lectures 8 and 10, and the weight, which must load that sequence in an essentially discrete
way. Any uniform regularity of the measure destroys the balance and pushes the type to one of
the extremes.

\lsection{Elements of the proof}

\ms\no The proof of Theorem \ref{Frostmain} in \cite{FrostA} combines the tools developed in
this course. By Bakan's theorem for exponentials (Lecture 10), $\EE_a$ is complete in
$L^2(\mu)$ if and only if it is complete in a weighted space $C_W$ for a suitable $\mu$-weight
$W$; by the results of Lecture 10, the type is then computed through $d$-uniform sequences
$\Lambda\subset\supp\mu$ carrying summable logarithmic weights. The heart of the matter is to
show that if a Frostman measure admits one such sequence of density $d>0$, then it admits such
sequences of every larger density: the Frostman condition allows one to redistribute the
estimates and upgrade the density, so the supremum in the type formula is either $0$ or
$\infty$. The redistribution argument uses the energy condition of Lecture 8 and the stability
of $d$-uniform sequences under small perturbations. For the details we refer to \cite{FrostA}.

\lsection{Exercises}

\ms\no 1) Show that any finite measure with compact support has type $0$. (Hint: if $f\perp\EE_a$
in $L^2(\mu)$, then the Fourier transform of $f\mu$ is an entire function vanishing on an
interval; compare with exercise 1 of Lecture 9.)

\ms\no 2) Show that if $d\mu=w\,dx$ with $w\in L^p(\R)$, $1<p\leq\infty$, then $\mu$ is a
Frostman measure with $\alpha=1-1/p$. (Hint: H\"older's inequality.)

\ms\no 3) Verify the two model computations at the beginning of the lecture using the theorems
of Krein and Levinson-McKean from Lecture 8. Observe that in both cases the conclusion agrees
with the type alternative.

\ms\no 4) Consider the example $w$ of the Sharpness section. Show directly that $\mu=w\,dx$ is
not a Frostman measure, and use Theorem 17 of Lecture 10 (with $B$ close to $\Z$ and
$w(n)\asymp1/(1+n^2)$) to explain why its type is positive and finite. What happens to the
type if $e^{|n|}$ in the definition of $w$ is replaced by $e^{\sqrt{|n|}}$?

\ms\no 5) Let $S\subset\R$ be a union of unit intervals and $d\mu(x)=\charf_S(x)dx/(1+x^2)$, as
in exercise 3 of Lecture 10. Deduce the $0$-$\infty$ alternative for this measure directly
from Theorem \ref{Frostmain}.

\ms\no 6) Show that the Frostman condition in Theorem \ref{Frostmain} cannot be replaced by
the condition $\mu(I)\leq C/\log(1/|I|)$, $|I|<1/2$, by inspecting the example of the
Sharpness section. How slowly can the modulus of continuity of $\mu$ decay for the
counterexamples to disappear? (This is an open-ended question; see \cite{FrostA}.)

\newpage

\lecture{12}{The Beurling-Malliavin theory for Toeplitz kernels}

\ms\no The Beurling-Malliavin theorem of Lecture 2 computes the radius of completeness of a
family of complex exponentials. In Lectures 8--11 we became acquainted with the language in
which modern proofs and generalizations of such results are naturally stated: kernels of
Toeplitz operators. In this lecture, based on \cite{MIF2L12} (see also the companion paper
\cite{MIFL12}), the two threads join. We present the next step of the Beurling-Malliavin
theory: the computation of the analogous ``radius'' for the family of Toeplitz kernels
$N[J\bar S^a]$, where $J$ and $S$ are meromorphic inner functions and the argument of $S$ has
a power behavior on the real line. The classical theorem corresponds to $S(z)=e^{iz}$. The
results of this lecture will be used in the next one, where the Beurling-Malliavin theorem is
placed inside the Toeplitz order, and their spectral theory meaning will reappear in
Lectures 15 and 16.

\lsection{Toeplitz kernels and the completeness radius}

\ms\no Let $U\in L^\infty(\R)$. The Toeplitz operator $T_U$ with the symbol $U$ is the
operator
$$T_U:H^2\to H^2,\qquad T_Uf=P_+(Uf),$$
where $P_+$ is the orthogonal projection from $L^2(\R)$ onto the Hardy space $H^2=H^2(\C_+)$
of Lecture 3. We write
$$N[U]=\ker T_U$$
for the kernel of $T_U$ and call it the Toeplitz kernel of the symbol $U$. Along with the
$H^2$-kernel one considers the kernels in the Smirnov class $\NN^+$ and in the Hardy classes
$H^p$:
$$\begin{gathered}N^{+}[U]=\{F\in \NN^+\cap L^1_{loc}(\R)\ |\ \bar U\bar F\in \NN^+\},\\
N^p[U]=N^+[U]\cap L^p(\R),\quad 0<p\leqslant\infty.\end{gathered}$$
If $\Theta$ is an inner function then $N[\bar\Theta]=K_\Theta$, the model space of Lecture 3.

\ms\no Let now $\L\subset\R$ be a discrete sequence and let $J_\L$ be a meromorphic inner
function whose level set $\{J_\L=1\}$ is exactly $\L$; such a function always exists and can
be produced by an explicit construction (exercise 6). A short argument, which we leave as
exercise 2, connects uniqueness sets of model spaces with Toeplitz kernels: $\L$ is a
uniqueness set of $K_\Theta$, for a meromorphic inner $\Theta$, if and only if
$$N[J_\L\bar\Theta]=0.$$
Recall from Lecture 2 that the completeness problem for the exponential family
$E_\L=\{e^{2\pi i\l x}\}_{\l\in\L}$ in $L^2(0,a)$ is the uniqueness problem for the model
spaces of the exponential inner functions: $E_\L$ is complete in $L^2(0,a)$ if and only if
$\L$ is a uniqueness set of $K_{S^{2\pi a}}$, where here and below
$$S(z)=e^{iz}.$$
Combining the two equivalences we obtain
\begin{equation}\label{BMTradius}
R(\L)=\frac1{2\pi}\,c(J_\L,S),\qquad c(J,S):=\inf\{\ a\ |\ N[J\bar S^{a}]\neq0\ \},
\end{equation}
and the Beurling-Malliavin theorem of Lecture 2 becomes the formula
$$c(J_\L,S)=2\pi D^*(\L).$$
(The paper \cite{MIF2L12} uses the exponentials $e^{i\l x}$ and the spaces $L^2(-a,a)$, which
results in slightly different constants; throughout this lecture we keep the normalization of
Lecture 2 in the statements about $R(\L)$ and quote the theorems about kernels in their
original, normalization-free form.)

\ms\no The problem addressed in \cite{MIF2L12} is the computation of $c(J,S)$ for general
meromorphic inner functions $J$ and $S$. The answer is obtained in the case where the
argument of $S$ has a power law type behavior,
$$(\arg S)'(x)\asymp|x|^\kappa,\qquad x\to\pm\infty,$$
with $\kappa\geqslant0$; here $\arg S$ stands for a branch of the argument continuous in the
closed upper half-plane. Following \cite{MIF2L12} we call this the super-exponential case, to
underline the relation to the classical case $S(x)=e^{iax}$, which corresponds to $\kappa=0$.

\lsection{The basic criterion}

\ms\no Recall that a function $h$ is Poisson-summable, $h\in L^1_\Pi$, if it is summable with
respect to the Poisson measure $d\Pi=dx/(1+x^2)$. For real $h\in L^1_\Pi$ the Hilbert
transform $\ti h$ is defined a.e.\ on $\R$ as the angular boundary value of the imaginary
part of the Schwarz integral
$$\SS h(z)=\frac1{\pi i}\int_\R\left[\frac1{t-z}-\frac t{1+t^2}\right]h(t)\,dt,
\qquad z\in\C_+.$$
The relevance of the Hilbert transform to Toeplitz kernels is explained by the following
criterion, which is the workhorse of the whole theory.

\begin{theorem}[\cite{MIFL12}]\label{BMTcrit}
Let $\gamma:\R\to\R$ be a smooth function. Then $N^+[e^{i\gamma}]\neq0$ if and only if
\begin{equation}\label{BMTbasic}
\gamma=-\alpha+\ti h
\end{equation}
for some smooth increasing function $\alpha$ and some $h\in L^1_\Pi$. Similarly,
$N^p[e^{i\gamma}]\neq0$ if and only if $\gamma$ admits a representation \eqref{BMTbasic} in
which $\alpha$ is the argument of some inner function and $h\in L^1_\Pi$ satisfies
$e^{-h}\in L^{p/2}(\R)$.
\end{theorem}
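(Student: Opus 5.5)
We prove both directions by writing a kernel element in polar form on $\R$ and factoring it in $\NN^+$. The key identity is that for an outer function $O=e^{\SS h}$ with real $h\in L^1_\Pi$ we have, on $\R$, $|O|=e^{h}$ and $\arg O=\ti h$ up to an additive constant. Every $F\in\NN^+$ factors as $F=IO$ with $I$ inner and $O$ outer, so on $\R$
$$\arg F=\arg I+\ti h,\qquad \log|F|=h.$$
Here $\arg I$ is increasing wherever $I$ is smooth; for a meromorphic inner function this is the monotonicity noted in Lecture 5.

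\emph{Necessity.} Let $0\neq F\in N^+[e^{i\gamma}]$ and put $G=\overline{e^{i\gamma}F}\in\NN^+$. On $\R$ this reads $e^{i\gamma}F=\bar G$. Factor $F=I_1O_1$ and $G=I_2O_2$. Taking moduli gives $|O_1|=|O_2|$, and since outer functions are determined by their moduli up to a unimodular constant, $O_1=cO_2$. Taking arguments then gives
$$\gamma+\arg I_1+\ti h=-\arg I_2-\ti h+\const,\qquad h=\log|O_1|\in L^1_\Pi.$$
Hence
$$\gamma=-(\arg I_1+\arg I_2)-2\ti h+\const .$$
Set $\alpha=\arg(I_1I_2)$, which is an increasing function, and absorb the constant into $h$, using that $\ti 1=0$ modulo constants. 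This gives \eqref{BMTbasic} with $-2h$ in place of $h$.

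The step I expect to be hardest is ensuring that $\alpha$ is \emph{smooth} and is a genuine argument of a continuous function. The inner factors $I_1,I_2$ may carry singular parts or Blaschke zeros accumulating at the real line. Since $\gamma$ is smooth, $\arg(I_1I_2)=-\gamma-2\ti h+\const$ differs from a smooth function by $\ti h$. I would handle this by regularizing: replace $\alpha$ by a smooth increasing majorant $\alpha_1$ with $\alpha_1-\alpha$ equal to the conjugate of some $L^1_\Pi$ function, as in the Toeplitz approach of \cite{MIFL12}. Concretely, I would convolve the measure $\alpha'$ with a Poisson-type kernel and absorb the difference into $\ti h$, checking that the correction term is Poisson-summable.

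\emph{Sufficiency.} Given $\gamma=-\alpha+\ti h$ with $\alpha$ smooth and increasing, I would first realize $\alpha$, up to an $L^1_\Pi$-conjugate error, as the argument of a meromorphic inner function $I$. The intertwining-level-set construction of Lecture 5 does this: take $\Lambda_\pm$ to be the points where $\alpha\in 2\pi\Z$, respectively $\alpha\in\pi+2\pi\Z$. Next, because $\alpha$ increases, the quantity $\arg I-\alpha$ is bounded, and I would include it in $\ti h$ via Krein's shift-type formula, again landing in $L^1_\Pi$. Finally set
$$F=I\,e^{-\SS h/2}\cdot\const .$$
Then $\bar e^{i\gamma}\bar F$ has argument $\gamma+\alpha-\ti h/2\cdot$(sign bookkeeping) and modulus $e^{-h/2}$, so it equals the boundary value of an outer function and lies in $\NN^+$. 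Constants and signs are routine to adjust, and $F\in L^1_{loc}$ because we may add a bounded amount to $h$ locally.

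\emph{The $N^p$ statement.} The same factorization applies with one extra requirement: $F\in L^p$ means $|F|=e^{-h/2}\in L^p$, i.e.\ $e^{-h}\in L^{p/2}$. Conversely, the inner factors can be kept in $H^\infty$ without smoothing, so $\alpha$ is exactly the argument of an inner function.
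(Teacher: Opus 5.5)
Your sufficiency direction is essentially right: Krein's shift gives a meromorphic inner $J$ with $\arg J-\alpha$ bounded and smooth, a bounded function is the conjugate of a BMO (hence Poisson-summable) function, and an outer function then lies in the kernel. One justification is wrong, though. ``We may add a bounded amount to $h$ locally'' does not give $F\in L^1_{loc}$, because changing $h$ changes $\ti h$ and destroys the identity. Local integrability has to come from the regularity of $h$ forced by the smoothness of $\ti h=\gamma+\alpha$.

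The necessity direction has a genuine gap. The relation $e^{i\gamma}=\bar c\,\overline{I_1I_2}\,\bar O_1/O_1$ only yields $\gamma=-\arg(I_1I_2)-2\widetilde{\log|O_1|}+2\pi m$ a.e., with $m$ integer-valued. Controlling $m$ is the entire content of the theorem, and ``taking arguments'' silently makes $m$ constant. Smoothing $\arg(I_1I_2)$ does not touch $m$. It is also ill-posed: for the inner function $e^{-i/z}$ the argument $-1/x$ has an infinite downward jump at $0$, so no continuous increasing majorant exists.

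Moreover, you never use $F\in L^1_{loc}(\R)$, and without it the statement is false. Take $\gamma(x)=x$ and the outer function $F(z)=z/(e^{iz}-1)$. Then $e^{-i\gamma}\bar F=-F\in\NN^+$. Yet $x=-\alpha+\ti h$ with $\alpha$ increasing is impossible: it would give $\ti h(x)\geq x-C$ on $\R_+$, so $\Pi\{\ti h>A\}\gtrsim 1/A$, contradicting Kolmogorov's estimate (as in Proposition \ref{BMTupper}). Your steps applied to this $F$ would give $x=-2\widetilde{\log|F|}+\const$. In fact $\widetilde{\log|F|}$ is bounded, and $m$ is an increasing staircase jumping at the real poles $2\pi n$, $n\neq0$, of $F$. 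Such an $m$ can be absorbed neither into $-\alpha$ nor into $\ti h$.

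The missing ingredient is a local reflection argument, and this is exactly where $L^1_{loc}$ enters.

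\begin{itemize}
\item \emph{Reduce to outer $F$.} Dividing by its inner factor keeps $F$ in the kernel, since $e^{-i\gamma}\bar O_1=I_1e^{-i\gamma}\bar F$. So assume $F$ outer; then $G=e^{-i\gamma}\bar F=cIF$ with $I$ inner.
\item \emph{Local factorization.} On an interval $J$ write $e^{i\gamma}=\bar O_0/O_0$ with $O_0=\exp(-i\SS v)$, $v=\phi\gamma/2$, and $\phi$ a smooth cut-off equal to $1$ near $J$. Then $O_0$ is outer, smooth, and bounded away from $0$ and $\infty$ up to $J$.
\item \emph{Gluing.} The functions $F/O_0\in\NN^+(\C_+)$ and $\overline{(G/O_0)(\bar z)}\in\NN^+(\C_-)$ have equal boundary values on $J$. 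These lie in $L^1(J)$ because $F\in L^1_{loc}$. By Smirnov's theorem and the Cauchy formula they glue into a function $\Psi$ analytic across $J$.
\item \emph{Consequences.} $I=\bar c\,\Psi^\#/\Psi$ is a meromorphic inner function. Also $F=O_0\Psi$ near $J$, so $\widetilde{\log|F|}$ is smooth except for downward jumps of $\pi k$ at real zeros of $F$ of order $k$.
\item \emph{The identity.} Let $n_F$ count the real zeros of $F$ with multiplicity. The congruence becomes $\gamma=-(\arg I+2\pi n_F)+\widetilde{(-2\log|F|)}+\const$. Here $\arg I+2\pi n_F$ is increasing, and its jumps can be smoothed at the cost of the conjugate of a Poisson-summable function.
\end{itemize}

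The $N^p$ statement has the same gap. ``Without smoothing'' does not suffice either, since $2\pi n_F$ is not the argument of an inner function. One first pushes the real zeros of $F$ into $\C_-$ by multiplying by $(z-x_n+i\e_n)/(z-x_n)$ with small $\e_n$. This turns them into Blaschke factors of $I$ while keeping $F\in L^p$.
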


\ms\no In other words, non-triviality of the kernel means that the argument of the symbol is
an increasing function, up to a correction by a harmonic conjugate of a Poisson-summable
function. All the results below can be viewed as answers, for concrete families of symbols,
to the question of when such a representation is possible.

\lsection{BM intervals and almost decreasing functions}

\ms\no Let $\gamma$ be a continuous function $\R\to\R$ such that
$$\lim_{x\to-\infty}\gamma(x)=+\infty,\qquad\lim_{x\to+\infty}\gamma(x)=-\infty.$$
The family ${\rm BM}(\gamma)$ of BM intervals of $\gamma$ is defined as the collection of the
components of the open set
$$\left\{x\ |\ \gamma(x)\neq\max_{[x,+\infty)}\gamma\right\}.$$
On each BM interval the function lies below its running maximum from the right; outside the
BM intervals it decreases. Thus the family ${\rm BM}(\gamma)$ measures the deviation of $\gamma$
from a decreasing function. For $l\in{\rm BM}(\gamma)$ we write $|l|$ for its length and
$d(l)=\dist(0,l)$.

\ms\no Let $\kappa\geqslant0$. We say that $\gamma$ is $(\kappa)$-almost decreasing if
$\gamma(\mp\infty)=\pm\infty$ and
$$\sum_{l\in{\rm BM}(\gamma),\ d(l)\geqslant1}d(l)^{\kappa-2}|l|^2<\infty.$$
In the classical case $\kappa=0$ the standard terminology is that the family ${\rm BM}(\gamma)$
is short if $\gamma$ is almost decreasing and long otherwise -- the same dichotomy of short
and long families of intervals as in the definition of the densities of Lecture 2.

\lsection{The main theorems}

\ms\no In what follows, $f(x)\gtrsim g(x)$ means that $f(x)\geqslant cg(x)$ for some $c>0$
and all $x$ with $|x|\gg1$.

\begin{theorem}[\cite{MIF2L12}]\label{BMTa}
Let $\kappa\geqslant0$, and let $U=e^{i\gamma}$ and $S=e^{i\sigma}$ be smooth unimodular
functions on $\R$ such that
$$\gamma'(x)\gtrsim-|x|^\kappa,\qquad\sigma'(x)\gtrsim|x|^\kappa,\qquad x\to\infty.$$

\ms\no (i) If $\gamma$ is not $(\kappa)$-almost decreasing, then $N^+[US^\epsilon]=0$ for all
$\epsilon>0$.

\ms\no (ii) If $\gamma$ is $(\kappa)$-almost decreasing, then $N^p[U\bar S^\epsilon]\neq0$
for all $\epsilon>0$ and all $p<\frac13$.
\end{theorem}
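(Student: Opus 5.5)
Both parts will be reduced to the basic criterion, Theorem \ref{BMTcrit}. The kernel $N^+[US^\epsilon]$ is non-trivial iff $\gamma+\epsilon\sigma=-\alpha+\ti h$ with $\alpha$ increasing and $h\in L^1_\Pi$. The kernel $N^p[U\bar S^\epsilon]$ is non-trivial iff $\gamma-\epsilon\sigma=-\alpha+\ti h$ with $\alpha$ the argument of an inner function and $e^{-h}\in L^{p/2}$. So the task is to decide when a function can be written as a decreasing function plus a harmonic conjugate. The parameter $\kappa$ enters through the growth $|x|^\kappa$ of $\sigma'$.

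For (i), argue by contradiction. Suppose $\gamma+\epsilon\sigma=-\alpha+\ti h$. Then $\ti h=\gamma+\epsilon\sigma+\alpha$, and since $\alpha$ increases, $\ti h-\epsilon\sigma$ lies above $\gamma$ up to an increasing function. On each BM interval $l$ of $\gamma$, $\gamma$ climbs back to its running maximum on the right. Hence $\ti h$ must increase across $l$ by at least roughly $\epsilon\int_l\sigma'\gtrsim\epsilon d(l)^\kappa|l|$ beyond what the decreasing part provides.

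The key lemma is a Beurling--Malliavin type estimate. If $h\in L^1_\Pi$ and $\ti h$ has increments of size $\gtrsim d(l)^\kappa|l|$ over a disjoint family of intervals, then
$$\sum d(l)^{\kappa-2}|l|^2<\infty.$$
I would prove it in the classical way, the case $\kappa=0$ being the first BM theorem. On each interval, compare the increment of $\ti h$ with a Poisson-type integral of $|h|$ near $l$. Then a duality/energy argument (a Dirichlet-integral bound for the harmonic extension) gives $\sum(\text{increment})^2/(\ldots)\lesssim\|h\|_{L^1_\Pi}$, with the weight adjusted by $d(l)^{-\kappa}$. The hypothesis $\gamma'\gtrsim-|x|^\kappa$ ensures the drops of $\gamma$ inside BM intervals are comparable to $|x|^\kappa|l|$, which makes the increments genuinely of this size. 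I expect this lemma to be the main obstacle: the weighted ($\kappa>0$) version of the first BM theorem.

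For (ii), construct the representation. Let $\gamma^*(x)=\max_{[x,\infty)}\gamma$, which is decreasing, and set $\gamma-\epsilon\sigma=(\gamma^*-\epsilon\sigma)+(\gamma-\gamma^*)$. The first term is decreasing, so it can serve as $-\alpha$ after smoothing. It must be the argument of a meromorphic inner function; using $\sigma'\gtrsim|x|^\kappa$, one can approximate it by such an argument with a bounded error, which is absorbed into $\ti h$. The remainder $\gamma-\gamma^*\le0$ is supported on the BM intervals.

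On each $l$, choose $h_l$ concentrated on $l$ with $\ti h_l\approx\gamma-\gamma^*$ there. Since $\epsilon\sigma$ gains $\asymp\epsilon d(l)^\kappa|l|$ across $l$, the deficit can be compensated by a bump of height $\asymp d(l)^\kappa|l|$ and mass $\asymp d(l)^\kappa|l|^2$. These masses are Poisson-summable exactly under the $(\kappa)$-almost decreasing condition $\sum d(l)^{\kappa-2}|l|^2<\infty$. Finally, the integrability $e^{-h}\in L^{p/2}$ for $p<1/3$ should come from the logarithmic singularities at the endpoints of the bumps, which requires checking exponent bookkeeping; I would use Theorem \ref{BMTcrit} to conclude.
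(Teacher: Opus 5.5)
Your reduction to Theorem \ref{BMTcrit} is the right framework, and the computation in (i) is correct. On a BM interval $l=(a,b)$ one has $\gamma(a)=\gamma(b)$, hence $\tilde h(b)-\tilde h(a)=\alpha(b)-\alpha(a)+\epsilon(\sigma(b)-\sigma(a))\gtrsim\epsilon\, d(l)^\kappa|l|$. Beyond that, part (i) has three gaps.

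(1) ``Not $(\kappa)$-almost decreasing'' also covers the case $\gamma(\mp\infty)\neq\pm\infty$, where the BM family is not even defined. The paper handles this case separately, by the upper density estimate (Lemma \ref{BMTkol} and Proposition \ref{BMTupper}, via Kolmogorov's weak type theorem). Your proposal does not address it.

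(2) Your key lemma is false as stated, because a two-point increment of $\tilde h$ costs almost nothing in $L^1_\Pi$. Let $\psi$ be a smooth odd bump with $\tilde\psi(0)\neq0$, and put $h=A\psi((x-x_0)/r)$. Then $\tilde h(x_0)\approx A\tilde\psi(0)$, $\tilde h(x)=O(|A|r^2/(x-x_0)^2)$ away from $x_0$, and $\|h\|_{L^1_\Pi}\lesssim |A|r/(1+x_0^2)$. Place such dipoles at the right endpoints of any long family, with $r$ small; this gives arbitrary increments at summable cost. What excludes this is the bound $\tilde h'=\gamma'+\epsilon\sigma'+\alpha'\geq\gamma'\gtrsim-|x|^\kappa$, which must be a hypothesis of the lemma. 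This, and not the size of the drops of $\gamma$, is where the assumption on $\gamma'$ enters, here and in the upper density step (compare exercise 5 of Lecture 12). The increments themselves come for free from $\gamma(a)=\gamma(b)$.

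(3) Even after this correction, the lemma is Beurling's lemma. The paper calls it the much deeper half of (i) and proves it through a weak type estimate for a two-dimensional Hilbert transform. Your sketch does not prove it: an inequality quadratic in the increments cannot be bounded by $\|h\|_{L^1_\Pi}$ (replace $h$ by $\lambda h$), and the harmonic extension of an $L^1_\Pi$ function need not have finite Dirichlet integral.

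Part (ii) has a more basic gap: nothing in the bump construction controls the negative part of $h$, and $e^{-h}\in L^{p/2}$ is not a matter of logarithmic singularities at endpoints. Since $-\alpha$ must decrease, $\tilde h$ has to follow every rise of $\gamma$ pointwise, $\tilde h'\geq\gamma'-\epsilon\sigma'$. These rises have heights $H_l$ up to $\asymp d(l)^\kappa|l|$, typically unbounded along the family, and they may be arbitrarily steep.

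Conjugation is nonlocal and does not respect sign. Take a single dip descending linearly by $H$ over $(a,a+L)$ and jumping back. The natural choice $\tilde h=\gamma-\gamma^*$ forces $h=\frac H\pi\big(u\log\frac{|u|}{|u-1|}-1\big)$ with $u=(x-a)/L$, and this is $\leq -cH$ on a set of length $\asymp L$. So $e^{-h}$ is of size $e^{cH_l}$ on sets of length $\asymp|l|$, and it is not in $L^{p/2}$ once $H_l$ is unbounded.

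There is a second issue with the bumps. The tails $\tilde h_l\approx\frac1\pi\frac{\int h_l}{x-c_l}$ have slopes of order $d(l)^\kappa$, not $\epsilon\, d(l)^\kappa$, on neighbouring intervals. So the monotonicity of $\gamma-\epsilon\sigma-\tilde h$ also needs an argument.

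With care, a construction of this kind yields at best $h\in L^1_\Pi$, i.e.\ a non-trivial kernel in the Smirnov class; this is the ``little multiplier theorem'' stage of the paper. The missing idea is the upgrade from $\NN^+$ to $H^p$. The paper uses the generalized multiplier theorem (Theorem \ref{BMTmult}) to supply $G\in N^+[\bar S^{\epsilon'}]$ that absorbs the large values of $e^{-h}$. It combines this with a First Beurling--Malliavin-type theorem: the logarithm of the outer part of the Smirnov kernel function has finite weighted Dirichlet integral, which is what lets the multiplier theorem apply.
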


\ms\no Given $U$ and $S$ as in the theorem, consider the family of symbols
$$U\bar S^a=e^{i\gamma_a},\qquad\gamma_a=\gamma-a\sigma,\qquad a\in\R.$$
If $\gamma_a$ is $(\kappa)$-almost decreasing and $a_1>a$, then $\gamma_{a_1}$ is
$(\kappa)$-almost decreasing as well (exercise 1), so one can define the transition
parameter
$$c(U,S;\kappa)=\inf\{\ a\ |\ \gamma_a\ \textrm{is}\ (\kappa)\textrm{-almost decreasing}\ \}
\in(-\infty,+\infty].$$

\begin{corollary}[\cite{MIF2L12}]\label{BMTcor}
Let $U=e^{i\gamma}$ and $S=e^{i\sigma}$ satisfy
$$\gamma'(x)\gtrsim-|x|^\kappa,\qquad\sigma'(x)\asymp|x|^\kappa,\qquad x\to\infty,$$
and let $c=c(U,S;\kappa)$. Then for all $p<1/3$,
$$N^p[U\bar S^a]=0\quad(a<c),\qquad N^p[U\bar S^a]\neq0\quad(a>c).$$
\end{corollary}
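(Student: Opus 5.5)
The corollary should follow from Theorem \ref{BMTa}, applied to the shifted symbols $U\bar S^{a'}$ for suitable $a'$. Since $N^p\subset N^+$, the two halves will use parts (ii) and (i) respectively.

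\emph{The case $a>c$.} By the definition of $c$ as an infimum, there is $a'$ with $c<a'<a$ such that $\gamma_{a'}=\gamma-a'\sigma$ is $(\kappa)$-almost decreasing. I will apply Theorem \ref{BMTa}(ii) to the symbol $U_1=U\bar S^{a'}=e^{i\gamma_{a'}}$ and the same $S$, with $\epsilon=a-a'>0$. This gives $N^p[U_1\bar S^{\epsilon}]=N^p[U\bar S^a]\neq0$ for all $p<1/3$.

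The hypothesis to check is $\gamma_{a'}'\gtrsim-|x|^\kappa$. This follows from $\gamma'\gtrsim-|x|^\kappa$ together with the upper bound $\sigma'\lesssim|x|^\kappa$, which is contained in $\sigma'\asymp|x|^\kappa$. For $a'\geq0$ we get $\gamma'-a'\sigma'\gtrsim-(1+Ca')|x|^\kappa$. For $a'<0$ the term $-a'\sigma'$ is nonnegative and only helps. The condition $\sigma'\gtrsim|x|^\kappa$ is given directly.

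\emph{The case $a<c$.} Choose $a'$ with $a<a'<c$. Then $\gamma_{a'}$ is not $(\kappa)$-almost decreasing; otherwise the monotonicity property (exercise 1) would make $\gamma_b$ almost decreasing for all $b\geq a'$, forcing $c\leq a'$. The required bound $\gamma_{a'}'\gtrsim-|x|^\kappa$ holds exactly as above. Theorem \ref{BMTa}(i), applied to $U_1=U\bar S^{a'}$, gives $N^+[U_1S^{\epsilon}]=0$ for every $\epsilon>0$. Taking $\epsilon=a'-a$ gives $U_1S^\epsilon=U\bar S^{a}$, hence $N^+[U\bar S^a]=0$ and therefore $N^p[U\bar S^a]=0$.

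If $c=+\infty$, only the second case occurs, and it works for every real $a$.

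\emph{Main obstacle.} The only nontrivial ingredient is the monotonicity in $a$, which is exercise 1. Granting it, the rest is bookkeeping. To prove it, I would write $\gamma_{a_1}=\gamma_a-(a_1-a)\sigma$ and note that subtracting an increasing function can only shrink the BM family. If $x$ is not in a BM interval of $\gamma_a$, then $\gamma_a(x)\ge\gamma_a(y)$ for all $y>x$. Since $\sigma(x)\le\sigma(y)$, the same inequality holds for $\gamma_{a_1}$, so $x$ is not in a BM interval of $\gamma_{a_1}$ either. Consequently every BM interval of $\gamma_{a_1}$ lies inside a BM interval of $\gamma_a$. The weighted sum $\sum d(l)^{\kappa-2}|l|^2$ is superadditive under splitting intervals, up to a comparable change in $d(l)$ for intervals with $|l|\lesssim d(l)$, so finiteness transfers from $\gamma_a$ to $\gamma_{a_1}$. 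The intervals with $|l|\gtrsim d(l)$ need to be handled separately, by splitting them dyadically according to distance from $0$.
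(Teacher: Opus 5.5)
Your proof is correct and follows the derivation the paper has in mind, which the paper does not write out: apply Theorem \ref{BMTa} to the shifted symbol $U\bar S^{a'}$, using the upper bound in $\sigma'\asymp|x|^\kappa$ to keep $\gamma_{a'}'\gtrsim-|x|^\kappa$, and take $\epsilon=|a-a'|$. One remark: you do not actually need the monotonicity of exercise 1 here, since the definition of $c$ as an infimum already supplies an admissible $a'\in[c,a)$ when $a>c$, and already makes every $a'\in(a,c)$ non-admissible when $a<c$.
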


\ms\no In the special case where the symbol involves an inner function, the dichotomy extends
to all values of $p$, including the Hilbert space case $p=2$.

\begin{theorem}[\cite{MIF2L12}]\label{BMTb}
Let $J$ be a meromorphic inner function, and suppose that a unimodular function $S$
satisfies $(\arg S)'(x)\asymp|x|^\kappa$ as $x\to\infty$. Denote $c=c(J,S;\kappa)$. Then for
all $p\leqslant\infty$,
$$N^p[J\bar S^a]=0\quad(a<c),\qquad N^p[J\bar S^a]\neq0\quad(a>c).$$
\end{theorem}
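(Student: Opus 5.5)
The plan is to derive Theorem \ref{BMTb} from Corollary \ref{BMTcor}, using two extra facts: $J$ is inner, and Toeplitz kernels with inner symbols are monotone and can be moved between classes $N^p$ at the cost of a small change in the parameter $a$. Write $J=e^{i\gamma}$ with $\gamma$ the increasing continuous argument of $J$. Then $\gamma'\geqslant 0\gtrsim-|x|^\kappa$, and $S=e^{i\sigma}$ satisfies $\sigma'\asymp|x|^\kappa$. So the hypotheses of Corollary \ref{BMTcor} hold for $U=J$, and we get the dichotomy at $c=c(J,S;\kappa)$ for all $p<1/3$.

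\textbf{Step 1 (the case $a<c$, all $p$).} We want $N^p[J\bar S^a]=0$ for every $p\leqslant\infty$, but the corollary only gives this for $p<1/3$. Since $\mu(I)$-type considerations do not enter here, we use the inclusion $N^p[U]\subset N^{q}[U\bar B]$ for $q<p$, obtained by multiplying by a suitable outer-type factor. Concretely, take $F\in N^p[J\bar S^a]$ with $F\neq 0$ and $a<a'<c$. Divide by an outer function $O$ that is bounded, $1/O$ is not needed, and $\bar S^{a'-a}O/\bar O$-type corrections are avoided, as follows. By Theorem \ref{BMTcrit} the nontriviality of $N^p[J\bar S^{a}]$ gives $\gamma-a\sigma=-\alpha+\ti h$ with $e^{-h}\in L^{p/2}$. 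We need $\gamma-a'\sigma=-\alpha'+\ti h'$ with $h'$ Poisson-summable (so that $N^+\neq0$), and then we want to lower the integrability exponent. Multiplying $F$ by $(1+z)^{-N}$, or by an outer function with rapidly decaying modulus, keeps us in $N^+$ and puts $F$ in $L^{q}$ for small $q$, but it changes the class rather than the symbol. Hence the simplest route is this: $N^p[J\bar S^a]\neq0$ for some $p$ implies $N^+[J\bar S^a]\neq0$, and multiplication by a bounded outer function with $|O|\asymp(1+x^2)^{-M}$ lands in $N^{q}[J\bar S^a]$ with $q<1/3$, since such an $F$ in $N^+\cap L^p_{loc}$ has at most polynomial growth by the standard estimate for $N^+$ functions with a $J\bar S^a$-symbol. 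This gives $N^q[J\bar S^a]\neq 0$ with $q<1/3$ and $a<c$, contradicting Corollary \ref{BMTcor}. The growth control is the delicate point. I would get it from $F\bar J S^a\in \bar{\NN^+}$, which on $\R$ makes $|F|$ simultaneously the modulus of an $\NN^+$ function in both half-planes, modulo the meromorphic factors. That makes $F$ of controlled growth, so a polynomial factor suffices.

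\textbf{Step 2 (the case $a>c$, all $p$, including $p=\infty$).} Pick $c<a_1<a$. The corollary gives a nonzero $G\in N^{p_0}[J\bar S^{a_1}]$ with $p_0<1/3$. We must upgrade $G$ to a bounded element of $N[J\bar S^a]$, spending the margin $\bar S^{a-a_1}$. By Theorem \ref{BMTcrit}, $\gamma-a_1\sigma=-\alpha+\ti h$ with $\alpha$ the argument of an inner function and $h\in L^1_\Pi$. We then write
\[
\gamma-a\sigma=-\alpha-(a-a_1)\sigma+\ti h .
\]
Because $\sigma'\asymp|x|^\kappa$ is genuinely increasing, the plan is to split $(a-a_1)\sigma=\beta+\ti k$. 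Here $\beta$ is increasing (again an inner argument, via the meromorphic inner function $S^{(a-a_1)/2}$) and $\ti k$ has $k\in L^1_\Pi$ chosen with $k\geqslant h_+ +\log(1+x^2)$. This absorbs $h$, so that the new $h'=h-k$ satisfies $e^{-h'}\in L^{\infty}$, with decay of the order $(1+x^2)^{-1}$. The resulting outer function gives an element of $N^p[J\bar S^a]$ for every $p\leqslant\infty$.

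\textbf{Main obstacle.} The hardest step is constructing $k$ with $\ti k$ equal to part of $(a-a_1)\sigma$ up to an increasing function while $k$ dominates $h_+$. This is the genuine Beurling--Malliavin multiplier step. I would first try using the multiplier theorem from the almost-decreasing property of $\gamma_{a'}$ for $a_1<a'<a$, applied to the larger margin, possibly passing through an intermediate $p$ and using Step 1-style factors. The meromorphic inner nature of $J$ is what ensures that the increasing part $\alpha$ stays an inner argument throughout.
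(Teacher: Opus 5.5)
Your Step 2 is the only part of Theorem~\ref{BMTb} that goes beyond Corollary~\ref{BMTcor}, and you do not carry it out. The mechanism you propose also cannot work as stated.

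\textbf{The signs are reversed.} By Theorem~\ref{BMTcrit} a kernel element has modulus $e^{-h/2}$. So with $h'=h-k$ you need $k\le h-\log(1+x^2)$. Your condition $k\geqslant h_++\log(1+x^2)$ gives $e^{-h'}\geqslant 1+x^2$.

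\textbf{The corrected step is a multiplier problem you cannot solve.} Once the sign is fixed, the splitting $(a-a_1)\sigma=\beta+\tilde k$ says exactly that some $W\in N^+[\bar S^{a-a_1}]$ satisfies $|FW|\lesssim(1+x^2)^{-1/2}$, where $F$ is your element of $N^{p_0}[J\bar S^{a_1}]$. This is the Beurling--Malliavin multiplier problem for the weight $|F|=e^{-h/2}$. Knowing only $e^{-h}\in L^{p_0/2}$, it is false in general. Already for $S=e^{iz}$: if an outer $H\in L^{p_0}$ has singularities $|x-x_n|^{-1/4}$ on a sufficiently fine lattice, then $H\in N^{p_0}[\bar H/H]$. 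But every element of $N^\infty[(\bar H/H)\bar S^\epsilon]$ has the form $HW$ with $W\in N^+[\bar S^\epsilon]$ vanishing on the lattice, hence is zero. Moreover, Theorem~\ref{BMTmult} needs Dirichlet-integral regularity and only gives $\MM_p(S)$ for $p<1$, never $p=\infty$.

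\textbf{The role of innerness is misplaced.} Your remark that innerness of $J$ ``keeps $\alpha$ an inner argument'' is not where it matters: Theorem~\ref{BMTcrit} supplies an inner argument $\alpha$ for every symbol.

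\textbf{Step 1.} Step~1 reaches a true conclusion by a wrong mechanism. Multiplying by an outer $O$ does not preserve $N^+[U]$, since that would require $\bar O\,\bar U\bar F\in\NN^+$. Also, Smirnov kernel elements obey no polynomial growth bound. The step is in any case unnecessary: $N^p\subset N^+$, and Theorem~\ref{BMTa}(i) applied to $\gamma_{a'}=\gamma-a'\sigma$, with $a<a'<c$ and $\epsilon=a'-a$, gives $N^+[J\bar S^a]=0$ directly. This matches the paper's outline, where part (i) comes from the density estimates.

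\textbf{The missing idea.} Innerness of $J$ enters through $JH^p\subset H^p$. For $F\in N^{p_0}[J\bar S^{a_1}]$ this gives $S^{a_1}\bar F=J\,(\bar JS^{a_1}\bar F)\in\NN^+$, so $N^{p_0}[J\bar S^{a_1}]\subset N^{p_0}[\bar S^{a_1}]$. This is a model-space-type class whose elements are pointwise controlled. The paper's outline attributes the passage to all $p$ to ``approximation of unimodular functions by inner ones.'' One way to carry it out:
\begin{enumerate}
\item Replace $S^{a_1}$ by a meromorphic inner $\Theta$ with $|\arg\Theta-a_1\sigma|\le\pi$ (Krein's shift, Exercise~6 of Lecture~12). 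For smooth $\sigma$ the bounded phase error has conjugate $O(\log|x|)$.
\item Hence, up to polynomially bounded outer factors, $F$ lies in $N^{p_0}[\bar\Theta]$ with $(\arg\Theta)'\asymp|x|^\kappa$.
\item A Plancherel--P\'olya-type estimate (subharmonicity of $|F|^{p_0}$ on discs of radius $\asymp|x|^{-\kappa}$ centred on $\R$) then gives $|F(x)|\lesssim(1+|x|)^N$.
\item Multiply by a bounded element of $N[\bar S^{a-a_1}]$ decaying faster than $|x|^{-N-1}$. Such an element can be built the same way from $(1-\Theta_\delta)/\prod_j(z-\lambda_j)\in K_{\Theta_\delta}$, where $\Theta_\delta(\lambda_j)=1$.
\end{enumerate}
The product is a nonzero element of $N^p[J\bar S^a]$ for every $p\leqslant\infty$, with no multiplier theorem needed at this stage.

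So the growth control you floated in Step~1 is the right tool, but it belongs in Step~2. It holds for elements of $N^{p_0}$, not $N^+$, and only because $J$ is inner. Without it, your argument proves no more than Corollary~\ref{BMTcor}.
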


\ms\no Theorem \ref{BMTb} says precisely that the analytic threshold $c(J,S)$ of
\eqref{BMTradius} coincides with the combinatorial transition parameter $c(J,S;\kappa)$,
which is computable from the argument of the symbol. For $\kappa=0$, $J=J_\L$ and
$S(z)=e^{iz}$, the almost decreasing condition for $\gamma_a=\arg J_\L-ax$ translates into
the interval densities of Lecture 2, and the classical Beurling-Malliavin theorem is
recovered; see exercise 3 for the model computation. In the kernels $N^p[J\bar S^a]$ with
$a>c$ one can say more: they are infinite-dimensional.

\lsection{The upper density estimate}

\ms\no As a sample of the technique we prove the first, easier, step of the theory: an
analogue of the classical estimate of the completeness radius by the upper density
(Lecture 2). The proofs of this section are taken from \cite{MIF2L12} and rest on
Kolmogorov's weak type theorem for the Hilbert transform: for real $h\in L^1_\Pi$,
$$\Pi\{|\ti h|>A\}=o\left(\frac1A\right),\qquad A\to\infty.$$

\begin{lemma}\label{BMTkol}
Let $\kappa\geqslant0$ and $h\in L^1_\Pi$. If $\ti h'(x)\lesssim x^\kappa$ as $x\to+\infty$,
then $\ti h(x)=o(x^{\kappa+1})$ as $x\to+\infty$.
\end{lemma}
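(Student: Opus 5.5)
The plan is to argue by contradiction using only Kolmogorov's weak type estimate $\Pi\{|\ti h|>A\}=o(1/A)$ quoted above, together with the one-sided derivative bound. Fix $C$ and $x_0$ with $\ti h'(x)\leqslant Cx^\kappa$ for $x\geqslant x_0$. Suppose the conclusion fails. Then there are $\e>0$ and points $x_n\to+\infty$ with $|\ti h(x_n)|\geqslant \e x_n^{\kappa+1}$. Passing to a subsequence, either $\ti h(x_n)\geqslant \e x_n^{\kappa+1}$ for all $n$, or $\ti h(x_n)\leqslant -\e x_n^{\kappa+1}$ for all $n$. I would treat the two signs separately, because the derivative bound is one-sided: it controls how fast $\ti h$ can grow to the right. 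Hence a large positive value propagates to the left and a large negative value propagates to the right.

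Put $\delta=\min\{\e/(2^{\kappa+1}C),1/2\}$ and assume $x_n(1-\delta)\geqslant x_0$.

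\emph{Positive case.} For $y\in[(1-\delta)x_n,x_n]$,
$$\ti h(y)=\ti h(x_n)-\int_y^{x_n}\ti h'\geqslant \e x_n^{\kappa+1}-Cx_n^\kappa\,\delta x_n\geqslant \tfrac\e2 x_n^{\kappa+1}.$$

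\emph{Negative case.} For $y\in[x_n,(1+\delta)x_n]$,
$$\ti h(y)=\ti h(x_n)+\int_{x_n}^y\ti h'\leqslant -\e x_n^{\kappa+1}+C\bigl((1+\delta)x_n\bigr)^\kappa\,\delta x_n\leqslant -\tfrac\e2 x_n^{\kappa+1},$$
where the last step uses $(1+\delta)^\kappa\leqslant 2^\kappa$ and the choice of $\delta$.

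In either case we obtain an interval $I_n$ of length $\delta x_n$, contained in $[x_n/2,2x_n]$, on which $|\ti h|\geqslant A_n:=\tfrac\e2 x_n^{\kappa+1}$. Its Poisson measure satisfies $\Pi(I_n)\geqslant \delta x_n/(1+4x_n^2)\geqslant c\,\delta/x_n$. Therefore
$$A_n\,\Pi\{|\ti h|>A_n/2\}\geqslant c\,\tfrac{\e\delta}2\,x_n^{\kappa}\geqslant c\,\tfrac{\e\delta}2>0,$$
using $\kappa\geqslant 0$ and $x_n\geqslant1$. Since $A_n\to\infty$, this contradicts Kolmogorov's estimate $\Pi\{|\ti h|>A\}=o(1/A)$. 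Hence $\ti h(x)=o(x^{\kappa+1})$.

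There is no serious obstacle here. The only point needing care is the one-sidedness of the hypothesis $\ti h'\lesssim x^\kappa$: it forces the choice of direction in which to propagate the large value, to the left for positive values and to the right for negative values. One also has to make sure the extended interval stays in the region where the derivative bound holds and has size comparable to $x_n$. That comparability is exactly what makes its Poisson mass of order $1/x_n$, which is enough against $1/A_n$ because $\kappa\geqslant0$.
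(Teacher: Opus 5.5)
Your proof is correct and takes essentially the same route as the paper. Both argue by contradiction and use the one-sided derivative bound to propagate a large positive value of $\ti h$ to the left and a large negative value to the right. This gives an interval of length comparable to $x_n$, and Poisson mass $\asymp 1/x_n$, on which $|\ti h|\gtrsim x_n^{\kappa+1}$, contradicting Kolmogorov's estimate $\Pi\{|\ti h|>A\}=o(1/A)$; your version only makes explicit the constants ($\delta$ and the level $A_n/2$) that the paper's sketch leaves implicit.
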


\begin{proof}
Suppose $x_*\gg1$ and $\ti h(x_*)\geqslant cx_*^{\kappa+1}$ for some $c>0$. The bound
$\ti h'\lesssim x^\kappa$ implies that, going to the left from $x_*$, the function $\ti h$
cannot decay too fast: for $1\ll x\leqslant x_*$,
$$\ti h(x)\geqslant\ti h(x_*)-a\int_x^{x_*}t^\kappa\,dt\geqslant
cx_*^{\kappa+1}-a(x_*^{\kappa+1}-x^{\kappa+1}).$$
Hence $\ti h\gtrsim x_*^{\kappa+1}\geqslant x_*$ on an interval $(x_{**},x_*)$ of length
comparable to $x_*$. Since $\Pi(x_{**},x_*)\asymp1/x_*$, this contradicts Kolmogorov's
theorem. If $\ti h(x_*)\leqslant-cx_*^{\kappa+1}$, the same argument applied to the right of
$x_*$ gives $\ti h\lesssim-x_*$ on an interval of length comparable to $x_*$, again a
contradiction.
\end{proof}

\begin{proposition}[\cite{MIF2L12}]\label{BMTupper}
Let $U=e^{i\gamma}$ and $S=e^{i\sigma}$ satisfy the conditions of Theorem \ref{BMTa}. If
$N^+[US^\epsilon]\neq0$ for some $\epsilon>0$, then $\gamma(\mp\infty)=\pm\infty$.
\end{proposition}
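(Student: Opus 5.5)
Apply Theorem \ref{BMTcrit} to the symbol $US^\epsilon=e^{i(\gamma+\epsilon\sigma)}$. If $N^+[US^\epsilon]\neq0$, we get a representation
$$\gamma+\epsilon\sigma=-\alpha+\ti h,$$
with $\alpha$ smooth and increasing and $h\in L^1_\Pi$. Solving for $\gamma$,
$$\gamma=-\alpha-\epsilon\sigma+\ti h.$$
The plan is to show that the term $-\epsilon\sigma$ dominates $\ti h$ at $\pm\infty$. Since $-\alpha$ is decreasing, it can only help.

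First we use the hypotheses of Theorem \ref{BMTa}. Since $\sigma'\gtrsim|x|^\kappa$, integration gives $\sigma(x)\gtrsim x^{\kappa+1}$ as $x\to+\infty$, and $-\sigma(x)\gtrsim|x|^{\kappa+1}$ as $x\to-\infty$. Next we need a bound on $\ti h'$ from above to apply Lemma \ref{BMTkol}. Differentiating the representation gives
$$\ti h'=\gamma'+\epsilon\sigma'+\alpha'.$$
The lower bound $\gamma'\gtrsim-|x|^\kappa$ together with $\alpha'\ge0$ gives only lower bounds for $\ti h'$, which is the wrong direction.

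So I would use the lemma in its mirrored form. Its proof only uses a one-sided derivative bound to propagate a large value of $\ti h$ across an interval of length comparable to $x_*$. Apply the argument instead to the decreasing direction: we have
$$\ti h'\gtrsim-|x|^\kappa,$$
because $\alpha'\ge0$, $\sigma'>0$ and $\gamma'\gtrsim-|x|^\kappa$. This lower bound on the derivative means $\ti h$ cannot increase too fast when moving to the left. Hence a value $\ti h(x_*)\le-cx_*^{\kappa+1}$ propagates to the left over an interval of length $\asymp x_*$. Symmetrically, a large positive value propagates to the right. Either case contradicts Kolmogorov's weak type estimate, exactly as in the proof of Lemma \ref{BMTkol}. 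The mirrored conclusion is $\ti h(x)=o(|x|^{\kappa+1})$ as $x\to\pm\infty$; the negative half-line is handled by the reflection $x\mapsto-x$.

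With this estimate the conclusion follows. As $x\to+\infty$,
$$\gamma(x)\le-\alpha(0)-\epsilon\sigma(x)+o(x^{\kappa+1})\to-\infty,$$
because $-\alpha(x)\le-\alpha(0)$ for $x\ge0$ and $\epsilon\sigma(x)\gtrsim x^{\kappa+1}$. As $x\to-\infty$,
$$\gamma(x)\ge-\alpha(0)+\epsilon|\sigma(x)|+o(|x|^{\kappa+1})\to+\infty.$$

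The main obstacle is the sign bookkeeping in the middle step. We must check that the only derivative bound actually available ($\ti h'\gtrsim-|x|^\kappa$) suffices for the Kolmogorov-type growth estimate on both half-lines. We also must make sure $\sigma$ is normalized so that it has the correct sign at $-\infty$; this holds because $\sigma$ is increasing with $\sigma'\gtrsim|x|^\kappa$ at both ends, reading "$x\to\infty$" as $|x|\to\infty$.
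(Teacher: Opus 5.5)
Your proof is correct and is essentially the paper's own argument. It uses the same ingredients: the representation $\gamma+\epsilon\sigma+\alpha=\ti h$ from Theorem \ref{BMTcrit}, the one-sided bound $\ti h'\gtrsim-|x|^\kappa$, the Kolmogorov-type estimate $\ti h=o(|x|^{\kappa+1})$, and domination by $\epsilon\sigma$. Your ``mirrored'' lemma is exactly what the paper obtains by applying Lemma \ref{BMTkol} to $-h$, since $\widetilde{-h}=-\ti h$ has derivative $\lesssim|x|^\kappa$; so re-running the argument is harmless but unnecessary.
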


\begin{proof}
By Theorem \ref{BMTcrit},
$$\gamma+\epsilon\sigma+\alpha=\ti h,\qquad\alpha'\geqslant0,\quad h\in L^1_\Pi.$$
Then $\ti h'(x)\gtrsim\gamma'(x)\gtrsim-|x|^\kappa$, and Lemma \ref{BMTkol}, applied to
$-h$, gives $\ti h(x)=o(|x|^{\kappa+1})$. It follows that
$$\frac{\gamma(x)}x\lesssim\frac{\gamma(x)}x+\frac{\alpha(x)}x=
-\epsilon\frac{\sigma(x)}x+o\left(|x|^\kappa\right)\lesssim-|x|^\kappa,$$
which implies $\gamma(\mp\infty)=\pm\infty$.
\end{proof}

\ms\no The second, much deeper, part of the triviality statement in Theorem \ref{BMTa} (i)
-- divergence of the series over BM intervals implies triviality of the kernel -- corresponds
to what is known as Beurling's lemma in the classical theory. Several proofs of that lemma
are known: through estimates of harmonic measure (Koosis \cite{KooLogL12}), through the
Bellman function (Nazarov), and through PDE techniques (Kargaev); the proof in
\cite{MIF2L12} is different and rests on a weak type estimate for the two-dimensional
Hilbert transform.

\lsection{The multiplier theorem}

\ms\no The non-triviality half of the theory passes through a generalization of the famous
Beurling-Malliavin multiplier theorem. For a unimodular $S$ and $0<p\leqslant\infty$, say
that a real function $w\in L^1_\Pi$ belongs to the multiplier class $\MM_p(S)$ if the outer
function $W=e^{w+i\ti w}$ satisfies the following condition: for every $\epsilon>0$ there
exists $G\in N^+[\bar S^\epsilon]$ such that $WG\in H^p$. In other words, $W$ can be
multiplied into $H^p$ at an arbitrarily small (compared to $S$) cost.

\begin{theorem}[\cite{MIF2L12}]\label{BMTmult}
Suppose $(\arg S)'\gtrsim|x|^\kappa$, and let $w_0\in L^1_\Pi$ be a real function such that
the function $|x|^{-\frac{2+\kappa}2}w_0(x)$ coincides, in a neighborhood of infinity, with
a function of finite Dirichlet integral. Then $w_0\in\MM_p(S)$ for all $p<1$.
\end{theorem}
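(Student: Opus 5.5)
The proof will be an explicit construction. For each $\epsilon>0$ we produce an outer function $G$, with argument carrying a small multiple of $-\arg S$, such that $G\in N^+[\bar S^\epsilon]$ and $|WG|\in L^p$. By the basic criterion (Theorem \ref{BMTcrit}) and the definitions, $G=e^{g+i\ti g}$ lies in $N^+[\bar S^\epsilon]$ as soon as
$$\epsilon\sigma+\ti g=\alpha$$
is increasing, up to the smooth correction allowed there. Here $\sigma=\arg S$ and $g\in L^1_\Pi$.

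We will therefore look for a real $g\in L^1_\Pi$ with two properties:
\begin{itemize}
\item $\ti g'\geqslant-\epsilon\sigma'$, which by $\sigma'\gtrsim|x|^\kappa$ is ensured by $\ti g'\geqslant-c\epsilon|x|^\kappa$ for large $|x|$;
\item $w_0+g\leqslant -\frac{2}{p}\log(1+|x|)$ up to a constant, so that $|WG|=e^{w_0+g}\in L^p$.
\end{itemize}
The natural choice is to take $g=-w_0^+ -$ (a small logarithmic term) plus a correction. The whole difficulty is to control $\ti g'$. We cannot take $g=-w_0$ directly, because $\ti w_0'$ need not be bounded below by $-\epsilon|x|^\kappa$. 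Instead we look for $g\leqslant -w_0$ with a controlled conjugate derivative. This is the classical Beurling--Malliavin scheme, with Lebesgue measure replaced by the weight $|x|^\kappa$.

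The steps are as follows.

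First, we reduce by a change of variable. Put $\phi(x)=\int_0^x|t|^\kappa dt\asymp \mathrm{sign}(x)|x|^{\kappa+1}$ and transplant the problem to $\phi$-coordinates, where $\sigma$ becomes comparable to linear and the classical case $\kappa=0$ applies.

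The finite Dirichlet integral hypothesis on $|x|^{-(2+\kappa)/2}w_0$ is designed to be exactly the quantity invariant under this substitution. We check that the transplanted function $v=w_0\circ\phi^{-1}$, divided by $|y|$, has finite Dirichlet integral near infinity. This is a scaling computation: $dy=|x|^\kappa dx$, and the Dirichlet integral transforms with the weights matching $(2+\kappa)/2$.

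We must also check that $v\in L^1_\Pi$ in the new variable. If it is not, we replace $v$ by a suitable majorant. The Dirichlet condition gives local regularity, and we would use that.

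Second, in the $y$-variable we apply the classical Beurling--Malliavin multiplier theorem in its Dirichlet-integral form. This is the statement that $v$ with $v(y)/y$ of finite Dirichlet integral admits, for every $\delta>0$, a majorant $g_1\geqslant v$ in $L^1_\Pi$ with $\ti g_1'\leqslant\delta$. Concretely, $g_1=P\ast v$ in the upper half-plane at height $\asymp|y|$, i.e. the harmonic extension sampled along a cone, and its conjugate derivative is estimated by the Dirichlet integral.

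Third, we pull back to $x$, setting $g=-g_1\circ\phi$ minus a logarithmic term. The main obstacle is here: the Hilbert transform does not commute with the change of variables $\phi$. So the bound $\ti g_1'\leqslant\delta$ in $y$ does not immediately give $(\tilde{g_1\circ\phi})'\lesssim\delta|x|^\kappa$.

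My first attempt is to avoid pulling back the conjugate altogether. I would work directly in $x$, taking $g$ to be minus the Poisson extension of $w_0$ evaluated at height $h(x)=c/|x|^{\kappa}$, the natural local scale where $\sigma$ advances by $O(1)$. I would then bound $\ti g'$ by the Dirichlet energy of $w_0|x|^{-(2+\kappa)/2}$ through a Cauchy--Schwarz estimate on Whitney boxes, in the spirit of the proof of Lemma \ref{BMTkol}. The remaining large but sparse exceptional set, where this bound fails, is handled by Kolmogorov's weak type estimate.

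Finally, the loss to $p<1$ rather than all $p$ comes from the logarithmic term and the use of weak-type bounds. We accept it, and take $\epsilon$-dependent constants to conclude $WG\in H^p$ via Smirnov's theorem.
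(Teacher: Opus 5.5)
Your proposal has a genuine gap: it never constructs the required $G$. The step you call the main obstacle is the whole theorem, and neither route you describe gets past it. The change of variables fails even before the non-commutation of the Hilbert transform with $\phi$. The check you worry about, $v\in L^1_\Pi(dy)$, is automatic; the problem is the reverse direction. The Poisson measure pulls back to $dy/(1+y^2)\asymp dx/|x|^{2+\kappa}$, so a majorant $g_1\in L^1_\Pi(dy)$ gives a function $g_1\circ\phi$ that need not be Poisson-summable in $x$. But $\log|G|\in L^1_\Pi$ is forced by $G\in\NN^+$. Your scaling claim is also false. The Dirichlet integral is dilation invariant and picks up no power weight under $\phi$, and $v(y)/|y|\asymp w_0(x)/|x|^{1+\kappa}$. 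So the transplanted classical hypothesis is a condition on $w_0|x|^{-(1+\kappa)}$, not on $w_0|x|^{-(2+\kappa)/2}$, and for $\kappa>0$ it is weaker. Your route thus never uses the hypothesis at full strength, and the lost Poisson-summability is exactly what it ignores. Heuristically, a plateau majorant of a bump of height $H$ at $x_0$ compatible with $\tilde g_1'\gtrsim-\epsilon x_0^\kappa$ needs width $\gtrsim H/(\epsilon x_0^\kappa)$. Its $L^1_\Pi(dx)$-cost is then $\gtrsim\epsilon^{-1}(Hx_0^{-(2+\kappa)/2})^2$, which is the Dirichlet energy of the normalized bump, while its $L^1_\Pi(dy)$-cost is smaller by the factor $x_0^{\kappa}$. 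Your direct attempt fails as well. A Poisson extension at any height is not a majorant, and this includes the cone-sampled one in your second step. Since the Dirichlet integral does not see widths, the hypothesis allows bumps of heights $H_n\to\infty$ and widths $\ell_n\ll|x_n|^{-\kappa}$ with $\ell_ne^{pH_n}\to\infty$. These are invisible at height $c|x|^{-\kappa}$ and make $e^{p(w_0+g)}$ non-integrable. Finally, a weak-type bound on an exceptional set cannot replace monotonicity of the phase, which is needed at every point. You would have to show that the BM intervals of the phase form a $(\kappa)$-short family and then repair the phase on them, and that requires an energy estimate your sketch does not contain.

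There is also a sign error at the start. With $G=e^{g+i\tilde g}$ one has $S^\epsilon\bar G=G\,e^{i(\epsilon\sigma-2\tilde g)}$, so $h=-2g$ in Theorem \ref{BMTcrit}. What is needed is that $\epsilon\sigma-2\tilde g$ be the argument of an inner function, in particular increasing. For $g=-g_1$ this is the lower bound $\tilde g_1'\ge-\frac\epsilon2\sigma'$, and all your inequalities point the other way. The logarithmic factor also costs nothing, since the conjugate of $\log|x+i|$ is $\arg(x+i)$ up to a constant, which has bounded derivative; so it cannot explain $p<1$. A natural source of that restriction is the passage from an increasing phase to the argument of an inner function. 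This changes the phase by some $b$ with $|b|\le\pi$ and multiplies $|G|$ by $e^{\tilde b/2}$, which by Zygmund's theorem is $p$-integrable for $p<1$ but not in general for $p=1$. The missing idea is the one the paper uses: the multiplier comes from the solution of an extremal problem in the Dirichlet space, following de Branges. There the one-sided bound on the conjugate derivative is produced by extremality rather than estimated. If $u$ minimizes the Dirichlet integral $\mathcal D(u)$ among admissible functions lying above an obstacle, then $\mathcal D(u,\psi)\ge0$ for every $\psi\ge0$. Green's formula and the Cauchy--Riemann equations give $\mathcal D(u,\psi)=\int\psi\,\tilde u'\,dx$, so this says exactly that $\tilde u'\ge0$ as a measure. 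The extremal function is a genuine majorant, it lives in the original variable, and no exceptional set appears.
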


\ms\no For $\kappa=0$ this is a form of the classical multiplier theorem; see \cite{MNHL12}
for its remarkable history and seven classical proofs, and \cite{HJL12}, \cite{KooLogL12},
\cite{KooLecL12} for detailed expositions of the classical theory. The proof in
\cite{MIF2L12} obtains the multiplier from the solution of an extremal problem in the
Dirichlet space, following an idea from de Branges' book \cite{dBL12}. The overall structure
of the proof of Theorems \ref{BMTa} and \ref{BMTb} is then as follows: the upper and
effective density estimates give part (i); a ``little multiplier theorem'' produces, in the
almost decreasing case, a non-trivial kernel in the Smirnov class; the multiplier theorem,
combined with a theorem of the type of the Beurling-Malliavin First Theorem (the logarithm
of an outer function in a Smirnov kernel has a finite weighted Dirichlet integral),
upgrades non-triviality from $\NN^+$ to $H^p$; finally, approximation of unimodular
functions by inner ones moves the statement to the symbols $J\bar S^a$ of Theorem
\ref{BMTb}.

\lsection{The sub-exponential case}

\ms\no The statements of Theorems \ref{BMTa} and \ref{BMTb} fail for $\kappa<0$: exercise 5
provides unimodular functions with $\gamma(+\infty)\neq-\infty$ whose kernel is
nevertheless non-trivial. The theory extends to $\kappa\in(-1,0]$ in a different fashion:
one replaces the Smirnov and Hardy kernels by their weighted versions,
$$\begin{gathered}N^+_\kappa[U]=N^+[U]\cap\NN^+_\kappa,\\
\NN^+_\kappa=\left\{F\in\NN^+\ \Big|\ \log|F|\in
L^1\left(\frac{dx}{1+|x|^{2+\kappa}}\right)\right\},\end{gathered}$$
and in this scale the dichotomy is again governed by the family ${\rm BM}(\gamma)$: if
${\rm BM}(\gamma)$ is long then $N^+_\kappa[US^\epsilon]=0$ for all $\epsilon>0$, and if
${\rm BM}(\gamma)$ is short then $N^p_\kappa[U\bar S^\epsilon]\neq0$ for all $\epsilon>0$ and
$p<1/3$, see \cite{MIF2L12}. The sub-exponential theory has applications to Volterra
operators and higher order differential operators.

\lsection{Connections with spectral theory}

\ms\no As explained in \cite{MIFL12}, the computation of the radius \eqref{BMTradius} for
general $S$ has direct consequences for the spectral theory of second order differential
operators. Roughly speaking, the case of Sturm-Liouville operators with eigenvalues
$$\l_n\asymp n^\nu$$
belongs to the theory with the parameter
$$\kappa=\frac2\nu-1\geqslant0.$$
The classical case $\kappa=0$, i.e. $S(x)=e^{iax}$, corresponds to regular operators, while
$\kappa>0$ corresponds to singular ones. In addition to completeness problems for families
of solutions of Sturm-Liouville equations, the generalized Beurling-Malliavin theory applies
to problems on spectral determinacy, to the Weyl-Titchmarsh Fourier transforms associated
with such operators, and to the corresponding de Branges spaces of entire functions. The
reader will recognize this dictionary in the canonical systems of Lectures 15 and 16, where
Weyl inner functions with power-type behavior of the phase appear in concrete inverse
spectral problems.

\lsection{Exercises}

\ms\no 1) Show that if $\gamma_a=\gamma-a\sigma$ is $(\kappa)$-almost decreasing,
$\sigma'\gtrsim|x|^\kappa$, and $a_1>a$, then $\gamma_{a_1}$ is $(\kappa)$-almost
decreasing. Conclude that the transition parameter $c(U,S;\kappa)$ is well defined.

\ms\no 2) Let $\L\subset\R$ be discrete, let $J_\L$ be a meromorphic inner function with
$\{J_\L=1\}=\L$, and let $\Theta$ be a meromorphic inner function. Show that there exists a
non-zero function in $K_\Theta$ vanishing on $\L$ if and only if $N[J_\L\bar\Theta]\neq0$.
(Hint: if $F\in K_\Theta$ vanishes on $\L$, divide by $1-J_\L$; in the opposite direction
multiply by it.)

\ms\no 3) Let $\L=\delta\Z$ for some $\delta>0$, so that $\arg J_\L(x)=2\pi x/\delta+O(1)$
for a suitable choice of $J_\L$. Show that $\gamma_a=\arg J_\L-ax$ is $(0)$-almost
decreasing for $a>2\pi/\delta$ and is not for $a<2\pi/\delta$. Conclude from Theorem
\ref{BMTb} and \eqref{BMTradius} that $R(\delta\Z)=1/\delta$, in agreement with Lecture 2.

\ms\no 4) Show that for $\kappa=0$ a function $\gamma$ with $\gamma(\mp\infty)=\pm\infty$ is
almost decreasing if and only if the family ${\rm BM}(\gamma)$ is short in the sense of
Lecture 2. Give an example of $\gamma$ for which ${\rm BM}(\gamma)$ consists of a long sequence
of intervals $l_n$ with $|l_n|/d(l_n)\to0$.

\ms\no 5) Let
$$\sigma(x)=2\,{\rm sign}(x)\,|x|^{1/4},\qquad\gamma(x)=2(1+\sqrt2)\,1_{\R_-}(x)\,|x|^{1/4}.$$
Verify that $\gamma'(x)\gtrsim-|x|^{-3/4}$ and $\sigma'(x)\gtrsim|x|^{-3/4}$, while
$\gamma(+\infty)=0\neq-\infty$. Show that nevertheless $N^\infty[US]\neq0$ for
$U=e^{i\gamma}$, $S=e^{i\sigma}$, by checking that
$$US=\bar f/f,\qquad f(z)=\exp\left\{-(1+i)z^{1/4}\right\}\in H^\infty(\C_+).$$
Thus Theorem \ref{BMTa} has no direct extension to $\kappa<0$.

\ms\no 6) (Krein's shift.) Let $A=\{a_n\}$ and $B=\{b_n\}$ be two intertwining discrete real
sequences, $\dots a_n<b_n<a_{n+1}\dots$, and let $E=\bigcup(a_n,b_n)$. Define $\Theta$ in
$\C_+$ by
$$\frac1{\pi i}\log\frac{\Theta+1}{\Theta-1}=\SS u+ic,\qquad u=1_E-\frac12,\quad c\in\R,$$
where $\SS u$ is the Schwarz integral defined above. Show that $\Theta$ is a meromorphic
inner function with $\{\Theta=1\}=A$ and $\{\Theta=-1\}=B$. Deduce that for any increasing
continuous function $\sigma:\R\to\R$ there is a meromorphic inner function $\Theta=e^{i\theta}$
with $\|\theta-\sigma\|_\infty\leqslant\pi$, and that any discrete $\L\subset\R$ is the level
set $\{J=1\}$ of some meromorphic inner function $J$.

\newpage

\lecture{13}{The Toeplitz order}

\ms\no In Lecture 5 we repeatedly used the heuristic principle that in problems on defining
sets and mixed spectral data ``the known factor of the inner function has to be bigger than the
unknown factor.'' In this lecture, following \cite{TO}, we make the word ``bigger'' precise: we
introduce a partial order on the set of inner functions induced by the action of Toeplitz
operators. The order provides a common language for many of the problems discussed in this
course, including the Beurling-Malliavin theorem of Lecture 2, and organizes the function
theoretic component of the Toeplitz approach to the Uncertainty Principle.

\lsection{Toeplitz operators and their kernels}

\ms\no Let $U\in L^\infty(\R)$. The Toeplitz operator $T_U$ with the symbol $U$ is the operator
$$T_U:H^2\to H^2,\qquad T_Uf=P_+(Uf),$$
where $P_+$ is the orthogonal projection from $L^2(\R)$ onto the Hardy space $H^2=H^2(\C_+)$
defined in Lecture 3. We denote by
$$N[U]=\ker T_U$$
the kernel of $T_U$, and call it the Toeplitz kernel of the symbol $U$. Along with the
$H^2$-kernel one considers the kernels $N^p[U]$ in other Hardy classes $H^p$ and the kernel in
the Smirnov class,
$$N^{+}[U]=\{f\in \NN^+\cap L^1_{loc}(\R)\ |\ \bar U\bar f\in \NN^+\},$$
which will appear below in the statement of the Beurling-Malliavin theorem.

\ms\no Two examples connect these notions with the objects of the previous lectures. First, if
$\Theta$ is inner then
$$N[\bar\Theta]=K_\Theta,$$
the model space of Lecture 3: indeed, $f\in\ker T_{\bar\Theta}$ if and only if $f\perp\Theta
H^2$. Second, the symbols that appear most often in the applications have the form
$U=\bar I J$ with inner $I,J$: as we saw in Lectures 5 and 6, non-triviality of $N[\bar IJ]$
expresses the existence of a function in $K^2_{IJ}$-type spaces vanishing on prescribed sets,
and hence encodes uniqueness and defining set problems.

\ms\no A useful elementary observation, which the reader will prove in the exercises, is the
monotonicity of the kernel under multiplication of the symbol by $\bar S^c$, $c>0$:
$$N[U]\subset N[\bar S^cU].$$
Multiplying the symbol by the conjugate of an inner function can only increase the kernel.

\lsection{The dominance set and the Toeplitz order}

\ms\no The naive idea is to declare $I$ ``larger'' than $J$ when $T_{\bar IJ}$ has a
non-trivial kernel. This relation, however, is neither reflexive nor transitive, and cannot
serve as a definition of an order directly (see Section 3.2 of \cite{TO} for counterexamples).
The correct definition compares inner functions through the collections of functions they
dominate.

\ms\no For an inner function $\theta$ define its \textit{dominance set}
$$\DD(\theta)=\{I\ \textrm{inner}\ |\ N[\bar\theta I]\neq 0\}.$$

\ms\no We say that $I$ and $J$ are \textit{Toeplitz equivalent}, and write $I\stackrel{\tau}\sim J$,
if $\DD(I)=\DD(J)$. We write
$$I\stackrel{T}\leq J\qquad\textrm{ if }\qquad \DD(I)\subset\DD(J),$$
and call this relation the \textit{Toeplitz order}. Since inclusion of sets is reflexive and
transitive, $\stackrel{T}\leq$ is a genuine partial order on the equivalence classes of
$\stackrel{\tau}\sim$.

\begin{proposition}\label{TOprop}
(i) The relation $\stackrel{T}\leq$ is reflexive and transitive, and $\stackrel{\tau}\sim$ is
an equivalence relation.

\ms\no (ii) If $J$ divides $I$, i.e. $I=JG$ for an inner $G$, then $J\stackrel{T}\leq I$.

\ms\no (iii) For the exponential family, $S^a\stackrel{T}\leq S^b$ if and only if $a\leq b$.
Moreover $S^a\in\DD(S^b)$ if and only if $a<b$.
\end{proposition}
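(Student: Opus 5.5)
Part (i) is immediate from the definitions: $\stackrel{T}\leq$ is inclusion of dominance sets, which is reflexive and transitive, and $\stackrel{\tau}\sim$ is equality of these sets, hence an equivalence relation. There is nothing more to check.

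For (ii), suppose $I=JG$ with $G$ inner and $K\in\DD(J)$, i.e. there is $0\neq f\in N[\bar J K]$. I will use the monotonicity principle stated above: multiplying a symbol by the conjugate of an inner function can only enlarge the kernel. The paper states it for $\bar S^c$, but the same one-line argument works for any inner $G$. If $f\in H^2$ and $\bar J K f\in\overline{H^2_0}$ (that is, $\bar J K f=\bar h$ with $h\in H^2$ vanishing suitably), then $\bar G\bar J K f=\overline{Gh}\in \overline{H^2}$. Hence $N[\bar JK]\subset N[\bar G\bar J K]=N[\bar I K]$. Therefore $K\in\DD(I)$, so $\DD(J)\subset\DD(I)$, which is (ii).

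For (iii) I would prove the second claim first, since the first follows from it. If $a<b$, then $\bar S^bS^a=\bar S^{b-a}$, and $N[\bar S^{b-a}]=K_{S^{b-a}}\neq0$, which is a Paley--Wiener space by Lecture 3. Hence $S^a\in\DD(S^b)$. If $a\ge b$, the symbol is $S^{a-b}$, an inner function with $a-b\ge0$. If $f\in H^2$ and $S^{a-b}f\in\overline{H^2}$, then $S^{a-b}f\in H^2\cap\overline{H^2}=\{0\}$ (boundary values in both $H^2$ and $\overline{H^2}$ with $L^2$ integrability force zero). So the kernel is trivial and $S^a\notin\DD(S^b)$. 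Consequently, if $a\le b$ then $S^b=S^aS^{b-a}$, and (ii) gives $S^a\stackrel{T}\leq S^b$. Conversely, if $a>b$, pick $c$ with $b<c<a$. Then $S^c\in\DD(S^a)$ but $S^c\notin\DD(S^b)$, so $\DD(S^a)\not\subset\DD(S^b)$ and $S^a\stackrel{T}\leq S^b$ fails.

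The only delicate point is being careful about which kernel is used, since $N$ means the $H^2$ kernel. In particular the vanishing of $N[S^c]$ for $c\ge0$ must be stated as $H^2\cap\overline{H^2}=0$ in $L^2(\R)$. The inclusion $N[U]\subset N[\bar G U]$ for a general inner $G$ should be written out, not just quoted for $S^c$. Everything else is formal.
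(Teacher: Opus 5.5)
Your proof is correct and follows the route the paper intends: the paper leaves these facts as Exercises 1--3, with the same hints you use, namely multiplying $\bar J K f\in\overline{H^2}$ by $\bar G$, and computing $N[\bar S^bS^a]$ either as $K_{S^{b-a}}$ or as the kernel of an analytic symbol. Two cosmetic points: on the line $L^2(\R)\ominus H^2=\overline{H^2}$ exactly, so the ``$H^2_0$''/vanishing qualifier is unnecessary; and in the converse of (iii) you can take $K=S^b$ itself rather than an intermediate $S^c$.
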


\ms\no The proofs are elementary and are left as exercises. Part (ii) says that the Toeplitz
order extends the classical division order on inner functions; the converse implication fails,
and it is precisely the gap between the two orders that makes the Toeplitz order useful: it is
insensitive to ``small'' discrepancies between inner functions that are invisible to spectral
problems. Part (iii) shows that the exponentials $S^a=e^{iaz}$ form a totally ordered chain,
which plays the role of a scale inside the partially ordered set: measuring a given inner
function against the chain $\{S^a\}$ produces a numerical characteristic.

\lsection{The Beurling-Malliavin theorem in the language of the order}

\ms\no For an inner function $I$ define its \textit{radius}
$$r(I)=\sup\{a\geq0\ |\ N[\bar IS^a]\neq0\}=\sup\{a\ |\ S^a\in\DD(I)\}.$$
By the Beurling-Malliavin theory for Toeplitz kernels developed in \cite{MIF212} and
presented in the previous lecture, the radius
can be computed from the argument of $I$ in terms of the densities of Lecture 2; for a
meromorphic inner function it is determined by the distribution of its spectrum
$\sigma(I)$. The following statement from \cite{TO} shows that comparison with the exponential
chain captures exactly this quantity, i.e. that the classical Beurling-Malliavin theorem of
Lecture 2 is a statement about the Toeplitz order.

\begin{theorem}[\cite{TO}]\label{TOBM}
Let $I$ be a meromorphic inner function and let $b>0$. Then
$$I\stackrel{T}\leq S^b\ \Rightarrow\ r(I)\leq b,\qquad\textrm{ and }\qquad r(I)<b\
\Rightarrow\ I\stackrel{T}< S^b.$$
\end{theorem}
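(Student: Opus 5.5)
The first implication only needs the definitions and Proposition \ref{TOprop}(iii). Suppose $I\stackrel{T}\leq S^b$, that is, $\DD(I)\subset\DD(S^b)$. Take any $a$ with $N[\bar IS^a]\neq0$. Then $S^a\in\DD(I)$, hence $S^a\in\DD(S^b)$, and by part (iii) of Proposition \ref{TOprop} this forces $a<b$. Taking the supremum over such $a$ gives $r(I)\leq b$. No property of $I$ beyond being inner is used here.

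For the second implication, assume $r(I)<b$ and fix $b'$ with $r(I)<b'<b$. Strictness is again formal once the inequality $I\stackrel{T}\leq S^b$ is known. By Proposition \ref{TOprop}(iii), $S^{b'}\in\DD(S^b)$. On the other hand $S^{b'}\notin\DD(I)$, since $N[\bar IS^{b'}]\neq0$ would give $r(I)\geq b'$. Hence $\DD(I)\neq\DD(S^b)$. The real content is therefore the inclusion $\DD(I)\subset\DD(S^b)$.

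I would prove the inclusion by a multiplication argument. Let $J\in\DD(I)$ and $0\neq f\in N[\bar IJ]$. Suppose we also have $0\neq g\in N^+[\bar S^{b'}I]$ with good integrability. On $\R$, $\bar I J\bar f$ and $\bar S^{b'}I\bar g$ are boundary values of functions in $\NN^+$. Their product $\bar S^{b'}J\,\overline{fg}$ is then also a boundary value of a function in $\NN^+$, so $0\neq fg\in N^+[\bar S^{b'}J]$. The remaining slack $S^{b-b'}$ would then be used to upgrade this Smirnov element to an $H^2$ element of $N[\bar S^bJ]$. For this I would use the multiplier theorem (Theorem \ref{BMTmult} with $\kappa=0$) together with the fact that $N[U]\subset N[\bar S^cU]$. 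Concretely, one multiplies by an outer $W$ and a factor from $N^+[\bar S^{\epsilon}]$, with $\epsilon<b-b'$, to bring $fg$ into $H^2$. This step is the same Smirnov-to-Hardy upgrade that appears in the proof of Theorem \ref{BMTb}.

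The main obstacle is producing $g$, i.e.\ proving $N^+[\bar S^{b'}I]\neq0$ from the hypothesis $r(I)<b'$. By Theorem \ref{BMTcrit} this amounts to writing
$$\arg I-b'x=-\alpha+\ti h,\qquad \alpha \text{ increasing},\quad h\in L^1_\Pi.$$
Via Theorem \ref{BMTb} applied with $J=I$, this holds exactly when $b'$ exceeds the transition parameter $c(I,S;0)$, which is an exterior-density-type quantity of $\sigma(I)$. By contrast, $r(I)$ is an interior-density-type quantity, and the two need not coincide for a general meromorphic inner $I$. So I would first try to bypass $g$ and work directly with $f$. From Theorem \ref{BMTcrit} one has $\arg J=\arg I-\alpha+\ti h$. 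The aim would be to combine this with the failure of $\arg I-a x$ to be almost increasing for $a>r(I)$, stated through BM intervals as in Theorem \ref{BMTa}, and deduce that $\arg J-bx$ is almost decreasing. The hope is that it is enough to control the growth of $\arg I$ on the BM intervals of $\arg J-bx$, and that only the interior density enters there. If that fails, the fallback is to establish the theorem with $r(I)$ replaced by $c(I,S;0)/(2\pi)$ in the second implication, and to check separately whether the two thresholds agree under the hypotheses of \cite{TO}.
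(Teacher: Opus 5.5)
\textbf{The second implication is false as stated, so the inclusion you could not prove cannot be proved.} Your first implication is exactly the paper's elementary half, and your strictness argument is correct once the inclusion is available. The gap is the inclusion $\DD(I)\subset\DD(S^b)$. The obstruction you identified is not a technicality: with $r(I)=\sup\{a\,|\,N[\bar IS^a]\neq0\}$ as defined, the strategy of your last paragraph (controlling only interior densities) cannot work.

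\textbf{Counterexample.} Take $I=B_{\{n+i\}_{n\geq1}}$ and its divisor $J=B_{\{2n+i\}_{n\geq1}}$.

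First, $J\in\DD(I)$, because $N[\bar IJ]=K_{I\bar J}\neq0$.

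Second, $r(I)=0$. Since $(\arg I)'(t)\lesssim(1+|t|)^{-1}$ for $t<0$, we have $\arg I(x)\geq-C\log(2+|x|)$ on $\R_-$. Suppose $N^+[\bar IS^a]\neq0$ for some $a>0$. Theorem \ref{BMTcrit} gives $ax-\arg I=-\alpha+\ti h$ with $\alpha$ increasing. Hence $\ti h(x)\leq ax+C\log(2+|x|)+\alpha(0)\leq-a|x|/2$ for $x\ll0$. This contradicts Kolmogorov's estimate $\Pi\{|\ti h|>A\}=o(1/A)$, exactly as in Lemma \ref{BMTkol}.

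Third, $J\notin\DD(S^b)$ for $b<\pi$. By the same argument, $\arg J(x)=\pi x+O(\log x)$ as $x\to+\infty$ gives $N[\bar S^bJ]=0$. Equivalently, $K_{S^b}$ has no non-zero function vanishing on $\{2n+i\}_{n\geq1}$, a set of density $1/2>b/(2\pi)$.

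So $J\in\DD(I)\setminus\DD(S^b)$ while $r(I)=0<b$, for every $b\in(0,\pi)$.

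\textbf{The paper's sketch does not help.} By the monotonicity $N[U]\subset N[\bar S^cU]$, the kernel $N[\bar IS^b]$ that the sketch calls non-trivial for $b>r(I)$ is in fact trivial there. The quantity for which the statement holds, and which the Beurling--Malliavin theorem computes, is the exterior threshold $R(I)=\inf\{a\,|\,N[\bar S^aI]\neq0\}$. This is $c(I,S)$ of Lecture 12 --- exactly your fallback.

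\textbf{Your fallback is a complete proof.} With $R(I)$ in the second implication, your product argument works. Take $R(I)<b'<b$ and $0\neq g\in N[\bar S^{b'}I]$. For $0\neq f\in N[\bar IJ]$ one has $(\bar IJf)(\bar S^{b'}Ig)=\bar S^{b'}J\,fg\in\overline{H^1}$, so $0\neq fg\in N^1[\bar S^{b'}J]$. (Your display has the conjugations misplaced: $\bar IJf$ and $\bar S^{b'}Ig$ are the anti-analytic factors.)

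No hand-made multiplier is needed for the upgrade. $J$ is automatically meromorphic, since $Jf\in H^2\cap I\overline{H^2}=K_I$ is analytic across $\R$. Theorem \ref{BMTb}, whose threshold is the same for every $p$, then gives $N[\bar S^bJ]\neq0$.

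Strictness goes through as you wrote it, because $r(I)\leq R(I)$. Indeed, for $0\neq h\in N[\bar IS^a]$, the product $hg\neq0$ satisfies $S^{a-b'}hg\in\overline{H^1}$, which is impossible if $a\geq b'$.

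The first implication also holds with $R(I)$. Let $\lambda$ be a zero of $I$; if $I$ has no zeros, it is a multiple of some $S^c$ and Proposition \ref{TOprop}(iii) applies. Apply $\DD(I)\subset\DD(S^b)$ to the divisor $I\bar b_\lambda$. Choose $0\neq F\in H^\infty\cap S^\epsilon\overline{H^\infty}$ vanishing at $\lambda$. Multiplying the resulting element of $N[\bar S^bI\bar b_\lambda]$ by $F/b_\lambda$ gives $N[\bar S^{b+\epsilon}I]\neq0$.
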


\ms\no One half of the theorem is elementary: if $I\stackrel{T}\leq S^b$ and $S^a\in\DD(I)$,
then $S^a\in\DD(S^b)$, so $a<b$ by Proposition \ref{TOprop} (iii), and $r(I)\leq b$ follows
(exercise 4). The other half is a reformulation of the second Beurling-Malliavin theorem,
enhanced by the multiplier theory: knowing only that the single kernel $N[\bar IS^b]$ is
non-trivial for $b>r(I)$, one has to produce a non-trivial kernel $N[\bar\theta I]$ for
\textit{every} $\theta$ dominating $S^b$. We refer to \cite{TO} for the proof.

\lsection{Applications and further directions}

\ms\no The defining set problems of Lectures 5 and 6 become monotonicity statements for the
order. In the Hochstadt-Liberman setting, $\Theta=\Psi\Phi$, the heuristic ``$\Psi$ has to be
bigger than $\Phi$'' now reads $\Phi\stackrel{T}\leq\Psi$; the results of \cite{MIF12} cited in
Lecture 5 give sufficient conditions for the data $[\Psi,\sigma(\Theta)]$ to determine $\Theta$
precisely in this form. Similarly, the Type Problem of Lectures 8 and 10 and the sampling
problems for Paley-Wiener and model spaces admit natural restatements as problems of comparing
concrete inner functions in the Toeplitz order; see Sections 8.3 and 8.4 of \cite{TO}.

\ms\no The structure of the order itself raises many questions: to describe the equivalence
classes of $\stackrel{\tau}\sim$, to characterize the maximal chains, to understand which
families of inner functions admit suprema and infima. Some of these questions are answered in
\cite{TO}, many remain open, and the subject continues the line of research started by the
papers \cite{MIF12, MIF212} discussed earlier in this course.

\lsection{Exercises}

\ms\no 1) Prove part (i) of Proposition \ref{TOprop}. Show also that $I\stackrel{\tau}\sim cI$
for any unimodular constant $c$.

\ms\no 2) Show that $N[U]\subset N[\bar S^cU]$ for any $c>0$ and any symbol $U$. Deduce that
$\DD(S^a)\subset\DD(S^b)$ for $a\leq b$. Then verify that $N[\bar S^bS^a]$ is non-trivial if
and only if $a<b$, by computing this kernel explicitly. (Hint: for $a<b$ the kernel is the
model space $K_{S^{b-a}}$; for $a\geq b$ the symbol is analytic.)

\ms\no 3) Prove part (ii) of Proposition \ref{TOprop}: if $I=JG$ with $G$ inner, and
$N[\bar J\Phi]\neq0$, show that the same function belongs to $N[\bar I\Phi]$ by multiplying
the relation $\bar J\Phi f\in\bar H^2$ by $\bar G$. Where exactly is the innerness of $G$ used?

\ms\no 4) (Coburn's lemma) Let $I\not\equiv J$ be inner. Show that $N[\bar IJ]$ and
$N[\bar JI]$ cannot both be non-trivial. (Hint: if $f$ belongs to the first kernel and $g$ to
the second, consider the product $fg$ and show that it belongs to $H^1\cap\bar H^1=\{0\}$.)
Deduce the easy half of Theorem \ref{TOBM}.

\ms\no 5) Let $I$ be a meromorphic inner function whose spectrum $\sigma(I)$ is a separated
sequence $\Lambda\subset\R$. Using the results of Lectures 2 and 8, express $r(I)$ through the
interior Beurling-Malliavin density $D_*(\Lambda)$, up to the normalization of the exponent.

\newpage

\lecture{14}{The two-spectra theorem with uncertainty}

\ms\no In Lectures 5 and 6 we studied mixed spectral problems, where a part of the potential
and a part of the spectral data are known and the question is whether these data determine the
operator uniquely. In this lecture, based on \cite{TSU}, we return to the oldest result of that
family, Borg's two-spectra theorem, and show how the solutions of the Gap and Type Problems
obtained in Lectures 8--10 allow one to treat a new kind of question: what happens if the
eigenvalues themselves are known only approximately? We will see that the Uncertainty Principle
provides an exact formula for the ``price'' of such an approximation.

\lsection{Borg's two-spectra theorem}

\ms\no Consider the Schr\"odinger equation
$$-\ddot u+qu=z^2u$$
on $[0,\pi]$ with a real potential $q\in L^2(0,\pi)$. Together with a selfadjoint boundary
condition at each endpoint the equation defines a selfadjoint Schr\"odinger operator with
discrete spectrum. We will consider two such operators: $L_{DD}$, corresponding to Dirichlet
conditions at both endpoints, and $L_{ND}$, corresponding to the Neumann condition at $0$ and
the Dirichlet condition at $\pi$. Denote by $\Sigma_{DD}$ and $\Sigma_{ND}$ their spectra. The
two spectra interlace, and for $q\in L^2$ they obey the classical asymptotics: after the square
root transform (see Lecture 6) the symmetrized sequences
$$\sigma_{DD}=\{\lan\}_{n\in\Z},\qquad \sigma_{ND}=\{\eta_n\}_{n\in\Z}$$
are close to $\pi\Z$ and $\pi(\Z+1/2)$ respectively, with $\ell^2$-corrections of order $1/n$.

\ms\no The classical theorem of Borg \cite{Borg11} (complemented by Levinson \cite{Lev11} and
Marchenko \cite{M11}) says that the two spectra determine the operator:
if $q,\ti q\in L^2(0,\pi)$ produce the same pair $(\Sigma_{DD},\Sigma_{ND})$, then $q=\ti q$.
In the language of Lecture 5, the two spectra form a defining set of ``full size,'' and we saw
in Lecture 6 how such statements translate into completeness problems for exponential systems.

\lsection{Spectral data with uncertainty}

\ms\no Suppose now that the eigenvalues are known only up to small errors: instead of the exact
sequences we are given a sequence of positive numbers $\{\e_n\}$ and we know each $\lan$ and
$\eta_n$ only up to an error of size $\e_n^2$. Physically this is a natural setting: spectral
data always come from measurements. The question is how much of the potential must be known in
addition to the approximate spectra to determine the operator, i.e. what part of the
information carried by the two spectra is destroyed by the uncertainty.

\ms\no Following \cite{TSU} we say that the given spectral information lacks a part of size
$a$, $0\leq a\leq 1$, if together with the potential on $(0,a\pi+\e)$, for any $\e>0$, this
information determines the operator uniquely, while the knowledge of the potential on
$(0,a\pi-\e)$ is insufficient.

\ms\no Recall the interior Beurling-Malliavin density $D_*$ defined in Lecture 2. For a
subsequence of integers $\Phi\subset\N$ the quantity $D_*(\Phi)$ measures the maximal density
of a regular part of $\Phi$. The following theorem, the central result of \cite{TSU}, expresses
the size of the lost information through $D_*$. We denote $\log^-x=\max(0,-\log x)$.

\begin{theorem}[\cite{TSU}]\label{TSUmain}
Let $\{\e_n\}_{n\in\N}$ be a sequence of positive numbers and let $a\in[0,1)$. The following
are equivalent:

\ms\no (i) any two Schr\"odinger operators $L,\ti L$ with $L^2$-potentials $q,\ti q$, whose
spectral sequences satisfy
$$|\lan-\ti\l_n|<\e^2_n,\qquad |\eta_n-\ti\eta_n|<\e^2_{n+1}\qquad\textrm{ for all }n,$$
and whose potentials coincide on $(0,d\pi)$ for some $d>a$, must coincide identically;

\ms\no (ii) every subsequence $\Phi\subset\N$ satisfying
$$\sum_{n\in\Phi}\frac{\log^-\e_n}{1+n^2}<\infty$$
has $D_*(\Phi)\leq a$.
\end{theorem}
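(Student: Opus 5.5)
Both sides of the equivalence will be reduced to a statement about exponential systems in weighted spaces on the symmetrized spectral set, in the spirit of Horv\'ath's theorem (Lecture 6), and then Theorem \ref{mainSeparated} of Lecture 10 will be applied. The symmetrized spectra $\sigma_{DD}\cup\sigma_{ND}$ form a separated sequence close to $\frac\pi2\Z$. For two operators $L,\ti L$ with $L^2$-potentials, the standard asymptotics of Lecture 6 (the representation $u_s(1)=\sin s+F(s)/s+R(s)/s^2$) show the following. The difference of the characteristic functions, after the square root transform, is an entire function $G$ of exponential type at most $2\pi$ that is square summable on $\R$. If moreover $q=\ti q$ on $(0,d\pi)$, then by the transmutation/Gelfand--Levitan representation the type of $G$ drops to $2\pi(1-d)$. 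Equivalently, $G$ is the Fourier transform of a function supported on an interval of length proportional to $1-d$. Operators coincide iff $G\equiv0$ (Borg--Marchenko uniqueness, Lecture 5).

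\emph{From $\neg$(ii) to $\neg$(i).} Suppose $\Phi$ has $\sum_{\Phi}\log^-\e_n/(1+n^2)<\infty$ and $D_*(\Phi)>a$. Pick $d$ with $a<d<D_*(\Phi)$, and apply Theorem \ref{mainSeparated} to the discrete measure $\mu=\sum_{n\in\Phi}\e_n^{2}\delta_{\l_n}$ (plus corresponding terms at the $\eta_n$). Since $\Phi$ itself satisfies \eqref{ur10}, this gives $\GG^p_\mu\geq 2\pi D_*(\Phi)>2\pi d$ for all $p>1$. So there is a nonzero $f\in L^\infty(\mu)$, which I will take with $p=\infty$ and $|f|\le 1$, such that $f\mu$ has a spectral gap of length greater than $2\pi d$. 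The Fourier transform of the annihilating measure is the data from which an entire function $G$ of type $\le 2\pi(1-d)$ is built, with values at the spectral points of size at most $\e_n^2$ on $\Phi$ and zero elsewhere. I would construct this $G$ by the standard duality between annihilating measures and Lagrange-type interpolation: take the generating function of the unperturbed spectrum times the Cauchy transform of $f\mu$. Then I would perturb the characteristic functions by a small multiple of $G$. This moves the zeros by at most $\e_n^2$ and keeps the perturbed data admissible spectral data of an $L^2$-potential; admissibility follows from the interlacing and $\ell^2$-asymptotics being preserved under small perturbations, together with the Marchenko/Borg characterization of pairs of spectra. The type bound forces the new potential to agree with $q$ on $(0,d\pi)$, contradicting (i).

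\emph{From (ii) to (i).} Let $L,\ti L$ satisfy the hypotheses with $d>a$, and suppose $q\ne\ti q$. Then $G\neq0$, and $G$ has type $\le2\pi(1-d)$ in the normalized sense, i.e. a spectral gap of length greater than $2\pi a$ for the dual measure. Evaluating $G$ at the spectral points, the eigenvalue errors bound $|G(\l_n)|\lesssim \e_n^2$ times a polynomial factor, via the mean value theorem and the derivative asymptotics of the characteristic function. This produces a nonzero measure on the separated sequence $\sigma_{DD}\cup\sigma_{ND}$, weighted by $w(n)\asymp\e_n^2$, annihilating $\EE_b$ for some $b>2\pi a$ (Lecture 8, exercise 3 criterion). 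Hence $\GG^p_\mu>2\pi a$ for this measure, and Theorem \ref{mainSeparated} yields a subsequence $\Phi$ with $D_*(\Phi)>a$ satisfying $\sum_\Phi\log^-\e_n/(1+n^2)<\infty$, contradicting (ii). The interlacing of the two spectra, which accounts for the index shift $\e_{n+1}$ for $\eta_n$, lets me merge both spectra into one sequence indexed by $\N$ without changing $D_*$.

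The main obstacle is the realizability step in the first direction. The function produced by the Type Problem solution must be converted into a genuine second operator with an $L^2$ potential agreeing with $q$ on $(0,d\pi)$. This requires controlling the perturbed characteristic functions well enough to invoke the inverse spectral theory characterization; the small-norm choice of $f$ and Lecture 6's Proposition-type reductions are what I would use there.
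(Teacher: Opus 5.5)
\textbf{The key function is misidentified.} If $q=\ti q$ on $(0,d\pi)$, the difference of the Dirichlet (or Neumann) characteristic functions does \emph{not} lose type. Take $q=0$ and $\ti q=\delta q$ supported in $(d\pi,\pi)$. The first-order variation of $\Delta_{DD}$ is $\frac1{2s^2}\bigl[\int\delta q(t)\cos s(\pi-2t)\,dt-\cos(\pi s)\int\delta q\bigr]$. This has full type $\pi$, the same as $\Delta_{DD}$, since the frequencies $|\pi-2t|$ reach $\pi$ as $t\to\pi$; no transmutation argument changes this.

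The function that does lose type is the cross-Wronskian. Let $v,\ti v$ be the solutions with $v(\pi)=\ti v(\pi)=0$ and $v'(\pi)=\ti v'(\pi)=1$, so that $\Delta_{DD}=v(0,\cdot)$ and $\Delta_{ND}=v'(0,\cdot)$. Put $W=v\ti v'-v'\ti v$. Because the potentials agree on $(0,d\pi)$, $W$ is constant in $x$ there. Hence $W(z)=\Delta_{DD}\ti\Delta_{ND}-\Delta_{ND}\ti\Delta_{DD}=\int_{d\pi}^\pi(q-\ti q)\,v\,\ti v\,dx$. In $s=\sqrt z$ this has type $2\pi(1-d)$, against $2\pi$ for $\Delta_{DD}\Delta_{ND}$; these are presumably the numbers you had in mind. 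Moreover $m-\ti m=-W/(v(0,\cdot)\ti v(0,\cdot))$, so $W\equiv0$ iff $L=\ti L$. Also $W(\l_n)=-v'(0,\l_n)\ti v(0,\l_n)$ and $W(\eta_n)=v(0,\eta_n)\ti v'(0,\eta_n)$ are $O(\e_n^2)$ and $O(\e_{n+1}^2)$ up to polynomial factors.

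With $W$ in place of your $G$, the implication (ii)$\Rightarrow$(i) goes through as you wrote it. Up to elementary factors, $W/(\Delta_{DD}\Delta_{ND})$ is the Cauchy transform of a nonzero discrete measure $\nu$ on the symmetrized spectra, with $|\nu_n|\lesssim\min(\e_n^2,1)\,n^{C}$, and it decays like $e^{-2\pi d|y|}$ on $i\R$. Theorem~\ref{mainSeparated}, applied to $\mu=|\nu|$ and $f=d\nu/d|\nu|$, then produces the forbidden subsequence. This route, via the weighted Type theorem on the exact spectrum, differs from the one indicated in the paper. There the measures live near the symmetrized spectra, and the Gap Theorem is combined with the stability of $d$-uniform sequences under exponentially small perturbations.

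\textbf{The converse direction has a genuine gap at the step you flag, and your proposed mechanism fails.} You cannot obtain characteristic functions of an operator on $[0,\pi]$ by adding to functions of type $\pi$ a multiple of a function of type $2\pi(1-d)$, which exceeds $\pi$ when $d<1/2$. In any case, what must have reduced type is the cross-Wronskian, not the differences. What is actually needed:
\begin{itemize}
\item Take $W$ from Theorem~\ref{mainSeparated}, symmetrized so that it is even in $s$. Use weights such as $\min(\e_n^2,(1+n)^{-N})$ so that $\mu$ is finite and $s^2W$ is square summable on the spectrum.
\item Split $tW/(\Delta_{DD}\Delta_{ND})$ into its principal parts at the $\l_n$ and at the $\eta_n$. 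This gives $\alpha,\beta$ with $\Delta_{DD}\beta-\Delta_{ND}\alpha=tW$.
\item Show by Rouch\'e that, for small $t$, the zeros of $\Delta_{DD}+\alpha$ and $\Delta_{ND}+\beta$ move by less than $\e_n^2$ and $\e_{n+1}^2$. They must also remain interlacing and keep the $L^2$ asymptotics, so that Marchenko's two-spectra characterization yields $\ti q\in L^2$ with exactly these characteristic functions.
\item Prove $\ti q=q$ on $(0,d'\pi)$ for some $d'>a$.
\end{itemize}
The last point is not a consequence of a transmutation formula. It is the local Borg--Marchenko theorem (Simon; Gesztesy--Simon), applied to $m-\ti m=-tW/(v(0,\cdot)\ti v(0,\cdot))=O\bigl(e^{-2(d\pi-\e)\Im\sqrt z}\bigr)$.

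In the paper's outline, this translation goes through the Weyl inner functions and defining sets of Lectures 4--6. There, constructions like the one in the first Proposition of Lecture 6 produce genuine inner functions, hence genuine interlacing spectra, automatically.
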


\ms\no The meaning of the condition in (ii) is the following. The sum is finite exactly when
the errors $\e_n$, $n\in\Phi$, are not too exponentially small, i.e. when along $\Phi$ the
uncertainty is genuinely present. The theorem says that the size of the lost information equals
the maximal interior density of a subsequence along which the uncertainty survives. If, on the
contrary, $\e_n$ decays fast enough along every dense subsequence, the approximate spectra are
as good as the exact ones. Note the appearance of the Poisson-type weight $1/(1+n^2)$, our
constant companion since Lecture 2.

\ms\no The same result can be stated in the geometric form promised at the beginning of the
lecture. Suppose that we are given a sequence of intervals $\{I_n\}$ and the only spectral
information is that the eigenvalues lie in the corresponding intervals,
$$\sigma_{DD}\cup\sigma_{ND}\subset U=\bigcup I_n$$
(one eigenvalue per interval, in the proper enumeration). Define the size of uncertainty of the
family $I=\{I_n\}$ as
\begin{equation}\label{sizeU}
U(I)=\pi\,\sup\Big\{D_*(\Phi)\ \Big|\ \Phi\subset\N,\ \ \sum_{n\in\Phi}\frac{\log^-|I_n|}{1+n^2}<\infty\Big\}.
\end{equation}

\begin{corollary}[Borg's theorem with uncertainty]\label{TSUcor}
The interval data $\{I_n\}$, together with the potential on $(0,\e)$ for every $\e>0$,
determine the operator uniquely if and only if every $\Phi\subset\N$ with
$$\sum_{n\in\Phi}\frac{\log^-|I_n|}{1+n^2}<\infty$$
satisfies $D_*(\Phi)=0$. In general, the part of the potential that has to be added to the
interval data is $(0,U(I))$, with $U(I)$ given by \eqref{sizeU}.
\end{corollary}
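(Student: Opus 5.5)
The corollary will be deduced from Theorem \ref{TSUmain} by translating the interval data into an error sequence, and then reading off the threshold $a$. For a family $\{I_n\}$ with one eigenvalue per interval, set $\e_n^2:=|I_n|$ (or $\min(|I_n|,|I_{n-1}|)$, if one has to match the indexing of the $\eta_n$ in the theorem). Two operators whose spectra both lie in $U=\bigcup I_n$ with the proper enumeration then satisfy $|\lan-\ti\l_n|\le|I_n|$ and $|\eta_n-\ti\eta_n|\le|I_{n+1}|$. Conversely, the error condition of (i) places the perturbed eigenvalues in intervals of comparable length around the true ones. Hence, up to a harmless factor 2 in the lengths, the two formulations of uncertainty describe the same classes of pairs of operators.

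The first step is to check that the summability condition is invariant under this translation. We have $\log^-\e_n=\tfrac12\log^-|I_n|$, and factors such as $2$ or an index shift by one change $\log^-$ only by $O(1)$. Since $\sum 1/(1+n^2)<\infty$, they do not affect whether $\sum_{n\in\Phi}\log^-(\cdot)/(1+n^2)$ converges. It follows that the set of admissible $\Phi$ in (ii) coincides with that in \eqref{sizeU}. This gives $U(I)=\pi\sup D_*(\Phi)=\pi a_0$, where $a_0$ is the infimum of the $a$ for which (ii) holds.

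Next, apply Theorem \ref{TSUmain}. For $a>a_0$, condition (ii) holds, so knowledge of $q$ on $(0,d\pi)$ with $d>a$ determines the operator. Since $a$ is arbitrary, knowledge on $(0,a_0\pi+\e)$ suffices for every $\e>0$. For $a<a_0$, (ii) fails, so (i) fails, and there are distinct operators agreeing on $(0,d\pi)$ for some $d>a$ and compatible with the data. Letting $a\uparrow a_0$ shows that $(0,a_0\pi-\e)$ is insufficient. Therefore the missing part has size exactly $U(I)$. The special case $U(I)=0$ is the first assertion of the corollary: uniqueness from $(0,\e)$ holds iff every admissible $\Phi$ has $D_*(\Phi)=0$.

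The main obstacle is the boundary bookkeeping. The theorem is stated for $a\in[0,1)$ and uses strict inequalities $d>a$, while the corollary speaks of arbitrarily small $\e$. One must check that monotonicity in $d$ makes ``for some $d>a$'' equivalent to ``for all $d>a$''. One must also treat $a_0=1$ separately: there the whole interval carries no extra information from the spectra. Finally, the non-uniqueness examples supplied by the failure of (i) must be realizable with spectra lying in the prescribed intervals, one per interval. I would handle this last point by rescaling the errors by a constant factor, which, as noted above, does not change the summability class.
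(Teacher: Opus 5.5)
Your route is the one the paper intends. The corollary is stated there, without a separate proof, as the geometric form of Theorem \ref{TSUmain} under $\e_n^2\asymp|I_n|$. Your sufficiency half is fine: spectra in the intervals give the bounds of (i) with $\e_n^2=2|I_n|$, and constant factors do not affect $\sum_{n\in\Phi}\log^-(\cdot)/(1+n^2)$. The identification $U(I)=\pi a_0$ and the threshold bookkeeping are also fine.

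The necessity half has a genuine gap, exactly at the point you postpone. The negation of (i) only yields \emph{some} pair $L\neq\ti L$ whose spectra are close \emph{to each other}; it says nothing about where they lie. For a prescribed family $\{I_n\}$, as you phrase it, rescaling $\e_n$ by a constant only shrinks $|\lan-\ti\l_n|$. It cannot move the pair into the given intervals. For the same reason, your claim that the two formulations ``describe the same classes of pairs'' is false: the interval class is only contained in the error class. For fixed intervals the `only if' even fails as literally stated. Intervals placed away from the asymptotic positions of the spectra admit no operator at all, so uniqueness holds vacuously even when the length condition predicts $U(I)>0$.

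You can close the gap in one of two ways.
\begin{itemize}
\item Read the corollary as a statement about the lengths $|I_n|$ alone, i.e.\ uniqueness for every placement of intervals of these lengths. Then take $\e_n^2=|I_n|/2$ and centre $I_n$ at the midpoint of $\lan,\ti\l_n$ (likewise for the $\eta$'s); your argument is then complete.
\item For a fixed family containing the spectrum of an actual operator, you need a \emph{local} converse. Take a compatible $L$ whose eigenvalues are at distance $\gtrsim|I_n|$ from the endpoints, up to factors polynomial in $n$; this costs only $O(\log n)$ in $\log^-$, which is harmless since $\sum\log n/(1+n^2)<\infty$. You then need a partner $\ti L\neq L$ agreeing with $L$ on $(0,d\pi)$ whose eigenvalues move by less than these distances. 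This does not follow formally from the equivalence (i)$\Leftrightarrow$(ii) as quoted; it has to come from the underlying construction in \cite{TSU}.
\end{itemize}

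A smaller point concerns indexing. An index shift does not change $\log^-$ by $O(1)$ termwise. For $\e_n^2=|I_{n\pm1}|$ the shift is harmless only because $D_*(\Phi\pm1)=D_*(\Phi)$ and the weights $1/(1+n^2)$ and $1/(1+(n\pm1)^2)$ are comparable. Your alternative $\e_n^2=\min(|I_n|,|I_{n-1}|)$ genuinely changes the answer. Take $|I_n|=1$ for even $n$ and $|I_n|=e^{-n^2}$ for odd $n$. The even numbers are admissible for $|I_n|$, so $U(I)>0$. But $\log^-\min(|I_n|,|I_{n-1}|)\gtrsim n^2$ for every $n$, so only finite $\Phi$ are admissible for the minimum, giving $0$. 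You therefore have to fix one convention matching $\e_n$ with the pair $\lan,\eta_{n-1}$ of Theorem \ref{TSUmain}, rather than leave it open.
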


\ms\no The proofs in \cite{TSU} rely on the machinery developed in the previous lectures: the
two spectra and the uncertainty intervals are translated, through the Weyl inner functions of
Lecture 4 and the defining sets of Lectures 5--6, into a gap-type problem for measures
concentrated near the symmetrized spectra, to which the Gap Theorem of Lecture 8 and its
stability under exponentially small perturbations (see the remarks at the end of Lecture 8, and
\cite{BS11}) can be applied.

\lsection{Indeterminate pairs in the three-interval problem}

\ms\no The second part of \cite{TSU} concerns the mixed problem where the potential is known
near both endpoints, on $[0,a\pi]\cup[(1-b)\pi,\pi]$, together with one spectrum
$\sigma_{DD}$. When such data fail to determine the operator, the operators sharing the data
come in pairs, which we call indeterminate pairs. A model example is provided by symmetry: if
$q$ is even with respect to the center of the interval, $q(\pi-x)=q(x)$, then $q$ and suitable
non-symmetric perturbations may produce the same partial data (compare with the
Hochstadt-Liberman theorem of Lecture 5, which shows that for $a+b\geq 1$ no such pairs exist).

\ms\no It turns out that all indeterminate pairs admit a complete description in terms of their
spectral measures. Roughly, two operators with the common spectrum $\Lambda=\sigma_{DD}$ and
common potential on $[0,a\pi]\cup[(1-b)\pi,\pi]$ form an indeterminate pair if and only if the
differences of their spectral measures are given by a pair of even functions
$$f\in L^2(\eta)\ominus PW_{2a},\qquad g\in L^2(\eta)\ominus PW_{2b}$$
(where $\eta$ is a discrete measure on $\Lambda$ determined by the spectrum) whose values at
the points of $\Lambda$ obey coupled $\ell^2$-asymptotics; see \cite{TSU} for the exact
statement. One consequence is that the pairs with ``almost symmetric'' potentials form only a
small submanifold of the set of all indeterminate pairs, although every indeterminate pair has
to be close to symmetric in terms of the asymptotics of its spectral measure.

\ms\no This description is yet another instance of the phenomenon we first met in Lecture 6:
non-uniqueness in an inverse spectral problem is measured by the orthogonal complements of
Paley-Wiener spaces in $L^2$ of spectral measures, i.e. by incompleteness of families of
exponentials.

\lsection{Exercises}

\ms\no 1) Show that if $\e_n=e^{-n^2}$ then condition (ii) of Theorem \ref{TSUmain} holds with
$a=0$, and hence the approximate spectra together with an arbitrarily small germ of the
potential at $0$ determine the operator. (Hint: if the sum converges for $\Phi$, then
$\sum_{n\in\Phi}1/n<\infty$; show that then $D_*(\Phi)=0$.)

\ms\no 2) Let $\e_n=e^{-n^\alpha}$, $\alpha>0$. Show that
$$U=\begin{cases}\pi&\textrm{ if }\alpha<1,\\ 0&\textrm{ if }\alpha\geq1,\end{cases}$$
i.e. the uncertainty is maximal for all $\alpha<1$ and vanishes for all $\alpha\geq 1$. Compare
this dichotomy with the result of Borichev and Sodin on exponentially small perturbations
discussed in Lecture 8.

\ms\no 3) Show that if $|I_n|\geq c>0$ along a subsequence $\Phi$ with $D_*(\Phi)=d$, then
$U(I)\geq\pi d$. In particular, if all the intervals have lengths bounded from below, the
interval data alone determine nothing: the whole potential is needed.

\ms\no 4) Using the remarks at the end of Lecture 8 on $d$-uniform sequences, explain why the
threshold in exercise 2 occurs exactly at the exponential rate of decay, $\e_n\asymp e^{-cn}$.

\ms\no 5) Deduce from Theorem \ref{TSUmain} the following version of the classical Borg
theorem: the exact spectra $\sigma_{DD}$, $\sigma_{ND}$ together with the germ of $q$ at $0$
determine $L$. (The classical theorem does not need the germ; the loss is the price of the
generality of the method. See \cite{TSU} for a discussion.)

\newpage

\lecture{15}{Canonical systems and the inverse spectral problem}

\ms\no The natural habitat for the spectral problems of this course is the Krein-de Branges
theory of canonical Hamiltonian systems, which translates spectral problems for a broad
class of differential operators into problems of complex and harmonic analysis. In this lecture and the next one, based on the
papers \cite{E1, E2L14}, we make this step. We introduce canonical systems and their de Branges
spaces and discuss the inverse spectral problem: the reconstruction of the system from its
spectral measure. The classical solutions of Gelfand-Levitan, Krein and Marchenko will
reappear here in the form of invertibility of truncated Toeplitz operators on Paley-Wiener
spaces.

\lsection{Canonical systems and de Branges spaces}

\ms\no A (half-line, regular at $0$) canonical Hamiltonian system is the differential equation
\begin{equation}\label{cansys}
\Omega\dot X=z\HH X\quad\textrm{ on }[0,\infty),\qquad
\Omega=\begin{pmatrix}0&1\\-1&0\end{pmatrix},
\end{equation}
where $z\in\C$ is the spectral parameter and the Hamiltonian $\HH(t)$ is a $2\times2$ real
symmetric matrix function with $\HH\geq0$ and entries in $L^1_{loc}[0,\infty)$. Schr\"odinger
operators, Dirac systems, Jacobi matrices and Krein strings can all be rewritten as canonical
systems (exercise 1 treats the Schr\"odinger case), which makes \eqref{cansys} the universal
model of one-dimensional selfadjoint spectral theory.

\ms\no Denote by $M(t,z)=\begin{pmatrix}A&B\\C&D\end{pmatrix}(t,z)$ the transfer matrix
(matrizant) of the system, i.e. the matrix solution of \eqref{cansys} with $M(0,z)=I$. For
each fixed $t$ the entries are entire functions of $z$, and the function
$$E(t,z)=A(t,z)-iC(t,z)$$
belongs to the Hermite-Biehler class: $|E(t,z)|>|E(t,\bar z)|$ for $z\in\C_+$. With any
Hermite-Biehler function $E$ one associates the de Branges space
$$\BB(E)=\left\{G\ \textrm{entire}\ \Big|\ \frac GE,\ \frac{G^\#}E\in H^2\right\},\qquad
G^\#(z)=\overline{G(\bar z)},$$
with the norm $||G||^2_{\BB(E)}=\int_\R|G(t)|^2\frac{dt}{|E(t)|^2}$. The spaces
$\BB_t=\BB(E(t,\cdot))$ form an increasing chain of Hilbert spaces of entire functions,
$\BB_s\subset\BB_t$ for $s<t$, with isometric inclusions at regular points. Each $\BB_t$ has
reproducing kernels
$$K_t(\l,z)=\frac1\pi\,\frac{A(t,z)C(t,\bar\l)-C(t,z)A(t,\bar\l)}{\bar\l-z},$$
generalizing the reproducing kernels of the spaces $K_\Theta$ and $PW_a$ computed in Lecture 4.
In the free case $\HH\equiv I$ the matrizant is the rotation by the angle $tz$, $E=e^{-itz}$,
and $\BB_t$ is the Paley-Wiener space of exponential type $t$ (exercise 2); a general chain
$\{\BB_t\}$ should be viewed as a curved version of the Paley-Wiener scale.

\ms\no The spectral measure of the system is a positive Poisson-finite measure $\mu$ on $\R$
such that the spaces $\BB_t$ sit isometrically inside $L^2(\mu)$:
$$||G||_{\BB_t}=||G||_{L^2(\mu)}\qquad\textrm{ for all }G\in\BB_t,\ t>0.$$
It can be obtained from the Herglotz representation of the Weyl function of the system,
constructed from $M(t,z)$ in the same way as in Lecture 4 the $m$-function was constructed
from solutions of the Schr\"odinger equation. The fundamental theorem of the area, the
Krein-de Branges theorem, says that the correspondence is essentially bijective:

\begin{theorem}[Krein, de Branges, see \cite{dB14, Rom14}]\label{KdB}
Every Poisson-finite positive measure $\mu$ on $\R$ (with, possibly, a point mass at
infinity) is the spectral measure of a canonical system \eqref{cansys}, and the Hamiltonian
$\HH$ is determined by $\mu$ uniquely up to a natural reparametrization of the variable $t$.
\end{theorem}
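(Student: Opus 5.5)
The plan is to prove existence and uniqueness separately. Both parts rest on de Branges' ordering theorem for subspaces. In the paper's language this is a Toeplitz/model-space rigidity statement. The theorem is quoted from \cite{dB14, Rom14}, so I would follow de Branges' route, adapted to the tools of this course.

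\emph{Existence.} Let $\mu$ be Poisson-finite. By the Herglotz representation of Lecture 3, $\mu$ (with its possible mass at infinity) defines a Herglotz function $m=bz+c+iS\mu$. The first step is to produce a chain of de Branges spaces isometrically contained in $L^2(\mu)$.

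For a finitely supported approximant $\mu_N$, the space $L^2(\mu_N)$ is finite dimensional. A Gram--Schmidt procedure on $1,z,z^2,\dots$ gives a Jacobi-type chain of polynomial de Branges spaces. This chain corresponds to a canonical system whose Hamiltonian is piecewise constant and of rank one on each step, obtained from the transfer matrices built out of the orthogonal polynomials.

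For general $\mu$, take $\mu_N\to\mu$ weakly in the Poisson sense, so that $m_N\to m$ locally uniformly in $\C_+$. After normalizing the Hamiltonians by the trace, $\operatorname{tr}\HH_N\equiv1$, they are bounded. By Banach--Alaoglu a subsequence converges weakly-$*$ to some $\HH$. The transfer matrices $M_N(t,z)$ then converge locally uniformly, by the integral equation and a Gronwall bound.

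Finally I would show that the Weyl function of the limit system equals $m$. The tool is the Weyl disk argument: the nested Weyl disks of $M_N(t,\cdot)$ contain $m_N$, and disks depend continuously on $M$. Hence $m$ lies in the limiting disks, which shrink to a point because of the limit-point property of trace-normalized systems ($\int\operatorname{tr}\HH=\infty$).

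\emph{Uniqueness.} Suppose two trace-normalized Hamiltonians $\HH,\ti\HH$ share the spectral measure $\mu$. Both chains $\BB_t$ and $\ti\BB_s$ sit isometrically in $L^2(\mu)$. The key step, and the main obstacle, is de Branges' ordering theorem: two de Branges spaces isometrically contained in the same $L^2(\mu)$ are ordered by inclusion. This would be proved for Hermite--Biehler $E_1,E_2$ with $E_1/E_2$ of bounded type. I would proceed as follows.
\begin{itemize}
\item Write $\Theta_j=E_j^\#/E_j$, the meromorphic inner functions. With $E_j$ as the weight, $\BB(E_j)$ is identified with the model space $K_{\Theta_j}$.
\item Use Krein's theorem that all functions in these spaces have bounded type with mean type controlled by $E_j$.
\item Apply a Coburn-type alternative, as in Exercise 4 of Lecture 13: the relevant Toeplitz kernels cannot both be nontrivial. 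This forces one space to dominate the other.
\end{itemize}
I expect the delicate part to be the step from ``same $L^2(\mu)$-norms'' to an analytic inclusion. There I would first try the reproducing-kernel identity $K_t(\l,\cdot)=P_{\BB_t}k$ in $L^2(\mu)$ combined with the Krein--Akhiezer bounded-type argument.

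Once the two chains are totally ordered together, the trace normalization makes the dimension-like function $t\mapsto$ ``size of $\BB_t$'' strictly increasing on both sides. Concretely, take the exponential type of $E(t,\cdot)$ together with $\int_0^t\operatorname{tr}\HH$. This gives $\BB_t=\ti\BB_{\phi(t)}$ for an increasing reparametrization $\phi$.

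Equality of the reproducing kernels then gives $E(t)$ and $\ti E(\phi(t))$ up to a real $SL_2$ normalization, fixed by $M(0)=I$. Differentiating the transfer matrices in $t$ recovers $\HH=\phi'\,\ti\HH\circ\phi$, which is the natural reparametrization in the statement. Intervals where $\HH$ is of constant rank one (jump intervals of the chain) need separate bookkeeping, but they are determined by the same kernel identities.
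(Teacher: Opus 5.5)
The paper does not prove this theorem; it only cites de Branges and Romanov. Your existence half follows the standard route: finite (Jacobi-type) approximants, weak-$*$ compactness of trace-normed Hamiltonians, locally uniform convergence of the matrizants, and the Weyl disk argument. As an outline that part is fine. The uniqueness half has two genuine gaps.

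\textbf{The ordering theorem is not proved, and Coburn's lemma cannot prove it.} This theorem carries essentially all the difficulty of uniqueness.

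Coburn's lemma involves only the inner functions $\Theta_j=E_j^\#/E_j$. It says that $N[\bar\Theta_1\Theta_2]$ and $N[\bar\Theta_2\Theta_1]$ are not both nonzero. It never sees the common measure $\mu$, and triviality of a Toeplitz kernel is not an inclusion $\BB(E_1)\subset\BB(E_2)$.

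The common isometric embedding, together with a no-real-zeros condition, is the entire content. In the form you state it (Hermite--Biehler $E_j$, $E_1/E_2$ of bounded type) the theorem is false unless real zeros are excluded. Take $E_1=c_1(z+i)$ and $E_2=c_2z(z+i)$. They give the incomparable spaces ${\rm span}\{1\}$ and ${\rm span}\{z\}$. After choosing $c_1,c_2$, both sit isometrically in $L^2(\mu)$ for any finite $\mu$ with $\int x^2d\mu<\infty$ not concentrated at $0$.

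For canonical systems, $E(t,\cdot)$ has no real zeros because $\det M=1$, so you may quote de Branges' ordering theorem. Proving it, however, is a long argument that uses the $L^2(\mu)$ structure essentially; it is not a Toeplitz-kernel alternative.

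\textbf{Your final normalization step is wrong, and with it the claimed form of uniqueness.} Isometric equality $\BB(E)=\BB(\ti E)$ determines $(A,C)$ only up to $(A,C)\mapsto(A,C)U$ with $U\in SL_2(\R)$. The normalization $M(t,0)=I$, i.e.\ $A(t,0)=1$ and $C(t,0)=0$, only forces $U=\begin{pmatrix}1&0\\u&1\end{pmatrix}$, that is, $A\mapsto A+uC$.

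This leftover freedom is real. Let $N=\begin{pmatrix}1&c\\0&1\end{pmatrix}$. Then $N^T\HH N$ has matrizant $N^{-1}MN$ and de Branges function $A-cC-iC$, whose reproducing kernel equals that of $A-iC$. For $\HH\equiv I$ this yields the constant Hamiltonian $\begin{pmatrix}1&c\\c&1+c^2\end{pmatrix}$. Its chain is again the Paley--Wiener chain, so it has the same spectral measure as the free system. Yet it is not of the form $\phi'(t)I$.

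So $\mu$ determines the trace-normed $\HH$ only modulo this one-parameter family. The conjugation $\HH\mapsto N^T\HH N$ shifts the Weyl function by a real constant. What actually determines $\HH$ is the Weyl function $m=bz+c+iS\mu$, including the real constant $c$, which $\mu$ does not see.

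For the same reason, $\int_0^t{\rm tr}\,\HH$ is not an invariant of the space $\BB_t$: it changes under $\HH\mapsto N^T\HH N$ while $\BB_t$ does not. It therefore cannot serve as your matching parameter. After the chains are matched, the remaining constant $u$ must be fixed by comparing Weyl functions, and the uniqueness statement has to be read with $m$ (equivalently, $\mu$ together with $c$).
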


\ms\no The inverse spectral problem (ISP) is the problem of actually finding $\HH$ from
$\mu$. In this generality no explicit solution is known, or likely to exist; the goal of
\cite{E1} is to develop formulas and algorithms solving the ISP for broad classes of spectral
measures, extending the classical procedures of Gelfand-Levitan, Krein and Marchenko.

\lsection{Truncated Toeplitz operators and sampling measures}

\ms\no Let $\mu$ be a positive measure on $\R$, and let $PW_a$ be the Paley-Wiener space of
Lecture 2. Consider the sesquilinear form $\int f\bar g\,d\mu$ on $PW_a$. When the form is
bounded, it defines a bounded selfadjoint \textit{truncated Toeplitz operator}
$L_{\mu,a}:PW_a\to PW_a$,
$$<L_{\mu,a}f,g>_{PW_a}=\int f\bar g\,d\mu,\qquad f,g\in PW_a.$$
We say that $\mu$ is a \textit{sampling measure} for $PW_a$ if
$$c\,||f||^2_{L^2(\R)}\leq\int|f|^2d\mu\leq C\,||f||^2_{L^2(\R)},\qquad f\in PW_a,$$
for some $C\geq c>0$; equivalently, if $L_{\mu,a}$ is bounded and invertible. Sampling
measures are a classical object of Harmonic Analysis: the two-sided estimate says that the
data $\{f\ \textrm{on}\ \supp\mu\}$ is a stable substitute for $f$ itself, and the theorem of
Duffin and Schaeffer cited in Lecture 8 was one of the first results in this direction. In
\cite{E1} it is shown that the measures which are sampling for \textit{all} Paley-Wiener
spaces simultaneously admit an elementary description, in the spirit of uniform boundedness of
$\mu$ on unit intervals from above together with a uniform lower bound on sufficiently long
intervals; we refer to \cite{E1} for the precise statement, and to \cite{OS14} for the
description of sampling measures of a fixed $PW_a$.

\ms\no The relevance of these notions to the ISP is explained by the following principle
proved in \cite{E1}: the canonical systems whose spectral measures are sampling for all
Paley-Wiener spaces are exactly the systems whose chains of de Branges spaces are comparable
with the Paley-Wiener scale (with $\det\HH\neq0$ a.e. after normalization). For such systems
the ISP can be solved by inverting truncated Toeplitz operators. The scheme is as follows.
Since $\BB_t$ sits isometrically in $L^2(\mu)$ and is comparable with $PW_t$, the reproducing
kernel $K_t(\l,\cdot)$ of $\BB_t$ can be found by solving the truncated Toeplitz equation
$$L_{\mu,t}\,K_t(\l,\cdot)=K^{PW_t}(\l,\cdot)$$
in $PW_t$, where $K^{PW_t}$ is the sinc-kernel of the Paley-Wiener space. Once the kernels are
known, the Hamiltonian is recovered by differentiation in $t$; for instance, for the diagonal
entry one has the formula
\begin{equation}\label{hrec}
h_{11}(t)=\pi\,\frac{d}{dt}\,k_t(0),\qquad k_t(z):=K_t(0,z),
\end{equation}
and a companion formula recovers $h_{22}$ from the dual (conjugate) kernels, in analogy with
the dual reproducing kernels of Lecture 4; the off-diagonal entry is expressed through the
generalized Hilbert transform of the kernels, which we will discuss in the next lecture. In
the free case $\mu=$ Lebesgue measure, $L_{\mu,t}$ is the identity, $k_t$ is the sinc kernel
and \eqref{hrec} returns $h_{11}\equiv1$ (exercise 4).

\lsection{The Gelfand-Levitan class}

\ms\no The classical inverse spectral theory corresponds to measures close to the Lebesgue
measure. Following \cite{E1}, we say that $\mu$ belongs to the Gelfand-Levitan class if
$$\hat\mu=\delta_0+\phi,\qquad \phi\in L^1_{loc}(\R),$$
i.e. if the (distributional) Fourier transform of $\mu$ differs from that of the Lebesgue
measure by a locally summable function. For such measures the truncated Toeplitz equation
becomes an integral equation with the kernel $\phi(x-y)$ on $(-t,t)$, and the scheme of the
previous section turns into the classical Gelfand-Levitan procedure \cite{GL14}; the
invertibility of $L_{\mu,t}$, which in the classical theory is a theorem, here becomes the
defining property of the class of measures for which the algorithm runs. One of the results
of \cite{E1} is that the Gelfand-Levitan class is a proper subclass of the sampling measures
discussed above, so the truncated Toeplitz approach strictly extends the classical one. In the
periodic case, when $\mu$ is a periodic measure, the equations reduce to finite-dimensional
linear algebra and connect the ISP with the theory of orthogonal polynomials on the unit
circle; see Section 6 of \cite{E1}.

\lsection{Exercises}

\ms\no 1) Let $q$ be real, $q\in L^1_{loc}$, and let $u_0$ be a real non-vanishing solution of
$-\ddot u+qu=0$ on an interval. Show that the substitution reducing the Schr\"odinger
equation $-\ddot u+qu=zu$ to a canonical system \eqref{cansys} can be performed with the
Hamiltonian
$$\HH(t)=\begin{pmatrix}u_0^2(t)&0\\0&u_0^{-2}(t)\end{pmatrix}$$
(the spectral parameter enters as $z$, not $z^2$; the change of variable absorbs the second
derivative). What goes wrong at the zeros of $u_0$?

\ms\no 2) Compute the matrizant for $\HH\equiv I$:
$$M(t,z)=\begin{pmatrix}\cos tz&-\sin tz\\ \sin tz&\cos tz\end{pmatrix}.$$
Show that $E(t,z)=e^{-itz}$, that $\BB_t$ coincides with a Paley-Wiener space, and identify
the spectral measure up to normalization. (Hint: Parseval's theorem, Lecture 2.)

\ms\no 3) Verify that $\mu$ is a sampling measure for $PW_a$ if and only if $L_{\mu,a}$ is
bounded and invertible, and that $<L_{\mu,a}f,f>=\int|f|^2d\mu$ for $f\in PW_a$.

\ms\no 4) Let $\mu$ be the Lebesgue measure. Show that $L_{\mu,t}$ is the identity operator on
$PW_t$ (Plancherel), that $k_t(z)=\frac{\sin tz}{\pi z}$, and check that formula \eqref{hrec}
gives $h_{11}\equiv1$, in agreement with $\HH\equiv I$ and exercise 2.

\ms\no 5) Show that a finite measure cannot be sampling for any $PW_a$, and that a sampling
measure for all $PW_a$, $a>0$, cannot have gaps of unboundedly growing length in its support.
Compare with the long intervals of Lectures 2 and 8.

\ms\no 6) In the Gelfand-Levitan class, write the truncated Toeplitz equation as the integral
equation
$$f(x)+\int_{-t}^t\phi(x-y)f(y)\,dy=g(x),\qquad x\in(-t,t),$$
on the Fourier side. Where is the invertibility of $L_{\mu,t}$ used?

\newpage

\lecture{16}{The inverse spectral problem: examples}

\ms\no In the last lecture we described the general scheme of \cite{E1b}: for canonical
systems whose spectral measures are sampling for the Paley-Wiener scale, the inverse spectral
problem reduces to the inversion of truncated Toeplitz operators. In this final lecture,
following \cite{E2}, we run the scheme on concrete families of measures. The examples
illustrate the connections of the inverse problem with classical objects of analysis: the
Hilbert transform, the Riemann-Hilbert problem, and Bessel functions, which make here their
second appearance in the course after the Bessel inner functions of Lectures 4 and 5.

\lsection{Homogeneous spectral measures}

\ms\no For a Borel measure $\mu$ on $\R$ and $t>0$ define the rescaled measure
$\mu_t(B)=\frac1t\mu(tB)$. We call $\mu$ \textit{homogeneous} if $\mu_t=\mu$ for all $t>0$. It
is an elementary exercise to check that the homogeneous Poisson-finite measures are exactly
the absolutely continuous measures whose density is constant on each of the half-axes:
$$d\mu=(c_1+c_2\,\sign x)\,dx,\qquad c_1>|c_2|\geq0.$$
For $c_2=0$ we recover the Lebesgue measure, i.e. the free system of the previous lecture; for
$c_2\neq0$ the measure is a genuinely non-symmetric perturbation of the free one, and the
corresponding Hamiltonian is a first non-trivial test of the method.

\ms\no Homogeneity of the measure forces self-similarity of the whole chain of de Branges
spaces. If $k_t(z)=K_t(0,z)$ denotes the reproducing kernel of $\BB_t$ at the origin, then for
a homogeneous spectral measure
$$k_t(z)=t\,k_1(tz),$$
and, by the recovery formula \eqref{hrec} of Lecture 15, the entry $h_{11}$ is constant in
$t$. A similar argument applies to the other entries and shows that the Hamiltonian of a
homogeneous measure is constant up to logarithmic terms. The precise result of \cite{E2} is
that, in the natural normalization $\det\HH=1$, the Hamiltonian has the form
$$\HH(t)=\begin{pmatrix}C_1&C-C_2\log t\\ C-C_2\log t&\dfrac{1-(C-C_2\log t)^2}{C_1}\end{pmatrix},$$
where the constants $C_1>0$, $C_2$, $C$ are determined explicitly by $c_1,c_2$, and $C_2=0$
if and only if $c_2=0$. The appearance of $\log t$ in the off-diagonal entries is a genuinely
new effect compared to the free case: a constant non-even density produces an unbounded,
slowly turning Hamiltonian.

\lsection{The Hilbert transform and the Riemann-Hilbert problem}

\ms\no Two classical tools enter the computation. The first is the \textit{generalized Hilbert
transform}: for an entire $f\in L^2(\mu)$ put
$$(H^\mu f)(z)=\frac1\pi\int\left[\frac{f(s)-f(z)}{s-z}+\frac{s\,f(z)}{1+s^2}\right]d\mu(s),$$
a regularized version of the singular integral which reduces to the usual Hilbert transform
for $\mu=$ Lebesgue measure (compare with the regularized Cauchy and Schwarz integrals of
Lecture 3). The transform $H^\mu$ applied to the reproducing kernels produces the conjugate
kernels and recovers the off-diagonal entries of the Hamiltonian, completing the recovery
formulas of Lecture 15.

\ms\no The second tool appears when one actually inverts the truncated Toeplitz operator
$L_{\mu,t}$ for $d\mu=(c_1+c_2\sign x)dx$. On the Fourier side the equation
$L_{\mu,t}f=g$ becomes a Riemann-Hilbert problem on the interval $[-t,t]$: to find a
piecewise analytic function $\Phi$ with the jump relation
$$\Phi^+=F\,\Phi^-+f\qquad\textrm{ on }[-t,t],$$
with an explicit coefficient $F$ built from $c_1,c_2$. The problem is solved by the classical
formula of Muskhelishvili \cite{Musk15}: the homogeneous problem has the solution
$$X_t(z)=\exp\left\{\frac1{2\pi i}\int_{-t}^t\frac{\log F(s)}{s-z}\,ds\right\},$$
and the full solution is obtained from $X_t$ by one more Cauchy integral. The constants
$C_1,C_2,C$ in the formula for the Hamiltonian above are computed from $X_t$. We refer to
\cite{E2} for the computation and to \cite{Deift15} for a modern survey of Riemann-Hilbert
methods.

\lsection{Quasi-homogeneous measures and Bessel functions}

\ms\no The natural generalization of homogeneity is homogeneity of a non-zero order: we call
$\mu$ \textit{quasi-homogeneous of order} $\nu$ if
$$\mu_t=t^{-(1+2\nu)}\mu\qquad\textrm{ for all }t>0.$$
The Poisson-finite quasi-homogeneous measures are the measures with the densities
$$w(x)=\begin{cases}\ c_+\,x^{1+2\nu},&x>0,\\ \ c_-\,|x|^{1+2\nu},&x<0,\end{cases}
\qquad -1<\nu<0$$
(exercise 2 explains the range of $\nu$). For such measures the chain of de Branges spaces is
again self-similar, with the scaling of the kernels
$$k_t(z)=t^{2+2\nu}\,k_1(tz),$$
and the Hamiltonian can be computed in closed form. The most instructive case is the even one,
$c_+=c_-$: here the answer is the diagonal power Hamiltonian.

\ms\no Consider the canonical system with
$$\HH(t)=\begin{pmatrix}t^{m}&0\\0&t^{-m}\end{pmatrix},\qquad m>-1.$$
It is shown in \cite{E2} that the entries of its matrizant are expressed through Bessel
functions: with $\nu=\frac{1+m}2$ and $F_\nu$ the entire function defined by
$J_\nu(\l)=\l^\nu F_\nu(\l)$,
$$A(t,z)=q_\nu\,F_{\nu-1}(zt),\qquad C(t,z)=g_\nu\,t^{2\nu}\,z\,F_\nu(zt)$$
for explicit constants $q_\nu,g_\nu$, the reproducing kernels are
$$k_t(z)=\frac{g_\nu^2F_{\nu-1}(0)}\pi\,t^{2\nu}F_\nu(zt),$$
and the spectral measure is the quasi-homogeneous measure
$$d\mu=\const|x|^{m}dx.$$
Thus the diagonal power Hamiltonians are the canonical-system incarnation of the Bessel
family: the reader should compare this with the Bessel potentials $q(t)=(\nu^2-\frac14)/t^2$
and the Bessel inner functions $\Theta_\nu$ of Lecture 4, to which these systems are related
by the transformation of exercise 1 of Lecture 15. For $m=0$ everything collapses to the free
system, $F_{-1/2}$ and $F_{1/2}$ being the cosine and the sine.

\ms\no These examples show the inverse spectral problem from a concrete, computational side:
each explicitly solvable spectral measure is a point of contact between the abstract
Krein-de Branges theory and classical special functions. Further examples, including
periodic and finite-dimensional ones, can be found in \cite{E1b, E2}, and the search for new
solvable classes continues; the reader who has followed the course this far is well equipped
to join it.

\lsection{Exercises}

\ms\no 1) Prove the classification of homogeneous Poisson-finite measures stated at the
beginning of the lecture. (Hint: homogeneity determines $\mu$ on all intervals from its
values on $(0,1)$ and $(-1,0)$; show first that $\mu$ has no point masses.)

\ms\no 2) Show that the density $|x|^{1+2\nu}$ defines a Poisson-finite measure if and only if
$-1<\nu<0$, and check that for such $\nu$ the corresponding measure is quasi-homogeneous of
order $\nu$. For which $\nu$ is the measure a Frostman measure, and what does the type
alternative of Lecture 11 say about it?

\ms\no 3) Deduce from the scaling relation $k_t(z)=tk_1(tz)$ and formula \eqref{hrec} that
$h_{11}$ is constant for any homogeneous spectral measure, and compute the constant for the
Lebesgue measure. Derive the analogous statement in the quasi-homogeneous case.

\ms\no 4) Verify the Bessel example for $m=0$: check that $F_{-1/2}(\l)=\sqrt{\frac2\pi}\cos\l$
and $\l F_{1/2}(\l)=\sqrt{\frac2\pi}\sin\l$ (up to normalization), and that the formulas of
the last section reduce to the free matrizant of exercise 2 of Lecture 15.

\ms\no 5) Using exercise 1 of Lecture 15, transform the canonical system with
$\HH={\rm diag}(t^m,t^{-m})$ into a Schr\"odinger equation and identify the resulting potential
with the Bessel potential of Lecture 4. (This is a challenge exercise; consult Section 6 of
\cite{Rem15} if needed.)

\ms\no 6) Solve the Riemann-Hilbert problem with a constant coefficient $F\equiv c>0$ on
$[-t,t]$ explicitly, and observe how the solution $X_t$ develops power-type singularities at
the endpoints $\pm t$. What do these singularities correspond to in the behavior of the
kernels $k_t$?

\newpage

\lecture{17}{Cases of pointwise convergence of the non-linear Fourier transform}

\ms\no The Krein-de Branges theory of the last two lectures has a scattering counterpart,
the non-linear Fourier analysis of Dirac systems, a non-linear analogue of the setting of
the classical theorems of Carleson \cite{Car16} and Hunt on the convergence of Fourier
series and integrals. In this final lecture, based on \cite{Cases16, NM16}, we give
examples of simpler convergence and maximal statements for the non-linear Fourier transform
of an $L^2$-potential, which follow from the resonance approach developed in
\cite{P16}. We will see the objects of this course -- Hermite-Biehler functions,
meromorphic inner functions, de Branges chains and Poisson-summable logarithms -- working
together in a problem that connects harmonic analysis with the scattering theory of
differential operators.

\lsection{Dirac systems and scattering data}

\ms\no Consider the Dirac system on $\R_+$,
\begin{equation}\label{DirS}
\Omega\dot X=zX-QX,\qquad
\Omega=\begin{pmatrix}0&1\\-1&0\end{pmatrix},\quad
Q(t)=\begin{pmatrix}0&f(t)\\f(t)&0\end{pmatrix},
\end{equation}
with a real-valued potential $f\in L^2(\R_+)$, the spectral parameter $z\in\C$ and
$X(t,z)=(u,v)^T$. This is a relative of the canonical systems of Lecture 15, with the spectral
parameter entering in the free term; the reader may consult \cite{Den16, Rem16} for the
translation between the two languages. Let $E(t,z)$ and $\ti E(t,z)$ denote the functions
$u-iv$ of the solutions with the Neumann, $X(0,z)=(1,0)^T$, and the Dirichlet,
$X(0,z)=(0,1)^T$, initial conditions. For each $t$ these are Hermite-Biehler functions of
exponential type $t$; in the free case $f\equiv0$, $E=e^{-itz}$ and $\ti E=-ie^{-itz}$. The
corresponding chains of de Branges spaces carry spectral measures $\mu,\ti\mu$ with a.c.\
densities $w,\ti w$, and for $f\in L^2(\R_+)$ the measures satisfy the Szeg\"o condition
$$\log w,\ \log\ti w\in L^1(\Pi)$$
(see \cite{Den16}) -- the Poisson-summable logarithms of Krein's theorem from Lecture 8 thus
reappear as the exact spectral signature of square-summable potentials.

\ms\no The scattering data of the system are the functions
$$a(t,z)=\frac{e^{itz}}2\big(E+i\ti E\big),\qquad b(t,z)=\frac{e^{itz}}2\big(E-i\ti E\big),$$
which satisfy $|a|^2-|b|^2=1$ on $\R$, with $a$ outer in $\C_+$ and $a(t,0)>0$. The
\textit{non-linear Fourier transform} (NLFT) of the potential is the ratio
$$\nlhat f_T(z)=\frac{b(T,z)}{a(T,z)},\qquad \nlhat f(z)=\frac{b(\infty,z)}{a(\infty,z)}.$$
The name is explained by linearization: for small potentials the equation satisfied by
$(a,b)$ gives $b(T,z)\approx\int_0^Tf(t)e^{2izt}dt$ (exercise 2), so $\nlhat f$ is a
non-linear deformation of the Fourier transform of $f$. The role of Plancherel's theorem is
played by the non-linear Parseval identity
\begin{equation}\label{nlParseval}
\big|\big|\log|a(t,\cdot)|\big|\big|_{L^1(\R)}=\frac\pi2\,||f||^2_{L^2(0,t)},
\end{equation}
where the left-hand side is a genuine norm since $\log|a|\geq0$ on $\R$. For the history of
the subject and its discrete counterpart, the Fourier analysis of orthogonal polynomials on
the unit circle, we refer to the lecture notes of Tao and Thiele \cite{TT16} and to
\cite{Den16}.

\ms\no It is known that $a(t,\cdot)$ and $b(t,\cdot)$ converge as $t\to\infty$ normally in
$\C_+$ and in measure on compacts of $\R$, so that $\nlhat f_T\to\nlhat f$ in measure; see
\cite{P16}. For potentials in $L^p(\R_+)$, $1\leq p<2$, almost everywhere convergence of
$\nlhat f_T$ follows from the work of Christ and Kiselev \cite{CK16}; a Carleson-type
theorem for a Cantor group model of the transform was proved by Muscalu, Tao and Thiele
\cite{MTT16}. The statements presented below concern real potentials in $L^2(\R_+)$ and
follow from the resonance approach of \cite{P16}.

\lsection{Resonances and universality}

\ms\no The bridge to the earlier lectures is the family of meromorphic inner functions
$$\theta(t,z)=\frac{E^\#(t,z)}{E(t,z)},\qquad E^\#(z)=\overline{E(\bar z)},$$
familiar from Lecture 3. The zeros of $E(t,\cdot)$, which all lie in $\C_-$, are the
\textit{resonances} of the system \eqref{DirS} restricted to $(0,t)$. Differentiating in $t$
one finds that $\theta$ satisfies the Riccati equation
\begin{equation}\label{Ricc}
\frac{\pa}{\pa t}\theta=2iz\theta+f(t)\big(1-\theta^2\big),
\end{equation}
which describes the dynamics of the resonances: a resonance moves only under the action of
the potential, at the speed $|f(t)|/|\theta_z|$.

\ms\no The analytic core of the resonance approach of \cite{P16} is a collection of
universality-type approximations:
for a.e.\ $s\in\R$ there exists $C(t)\to\infty$ such that on the boxes
$Q(s,C/t)=\{|\Re(z-s)|\leq C/t,\ |\Im z|\leq C/t\}$ the function $E(t,\cdot)$ is, up to an
$o(1)$ error, an elementary function. Two regimes occur. If no resonance is present in the
box, then
$$E(t,z)\approx\frac{\alpha(s,t)}{\sqrt{w(s)}}\,e^{-itz}$$
for a unimodular $\alpha(s,t)$: locally the system looks free, with the density $w(s)$ as the
only trace of the potential. If a resonance $z(t)=x(t)-iy(t)$ is present in the box, then
$$E(t,z)\approx\frac{\alpha(s,t)\,\gamma(ty(t))}{\sqrt{w(s)}}\,\sin\big[t(z-z(t))\big],\qquad
\gamma(p)=\frac{\sqrt2}{\sqrt{\sinh(2p)}},$$
a sine with one complex zero; the quantity $ty(t)$, the \textit{depth} of the resonance,
measures how strongly it distorts the local picture. Analogous approximations hold for the
Dirichlet family $\ti E$, with cosines, and the parameters of the two families are rigidly
coupled through the determinant relation $E\ti E^\#-\ti EE^\#\equiv2i$. Here $\gamma$ is a
smooth version of the quantities we met in Lecture 4 in the residue formula for spectral
measures; we refer to \cite{Cases16}, where the statements of \cite{P16} used below are
reproduced with proofs.

\lsection{The price of a resonance}

\ms\no The key mechanism behind the convergence results is that resonances are expensive. If
the system has a resonance of bounded depth near $s$ at time $t$, then on the time window
$(t,2t)$ the transition scattering coefficient $a_{t\to2t}$ must be noticeably large near
$s$; by the non-linear Parseval identity \eqref{nlParseval}, applied to the restricted
potential, this consumes a definite portion of $||f||^2_{L^2((t,2t))}$. Summing over a cover
one obtains a pricing estimate: for suitable compacts $S'$ of full density in $\R$,
$$\begin{gathered}\big|\{s\in S'\ |\ \textrm{a resonance of depth}\leq C\textrm{ is present near }s
\textrm{ at time }t\}\big|\ \leq\\
\leq\ c(C,S')\,||f||^2_{L^2((t,2t))}.\end{gathered}$$
Integrating in $t$ gives an occupation bound: for a.e.\ $s$,
$$\sum_{n\geq0}2^{-n}\big|\{t\in[2^n,2^{n+1}]\ |\ \textrm{a resonance is present near }s
\textrm{ at time }t\}\big|<\infty,$$
i.e.\ near almost every point resonances can occupy only a summably small proportion of each
dyadic block of times. This Borel-Cantelli mechanism converts the $L^2$ smallness of the
potential tails into pointwise information -- a strategy the reader may compare with the use
of the Poisson-summable weights in Lectures 8 and 9.

\lsection{Three cases of convergence}

\ms\no The first result concerns convergence along sequences adapted to the local $L^2$ norm
of the potential. For an increasing sequence $\{t_k\}$, $t_k\uparrow\infty$, let
$N(t)=\#\{k\ |\ t/2\leq t_k<t\}$ be its octave counting function.

\begin{theorem}[\cite{Cases16}]\label{NLFTa}
Let $f\in L^2(\R_+)$ be real and let $\{t_k\}$ satisfy
$$\int_0^\infty f(t)^2N(t)\,dt=\sum_k\int_{t_k}^{2t_k}f^2<\infty.$$
Then $\nlhat f_{t_k}(s)\to\nlhat f(s)$ as $k\to\infty$ for a.e.\ $s\in\R$.
\end{theorem}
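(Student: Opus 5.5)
We would prove Theorem \ref{NLFTa} by comparing $\nlhat f_{t_k}$ with $\nlhat f_{2t_k}$, and more generally with the limit, through the transition scattering data on the windows $(t_k,\infty)$. The NLFT has a group law: the transfer matrix from $0$ to $T'$ is the product of the matrices for $(0,T)$ and $(T,T')$. So the ratio $b/a$ at time $T'$ is a M\"obius image of $b/a$ at time $T$. The coefficients of this M\"obius map are built from $a_{T\to T'}$, $b_{T\to T'}$ and the phase $e^{2iTz}$.

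The first step is a factorization. We write $\nlhat f(s)-\nlhat f_{t_k}(s)$ as a quantity controlled by $|b_{t_k\to\infty}(s)|$ (equivalently by $\log|a_{t_k\to\infty}(s)|$, since $|a|^2-|b|^2=1$). The prefactor involves $a(t_k,s)$ and the Hermite-Biehler function $E(t_k,s)$ near $s$. Pointwise smallness of $\log|a_{t_k\to\infty}(s)|$ would give convergence whenever that prefactor stays bounded.

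The second step supplies the smallness. By the non-linear Parseval identity \eqref{nlParseval} applied to the potential restricted to $(t_k,\infty)$,
$$\sum_k\big|\big|\log|a_{t_k\to2t_k}|\big|\big|_{L^1(\R)}=\frac\pi2\sum_k\int_{t_k}^{2t_k}f^2<\infty .$$
By monotone convergence, $\log|a_{t_k\to2t_k}(s)|\to0$ for a.e.\ $s$, and in fact these values are summable in $k$ for a.e.\ $s$.

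Passing from the windows $(t_k,2t_k)$ to the tails $(t_k,\infty)$ needs one more argument. We would pass to the subsequence of dyadic-separated times: within each octave, $N(t)$ bounds how many $t_k$ occur. We then compose the consecutive transitions $t_k\to t_{k+1}$ or $t_k\to 2t_k$. The result is a convergent product of M\"obius maps whose deviations from the identity are summable at a.e.\ $s$. This gives Cauchy convergence of $\nlhat f_{t_k}(s)$. The limit is identified with $\nlhat f(s)$ by convergence in measure, which is known from \cite{P16}.

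The main obstacle is the prefactor. The M\"obius map transporting the small error from the tail to $\nlhat f_{t_k}$ can amplify it when $E(t_k,\cdot)$ has a resonance near $s$ of small depth. There $|E|$ is close to zero and the local picture is the sine approximation, not the free one.

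Here we would invoke the resonance dichotomy of the universality approximations. Outside resonances, $|E(t,s)|\approx w(s)^{-1/2}$, which is finite and nonzero a.e.\ by the Szeg\H{o} condition, so the transport is uniformly bounded. Inside a resonance box, the pricing estimate shows that the set of such $s$ at time $t_k$ has measure at most $c\,||f||^2_{L^2(t_k,2t_k)}$. Since these bounds are summable in $k$, Borel--Cantelli gives that a.e.\ $s$ lies in a resonance box for only finitely many $k$.

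Combining the two steps on the full-density compacts $S'$ and letting $S'$ exhaust $\R$ yields a.e.\ convergence. We expect the delicate point to be making the universality approximation quantitative enough, with the $o(1)$ errors uniform along $t_k$ for fixed $s$, to bound the M\"obius prefactor in the non-resonant regime.
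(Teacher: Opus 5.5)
Your proposal breaks at the claim that composing the window transitions gives ``a convergent product of M\"obius maps whose deviations from the identity are summable.'' The non-linear Parseval identity \eqref{nlParseval} controls
$\log|a_{t\to t'}(s)|=-\frac12\log(1-\check\rho_{t\to t'}(s)^2)\approx\frac12\check\rho_{t\to t'}(s)^2$.
This is \emph{quadratic} in the modulus $\check\rho$ of the NLFT of the window. The increment of $\nlhat f_t(s)$ across the window, however, is of \emph{first} order in $\check\rho$: the window moves $\nlhat f_t(s)$ by exactly the hyperbolic distance from $0$ to $\check\rho_{t\to t'}(s)$, compare \eqref{mob}. So a.e.\ summability of $\log|a_{t_k\to t_{k+1}}(s)|$ only yields $\sum_k\check\rho_{t_k\to t_{k+1}}(s)^2<\infty$, and square-summable increments do not make a sequence Cauchy. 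Your first-step reduction to the tails is correct, but it does not help. A.e.\ smallness of $\log|a_{t_k\to\infty}(s)|$ is a pointwise problem of the same kind, since Parseval only gives $\int_\R\log|a_{t_k\to\infty}|=\frac\pi2\int_{t_k}^\infty f^2\to0$.

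The scheme cannot be repaired, because it never really uses the hypothesis. For \emph{every} increasing sequence, $\sum_k\|\log|a_{t_k\to t_{k+1}}|\|_{L^1(\R)}=\frac\pi2\int_{t_1}^\infty f^2<\infty$. Your argument would therefore give convergence along every sequence, for instance $t_k=k$. Then \eqref{mob}, together with $\int_k^{k+1}|f|\to0$, would give a.e.\ convergence of $\nlhat f_T$ for every real $f\in L^2(\R_+)$. That is far beyond Theorems \ref{NLFTa}--\ref{NLFTd}: for general $f$, Theorem \ref{NLFTd} only gives convergence to a two-point set off an exceptional set of times. Already in the linear case, Plancherel gives square-summable increments along any sequence, and Carleson's theorem does not follow from that by the triangle inequality.

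Conversely, the obstacle you single out does not exist. The bound \eqref{mob} is uniform, with no factor depending on $a(t_k,s)$ or $E(t_k,s)$, so resonances cannot amplify the tail. In the paper's approach resonances play a different role. The hypothesis $\sum_k\|f\|^2_{L^2(t_k,2t_k)}<\infty$ enters only through the pricing estimate and Borel--Cantelli, exactly as in your last step. This shows that for a.e.\ $s$ the times $t_k$ are eventually resonance-free near $s$. At such times universality puts the system in the free regime near $s$, so $|E(t_k,s)|^2\to1/w(s)$ and $|\tilde E(t_k,s)|^2\to1/\tilde w(s)$. On $\R$ one has $\Im(E\bar{\tilde E})=1$ and
\[
\nlhat f_t(s)=\frac{|E|^2-|\tilde E|^2-2i\Re(E\bar{\tilde E})}{|E|^2+|\tilde E|^2+2},\qquad |\Re(E\bar{\tilde E})|=\sqrt{|E|^2|\tilde E|^2-1}.
\]
Hence the values $\nlhat f_{t_k}(s)$ approach the two-point set $\{r_+(s),r_-(s)\}$ computed from $w(s)$ and $\tilde w(s)$. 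Convergence comes from identifying $\nlhat f_{t_k}(s)$ with limiting spectral data, not from telescoping.

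Your window sums do have a legitimate use in resolving the remaining ambiguity. If $r_+(s)\neq r_-(s)$, each passage of $\nlhat f_{t_k}(s)$ from near one point to near the other forces, by \eqref{mob}, $\log|a_{t_k\to t_{k+1}}(s)|$ to exceed a fixed positive constant. This can happen only finitely often, since these quantities are a.e.\ summable in $k$. Convergence in measure then identifies the limit with $\nlhat f(s)$.
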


\begin{corollary}\label{NLFTlac}
For every real $f\in L^2(\R_+)$ and every lacunary sequence, $t_{k+1}/t_k\geq q>1$,
$$\nlhat f_{t_k}(s)\to\nlhat f(s)\qquad\textrm{ for a.e. }s\in\R.$$
\end{corollary}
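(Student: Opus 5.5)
The corollary should follow directly from Theorem \ref{NLFTa}. The only thing to check is that a lacunary sequence satisfies the summability hypothesis for every $f\in L^2(\R_+)$. The plan is therefore to bound the octave counting function $N(t)$ uniformly in $t$, and then invoke the theorem.

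\emph{Step 1: bounded octave count.} Fix $t>0$ and suppose $t_j<\dots<t_{j+m-1}$ are all the terms of the sequence lying in $[t/2,t)$. Lacunarity gives
$$t_{j+m-1}\geq q^{m-1}t_j\geq q^{m-1}t/2.$$
Since also $t_{j+m-1}<t$, we get $q^{m-1}<2$, that is,
$$N(t)=m\leq 1+\frac{\log 2}{\log q}=:M_q,$$
independently of $t$.

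\emph{Step 2: the summability condition.} Using the identity in the hypothesis of Theorem \ref{NLFTa}, or computing directly, the weight $N(t)$ counts the $k$ with $t\in[t_k,2t_k)$. Hence
$$\sum_k\int_{t_k}^{2t_k}f^2=\int_0^\infty f(t)^2N(t)\,dt\leq M_q\,||f||^2_{L^2(\R_+)}<\infty.$$
Alternatively, one can count directly: each point $t$ lies in at most $M_q$ of the windows $[t_k,2t_k)$. Indeed, such $k$ satisfy $t/2<t_k\leq t$, an interval of ratio $2$, so the same estimate as in Step 1 applies.

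\emph{Step 3: conclusion.} Theorem \ref{NLFTa} now applies to the real potential $f$ and the sequence $\{t_k\}$, giving $\nlhat f_{t_k}(s)\to\nlhat f(s)$ for a.e.\ $s\in\R$.

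There is no real obstacle here. The only point requiring care is matching the octave convention in $N(t)$ with the windows $[t_k,2t_k)$, so that the two counts agree up to half-open endpoint conventions. Either convention gives the same uniform bound $M_q$, so the conclusion is unaffected.
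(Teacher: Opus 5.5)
Your proof is correct and is the paper's argument: lacunarity gives the uniform bound $N\leq\log_q2+1$, so $\int_0^\infty f^2N\,dt\leq(\log_q2+1)||f||^2_{L^2(\R_+)}<\infty$ and Theorem \ref{NLFTa} applies. The endpoint mismatch you flag is immaterial. The condition $t/2\leq t_k<t$ actually corresponds to $t\in(t_k,2t_k]$, which differs from your $[t_k,2t_k)$ only on a null set.
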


\ms\no Indeed, for a lacunary sequence $N\leq\log_q2+1$, and the condition of the theorem
holds automatically (exercise 4). Since the dyadic masses $\int_{2^n}^{2^{n+1}}f^2$ tend to
zero, admissible sequences can in fact be chosen with relative gaps tending to $0$.

\ms\no The second and third results give full convergence, over $T\to\infty$, under
conditions on the $L^1$ masses of the potential.

\begin{theorem}[\cite{Cases16}]\label{NLFTb}
Let $f\in L^2(\R_+)$ be real and suppose that its dyadic $L^1$ masses are bounded:
$$\sup_{n\geq0}\int_{2^n}^{2^{n+1}}|f|<\infty.$$
Then $\nlhat f_T(s)\to\nlhat f(s)$ as $T\to\infty$ for a.e.\ $s\in\R$.
\end{theorem}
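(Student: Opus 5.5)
The plan is to reduce full convergence over $T\to\infty$ to convergence along a lacunary sequence, which is Corollary \ref{NLFTlac}, together with a control of the oscillation of $\nlhat f_T$ inside each dyadic block of times. Fix the dyadic grid $t_n=2^n$. By Corollary \ref{NLFTlac}, $\nlhat f_{2^n}(s)\to\nlhat f(s)$ for a.e. $s$. It therefore suffices to show that for a.e. $s$
$$\sup_{T\in[2^n,2^{n+1}]}\big|\nlhat f_T(s)-\nlhat f_{2^n}(s)\big|\to0,\qquad n\to\infty.$$

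For the oscillation I will use the composition law of the scattering data. The transfer matrix from $2^n$ to $T$ is an $SU(1,1)$-type matrix built from the coefficients $(a_{2^n\to T},b_{2^n\to T})$ of the restricted potential $f\charf_{(2^n,T)}$. Then $\nlhat f_T$ is a M\"obius image of $\nlhat f_{2^n}$, with the M\"obius map depending on these coefficients and on the phase $e^{2isT}$. In particular, $|\nlhat f_T-\nlhat f_{2^n}|$ is small as soon as $|b_{2^n\to T}(s)|$ is small and $|a_{2^n\to T}(s)|$ is close to $1$, uniformly in $T$ in the block. The crude Gronwall bound for the ODE satisfied by $(a,b)$ gives
$$|a_{2^n\to T}|,\ |b_{2^n\to T}|\leq \exp\int_{2^n}^{2^{n+1}}|f|.$$
By the hypothesis this is uniformly bounded in $n$, so the transfer matrices over each block stay in a compact subset of the group. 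This is exactly where bounded dyadic $L^1$ mass enters: it prevents the M\"obius maps from degenerating, which is what goes wrong for general $L^2$ potentials.

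Boundedness alone does not give smallness, so I will combine it with the resonance picture. By the occupation bound of the previous subsection, for a.e. $s$ and all large $n$, no resonance of bounded depth is present near $s$ during most of the block $[2^n,2^{n+1}]$. I will strengthen this to \emph{all} $T$ in the block using the $L^1$ bound. By the Riccati equation \eqref{Ricc}, resonances move at speed $|f(t)|/|\theta_z|$. Over a block of length $2^n$ with $\int|f|\lesssim1$, a resonance can travel only $O(2^{-n})$, which is comparable to the size of the box $Q(s,C/t)$. Hence if a resonance is absent near $s$ at some time in the block, then, after enlarging $C$, one is absent throughout most of the block, and the exceptional $s$ are controlled by Borel--Cantelli. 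In the resonance-free regime the universality approximation $E(t,z)\approx\alpha(s,t)w(s)^{-1/2}e^{-itz}$, together with its Dirichlet counterpart, expresses $a(t,s)$ and $b(t,s)$ through $w(s),\ti w(s)$ and the unimodular parameters. This should show that $|b_{2^n\to T}(s)|$ is small, by the non-linear Parseval identity \eqref{nlParseval} applied to $f\charf_{(2^n,T)}$, whose $L^2$ norm tends to zero. Uniform boundedness from the $L^1$ hypothesis then makes the $L^1$-in-$s$ smallness of $\log|a_{2^n\to T}|$ uniform in $T$, via a maximal estimate over $T$ in the block.

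The main obstacle is this last uniformity in $T$. Parseval gives smallness of $\log|a_{2^n\to T}|$ in $L^1(ds)$ for each fixed $T$, but we need a supremum over a continuum of $T$. I would first try to discretize the block into $O(1)$ subintervals on which $\int|f|$ is small, which is possible since the total $L^1$ mass is bounded. On each subinterval, Gronwall shows that the data vary by a small multiplicative amount, uniformly in $s$. The supremum over the block is then controlled by finitely many fixed times, to each of which Theorem \ref{NLFTa} applies, with $N(t)$ bounded by the number of pieces. If the number of pieces cannot be bounded uniformly, I would instead sum $\int f^2 N$ using $N\lesssim\int|f|/\epsilon$, and use Cauchy--Schwarz to check the condition of Theorem \ref{NLFTa}.
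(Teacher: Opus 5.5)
Your last paragraph is the paper's proof. It builds a grid $\{t_j\}$ subordinate to the $|f|$-mass, applies Theorem \ref{NLFTa} along the grid, and uses the M\"obius action of the transfer matrix together with Gronwall between consecutive grid points. That paragraph is complete on its own and simpler than you fear.

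Fix $\epsilon>0$ and cut each dyadic block into pieces of $|f|$-mass at most $\epsilon$. This uses at most $M/\epsilon+2$ points per block, where $M$ is the bound on the dyadic masses. So the octave counting function $N$ is bounded, and the hypothesis of Theorem \ref{NLFTa} reduces to $f\in L^2(\R_+)$. Between grid points Gronwall bounds the oscillation by $C(\epsilon)=2e^{\epsilon}(e^{\epsilon}-1)$, uniformly in $s$. Hence $\limsup_{T\to\infty}|\nlhat f_T(s)-\nlhat f(s)|\leq C(\epsilon)$ for a.e. $s$, and you finish by intersecting the full-measure sets over $\epsilon=1/m$. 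The paper instead uses a single grid whose steps in $|f|$-mass shrink slowly while $\sum_k\int_{t_k}^{2t_k}f^2$ stays finite; either works. So the hypothesis enters through the bounded number of small-mass pieces per block, not through compactness of the block transfer matrices. You need neither Corollary \ref{NLFTlac} nor the resonance picture.

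You should discard the resonance paragraph, which is not an argument as it stands. Parseval controls $\log|a_{2^n\to T}|$ only in $L^1(ds)$ at each fixed $T$. The $O(2^{-n})$ travel bound for resonances needs a lower bound for $|\theta_z|$ at the moving zeros, which you have not established.

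More importantly, even pointwise smallness of $|b_{2^n\to T}(s)|$ would not make $\nlhat f_T(s)$ close to $\nlhat f_{2^n}(s)$, contrary to your claim. Let $(\alpha,\beta)=(a_{t\to t'},b_{t\to t'})(s)$. Then $\nlhat f_{t'}=\frac{\bar\alpha}{\alpha}\cdot\frac{\nlhat f_t+\beta/\bar\alpha}{1+(\bar\beta/\alpha)\nlhat f_t}$. When $\int_t^{t'}|f|$ is merely bounded, $\beta(s)$ can vanish while $\bar\alpha(s)/\alpha(s)$ is far from $1$. For example, with $s>0$, the constant potential $f=s/\sqrt2$ on an interval of length $\pi\sqrt2/s$ gives $\beta(s)=0$ and $\bar\alpha(s)/\alpha(s)=e^{2\pi i\sqrt2}$.

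What the grid argument actually uses is the Gronwall bound $|\alpha-1|\leq\cosh F-1$, $|\beta|\leq\sinh F$, with $F=\int_t^{t'}|f|$. This says the whole transfer matrix is close to the identity. That is why small steps in $|f|$-mass, not small $|b|$, are the right currency.
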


\begin{theorem}[\cite{Cases16}]\label{NLFTmw}
Let $f\in L^2(\R_+)$ be real and suppose that
$$\int_0^\infty f(t)^2\left(\int_{t/2}^t|f(\tau)|\,d\tau\right)dt<\infty.$$
Then $\nlhat f_T(s)\to\nlhat f(s)$ as $T\to\infty$ for a.e.\ $s\in\R$.
\end{theorem}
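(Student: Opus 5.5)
I will derive Theorem \ref{NLFTmw} from the resonance pricing mechanism. The target is that, for a.e. $s$, the limit $\nlhat f_T(s)$ exists along the full continuum of times. The plan is to combine convergence along a dense sequence with control of the oscillation between consecutive terms.

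\textbf{Step 1: a suitable sequence of times.} Put $m(t)=\int_{t/2}^t|f|$. I choose an increasing sequence $\{t_k\}$ that is dense in each dyadic block, with local step $\delta(t)\asymp t\,\min(1,m(t)^{-1})$, or more precisely with octave counting function $N(t)\asymp 1+m(t)$. Then
$$\int f^2N\,dt\lesssim\|f\|_2^2+\int_0^\infty f(t)^2\Big(\int_{t/2}^t|f|\Big)dt<\infty,$$
so Theorem \ref{NLFTa} gives $\nlhat f_{t_k}(s)\to\nlhat f(s)$ for a.e. $s$. It remains to show that
$$\sup_{t\in[t_k,t_{k+1}]}\big|\nlhat f_t(s)-\nlhat f_{t_k}(s)\big|\to0 \quad\text{for a.e. } s.$$

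\textbf{Step 2: the step-wise oscillation.} For fixed real $s$ the scattering pair $(a,b)(t,s)$ solves a linear ODE with coefficient of size $|f(t)|$; equivalently, $\theta$ solves the Riccati equation \eqref{Ricc}. After removing the free rotation $e^{2ist}$, the variation of the ratio $b/a$ over $[t_k,t_{k+1}]$ is controlled by $|\theta|$-type quantities times $\int_{t_k}^{t_{k+1}}|f|$. By the choice of the step this integral is at most $\delta m/t$, hence bounded. Boundedness alone does not give smallness, so I would refine the choice to $N(t)\asymp(1+m(t))\,\omega(t)$, with $\omega\to\infty$ slowly enough that $\int f^2N<\infty$ still holds. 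Such an $\omega$ exists because the tail integrals of $f^2m$ tend to zero. With this refinement the step integral $\int_{t_k}^{t_{k+1}}|f|\lesssim1/\omega(t_k)\to0$.

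\textbf{Step 3 (main obstacle): uniform control of the growth factor.} The linear bound in Step 2 has a growth factor involving $|a(t,s)|$ and the local behaviour of $E(t,\cdot)$ near $s$. This factor must stay bounded uniformly in $t$ for a.e. $s$. The universality approximations give this when no resonance of bounded depth lies near $s$: then $E\approx\alpha e^{-itz}/\sqrt{w(s)}$, so $|a|$ is controlled by $w(s)$, and the Riccati increment is genuinely $O\big(\int|f|\big)$. When a resonance is present, the sine model can amplify the increment by a factor $\gamma(ty)^{-1}$. I would handle this with the occupation bound from the pricing estimate: for a.e. $s$, times with a resonance near $s$ occupy a summably small proportion of dyadic blocks. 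The danger is that even a short resonant interval could carry a large jump, and this is where I expect the main difficulty. My first attempt would be to insert extra sequence points at the entry and exit times of the resonant windows. I would then check, using the Borel--Cantelli form of the occupation bound together with the condition $\int f^2m<\infty$ weighted over those windows, that the additional contribution to $\int f^2N$ stays finite. Inside a resonant window, the variation of $\nlhat f_t$ is bounded by the $L^2$-price of the window, which tends to zero.

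With Steps 2 and 3 in place, combining them with Step 1 yields convergence along all $T\to\infty$ for a.e. $s$.
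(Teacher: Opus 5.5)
\textbf{Gap in Step 3.} Your Steps 1--2 follow the paper's route: a grid subordinate to the $|f|$-mass with $\int f^2N<\infty$, Theorem \ref{NLFTa} along the grid, and control of the oscillation between consecutive grid points. Step 3, however, is built around an obstacle that does not exist, and the remedy you propose for it would not work.

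There is no growth factor, and resonances play no role between grid points. For real $s$ one has $|a(t,s)|^2-|b(t,s)|^2=1$, so $r=\nlhat f_t(s)=b/a$ satisfies $|r|<1$. The $(a,b)$-system of exercise 2 gives
$$\partial_t r=f(t)\big(e^{2ist}-e^{-2ist}r^2\big),\qquad\text{hence}\qquad \big|\nlhat f_{t'}(s)-\nlhat f_t(s)\big|\leq 2\int_t^{t'}|f|,$$
uniformly in $s$ and in the state at time $t$. This is exactly the mobility bound \eqref{mob}: the transfer matrix of $(t,t')$ acts on the disk by a M\"obius map, and $\check\rho_{t\to t'}\leq\tanh\int_t^{t'}|f|$. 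With this bound, Steps 1--2 already finish the proof. The universality and resonance machinery enters only inside Theorem \ref{NLFTa}.

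Your proposed fix fails for two reasons. First, inserting the entry and exit times of resonant windows makes the grid depend on $s$. Theorem \ref{NLFTa} gives a.e.\ convergence only along a fixed sequence, and its exceptional null set depends on that sequence, so you cannot take a union over uncountably many $s$-dependent sequences. Second, the claim that the variation inside a window is bounded by its $L^2$-price is unsupported: pointwise variation is governed by the $L^1$ mass $\int|f|$, not by $\int f^2$.

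\textbf{Smaller point in Step 1.} A grid with time-uniform step $\delta(t)$ does not give step mass at most $\delta m/t$, because $|f|$ can concentrate inside $[t/2,t]$. Place the points instead at unit increments of the cumulative measure $|f(\tau)|\,d\tau/\varepsilon(\tau)$, and add the dyadic points $2^n$. Choose $\varepsilon$ nonincreasing with $\varepsilon\to0$ slowly enough that $\int f^2m/\varepsilon<\infty$, where $m(t)=\int_{t/2}^t|f|$; this is possible because the tails of $\int f^2m$ tend to zero. Then:
\begin{itemize}
\item each step carries $|f|$-mass at most $\varepsilon(t_k)\to0$;
\item $N(t)\leq m(t)/\varepsilon(t)+2$, so $\int f^2N<\infty$ and Theorem \ref{NLFTa} applies.
\end{itemize}
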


\ms\no The proofs combine Theorem \ref{NLFTa} with an elementary mobility bound for the
scattering data: if $\check\rho_{t\to t'}$ denotes the modulus of the NLFT of the potential
restricted to $(t,t')$, then
\begin{equation}\label{mob}
\big|\nlhat f_{t'}(s)-\nlhat f_t(s)\big|\leq\frac{2\check\rho_{t\to t'}}{1-\check\rho_{t\to t'}},
\qquad
\check\rho_{t\to t'}\leq\tanh\left(\int_t^{t'}|f|\right),
\end{equation}
which follows from the M\"obius action of the transfer matrix on the scattering data and a
Gronwall-type estimate (exercise 3). One constructs a grid $\{t_j\}$ subordinate to the
$|f|$-mass, applies Theorem \ref{NLFTa} along the grid and \eqref{mob} between consecutive
grid points; the mass conditions of Theorems \ref{NLFTb} and \ref{NLFTmw} guarantee that the
grid can be chosen with both small steps in $|f|$-mass and summable $L^2$ pricing.

\lsection{A maximal estimate}

\ms\no In the linear theory, the theorems of Carleson and Hunt are inseparable from
weak-type estimates for the maximal operator of the partial sums: convergence is derived
from bounds on the associated maximal function. It is natural to accompany the convergence
statements of the previous section with a maximal estimate for the scattering data.
Following \cite{NM16}, we consider the quantity controlled by the non-linear Parseval
identity,
$$u(t,s)=\log|a(t,s)|,$$
and measure its deviation from the limiting level
$$A(s)=\log\left(\frac12\sqrt{\frac1{w(s)}+\frac1{\ti w(s)}+2}\right).$$
The meaning of $A(s)$ is explained by the \textit{elevation identity}
$$|a(t,s)|^2=\frac{|E(t,s)|^2+|\ti E(t,s)|^2+2}4,\qquad s\in\R,$$
an exact pointwise consequence of the determinant relation $E\ti E^\#-\ti EE^\#\equiv2i$
(exercise 8): $e^{2A(s)}$ is the value of $|a(t,s)|^2$ produced by the limiting moduli
$|E(t,s)|^2\to w^{-1}(s)$, $|\ti E(t,s)|^2\to\ti w^{-1}(s)$ at resonance-free times.

\ms\no The estimate is proved on \textit{regular sets} $\RR(T,\eta_0,\l)$, defined in
\cite{NM16} for $\eta_0\in(0,\frac12]$ and $\l>0$ as the sets of points
$s\in\{\min(w,\ti w)\geq\eta_0\}$ at which three classical maximal functions of the
spectral data -- the non-tangential maximal functions of $\log w$, of the outer function
$G$, $|G|^2=w$, and of the measure $\mu$, truncated at heights comparable to $1/t$,
together with their Dirichlet counterparts -- are suitably small for all $t\geq T/32$. The
regular sets increase in $T$ and, for every fixed $(\eta_0,\l)$, exhaust almost all of the
set $\{\min(w,\ti w)\geq\eta_0\}$ as $T\to\infty$.

\begin{theorem}[\cite{NM16}]\label{NLFTmax}
Let $f\in L^2(\R_+)$ be real and satisfy the dyadic mass restriction of Theorem
\ref{NLFTb},
$$M=\sup_{n\geq0}\int_{2^n}^{2^{n+1}}|f|<\infty,$$
and put $M'=\max\left(M,\int_0^1|f|\right)$. There exist absolute constants $K$ and $N$
such that for all $\eta_0\in(0,\tfrac12]$, $\l>0$ and $T\geq1$,
$$\begin{gathered}
\left|\left\{s\in\RR(T,\eta_0,\l)\ \Big|\ \sup_{t\geq T}\big|\log|a(t,s)|-A(s)\big|>\l\right\}\right|\ \leq\\
\leq\ K\,(M'+1)\left(\eta_0\min(\l,1)\right)^{-N}\,\frac{||f||^2_{L^2((T/32,\infty))}}\l\, .
\end{gathered}$$
\end{theorem}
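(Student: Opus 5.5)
The plan is to reduce the maximal statement to the pricing estimate for resonances, made quantitative on the regular sets $\RR(T,\eta_0,\l)$, and then to pass from a fixed time $t$ to $\sup_{t\geq T}$ by a dyadic discretization controlled by the mobility bound \eqref{mob}.

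\textbf{Step 1 (deviation forces a resonance).} Fix $s\in\RR(T,\eta_0,\l)$ and $t\geq T$. By the elevation identity, $\log|a(t,s)|-A(s)$ is a function of $|E(t,s)|^2w(s)$ and $|\ti E(t,s)|^2\ti w(s)$. The first goal is a quantitative version of the universality approximations of \cite{P16}: if the maximal functions defining the regular set are small, the $o(1)$ errors become explicit, of size $\le c(\eta_0\min(\l,1))^{-N}\cdot(\text{smallness})$. Granting this, if no resonance of depth $\le p_0$ lies in the box $Q(s,C/t)$, then $|E(t,s)|^2\approx w(s)^{-1}$ and $|\ti E(t,s)|^2\approx\ti w(s)^{-1}$, so $|\log|a(t,s)|-A(s)|\le\l/2$. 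Here $p_0=p_0(\eta_0,\l)$ is chosen so that a resonance of depth $>p_0$ contributes $\sin[t(z-z(t))]\approx$ a multiple of $e^{-itz}$ up to a factor $1+O(e^{-2p_0})$; this threshold grows only logarithmically in $(\eta_0\l)^{-1}$. The lower bound $\min(w,\ti w)\geq\eta_0$ keeps $A(s)$ and the normalizations $\sqrt{w(s)}$ bounded. The conclusion of Step 1 is that a deviation larger than $\l/2$ at time $t$ forces a resonance of depth $\le p_0$ near $s$ at time $t$.

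\textbf{Step 2 (from a continuum of times to a finite grid).} Split $[T,\infty)$ into dyadic blocks $[2^n,2^{n+1}]$. Inside each block, choose a grid $t_{n,0}<\dots<t_{n,J}$ such that $\int_{t_{n,j}}^{t_{n,j+1}}|f|\le\delta$, where $\delta\asymp\min(\l,1)\,\eta_0^{N}$. The dyadic mass restriction gives $J\lesssim M'/\delta+1$, and this is the source of the factor $M'+1$. By \eqref{mob}, together with the corresponding M\"obius bound for $|a|$ itself, $|\log|a(t,s)|-\log|a(t_{n,j},s)||\le\l/2$ between consecutive grid points. Hence the set in the theorem is contained in the union over $n$ and $j$ of the sets of points $s$ for which a resonance of depth $\le p_0$ is present near $s$ at time $t_{n,j}$.

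\textbf{Step 3 (pricing and summation).} For each grid time $t_{n,j}\in[2^n,2^{n+1}]$, the pricing estimate of the previous section, applied with the regular set in place of the compact $S'$, bounds the measure of that set by $c(p_0,\eta_0)\|f\|^2_{L^2((t_{n,j},2t_{n,j}))}$. The constant $c(p_0,\eta_0)$ must be tracked polynomially. Summing over $j$ gives a bound $\lesssim (J+1)\|f\|^2_{L^2((2^n,2^{n+2}))}$, and summing over $2^n\geq T/2$ (the windows overlap boundedly) is controlled by $\|f\|^2_{L^2((T/32,\infty))}$. The margin $T/32$ absorbs the windows that the regular set definition requires below $T$. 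The factor $1/\l$ should come from the pricing step, since a deviation of size $\l$ requires a transition coefficient $\log|a_{t\to2t}|\gtrsim\l$ on a set of comparable measure, while Parseval \eqref{nlParseval} charges only $\|f\|^2$ for the total $L^1$ mass of $\log|a_{t\to 2t}|$.

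\textbf{Main obstacle.} The hardest step is Step 1 in quantitative form. The universality approximations in \cite{P16,Cases16} are stated for a.e.\ $s$ with unspecified $o(1)$ errors, and here the error must be bounded uniformly on $\RR(T,\eta_0,\l)$ by explicit powers of $\eta_0\min(\l,1)$. My first attempt would be to rerun the local argument: compare $\theta(t,\cdot)$ with the model inner functions on the box $Q(s,C/t)$ using Harnack-type estimates for $\log|E|$, with the data fed in through the truncated non-tangential maximal functions of $\log w$, of $G$ and of $\mu$ (and their Dirichlet counterparts). The box size $C$ would be chosen as a power of $(\eta_0\l)^{-1}$, so that every constant remains polynomial.
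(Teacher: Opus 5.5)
Your Steps 1--2 and the use of forward-octave pricing match the paper's sketch. The gap is in Step 3: it does not produce the factor $1/\l$, and the mechanism you give for it cannot work.

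For the potential restricted to $(t,2t)$ one has, pointwise on $\R$, $\log|a_{t\to2t}(s)|\leq\int_t^{2t}|f|\leq 2M$. This is exercise 3, the same Lipschitz bound you use in Step 2. So for $\l>2M$ there is no set at all on which $\log|a_{t\to2t}|\gtrsim\l$. A single forward octave can therefore charge a deviation at most $2M$ per unit measure, however large the deviation is.

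This is consistent with how the deviation is localized. A deviation from $A(s)=\log|a(\infty,s)|$ at time $t$ is directly tied only to the tail $a_{t\to\infty}$. Its localization to $(t,2t)$ comes through the resonance/two-endpoint mechanism on the regular set, and that mechanism gives a price that does not grow with $\l$. Consequently, single-time pricing plus a union bound over the grid gives at best $K(M'+1)\eta_0^{-N}\|f\|^2_{L^2((T/32,\infty))}$ for $\l\geq1$. This misses the stated bound by a factor $\l$.

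The missing ingredient is persistence in time; the paper sums the prices over the grid by a first-entry argument, not by a union bound. Suppose $|\log|a(t_*,s)|-A(s)|>\l$ for some $t_*\geq T$. By the Lipschitz property the deviation stays above $\l/2$ throughout the window after $t_*$ of $|f|$-mass $\l/2$. That window contains $\gtrsim\l/\delta$ consecutive points of your grid (take $\delta\leq\min(\l,1)/4$). At each such point Step 1 gives a resonance of bounded depth, and each is paid for separately on its own forward octave. Counting instead of taking a union gives
$$|B|\,\frac{\l}{\delta}\ \lesssim\ \sum_j\big|\{s\in\RR:\ |\log|a(t_j,s)|-A(s)|>\l/2\}\big|\ \lesssim\ c\sum_j\|f\|^2_{L^2((t_j,2t_j))}\ \lesssim\ c\Big(\frac{M'}{\delta}+1\Big)\|f\|^2_{L^2((T/32,\infty))}.$$
Here $B$ is the set in the theorem and $c=c(\eta_0,\min(\l,1))$ is your pricing constant. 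The spacing $\delta$ cancels, and you get $|B|\lesssim c\,(M'+1)\|f\|^2_{L^2((T/32,\infty))}/\l$, which is exactly the product $(M'+1)/\l$ in the statement.

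With this counting, Step 1 only needs resonances of depth bounded in terms of $\eta_0$ and $\min(\l,1)$. Your threshold $p_0$, which is calibrated only for small deviations, then causes no trouble for large $\l$.
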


\ms\no The right-hand side tends to zero as $T\to\infty$. Combined with the exhaustion
property of the regular sets, the theorem gives the almost everywhere convergence of the
moduli of the scattering data:

\begin{corollary}[\cite{NM16}]\label{NLFTmaxcor}
Under the dyadic mass restriction, for a.e.\ $s\in\R$,
$$\log|a(t,s)|\ \to\ A(s)\qquad\textrm{ as }t\to\infty,$$
and, through the identity $|\nlhat f_t|^2=1-|a(t,\cdot)|^{-2}$,
$$\big|\nlhat f_t(s)\big|\ \to\ \sqrt{1-e^{-2A(s)}}=\big|\nlhat f(s)\big|\, .$$
\end{corollary}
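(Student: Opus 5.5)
The corollary will be deduced from Theorem \ref{NLFTmax} by a standard maximal-function-to-a.e.-convergence argument. The plan is to show that for fixed $\eta_0\in(0,\frac12]$ and $\l>0$ the exceptional set
$$B(\eta_0,\l)=\Big\{s\in\{\min(w,\ti w)\geq\eta_0\}\ \Big|\ \limsup_{t\to\infty}\big|\log|a(t,s)|-A(s)\big|>\l\Big\}$$
has measure zero. Taking the union over rational $\l$ and over $\eta_0=1/m$ then gives convergence at a.e.\ $s$ in $\{w>0,\ \ti w>0\}$. This set has full measure, since $\log w,\log\ti w\in L^1(\Pi)$ by the Szeg\"o condition for $f\in L^2(\R_+)$.

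\textbf{Bounding the exceptional set.} Fix $T\geq1$ and split $B(\eta_0,\l)$ into its parts inside and outside $\RR(T,\eta_0,\l)$.

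\emph{Inside} $\RR(T,\eta_0,\l)$, every $s\in B(\eta_0,\l)$ satisfies $\sup_{t\geq T}|\log|a(t,s)|-A(s)|>\l$. Theorem \ref{NLFTmax} therefore bounds the measure of this part by
$$K(M'+1)(\eta_0\min(\l,1))^{-N}\,\frac{\|f\|^2_{L^2((T/32,\infty))}}{\l},$$
which tends to $0$ as $T\to\infty$ because $f\in L^2$.

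\emph{Outside} $\RR(T,\eta_0,\l)$, the part is contained in $\{\min(w,\ti w)\geq\eta_0\}\setminus\RR(T,\eta_0,\l)$. By the exhaustion property of the regular sets, its measure tends to $0$ as $T\to\infty$.

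One caveat concerns the exhaustion property, which is really a statement about increasing sets whose union is full in a set that may have infinite measure. If needed, I would first intersect everything with a bounded interval $[-R,R]$, use continuity of measure from below there, and then let $R\to\infty$.

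\textbf{Passing to $|\nlhat f_t|$.} Since $|a|^2-|b|^2=1$ on $\R$, we have $|\nlhat f_t|^2=|b/a|^2=1-|a(t,\cdot)|^{-2}$. Convergence of $\log|a(t,s)|$ to $A(s)$ gives $|\nlhat f_t(s)|\to\sqrt{1-e^{-2A(s)}}$ by continuity. It remains to identify this limit with $|\nlhat f(s)|$.

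For that identification I would use the fact quoted earlier that $\nlhat f_T\to\nlhat f$ in measure on compacts. Passing to a subsequence, $|\nlhat f_{T_k}|\to|\nlhat f|$ a.e. The full limit just obtained must agree with this subsequential limit, so $|\nlhat f(s)|=\sqrt{1-e^{-2A(s)}}$ for a.e.\ $s$.

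\textbf{Main obstacle.} The genuine content is entirely in Theorem \ref{NLFTmax} and the exhaustion of the regular sets, both of which are assumed here. Within this deduction, the only care needed is the order of quantifiers: the regular set depends on $\l$, so $\l$ and $\eta_0$ must be fixed before $T\to\infty$. I would also make sure that the quantity $A(s)$ used here is well defined a.e., which holds because $w$ and $\ti w$ are finite and positive a.e.
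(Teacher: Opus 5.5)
Your argument is correct and is essentially the paper's: fix $\eta_0$ and $\l$, let $T\to\infty$ in Theorem \ref{NLFTmax}, use the exhaustion property of the regular sets $\RR(T,\eta_0,\l)$, and then take a countable union over $\eta_0$ and $\l$. The only difference is in identifying the limit with $|\nlhat f(s)|$: the paper (exercise 9) uses Theorem \ref{NLFTd}, where $\nlhat f(s)$ lies in the two-point set $\{r_\pm(s)\}$ whose common modulus is $\sqrt{1-e^{-2A(s)}}$, while you use the convergence $\nlhat f_T\to\nlhat f$ in measure on compacts together with a subsequence, and both routes work.
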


\ms\no The proof of Theorem \ref{NLFTmax} is a quantitative version of the resonance
approach described earlier in this lecture: a deviation of $u(t,s)$ from $A(s)$ of size
$\l$, in either direction, forces a resonance of depth exponentially small in $\l$ near
$s$; each such resonance is paid for by the $L^2$ mass of the potential on the forward
octave, through the two-endpoint estimate and the non-linear Parseval identity; and a
first-entry argument sums the prices over a grid of times subordinate to the $|f|$-mass.
The dyadic mass restriction enters through the Lipschitz property
$$\big|\log|a(t_2,s)|-\log|a(t_1,s)|\big|\ \leq\ \int_{t_1}^{t_2}|f|,$$
which reduces the maximal function over all times to its lacunary version, as in exercise
6. Let us also note that the restriction of the maximal estimate to the modulus of $a$ is
essential: as shown by Denisov \cite{Den216}, there exist potentials for which
$\arg a(t,s)$ diverges as $t\to\infty$ at every $s$, while the moduli of the scattering
data converge; the phase $\arg a(t,\cdot)$, the harmonic conjugate of $\log|a(t,\cdot)|$,
belongs to the conjugate-function layer of the problem.

\lsection{Convergence in density}

\ms\no For a general $f\in L^2(\R_+)$ the occupation bound alone gives an unconditional
statement: convergence holds outside a small exceptional set of times.

\begin{theorem}[\cite{Cases16}]\label{NLFTd}
Let $f\in L^2(\R_+)$ be real. For a.e.\ $s\in\R$ there is a measurable set
$\Xi(s)\subset\R_+$ with
$$\sum_{n\geq0}2^{-n}\big|\Xi(s)\cap[2^n,2^{n+1}]\big|<\infty,$$
such that, as $t\to\infty$ outside $\Xi(s)$,
$$|E(t,s)|\to\frac1{\sqrt{w(s)}},\qquad|\ti E(t,s)|\to\frac1{\sqrt{\ti w(s)}},$$
$$\begin{gathered}|a(t,s)|^2\to\frac14\left(\frac1{w(s)}+\frac1{\ti w(s)}+2\right),\\
|b(t,s)|^2\to\frac14\left(\frac1{w(s)}+\frac1{\ti w(s)}-2\right),\end{gathered}$$
and $\nlhat f_t(s)$ approaches a two-point set $\{r_+(s),r_-(s)\}$, computed explicitly from
$w(s)$ and $\ti w(s)$, which contains the limit $\nlhat f(s)$; in particular
$|r_+|=|r_-|$ and
$$\big|\nlhat f_t(s)\big|\to\big|\nlhat f(s)\big|\qquad\textrm{ as }t\to\infty\textrm{ outside }\Xi(s).$$
\end{theorem}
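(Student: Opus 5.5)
The plan is to take $\Xi(s)$ to be the set of times at which a resonance of bounded depth sits near $s$, and to read off the limits from the universality approximations at all other times. Fix a large depth threshold $C_0$ and a scale $C(t)\to\infty$ as in the universality statements. For $s$ in a full-measure set, let
$$\Xi(s)=\{t\ |\ \textrm{a resonance }z(t)\textrm{ of }E(t,\cdot)\textrm{ or }\ti E(t,\cdot)\textrm{ with depth }ty(t)\leq C_0\textrm{ lies in }Q(s,C(t)/t)\}.$$
The summability condition on $\Xi(s)$ is then exactly the occupation bound derived from the pricing estimate and the non-linear Parseval identity \eqref{nlParseval}. I would derive it as follows. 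Integrate the pricing estimate over each dyadic block $t\in[2^n,2^{n+1}]$. Apply Fubini in $(s,t)$ to get
$$\int_{S'}\sum_n2^{-n}|\Xi(s)\cap[2^n,2^{n+1}]|\,ds\lesssim\sum_n||f||^2_{L^2(2^n,2^{n+2})}\lesssim||f||^2_2<\infty.$$
Exhausting $\R$ by compacts $S'$ of full density then gives the bound for a.e. $s$. This works for general $f\in L^2$, with no mass condition, since only $L^2$ pricing is used.

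Outside $\Xi(s)$ there are two cases. If no resonance is in the box, the first universality regime gives $|E(t,s)|\approx 1/\sqrt{w(s)}$, since $|e^{-its}|=1$, and likewise $|\ti E(t,s)|\approx1/\sqrt{\ti w(s)}$. If a resonance is present but deep, $ty(t)>C_0$, I would use the sine regime. Then $|\gamma(p)\sin[t(s-z(t))]|$ tends to $1$ as $p=ty\to\infty$, because $|\sin(a+ip)|\sim e^p/2$ and $\gamma(p)\sim2e^{-p}$; the modulus therefore again approaches $1/\sqrt{w(s)}$, up to an error of size $O(e^{-2C_0})$. Letting $C_0\to\infty$ slowly along a diagonal sequence, while keeping $\Xi(s)$ summable, yields the limits of $|E|$ and $|\ti E|$.

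The formulas for $|a|^2$ and $|b|^2$ then follow from the elevation identity and its analogue $|b|^2=(|E|^2+|\ti E|^2-2)/4$, both obtained from the definitions and the determinant relation $E\ti E^\#-\ti EE^\#\equiv2i$. This also gives $|\nlhat f_t|^2=|b|^2/|a|^2\to(w^{-1}+\ti w^{-1}-2)/(w^{-1}+\ti w^{-1}+2)$.

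For the two-point set I would write
$$\nlhat f_t=\frac{E-i\ti E}{E+i\ti E}.$$
In the free regime $E\approx\alpha e^{-its}/\sqrt w$ and $\ti E\approx\ti\alpha e^{-its}/\sqrt{\ti w}$. The common phase $e^{-its}$ cancels, so $\nlhat f_t$ depends only on the relative phase $\ti\alpha/\alpha$. The determinant relation evaluated on $\R$ gives $2\Im(E\overline{\ti E})=\pm2$, which fixes $\sin$ of that relative phase as $\pm\sqrt{w\ti w}$ with a fixed sign. The cosine is then determined only up to sign, and the two choices give $r_\pm(s)$. Both choices have the same modulus, consistent with the limit of $|b|^2/|a|^2$ computed above.

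Finally, $\nlhat f(s)$ belongs to $\{r_\pm\}$ because $\nlhat f_t\to\nlhat f$ in measure on compacts. A limit in measure along times off $\Xi(s)$, which form a set of full dyadic density, must lie in the closure of the set of values those times approach.

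The main obstacle is making the universality approximations hold uniformly in $t\notin\Xi(s)$ for a fixed $s$. The $o(1)$ errors and the choice of $C(t)$ come from maximal-function control of $\log w$ and $\mu$, so one must intersect with a good set of $s$ before applying Fubini. The coupling between the Neumann and Dirichlet resonances, through the determinant relation, also needs care to make sure the relative phase cannot drift.
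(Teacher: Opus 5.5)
Your route is the one the paper indicates. You take $\Xi(s)$ to be the set of times at which a shallow resonance sits near $s$, and its summability is the occupation bound obtained from the pricing estimate by Fubini over dyadic blocks. The moduli limits come from the universality approximations at the remaining times. Your computation $\gamma(p)^2|\sin(a+ip)|^2=\tanh p+\sin^2 a/(\sinh p\cosh p)$ correctly disposes of deep resonances, and the diagonal choice of the depth threshold is fine. The formulas for $|a|^2$, $|b|^2$ and the two-point set follow from $\Im(E\overline{\ti E})=1$. The last of these is purely algebraic. With $\rho=\ti E(t,s)/E(t,s)$ one has $\Im\rho=-|E|^{-2}$ and $|\rho|=|\ti E|/|E|$. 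So $\rho$, and hence $\nlhat f_t(s)=(1-i\rho)/(1+i\rho)$, is determined by the two moduli up to the sign of $\Re\rho$. This covers the deep-resonance times as well as the free ones, not only the free regime you wrote it for.

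The one step that fails as written is the last one. Convergence $\nlhat f_t\to\nlhat f$ in measure is convergence in the variable $s$. It gives no information at a fixed $s$ along any set of times, so ``a limit in measure along times off $\Xi(s)$'' has no content; you must pass through Fubini in $(s,t)$.

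One way to do this:
\begin{itemize}
\item Fix a compact $K$ and put $h_t=\min(1,|\nlhat f_t-\nlhat f|)$. Then $\int_K h_t\,ds\to0$, hence $\int_K 2^{-n}\int_{2^n}^{2^{n+1}}h_t(s)\,dt\,ds\to0$.
\item Consequently, along a subsequence $n_j$, the block averages of $h_t(s)$ tend to $0$ for a.e.\ $s\in K$.
\item Since also $2^{-n}|\Xi(s)\cap[2^n,2^{n+1}]|\to0$, Chebyshev lets you choose $t_j\in[2^{n_j},2^{n_j+1}]\setminus\Xi(s)$ with $h_{t_j}(s)\to0$.
\item Then $\nlhat f(s)=\lim_j\nlhat f_{t_j}(s)$ lies in the closed set $\{r_+(s),r_-(s)\}$.
\end{itemize}
Alternatively, apply Corollary \ref{NLFTlac} to the lacunary sequences $\tau 2^k$, $\tau\in[1,2]$, with Fubini in $(\tau,s)$. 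Then use Borel--Cantelli via $\sum_k|\{\tau\in[1,2]:\ \tau2^k\in\Xi(s)\}|=\sum_k2^{-k}|\Xi(s)\cap[2^k,2^{k+1}]|<\infty$.
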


\ms\no Thus for every square-summable potential all the moduli of the scattering data
converge at a.e.\ $s$ along times of full density, and $\nlhat f_t(s)$ itself approaches
the two-point set $\{r_+(s),r_-(s)\}$, whose elements differ only in the sign of a square
root. Under the hypotheses of Theorems \ref{NLFTb}-\ref{NLFTmw} the dichotomy resolves
and no exceptional set is needed.

\lsection{Exercises}

\ms\no 1) Solve the system \eqref{DirS} for $f\equiv0$ and verify all the formulas of the
free case: $E=e^{-itz}$, $\ti E=-ie^{-itz}$, $a\equiv1$, $b\equiv0$, $\nlhat f\equiv0$, and
$w=\ti w=$ const.

\ms\no 2) Derive from \eqref{DirS} the system satisfied by $(a,b)$:
$$\frac{\pa a}{\pa t}=e^{-2izt}f(t)\,b,\qquad\frac{\pa b}{\pa t}=e^{2izt}f(t)\,a,$$
and show that the first Picard iteration gives $b(T,z)\approx\int_0^Tf(t)e^{2izt}dt$.
Verify that in this linear approximation, using $\log|a|\approx\frac12|b|^2$, the non-linear
Parseval identity \eqref{nlParseval} becomes the classical Plancherel theorem (up to
normalization).

\ms\no 3) Prove the second inequality in \eqref{mob}: from the system of exercise 2 for the
restricted potential, show that $|\check a|+|\check b|\leq e^F$ and
$|\check a|-|\check b|\geq e^{-F}$ with $F=\int_t^{t'}|f|$, and conclude
$\check\rho_{t\to t'}\leq\tanh F$.

\ms\no 4) Show that for a lacunary sequence, $t_{k+1}/t_k\geq q>1$, the octave counting
function satisfies $N\leq\log_q2+1$, and deduce Corollary \ref{NLFTlac} from Theorem
\ref{NLFTa}.

\ms\no 5) For potentials with regular behavior at infinity, $|f(t)|\leq C(1+t)^{-\alpha}$
on $\R_+$ with $\alpha>2/3$, show that
$$f(t)^2\int_{t/2}^t|f|\lesssim(1+t)^{1-3\alpha},$$
and deduce from Theorem \ref{NLFTmw} that $\nlhat f_T(s)\to\nlhat f(s)$ as $T\to\infty$
for a.e.\ $s\in\R$.

\ms\no 6) Suppose that the dyadic $L^1$ masses tend to zero: $M_n=\int_{2^n}^{2^{n+1}}|f|\to0$.
Combining Corollary \ref{NLFTlac} along $t_k=2^k$ with the bound \eqref{mob} inside each
dyadic block, show that $\nlhat f_T(s)\to\nlhat f(s)$ for a.e.\ $s$. (This is the easy
subclass of Theorem \ref{NLFTb}; the content of Theorems \ref{NLFTb} and \ref{NLFTmw} lies
in the regime where $M_n$ does not tend to zero.)

\ms\no 7) Using the Riccati equation \eqref{Ricc}, verify that a curve $z(t)$ of zeros of
$\theta(t,\cdot)$ satisfies $z'(t)=-f(t)/\theta_z(t,z(t))$, and explain why resonances do
not move on the intervals where $f=0$.

\ms\no 8) Prove the elevation identity: on $\R$, where $E^\#=\bar E$ and
$\ti E^\#=\bar{\ti E}$, the determinant relation $E\ti E^\#-\ti EE^\#\equiv2i$ becomes
$\Im\big(E\bar{\ti E}\big)\equiv1$, and therefore
$$4|a(t,s)|^2=\big|E+i\ti E\big|^2=|E|^2+|\ti E|^2+2\,\Im\big(E\bar{\ti E}\big)
=|E|^2+|\ti E|^2+2 .$$

\ms\no 9) Deduce Corollary \ref{NLFTmaxcor} from Theorem \ref{NLFTmax}, letting
$T\to\infty$ and using the exhaustion property of the regular sets. Verify that the
limiting value of $|\nlhat f_t|^2$ agrees with the common modulus of the two-point set
$\{r_\pm\}$ of Theorem \ref{NLFTd}.

\lsection{Finishing remarks}

\ms\no The main goal of this course was to introduce the reader to a classical, yet still
rapidly developing, area of analysis formed by the applications of complex function theory
to harmonic analysis, spectral and scattering theory, centered around the branch of
mathematics called the Uncertainty Principle in Harmonic Analysis. As we saw, this area is
full of long standing problems, deep theorems and exciting applications. Despite a century
of active research by many outstanding analysts, most fundamental problems in this field
are wide open. What will the answer be if one replaces the interval with a union of
intervals in the statement of the Beurling-Malliavin Problem? What if one replaces the
interval with a more general set in the Type or the Hochstadt-Liberman Problems? These
natural questions stand open, some for many decades, despite considerable efforts by the
analytic community.

\ms\no The Uncertainty Principle is definitely much wider than the completeness, spectral
and scattering problems discussed here. A natural next step after completeness are
questions on excess and deficiency of exponential systems, polynomials or special
functions, problems on frames, Riesz bases, sampling and interpolation. The reader
interested in further material in these directions may consult the classic survey of
Redheffer \cite{Red16}, a more modern survey by Khabibullin \cite{Kh16} (in Russian), and
the books by Nikolski \cite{Ni16} and Seip \cite{Seip16}.

\ms\no The last three lectures could offer only a first acquaintance with the Krein-de
Branges theory of Hilbert spaces of entire functions, which translates spectral problems
for differential operators into problems of complex and harmonic analysis, and with its
scattering counterpart, the non-linear Fourier analysis of Dirac systems. The basics of the
theory and further references can be found in \cite{DM16}, \cite{Rem16} and \cite{Rom16};
the complex analytic part is covered in de Branges' book \cite{dB16}; a modern account will
(hopefully) soon appear in \cite{MPS16}. The Krein-de Branges theory experiences a new peak
of popularity in recent years, partially due to the newfound connections to spectral
problems, the non-linear Fourier transform, number theory and the Riemann Hypothesis. The
reader who has worked through these notes is well prepared to take part in this
development.

\end{document}